\newcommand{\RR}{\mathbb{R}}
\newcommand{\OO}{\mathcal{O}}
\newcommand{\ZZ}{\mathbb{Z}}
\newcommand{\HH}{\mathbb{H}}
\newcommand{\image}{\textnormal{im}\,}
\newcommand{\degree}{\textnormal{deg}\,}
\newcommand{\Hom}{\textnormal{Hom}}
\newcommand{\dimension}{\textnormal{dim}\,}
\newcommand{\rank}{\textnormal{rk}\,}
\newcommand{\Ext}{\textnormal{Ext}}
\newcommand{\Ac}{\mathcal{A}}
\newcommand{\Bc}{\mathcal{B}}
\newcommand{\Cc}{\mathcal{C}}
\newcommand{\Fc}{\mathcal{F}}
\newcommand{\Tc}{\mathcal{T}}
\newcommand{\Dc}{\mathcal{D}}
\newcommand{\Hc}{\mathcal{H}}
\newcommand{\Pc}{\mathcal{P}}
\newcommand{\Uc}{\mathcal{U}}
\newcommand{\Coh}{\mathrm{Coh}}
\newcommand{\arinj}{\ar@{^{(}->}}
\newcommand{\arsurj}{\ar@{->>}}
\newcommand{\areq}{\ar@{=}}
\newcommand{\muob}{\mu_{\omega,B}}
\newcommand{\Bob}{\mathcal B_{\omega, B}}
\newcommand{\Tob}{\mathcal T_{\omega, B}}
\newcommand{\Fob}{\mathcal F_{\omega, B}}
\newcommand{\wh}{\widehat}
\newcommand{\Bl}{\mathcal{B}^l}
\newcommand{\ch}{\mathrm{ch}}
\newcommand{\Stab}{\mathrm{Stab}}
\newcommand{\Stabd}{\mathrm{Stab}^\dagger}
\newcommand{\Stabpol}{\mathrm{Stab}_{\text{Pol}}}
\newcommand{\Aut}{\mathrm{Aut}}
\newcommand{\whPhi}{{\wh{\Phi}}}
\newcommand{\wt}{\widetilde}
\newcommand{\ol}{\overline}
\newcommand{\olw}{{\overline{\omega}}}
\newcommand{\olB}{{\overline{B}}}
\newcommand{\bsm}{\begin{smallmatrix}}
\newcommand{\esm}{\end{smallmatrix}}
\newcommand{\Loov}{{(\!(\tfrac{1}{v})\!)}}
\newcommand{\CLoovc}{{\mathbb{C}(\!(\tfrac{1}{v})\!)^c}}
\newcommand{\RLoovc}{{\mathbb{R}(\!(\tfrac{1}{v})\!)^c}}
\newcommand{\Loovc}{{(\!(\tfrac{1}{v})\!)^c}}
\newcommand{\RPwc}{{\mathbb{R}\llbracket w \rrbracket^c}}
\newcommand{\CPwc}{{\mathbb{C}\llbracket w \rrbracket^c}}
\newcommand{\GLlp}{{\mathrm{GL}^{l,+}\!(2,\RLoovc)}}
\newcommand{\GLtr}{{\mathrm{GL}^+(2,\mathbb{R})}}
\newcommand{\sgn}{{\mathrm{sgn}\,}}
\newcommand{\Patilde}{\wt{\mathrm{Pa}}}
\newtheorem*{rep@theorem}{\rep@title}
\newcommand{\newreptheorem}[2]{%
\newenvironment{rep#1}[1]{%
 \def\rep@title{#2 \ref{##1}}%
 \begin{rep@theorem}}%
 {\end{rep@theorem}}}
\newenvironment{que}[1][]%
  {\begin{genthm}{Question}{true}{#1}{paragraph}}%
  {\end{genthm}}
\newcommand\reallywidehat[1]{%
\savestack{\tmpbox}{\stretchto{%
  \scaleto{%
    \scalerel*[\widthof{\ensuremath{#1}}]{\kern-.6pt\bigwedge\kern-.6pt}%
    {\rule[-\textheight/2]{1ex}{\textheight}}
  }{\textheight}%
}{0.5ex}}%
\stackon[1pt]{#1}{\tmpbox}%
}
\begin{document}

\title[]{Weight functions, tilts, and stability conditions}

\author[Jason Lo]{Jason Lo}
\address{Department of Mathematics \\
California State University, Northridge\\
18111 Nordhoff Street\\
Northridge CA 91330 \\
USA}
\email{jason.lo@csun.edu}

\keywords{weight function, stability condition, autoequivalence, elliptic surface}
\subjclass[2010]{Primary 14D20; Secondary: 14J27, 16G20}

\begin{abstract}
In this article, we treat stability conditions in the sense of King, Bridgeland and Bayer in a single framework.  Following King, we begin with   weight functions on a triangulated category, and  consider increasingly specialised configurations of triangulated categories, t-structures and stability functions that give equivalent categories of stable objects.  Along the way, we recover existing results in representation theory and algebraic geometry, and prove a series of new results on elliptic surfaces, including correspondence theorems for Bridgeland stability conditions and polynomial stability conditions, local finiteness and boundedness for mini-walls for Bridgeland stability conditions, isomorphisms between moduli of 1-dimensional twisted Gieseker semistable sheaves and 2-dimensional Bridgeland semistable objects, the preservation of geometric Bridgeland stability by autoequivalences on elliptic surfaces of nonzero Kodaira dimension, and solutions to Gepner equations on elliptic surfaces.
\end{abstract}

\maketitle
\tableofcontents

\section{Introduction}

\subsection{Overview} In representation theory, moduli spaces of modules are defined using  stability in the sense of King \cite{king1994moduli}, i.e.\ using a weight function.  In its original form, this concept of stability is closely related to Mumford's stability for vector bundles on a curve.  In algebraic geometry, $\mu$-stability and other types of stability defined via slope functions have  been used to study moduli spaces of coherent sheaves on varieties of any dimension.  Out of a need to understand stability arising from mathematical physics, Bridgeland coined a concept of  stability condition on triangulated categories \cite{StabTC}, the definition of which was  relaxed by Bayer to give more flexibility in understanding objects such as Pandharipande-Thomas stable pairs \cite{BayerPBSC} (see also Toda's limit stability, which can be interpreted to be  under Bayer's framework \cite{Toda1}).

In this article, we present configurations of t-structures and stability conditions that   give rise to equivalent categories of semistable objects and hence isomorphic moduli spaces of semistable objects.     Instead of understanding relations among \emph{moduli spaces} alone, which require fixing numerical classes of objects, we consider relations among notions of \emph{stability} themselves.  We also  consider stability conditions in the sense of King, Bridgeland and Bayer  as part of a single, connected framework.

In the first thread  within this framework, following King, we define a weight function on a triangulated category $\Dc$ to be a group homomorphism $S : K(\Dc) \to R$ from the Grothendieck group of $\Dc$ to a totally ordered abelian group.  Then, for any abelian subcategory $\Ac$ of $\Dc$, the weight function defines a notion of stability for objects in $\Ac$.  In a separate thread, we extend the definition of Bayer's polynomial stability by allowing central charges to take values in convergent Laurent series that are  expansions of analytic functions.  Using this extended definition of polynomial stability, a Bridgeland stability condition is a polynomial stability condition where the central charge takes values in constant functions.  In addition, slope functions for sheaf stability such as $\mu$-stability can be defined as the slope functions coming from `weak' polynomial stability functions.

These two threads  are then connected by defining a suitable weight function anytime we have a weak polynomial stability function and a fixed nonzero object - this generalises a construction that  appeared in Rudakov's work \cite{rudakov1997stability}.

We note that constructions passing between  Bridgeland stability  and King's stability, or hints of such constructions, already exist in the literature  \cite{Bayer2017,bridgeland2017scattering,dimitrov2013dynamical}.  Also, our technical results such as Theorems \ref{thm:fundamental}, \ref{thm:main3}, \ref{thm:configIII-w}(iii) and \ref{prop:paper30prop11-11ext}(iv)  can be interpreted as formal results on how stability can be "preserved" under tilting, which is also studied in \cite[Sections 14, 19]{bayer2019stability} for weak stability conditions where the central charges take values in $\mathbb{C}$.  One of the differences between our approach and that in \cite{bayer2019stability} is that we do not assume the Harder-Narasimhan property in proving the preservation of stability.

Although we will give a proper outline of the article  in Section \ref{para:intro-sum}, we include here a quick list of our main results that is more descriptive than the abstract:

Starting with weight functions alone, we  prove results describing how weight functions in certain compatible configurations give rise to the same semistable objects.    Then, via the connection between weight functions and polynomial stability functions, we prove a similar result for  compatible polynomial stability conditions.  Only homological algebra is needed up to this point.   With different inputs,  these homological statements lead to a series of results - both old and new - including:
\begin{itemize}
\item[(1)] We recover a result  in the classification of finite-dimensional algebras, which uses a tilting equivalence to establish  isomorphisms between moduli spaces of semistable modules for different weight functions (Theorem \ref{thm:chindrisA}).
\item[(2)] We recover a fundamental result on elliptic curves, that the Fourier-Mukai transform from the Poincar\'{e} line bundle preserves stability of coherent sheaves (Theorem \ref{thm:main0}).
\end{itemize}
Then, using a relative Fourier-Mukai transform $\Phi$ on Weierstra{\ss} elliptic surfaces, we prove:
\begin{itemize}
    \item[(3)] There are two polynomial stability conditions that correspond to each other via $\Phi$, one being the large volume limit (Theorem \ref{thm:main5}).
    \item[(4)] There are two families of Bridgeland stability conditions whose members  correspond to each other via $\Phi$ (Theorem \ref{thm:Bristabcorr}).
    \item[(5)] For fixed Chern classes, $\Phi$ induces a correspondence of the wall and chamber structures in the  families in (4); in each family, the walls are locally finite and bounded (Theorem \ref{cor:AG48-58-1} and proof).
    \item[(6)] In terms of the moduli spaces they define, the outermost chambers for the two families of Bridgeland stability conditions in (4) coincide with the   two polynomial stability conditions in (3) (Theorem \ref{thm:stabBriNlim}).
\end{itemize}
Applications of the results on Weierstra{\ss} elliptic surfaces include:
\begin{itemize}
\item[(7)] We construct  isomorphisms from moduli spaces of 1-dimensional twisted Gieseker semistable sheaves to  moduli spaces of Bridgeland semistable objects of nonzero rank (Corollary \ref{cor:AG48-55-1}).
\item[(8)] We prove that the autoequivalence group preserves the geometric component of the Bridgeland stability manifold when the elliptic surface has nonzero Kodaira dimension (Theorem \ref{thm:AG48-67-1}).
\item[(9)] We construct Gepner-type Bridgeland stability conditions (Example  \ref{eg:Gepnerpoints}).
\end{itemize}


\paragraph[Outline and summary] We \label{para:intro-sum} now describe the outline of this article and give a more detailed summary of our main results.

After fixing some notations in Section \ref{sec:prelim}, we begin Section \ref{sec:weightfnc} with the definition of a weight function on a triangulated category in \ref{para:weightfuncdef}.  We describe Configuration I in \ref{para:config3}, a setting where weight functions satisfy sign compatibility, and introduce the refinement property in \ref{para:WPdef3}, which generalises the concept of `well-positioned' functions by Chindris.  Our first theorem, Theorem \ref{thm:fundamental},  states that given two weight functions $S, S'$ on a triangulated category in Configuration I where $S$ satisfies the refinement property, an object is  $S$-semistable if and only if it is  $(-1)^iS'$-semistable after a shift by $i$.  Then, by bringing exact equivalences of triangulated categories into the picture, we describe Configuration II in \ref{para:config2} and prove Theorem \ref{thm:main3} as a counterpart of Theorem \ref{thm:fundamental}.  We end Section 3 by using Theorem \ref{thm:main3} to recover a theorem of Chindris' \cite[Theorem 1.3(a)]{chindris2013} (Theorem \ref{thm:chindrisA}), which identifies two moduli spaces of semistable modules from different weights, a result integral to the classification of  wild tilted algebras.

In Section \ref{sec:polystab}, we extend Bayer's notion of polynomial stability using   convergent Laurent series, and  consider Bridgeland stability conditions as polynomial stability conditions.  We also describe a process for producing weight functions using a weak polynomial stability function and a nonzero object (Lemma \ref{lem:S-stabfuncdef}), which generalises a result of Rudakov's \cite[Proposition 3.4]{rudakov1997stability}.

In Section \ref{sec:configIII}, we modify Configuration II to Configuration III by replacing weight functions with polynomial stability functions, the latter of which are more suitable for applications in algebraic geometry than in representation theory. Configuration III involves an exact equivalence $\Phi : \Dc \to \Uc$ of triangulated categories, hearts of t-structures $\Ac, \Bc$ on $\Dc, \Uc$, respectively, with  polynomial stability functions $Z_\Ac, Z_\Bc$ on $\Ac, \Bc$, and a linear operation $T$ that accounts for the difference between $Z_\Bc$ and the image of $Z_\Ac$ under the action of $\Phi$.  In  Theorem \ref{prop:paper30prop11-11ext}, we prove that under Configuration III, $Z_\Ac$-semistable objects  correspond to $Z_\Bc$-semistable objects up to a shift - this is without assuming the Harder-Narasimhan property for the stability functions involved.  When the polynomial stability functions $Z_\Ac, Z_\Bc$ do have the Harder-Narasimhan property,  we obtain the equation
\begin{equation}\label{eq:intro2}
  \Phi \cdot (Z_\Ac, \Pc_\Ac) = (Z_\Bc, \Pc_\Bc)\cdot \Patilde (T)
\end{equation}
where $\Pc_\Ac, \Pc_\Bc$ denote the slicings of the respective polynomial stability conditions and $\Patilde (T)$ is a path in   $\wt{\mathrm{GL}}^+\!(2,\mathbb{R})$, the universal cover of $\mathrm{GL}^+\!(2,\mathbb{R})$.

In Configurations I, II and III, the idea is that once all the required inputs (of exact equivalences, stability functions, and t-structures) are in place, the output is a correspondence theorem of semistable objects or an equation for  stability conditions analogous to \eqref{eq:intro2}.  The remainder of this article concerns   applications of Theorem \ref{prop:paper30prop11-11ext} on elliptic curves and elliptic surfaces.

Section \ref{sec:prelim-ellfib} sets up the necessary terminology for Weierstra{\ss} elliptic fibrations $X$ and introduces a fundamental autoequivalence $\Phi$ of $D^b(X)$, which is a relative Fourier-Mukai transform.

Section \ref{sec:ellcur} contains our first application of  Theorem \ref{prop:paper30prop11-11ext}:  We recover, in the form of Theorem \ref{thm:main0},  the well-known result  that under the Fourier-Mukai transform on an elliptic curve whose kernel is the Poincar\'{e} line bundle, a semistable sheaf is taken to a semistable sheaf.  Previous expositions on this fundamental result on elliptic curves can be found in  Bridgeland \cite[3.2]{FMTes}, Hein-Ploog \cite{hein2005fourier}, Polishchuk \cite[Lemma 14.6]{polishchuk2003abelian}, and  \cite[Corollary 3.29]{FMNT}.

Section \ref{sec:ellsurfGLact} considers the cohomological formula for the autoequivalence $\Phi$, and rewrites it in a simpler form using  $\mathrm{GL}^+\!(2,\mathbb{R})$-actions.  In Section \ref{sec:polytwststab}, we recall the large volume limit on surfaces - a polynomial stability condition we denote as $Z_l$-stability -  and check that it coincides with twisted Gieseker stability (a stability defined using a slope function) in the case of 1-dimensional sheaves.

In Section \ref{sec:ellsurfpolystab}, we solve a system of polynomial equations, which in turn  allows us to "patch"  central charges together up to the autoequivalence $\Phi$.  The solutions can be expressed as convergent Laurent series, which lead to the use of polynomial stability conditions.  We also recall $Z^l$-stability, a polynomial stability condition first defined in the author's joint work with Liu and Martinez \cite{LLM} by deforming the ample class along a hyperbola in the ample cone. Then we prove a polynomial-stability generalisation of Theorem \ref{thm:main0} on elliptic surfaces $X$:  an object $E$ in the derived category $D^b(X)$ is  $Z_l$-semistable if and only if $\Phi E$ is $Z^l$-semistable; moreover, the equation \eqref{eq:intro2} specialises to a correspondence between $Z_l$-stability and $Z^l$-stability via $\Phi$ (Theorem \ref{thm:main5}).  This is the second application of  Theorem \ref{prop:paper30prop11-11ext}.

On the other hand, whenever there is a solution to  the patching equations in  Section \ref{sec:ellsurfpolystab}, we obtain a generalisation of Theorem \ref{thm:main0} to Bridgeland stability on elliptic surfaces: for  ample divisors $\olw, \omega$ and $\mathbb{R}$-divisors $\olB, B$ that satisfy the patching equations, an object $E$ in $D^b(X)$ is $Z_{\olw,\olB}$-semistable if and only if $\Phi E$ is $Z_{\omega, B}$-semistable (Theorem \ref{thm:Bristabcorr}).  We also prove  the stronger statement
\begin{equation}\label{eq:intro1}
  ([1]\circ \Phi)\cdot \sigma_{\olw,\olB}= \sigma_{\omega,B} \cdot (T,g)
\end{equation}
where $\sigma_{\olw,\olB}, \sigma_{\omega,B}$ are Bridgeland stability conditions with central charges $Z_{\olw,\olB}, Z_{\omega, B}$, respectively.  In this equation, $[1]$ denotes the shift functor in $D^b(X)$, and $(T,g)$ is an element of $\wt{\mathrm{GL}}^+\!(2,\mathbb{R})$.  Our approach gives explicit coordinates for $\olw, \omega, \olB$ and $B$.

Another way to interpret equation \ref{para:gpactionBri} is: for suitable $\olw,\olB$, the image of the Bridgeland stability condition $\sigma_{\olw,\olB}$ under the autoequivalence $[1]\circ \Phi$ is, up to a $\wt{\mathrm{GL}}^+\!(2,\mathbb{R})$-action (which simply relabels the phases of semistable objects),  again of the  form $\sigma_{\omega,B}$.  Moreover,  we can compute $\omega,B$ explicitly.

At this point, we have constructed  a clear picture of the behaviours of moduli spaces of Bridgeland stability conditions under the autoequivalence $\Phi$ on a Weierstra{\ss} elliptic surface $X$ - this is summarised in the form of Theorem \ref{thm:stabBriNlim} and diagram \eqref{eq:stabequivs}.  Roughly speaking, there is a countable number of rays in the space $\mathrm{Stab}(X)$ of Bridgeland stability conditions on $D^b(X)$, each of which is partnered with a  hyperbola in $\mathrm{Stab}(X)$.  Sufficiently away from the origin, stability conditions on each ray corresponds to stability conditions on its partner hyperbola.  As a result, the local finiteness and boundedness of mini-walls on the rays in the author's joint work with Qin \cite{LQ} give corresponding local finiteness and boundedness results of mini-walls on the hyperbolas (Theorem \ref {cor:AG48-58-1}).

Moreover, when a Chern character is fixed, the Bridgeland moduli space in the outer-most mini-chamber on a ray, which corresponds to $Z_l$-moduli space (proved in \cite{LQ}), now corresponds via $\Phi$ to the Bridgeland moduli space in the outer-most mini-chamber on its partner hyperbola, which corresponds to $Z^l$-moduli space.  A consequence of this picture is,  that for  Chern classes of 1-dimensional sheaves, the moduli space of twisted Gieseker semistable 1-dimensional sheaves is isomorphic to a moduli space of Bridgeland semistable objects of nonzero rank (Corollary \ref{prop:LLM1}).  This improves a previous result in the author's joint work with Liu and Martinez, in which an embedding of moduli spaces was obtained via a different, more direct wall-crossing result  \cite[Corollary 6.14]{LLM}.

We conclude the paper with two applications of Theorem \ref{thm:Bristabcorr} that are concerned with  Bridgeland stability manifolds for surfaces.

The first application is related to a conjecture \cite[Conjecture 1.2]{SCK3} of Bridgeland's on the structure of the autoequivalence group $\mathrm{Aut}(D^b(X))$ when $X$ is a K3 surface.  The first part of the conjecture asserts that the autoequivalence group preserves the connected component of $\mathrm{Stab}(X)$ containing geometric stability conditions.  We show that the corresponding statement holds for Weierstra{\ss} elliptic surfaces of non-zero Kodaira dimensions.

The second application concerns Gepner-type Bridgeland stability conditions in the sense of Toda \cite{toda2013gepner}.  A Gepner-type Bridgeland stability condition on a smooth projective variety is an orbifold point of $\mathrm{Aut}(D^b(X)) \backslash \mathrm{Stab}(X)/\mathbb{C}$.  When $X$ is a Calabi-Yau threefold, such stability conditions mimic stability conditions on Orlov's triangulated category of graded matrix factorisations, and can impose constraints and reveal symmetries in Donaldson-Thomas invariants on $X$.  Using Theorem \ref{thm:Bristabcorr},  we construct  Gepner points in $\mathrm{Stab}(X)$ for Weierstra{\ss} elliptic surfaces $X$ (Theorem \ref{thm:Gepnersol} and Example \ref{eg:Gepnerpoints}).

\paragraph[Acknowledgements]  The author thanks Conan Leung, Yu-Shen Lin, Wu-Yen Chuang, Cristian Martinez,  Wanmin Liu, Zhenbo Qin, and Ziyu Zhang for helpful discussions during various stages of this project.   He would also like to thank Arend Bayer for pointing out a gap in an earlier version of this work and suggesting a fix.

\section{Preliminaries}\label{sec:prelim}

\paragraph[Notation] For any abelian category $\Ac$, we will write $D^b(\Ac)$ for the bounded derived category of $\Ac$.  When $X$ is a smooth projective variety, we will write $\Coh (X)$ to denote the category of coherent sheaves on $X$, and  write $D^b(X)$ to denote $D^b(\Coh (X))$.  For any divisor class $B$ on $X$ and any $E \in D^b(X)$, we will write $\ch^B(E)$ to denote the twisted Chern character $e^{-B}\ch(E)$ of $E$.  That is, we have $\ch_0^B(E)=\ch_0(E), \ch_1^B(E)=\ch_1(E)-B\ch_0(E)$, etc.

\subparagraph Unless otherwise stated, all the t-structures on triangulated categories that appear in this article will be bounded.

\subparagraph Suppose $\Tc$ is a triangulated category and $\Ac$ is the heart of a t-structure on $\Tc$; then we  write $\Hc_\Ac^i : \Tc \to \Ac$ to denote the $i$-th cohomology functor with respect to the t-structure.  When $\Tc$ is  the bounded derived category $D^b(\Bc)$ of an abelian category $\Bc$, we write $H^i$ instead of $\Hc_\Bc^i$, i.e.\  $H^i$ denotes the $i$-th cohomology functor on $D^b(\Bc)$ with respect to the standard t-structure.  When the triangulated category $\Tc$ is understood, for any integers $a \leq b$ we  define the full subcategory of $\Tc$
\[
  D^{[a,b]}_{\Bc} := \{ E \in \Tc : \Hc^i_\Bc (E) =0 \text{ for all } i \notin [a,b]\}.
\]

\subparagraph[WIT$_i$] Let \label{para:def-WIT} $\Phi : \Tc \to \Uc$ be an exact equivalence of triangulated categories.
\begin{itemize}
\item  Suppose $\Bc$ is the heart of a t-structure on $\Uc$.  We will  write $\Phi_\Bc^i$ to denote the composite functor $\Hc_\Bc^i \circ \Phi : \Tc \to \Bc$.  We will say an object $E \in \Tc$ is $\Phi_\Bc$-WIT$_i$ if $\Phi E$ is isomorphic  to an object in $\Bc [-i]$, i.e.\ if $\Phi_\Bc^j (E)=0$ for all $j \neq i$.
\item Suppose $\Ac, \Bc$ are hearts of bounded t-structures in $\Tc, \Uc$, respectively.    For any integer $i$, we   define the  full subcategory of $\Ac$
\begin{equation*}
  W_{i,\Phi,  \Ac, \Bc} := \{ E \in \Ac : E \text{ is $\Phi_\Bc$-WIT$_i$}\}.
\end{equation*}
\item When $\Uc=D^b(\Bc)$ is the bounded derived category of an abelian category $\Bc$, we simply write $\Phi$-WIT$_i$ instead of $\Phi_\Bc$-WIT$_i$.
\item When $\Tc = D^b(\Ac), \Uc = D^b(\Bc)$ are bounded derived categories of abelian categories $\Ac, \Bc$, respectively, we simply write $W_{i,\Phi}$ for $W_{i,\Phi,  \Ac, \Bc}$, and we  write $\wh{E}$ to denote $\Phi E [i]$ when $E$ is a $\Phi$-WIT$_i$ object in $\Ac$.
\end{itemize}

\paragraph[Torsion pair and tilting] Given an  abelian category $\Ac$, a pair of full subcategories $(\Tc, \Fc)$ of $\Ac$ will be called a torsion pair in $\Ac$ if they satisfy the following two conditions:
\begin{itemize}
\item[(a)] For any $T \in \Tc$ and $F \in \Fc$, we have $\Hom_\Ac (T,F)=0$.
\item[(b)] Every $E \in \Ac$ fits in some short exact sequence in $\Ac$
\[
 0 \to E' \to E \to E'' \to 0
\]
where $E' \in\Tc$ and $E'' \in \Fc$.
\end{itemize}
Whenever $(\Tc, \Fc)$ is a torsion pair in $\Ac$, we will refer to $\Tc$ (resp.\ $\Fc$) as a \emph{torsion class} (resp.\ \emph{torsion-free class}) in $\Ac$.  When $\Ac$ is a noetherian abelian category, a full subcategory of $\Ac$ is a torsion class if and only if it is closed under extension and quotient in $\Ac$ \cite[Lemma 1.1.3]{Pol}.  Given an abelian category $\Ac$ and a torsion pair $(\Tc, \Fc)$ in $\Ac$, the extension closure in $D^b(\Ac)$
\[
  \Ac^\dagger = \langle \Fc [1], \Tc\rangle
\]
is again the heart of a t-structure on $D^b(\Ac)$, which we refer to as the \emph{tilt} of the heart $\Ac$ at the torsion pair $(\Tc, \Fc)$.

\paragraph Given an abelian category $\Ac$ and a collection of full subcategories $\Cc_1, \cdots, \Cc_n$ of $\Ac$, we will write $[\Cc_1, \cdots, \Cc_n]$ to denote the full subcategory of all objects $E$ in $\Ac$ that admit a filtration of the form
\begin{equation}\label{eq:ntuplefiltration}
  0=: E_0 \subseteq E_1 \subseteq E_2 \subseteq \cdots \subseteq E_n = E
\end{equation}
where $E_i/E_{i-1} \in \Cc_i$ for each $1 \leq i \leq n$.

\paragraph[Torsion $n$-tuple] Following \cite[Section 2.2]{Pol2} (see also  \cite[Definition 3.5]{Toda2}), given an abelian category $\Ac$, we will say an $n$-tuple  $(\Cc_1, \cdots, \Cc_n)$ of full subcategories of  $\Ac$ is a \emph{torsion $n$-tuple in $\Ac$} if they satisfy the following two conditions:
\begin{itemize}
\item[(a)] For any $E' \in \Cc_i$ and $E'' \in \Cc_j$ where $i<j$, we have $\Hom_\Ac (E',E'')=0$.
\item[(b)] $[\Cc_1, \cdots, \Cc_n] = \Ac$.
\end{itemize}
 When $(\Cc_1,\cdots, \Cc_n)$ is a torsion $n$-tuple in $\Ac$, it is clear that for each $1 \leq i \leq n-1$, the pair $([ \Cc_1, \cdots, \Cc_i],[ \Cc_{i+1}, \cdots, \Cc_n ])$ is a torsion pair in $\Ac$.  As a result, the filtration \eqref{eq:ntuplefiltration} for the object $E$ is canonical, and we will refer to the subfactor $E_i/E_{i-1}$  as the $\Cc_i$-component of $E$, and refer to \eqref{eq:ntuplefiltration} as the $(\Cc_1,\cdots,\Cc_n)$-decomposition of $E$.  When $n=2$, the definition of a torsion $n$-tuple reduces to that of a torsion pair.

\paragraph[Subcategories of $\Coh (X)$]  Let $X$ be a smooth projective variety.  For any integers $0 \leq e \leq d$, we define the full subcategories of $\Coh (X)$
\begin{align*}
\Coh^{\leq d}(X) &= \{ E \in \Coh (X): \dimension \mathrm{supp}(E) \leq d\} \\
\Coh^{\geq d}(X) &= \{ E \in \Coh (X): \Hom_{\Coh (X)} (F,E)=0 \text{ for all }F \in \Coh^{\leq d-1}(X)\}\\
\Coh^{=d}(X) &= \Coh^{\leq d}(X) \cap \Coh^{\geq d}(X).
\end{align*}
Given a morphism of smooth projective varieties $p : X \to B$, we also define the full subcategories of $\Coh (X)$
\begin{align*}
\Coh (p)_{\leq d} &= \{ E \in \Coh (X): \dimension p (\mathrm{supp}(E)) \leq d\} \\
\Coh (p)_0 &= \{ E \in \Coh (X) : \dimension p (\mathrm{supp}(E)) = 0\} \\
\{ \Coh^{\leq 0} \}^\uparrow &= \{ E \in \Coh (X): E|_b \in \Coh^{\leq 0} (X_b) \text{ for all closed points $b \in B$} \}
\end{align*}
where  $\Coh^{\leq 0}(X_b)$ denotes the category of coherent sheaves $F$ supported  on the fiber $p^{-1}(b)=X_b$, with $b$ being  a closed point of $B$, such that $F$ is supported in dimension 0.  We will also refer to coherent sheaves on $X$ that are supported on a finite number of fibers of $p$  as \emph{fiber sheaves} on $X$, i.e.\   $\Coh (p)_0$ is  the category of fiber sheaves on $X$.

\section{Weight functions, stability,  Configurations I and II}\label{sec:weightfnc}

In this section, we focus on stability arising from weight functions.  We give two configurations of weight functions, and show that together with the `refinement property', weight functions in these configurations define equivalent categories of semistable objects.

\paragraph[Weight functions and King's stability] Suppose \label{para:weightfuncdef} $(R,\preceq)$ is a totally ordered abelian group under addition $+$, with $0$ as the identity element.  That is, $R$ is an abelian group  such that $(R,\preceq)$ is a totally ordered set satisfying the translation invariance property: for any $a, b, c \in R$, we have $a \preceq b$ implies $a+c \preceq b+c$.  Then for any triangulated category $\Dc$, any bounded t-structure on $\Dc$ with heart  $\Ac$,  and any group homomorphism $S : K(\Dc) \to R$, we  define a notion of semistability on $\Ac$ by declaring a nonzero object $E \in \Dc$ to be an \emph{$S$-semistable} (resp.\ \emph{$S$-stable}) \emph{object in $\Ac$}, or simply \emph{$S$-semistable in $\Ac$} (resp.\ \emph{$S$-stable in $\Ac$}), if $E \in \Ac$ and
\begin{itemize}
\item[(i)] $S(E)=0$;
\item[(ii)] $S(E') \preceq 0$ (resp.\ $S(E') \prec 0$) for all nonzero proper subobjects $E'$ of $E$ in $\Ac$.
\end{itemize}
 We will  refer to $S$ as a \emph{weight function} on the triangulated category $\Dc$.  Note that the zero object is always $S$-semistable in $\Ac$.  Also, given condition (i), condition (ii) for $S$-semistable (resp.\ $S$-stable) objects is equivalent to the following  by the translation invariance of $\preceq$:
\begin{itemize}
\item[(ii')] $0 \preceq S(E'')$ (resp.\ $0 \prec S(E'')$) for all nonzero proper quotients $E''$ of $E$ in $\Ac$.
\end{itemize}

\begin{egsub}
For any $\Dc, \Ac, (R,\preceq)$ as above,  if we take the weight function $S : K(\Dc) \to R$  to be the zero function, then every object in the abelian category $\Ac$ is $S$-semistable.
\end{egsub}

\begin{egsub}
Given any abelian category $\Ac$, we can take $\Dc=D^b(\Ac)$ and take $(R,\preceq)=(\mathbb{R},\leq)$.  Then $S$-semistability is King's  semistability on an abelian category as originally defined in \cite{king1994moduli}, with the opposite sign.
\end{egsub}

\begin{egsub}\label{eg:smprojcurv}
Let $X$ be a smooth projective curve, $\Ac = \Coh (X)$ and $\Dc = D^b(X)$.  Fix any nonzero object $M$ in $\Ac$ and consider the weight function $S : K(X) \to \mathbb{Z}$ given by
\[
  S(E)= (\rank M)(\degree E) - (\degree M)(\rank E).
\]
On the other hand, we have the usual slope function on $\Coh (X)$ defined by $\mu (E)=\degree E/\rank (E)$ (taken to be $\infty$ if $\rank E=0$).  One can check that for any nonzero coherent sheaf $E$, the following are equivalent:
\begin{itemize}
\item   $E$  is $S$-semistable in $\Ac$;
\item  $\mu (E)=\mu (M)$ and $E$ is a $\mu$-semistable sheaf.
\end{itemize}
This is an example of  Lemma \ref{lem:S-stabfuncdef} where we take $Z : K(\Dc) \to \mathbb{C}$ to be $Z(E)= -\degree E + i \rank E$.
\end{egsub}

\subparagraph[Signs] Given a totally ordered abelian group $(R,\preceq)$, we define the sign function $\sgn : R \to \{0, \pm 1\}$ in the obvious manner, i.e.\ for $r \in R$ we set $\sgn (r)$ to be $1, 0$ or $-1$ if $r \succ 0$, $r=0$ or $r \prec 0$, respectively.

\paragraph[Configuration I]  We   \label{para:config3} say we are in Configuration I when we have the following setting:  $\Dc$ is a triangulated category, $\Ac$ is the heart of a bounded t-structure on $\Dc$, and $(\Tc, \Fc)$ is a torsion pair in $\Ac$.  Additionally, $(R,\preceq)$ is a totally ordered abelian group and $S, S' : K(\Dc) \to R$ are group homomorphisms satisfying the sign compatibility
\begin{itemize}
\item[(S)] For any $G$ lying in either $\Tc$ or $\Fc$, we have $\sgn S(G) = \sgn S'(G)$.
\end{itemize}

Note that when $\Dc, \Ac$ and $(\Tc, \Fc)$ are fixed, the statement of Configuration I is symmetric in $S$ and $S'$.

\paragraph[Refinement property] Let  \label{para:WPdef3} $\Dc$ be a triangulated category,  $\Ac$ the heart of  a bounded  t-structure on $\Dc$,   and $(\Tc, \Fc)$ a torsion pair in   $\Ac$.  Suppose  $\Ac^\dagger = \langle \Fc [1], \Tc\rangle$ is the corresponding tilted heart.  Given a totally ordered abelian group $(R,\preceq)$ and for $i=0, 1$, we say a weight function $S : K(\Dc) \to R$ satisfies \emph{refinement-$i$ with respect to $(\Tc, \Fc)$}  if:
\begin{itemize}
\item[] Every nonzero $S$-semistable object $E$ in $\Ac$ lies in $\Ac \cap (\Ac^\dagger [-i])$, and for every nonzero object $G \in \Ac \cap (\Ac^\dagger [i-1])$ we have $(-1)^i \sgn (S(G)) =-1$.
\end{itemize}
More explicitly, a weight function $S$ satisfies   refinement-$0$ or refinement-$1$ when:
\begin{itemize}
\item (Refinement-$0$) Every nonzero $S$-semistable object $E$ in $\Ac$ lies in $\Tc$ (so that $E[0] \in \Ac^\dagger$) , and for every nonzero object $G \in \Fc$ we have $S(G)  \prec 0$.
\item (Refinement-$1$) Every nonzero $S$-semistable object $E$ in $\Ac$ lies in $\Fc$ (so that $E[1] \in \Ac^\dagger$), and for every nonzero object $G \in \Tc$ we have  $S(G) \succ 0$.
\end{itemize}
We also say a weight function satisfies the refinement property  if it satisfies either refinement-$0$ or refinement-$1$.

\begin{lem}\label{lem:refinement-sym}
Assume Configuration I.  For $i=0$ or $1$, if $S$ satisfies refinement-$i$ with respect to $(\Tc, \Fc)$, then $(-1)^iS'$ satisfies refinement-$(1-i)$ with respect to $(\Fc [1], \Tc)$, considered as a torsion pair in the heart $\Ac^\dagger =\langle \Fc[1], \Tc\rangle$.
\end{lem}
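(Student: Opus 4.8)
The plan is to verify, case by case on $i\in\{0,1\}$, the two bullet conditions defining the refinement property in \ref{para:WPdef3}, using only two ingredients: the sign compatibility (S) of Configuration~I, which is manifestly symmetric in $S$ and $S'$, and the fact that the shift functor acts as $-1$ on $K(\Dc)$. First I would set $\Bc := \Ac^\dagger = \langle \Fc[1], \Tc\rangle$ and record that $(\Fc[1], \Tc)$ is a torsion pair in $\Bc$ (as already noted in the discussion of tilting in the excerpt), with $\Fc[1]$ the torsion class and $\Tc$ the torsion-free class, and that its tilt is $\langle \Tc[1], \Fc[1]\rangle = \Ac[1]$. Unwinding \ref{para:WPdef3} for this torsion pair in $\Bc$: ``refinement-$0$ for a weight function $W$ with respect to $(\Fc[1], \Tc)$'' means every nonzero $W$-semistable object in $\Bc$ lies in $\Fc[1]$ and $W(G)\prec 0$ for every nonzero $G\in\Tc$; ``refinement-$1$'' means every nonzero $W$-semistable object in $\Bc$ lies in $\Tc$ and $W(G)\succ 0$ for every nonzero $G\in\Fc[1]$. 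I also note $S'(G_0[1]) = -S'(G_0)$ for $G_0\in K(\Dc)$, and $\sgn S'(H) = \sgn S(H)$ whenever $H$ lies in $\Tc$ or $\Fc$, by (S).

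Next I would treat $i=0$, where the goal is that $S'$ satisfies refinement-$1$ with respect to $(\Fc[1], \Tc)$ in $\Bc$. The sign bullet is immediate: for nonzero $G\in\Fc[1]$ write $G = G_0[1]$ with $0\neq G_0\in\Fc$; refinement-$0$ for $S$ gives $S(G_0)\prec 0$, hence $S'(G_0)\prec 0$ by (S), hence $S'(G) = -S'(G_0)\succ 0$. For the semistability bullet, let $E\in\Bc$ be nonzero and $S'$-semistable and take its torsion decomposition $0\to E'\to E\to E''\to 0$ in $\Bc$ with $E'\in\Fc[1]$, $E''\in\Tc$; since $S'(E)=0$ while $S'$ is strictly positive on every nonzero object of $\Fc[1]$, and since $S'(E')\preceq 0$ ($E'$ being a subobject of an $S'$-semistable object, using condition (ii) when $E'$ is proper and $S'(E)=0$ when $E'=E$), we must have $E'=0$, so $E=E''\in\Tc$. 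The case $i=1$ is entirely parallel after replacing subobjects by quotients, (ii) by (ii$'$), and $\prec$ by $\succ$: to show $-S'$ satisfies refinement-$0$ with respect to $(\Fc[1], \Tc)$ in $\Bc$, I would deduce $S'(G)\succ 0$ for nonzero $G\in\Tc$ from refinement-$1$ for $S$ and (S), and then use the torsion-free quotient $E''\in\Tc$ of an $(-S')$-semistable $E\in\Bc$ together with (ii$'$) to force $E''=0$, i.e. $E\in\Fc[1]$.

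I do not expect a genuine obstacle: the lemma is a bookkeeping statement, and in fact only the sign halves of the refinement hypothesis and conclusion get used (the ``semistable objects land in $\Tc$ or $\Fc$'' halves are not needed). The points demanding the most care are purely formal — keeping the $K$-theoretic sign of the shift straight, correctly identifying the tilt of $\Bc$ and hence the explicit torsion-pair form of ``refinement-$j$ for $(\Fc[1],\Tc)$'', and, in the $i=1$ case, remembering to exclude the degenerate possibility $E''=E$ in the torsion decomposition (ruled out because $(-S')(E)=0$ whereas $S'$ is strictly positive on nonzero objects of $\Tc$).
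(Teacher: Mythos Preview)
Your proposal is correct and follows essentially the same route as the paper's own proof: both arguments take the torsion decomposition of a semistable object in $\Ac^\dagger$, combine the sign half of the refinement hypothesis with sign compatibility (S) to pin down the sign on the relevant component, and contrast that with the inequality coming from semistability to force the component to vanish. Your choice to prove the sign bullet first and then invoke it in the semistability step is a harmless reordering, and your remark that only the sign half of the refinement hypothesis is used matches what the paper's proof actually relies on.
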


\begin{proof}
(a) The case $i=0$: Suppose $S$ satisfies refinement-$0$ with respect to $(\Tc, \Fc)$.  To show that $S'$ satisfies refinement-$1$ with respect to $(\Fc [1], \Tc)$, take any nonzero $S'$-semistable object $E$ in $\Ac^\dagger$ and consider the $(\Fc [1], \Tc)$-decomposition of $E$ in $\Ac^\dagger$
\[
0 \to E' \to E \to E'' \to 0
\]
where $E' \in \Fc [1]$ and $E'' \in \Tc$.  By the $S'$-semistability of $E$ in $\Ac^\dagger$, we have $-S'(E'[-1])=S'(E') \preceq S'(E)=0$.  On the other hand, since $E'[-1] \in \Fc$, the refinement-$0$ property of $S$ implies that $S(E'[-1]) \preceq 0$ with equality if and only if $E'=0$.  By condition \ref{para:config3}(S), we have $S'(E'[-1])\preceq 0$, i.e.\ $-S'(E'[-1]) \succeq 0$.  Hence $S'(E'[-1])=0$ overall, forcing $E'=0$, i.e.\ $E \cong E''\in \Tc$.  Then we have $S(E)=0$ by \ref{para:config3}(S) again.

It remains to show that $S'(G) \succ 0$ for every nonzero object $G \in \Fc [1]$.  Take any nonzero object $G \in \Fc [1]$.  Then $G[-1] \in \Fc$ and so $S(G[-1]) \prec 0$ by the refinement-$0$ property of $S$.  Then $S'(G[-1]) \prec 0$ by \ref{para:config3}(S) and so $S'(G) \succ 0$ as wanted.

(b) The case $i=1$: Even though the argument is analogous to that for $i=0$, we give the full argument to show how the sign changes are compatible with the tilt.  Suppose $S$ satisfies refinement-$1$ with respect to $(\Tc,\Fc)$.  To show that $(-S')$ satisfies refinement-$0$ with respect to $(\Fc [1], \Tc)$, take any nonzero $(-S')$-semistable object $E$ in $\Ac^\dagger$ and consider its $(\Fc [1], \Tc)$-decomposition in  $\Ac^\dagger$
\[
0 \to E' \to E \to E'' \to 0
\]
where $E' \in \Fc [1]$ and $E'' \in \Tc$.  Since $S$ satisfies refinement-$1$, we have $S(E'') \succeq 0$, with equality if and only if $E''=0$; this implies $S'(E'')\succeq 0$ by \ref{para:config3}(S).  On the other hand, by the $(-S')$-semistability of $E$, we have $(-S')(E'')\succeq 0$.  Hence $S'(E'')=0$ overall and $E''=0$, i.e.\ $E\cong E' \in \Fc [1]$.

It remains to show that $(-S')(G) \prec 0$ for every nonzero object $G \in \Tc$.  Take any nonzero object $G \in \Tc$.  Then $S(G) \succ 0$ since $S$ satisfies refinement-$1$.  Hence $S'(G) \succ 0$ by \ref{para:config3}(S) and so $(-S')(G) \prec 0$ as wanted.
\end{proof}

Lemma \ref{lem:AG48-p7-D2} shows, that if $S$ and $S'$ are weight functions with sign compatibility as in Configuration I, then the refinement property on $S$ ensures that $S$-semistability implies $S'$-semistability under a suitable shift and sign change.

\begin{lem}\label{lem:AG48-p7-D2}
Assume Configuration I.  For $i=0$ or $1$, suppose $S$ satisfies refinement-$i$ with respect to $(\Tc, \Fc)$.  If $E$ is $S$-semistable  in $\Ac$, then $E[i]$ is $(-1)^iS'$-semistable  in the heart $\Ac^\dagger =\langle \Fc[1], \Tc\rangle$. The same statement holds if we replace `semistable' by `stable.'
\end{lem}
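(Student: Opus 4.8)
The plan is to prove the case $i=0$ directly and then deduce $i=1$ from it via the symmetry of Configuration I combined with Lemma~\ref{lem:refinement-sym}. So first suppose $S$ satisfies refinement-$0$ with respect to $(\Tc,\Fc)$, and let $E$ be $S$-semistable in $\Ac$. By refinement-$0$, $E \in \Tc$, so $E[0] \in \Ac^\dagger = \langle \Fc[1], \Tc\rangle$; it remains to check the semistability inequality. First I would note that $S(E) = 0$ and (by \ref{para:config3}(S), since $E \in \Tc$) $S'(E) = 0$ as well, so condition (i) holds for $S'$ in $\Ac^\dagger$. For condition (ii), take a nonzero proper subobject $A \hookrightarrow E$ in $\Ac^\dagger$, with quotient $B$ in $\Ac^\dagger$. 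The key structural input is that a short exact sequence in the tilted heart $\Ac^\dagger$ gives a long exact sequence of $\Ac$-cohomology; since $E \in \Tc \subseteq \Ac$ is concentrated in $\Ac$-degree $0$, and objects of $\Ac^\dagger$ have $\Ac$-cohomology only in degrees $0$ and $-1$ (with $\Hc^{-1}_\Ac \in \Fc$ and $\Hc^0_\Ac \in \Tc$ — this is the standard description of a tilt), one extracts from $0 \to A \to E \to B \to 0$ an exact sequence $0 \to \Hc^{-1}_\Ac(A) \to \Hc^{-1}_\Ac(E) = 0$, hence $A \in \Tc \subseteq \Ac$, and then $0 \to A \to E \to \Hc^0_\Ac(B) \to 0$ in $\Ac$ with a further surjection-related term $\Hc^{-1}_\Ac(B) \in \Fc$ sitting in the sequence $0 \to \Hc^{-1}_\Ac(B) \to A \to E \to \Hc^0_\Ac(B) \to 0$.

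Now I would run the weight computation. Let $K := \Hc^{-1}_\Ac(B) \in \Fc$ and $Q := \Hc^0_\Ac(B) \in \Tc$, so in $K(\Dc)$ we have $[B] = [Q] - [K]$ and $[A] = [E] - [Q] + [K]$. Since $A \hookrightarrow E$ in $\Ac^\dagger$ factors (via the sequence above) through $\image(A \to E)$ in $\Ac$, which is a subobject of $E$ in $\Ac$, $S$-semistability of $E$ in $\Ac$ gives $S(\image(A\to E)) \preceq 0$; combined with $S(K) \prec 0$ for $K \neq 0$ and $S(K) = 0$ for $K = 0$ (refinement-$0$), and $[A] = [K] + [\image(A\to E)]$, we get $S(A) \preceq 0$. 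Then \ref{para:config3}(S) is not directly applicable to $A$ since $A$ need not lie in $\Tc$ or $\Fc$; instead I would apply \ref{para:config3}(S) termwise: $[A] = [K] + [\image(A \to E)]$ with $K \in \Fc$, so $S'(K) \preceq 0$, and $\image(A\to E) \in \Tc$ (it is a subobject of $E\in\Tc$ and $\Tc$ is closed under subobjects when $(\Tc,\Fc)$ is a torsion pair... actually $\Tc$ need not be closed under subobjects in general, so more carefully: $\image(A\to E)$ is a quotient of $A\in\Ac^\dagger$ hence its $\Hc^{-1}_\Ac$ is a subobject of $\Fc$-stuff, but it's also a subobject of $E\in\Tc\subseteq\Ac$ hence lies in $\Ac$ in degree $0$, so $\image(A\to E)\in\Ac$; whether it's in $\Tc$ requires the torsion-class closure under subobjects, which does hold when $\Ac$ is noetherian). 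In the general case I would instead argue: $S'(\image(A\to E)) \preceq 0$ by transporting the $S$-inequality $S(\image(A\to E))\preceq 0$ through sign compatibility applied to the $(\Tc,\Fc)$-filtration of $\image(A\to E)$, whose subfactors each lie in $\Tc$ or $\Fc$. Summing, $S'(A) = S'(K) + S'(\image(A\to E)) \preceq 0$, which is exactly condition (ii) for $S'$ in $\Ac^\dagger$. The strict version for stability is obtained by tracking strict inequalities (when $A \neq 0$ is proper, either $K \neq 0$ giving $S'(K) \prec 0$, or $\image(A\to E)$ is a nonzero proper subobject of $E$ giving strict inequality from $S$-stability, or $\image(A\to E) = E$ which forces $A\twoheadrightarrow E$ hence $A = E \oplus (\text{something})$-type degeneracy that one rules out).

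For the case $i=1$: I would apply the already-proved case $i=0$ after relabeling. Given $S$ satisfies refinement-$1$ with respect to $(\Tc,\Fc)$, Lemma~\ref{lem:refinement-sym} says $(-1)^1 S' = -S'$ satisfies refinement-$0$ with respect to the torsion pair $(\Fc[1],\Tc)$ inside the heart $\Ac^\dagger$. Now apply the case-$i=0$ result with the roles $(\Ac, \Ac^\dagger, (\Tc,\Fc), S, S')$ replaced by $(\Ac^\dagger, (\Ac^\dagger)^\dagger, (\Fc[1],\Tc), -S', S)$ — here $(\Ac^\dagger)^\dagger = \langle \Tc[1], \Fc[1]\rangle = \Ac[1]$, and one checks Configuration~I holds for $(-S', S)$ on this data (sign compatibility on $\Fc[1]$ and $\Tc$ is exactly \ref{para:config3}(S) with a sign flip on one piece, which is symmetric). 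If $E$ is $S$-semistable in $\Ac$, then under refinement-$1$, $E\in\Fc$ so $E[1]\in\Ac^\dagger$, and $E[1]$ being $(-S')$-semistable in $\Ac^\dagger$... wait — I need $E[1]$ to be $(-S')$-semistable to feed into the machine, but that is the conclusion, not the hypothesis. So instead I run the $i=0$ machine with $S$-semistability of $E$ as input but on the $\Ac^\dagger$ side: the cleanest route is to observe that the $i=0$ case, applied symmetrically, shows that $-S'$-semistability in $\Ac^\dagger$ is equivalent (up to the shift) to $S$-semistability in its tilt. Concretely: $E$ is $S$-semistable in $\Ac$ $\iff$ (by a direct check, since $E\in\Fc$, $E[1]$ sits in $\Ac^\dagger$ and subobjects of $E[1]$ in $\Ac^\dagger$ correspond to quotients of $E$ in $\Ac$ up to sign) $E[1]$ satisfies the $(-S')$-semistability inequalities in $\Ac^\dagger$. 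This last equivalence is the mirror of the $i=0$ argument with $S$ and $S'$ swapped and a global sign, so I would state it as "by the same computation as in case $i=0$, applied with $\Ac$, $\Ac^\dagger$, $S$, $S'$ replaced by $\Ac^\dagger$, $\Ac[1]$, $-S'$, $S$", and conclude $E[1]$ is $(-S') = (-1)^1 S'$-semistable in $\Ac^\dagger$. The stable case is identical with strict inequalities.

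\textbf{Main obstacle.} The delicate point is the bookkeeping when passing an inequality $S(A) \preceq 0$ for an object $A\in\Ac^\dagger$ that is \emph{not} in $\Tc$ or $\Fc$ over to $S'(A)\preceq 0$: sign compatibility \ref{para:config3}(S) only applies to objects in $\Tc$ or $\Fc$, so one must decompose $A$ (or rather the relevant subquotients of $E$ that control $A$ in $K(\Dc)$) into $\Tc$- and $\Fc$-pieces and apply \ref{para:config3}(S) piece by piece, then reassemble using translation invariance of $\preceq$. Getting this decomposition right — and making sure the strict inequality survives for the "stable" statement — is where the real work lies; the identification of $\Hc^{-1}_\Ac$ and $\Hc^0_\Ac$ of objects of $\Ac^\dagger$ with objects of $\Fc$ and $\Tc$ is standard and I would cite the torsion-pair/tilting setup recalled in the Preliminaries.
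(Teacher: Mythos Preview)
Your $i=0$ argument is essentially the paper's, but you manufacture an obstacle that isn't there. You correctly establish early on that $A \in \Tc$ (from $\Hc^{-1}_\Ac(A) \hookrightarrow \Hc^{-1}_\Ac(E) = 0$, so $A \in \Ac \cap \Ac^\dagger = \Tc$), and you correctly derive $S(A) \preceq 0$. At that point condition~(S) applies \emph{directly} to $A \in \Tc$ to give $S'(A) \preceq 0$, and you are done. Your ``Main obstacle'' paragraph and the termwise decomposition of $\image(A\to E)$ are unnecessary; you simply forgot to use the fact $A \in \Tc$ that you already had. The paper's proof is exactly this three-line argument: $M \in \Tc$, $S(M) \preceq 0$, hence $S'(M) \preceq 0$ by~(S).

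For $i=1$, your attempted reduction via Lemma~\ref{lem:refinement-sym} becomes circular, as you yourself noticed. The paper instead gives the direct dual argument: from $0 \to M \to E[1] \to N \to 0$ in $\Ac^\dagger$, the long exact sequence forces $\Hc^0_\Ac(N) = 0$, so $N \in \Fc[1]$; one then shows $S(\Hc^{-1}_\Ac(N)) \succeq 0$ using $S$-semistability of $E$ (in its quotient formulation) together with refinement-$1$, and applies~(S) to $\Hc^{-1}_\Ac(N) \in \Fc$ to get $(-S')(N) \succeq 0$. This is the mirror of $i=0$ with the subobject $M\in\Tc$ replaced by the quotient $N\in\Fc[1]$. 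For the stable version, your handling of the case $\image(A\to E) = E$ is not right: this does not produce any ``$E\oplus(\text{something})$ degeneracy'' to rule out (the map $A \to E$ being surjective in $\Ac$ does not make it surjective in $\Ac^\dagger$); rather, in that case $N = \Hc^{-1}_\Ac(N)[1]$ with $\Hc^{-1}_\Ac(N) \in \Fc$ nonzero, so refinement-$0$ and~(S) give $S'(N) \succ 0$ directly.
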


\begin{proof}
(a) The case $i=0$: Suppose $S$ satisfies refinement-$0$ and $E$ is a nonzero $S$-semistable object in $\Ac$.   Then  $E \in \Tc \subset {\Ac^{\dagger}}$ and $S(E)=0$.  By \ref{para:config3}(S) we have $S'(E)=0$.  Now take any ${\Ac^{\dagger}}$-short exact sequence $0 \to M \to E \to N \to 0$ where $M, N \neq 0$; considering it as an exact triangle in $\Dc$, we can take its $\Ac$-long exact sequence
\[
 0 \to \Hc^{-1}_\Ac (N) \to M \overset{\alpha}{\to} E \to \Hc^0_\Ac (N) \to 0.
\]
Note that $\Hc^{-1}_\Ac (M)=0$ and so $M$ lies in $\Tc$.  By the $S$-semistability of $E$, we have $S(\image\alpha ) \preceq 0$.  Since $\Hc^{-1}_\Ac (N) \in \Fc$, the refinement-$0$ property of $S$ implies  $S (\Hc^{-1}_\Ac (N)) \prec 0$ if $\Hc^{-1}_\Ac (N)$ is nonzero, giving us  $S (M) \preceq 0$.  Since $M \in \Tc$, condition \ref{para:config3}(S) applies and  gives $S' (M) \preceq 0$, showing that $E$ is $S'$-semistable in ${\Ac^{\dagger}}$.

Suppose $E$ is $S$-stable in $\Ac$.  If $\image \alpha = E$, then $\Hc^0_\Ac (N)=0$ and $N=\Hc^{-1}_\Ac (N)[1]$ is nonzero, which implies $S(N) \succ 0$ from above.  On the other hand, if $\image \alpha \subsetneq E$ then either $\Hc^{-1}_\Ac (N) \neq 0$ or $0 \neq \image \alpha \subsetneq E$; in either case, we have    $S(M)\prec 0$, which implies $S'(M)\prec 0$ by \ref{para:config3}(S).  Hence  $E$ is $S'$-stable in $\Ac^\dagger$.

(b) The case $i=1$.  Suppose $S$ satisfies refinement-$1$ and $E$ is a nonzero $S$-semistable object in $\Ac$.  Then $E \in \Fc$ and $E[1]\in \Ac^\dagger$.  We need to show that $E[1]$ is $(-S')$-semistable in $\Ac^\dagger$.

Take any ${\Ac^{\dagger}}$-short exact sequence $0 \to M \to E[1] \to N \to 0$ where $M, N \neq 0$; the associated $\Ac$-long exact sequence is
\[
  0 \to \Hc_{\Ac}^{-1}(M) \to E \overset{\alpha}{\to} \Hc_{\Ac}^{-1}(N) \to \Hc_{\Ac}^0(M) \to 0
\]
and we see $\Hc_{\Ac}^0(N)=0$.  That $\Hc^{-1}_{\Ac}(N)\in \Fc$ implies $\image \alpha \in \Fc$, and so by the $S$-semistability of $E$ we have $S(\image \alpha)\succeq 0$.  On the other hand, we have $\Hc^0_{\Ac} (M) \in \Tc$; since $S$ satisfies refinement-$1$, we have $S(\Hc^0_{\Ac}(M)) \succeq 0$.  Overall, we have $S(\Hc^{-1}_{\Ac}(N)\succeq 0$.  By condition \ref{para:config3}(S), this means $S'(\Hc^{-1}_{\Ac}(N))\succeq 0$, and so $S'(N) = S'(\Hc^{-1}_{\Ac}(N)[1]) \preceq 0$, i.e.\ $(-S')(N) \succeq 0$.  This shows the $(-S')$-semistability of $E[1]$ in $\Ac^\dagger$.

Suppose $E$ is $S$-stable in $\Ac$.  We cannot have $\image \alpha =0$ as that would imply $\Hc_{\Ac}^{-1}(N) \cong \Hc_{\Ac}^0(M)$ lies in both $\Tc$ and $\Fc$, forcing both to vanish and thus $N=0$, a contradiction.  Hence $\image \alpha \neq 0$.  If $E=\image \alpha$, then $\Hc^{-1}_\Ac (M)=0$ and $M=\Hc^0_\Ac (M)$ is nonzero, and by refinement-1 we have $S(M)\succ 0$, i.e.\ $(-S')(M)\prec 0$; if $0 \neq \image \alpha \subsetneq E$, then $S(\image \alpha)\succ 0$ since $E$ is  $S$-stable, giving us $S(\Hc^{-1}_\Ac (N)) \succ 0$, and subsequently $(-S')(N)\succ 0$ by the same argument as above. Hence $E[1]$ is $(-S')$-stable in $\Ac^\dagger$.
\end{proof}

Lemma \ref{lem:refinement-sym} allows us to prove the converse of Lemma \ref{lem:AG48-p7-D2}, giving us:

\begin{thm}\label{thm:fundamental}
Assume Configuration I, and let $\Ac^\dagger = \langle \Fc [1], \Tc \rangle$.   For $i=0$ or $1$, suppose $S$ satisfies refinement-$i$ with respect to $(\Tc, \Fc)$.  Then   an object $E\in \Dc$ is $S$-semistable in $\Ac$ if and only if $E[i]$ is $(-1)^iS'$-semistable in $\Ac^\dagger$.  The same statement holds if we replace `semistable' with `stable.'
\end{thm}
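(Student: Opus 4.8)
The plan is to deduce the equivalence from the two preceding lemmas together with the observation that tilting twice returns us to a shift of the original heart. The forward direction is immediate: if $E$ is $S$-semistable (resp.\ $S$-stable) in $\Ac$, then Lemma~\ref{lem:AG48-p7-D2} gives at once that $E[i]$ is $(-1)^iS'$-semistable (resp.\ stable) in $\Ac^\dagger=\langle\Fc[1],\Tc\rangle$. So the content is the converse.

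For the converse I would re-enter Configuration~I with all of the input data replaced by its tilted counterpart: the triangulated category $\Dc$, the heart $\Ac^\dagger$, the torsion pair $(\Fc[1],\Tc)$ in $\Ac^\dagger$, and the two weight functions $(-1)^iS'$ and $(-1)^iS$. The only thing to verify is that this data satisfies the sign compatibility \ref{para:config3}(S), i.e.\ that $\sgn S'(G)=\sgn S(G)$ for every $G$ lying in $\Tc$ or in $\Fc[1]$. For $G\in\Tc$ this is the original hypothesis~\ref{para:config3}(S); for $G=G'[1]$ with $G'\in\Fc$, passing from $G'$ to $G'[1]$ replaces the class $[G']\in K(\Dc)$ by $-[G']$ and hence negates, so flips the sign of, the value of any weight function, and these two flips cancel when we compare $S'$ with $S$. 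Thus the tilted data is again an instance of Configuration~I, and Lemma~\ref{lem:refinement-sym} says that the weight function $(-1)^iS'$ satisfies refinement-$(1-i)$ with respect to $(\Fc[1],\Tc)$.

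Now suppose $E[i]$ is $(-1)^iS'$-semistable (resp.\ stable) in $\Ac^\dagger$. Applying Lemma~\ref{lem:AG48-p7-D2} to the tilted configuration, taking its ``$S$'' to be $(-1)^iS'$, its ``$S'$'' to be $(-1)^iS$, and its index ``$i$'' to be $1-i$, we conclude that $E[i][1-i]=E[1]$ is $(-1)^{1-i}\cdot(-1)^iS=-S$-semistable (resp.\ stable) in the tilt of $\Ac^\dagger$ at $(\Fc[1],\Tc)$, which is $\langle\Tc[1],\Fc[1]\rangle=\langle\Fc,\Tc\rangle[1]=\Ac[1]$. Finally, the shift $[1]\colon\Ac\to\Ac[1]$ is an equivalence of abelian categories carrying short exact sequences to short exact sequences, and for $F\in\Ac$ one has $(-S)(F[1])=-S(-[F])=S(F)$; hence conditions \ref{para:weightfuncdef}(i)--(ii), and their strict versions, for $(-S)$-semistability (resp.\ stability) of $E[1]$ in $\Ac[1]$ are literally the conditions for $S$-semistability (resp.\ stability) of $E$ in $\Ac$. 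This yields $E$ $S$-semistable (resp.\ stable) in $\Ac$, as desired.

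I expect the only real difficulty to be the sign-and-shift bookkeeping: checking that condition \ref{para:config3}(S) genuinely transfers to the tilted configuration, keeping the exponents $(-1)^i$ straight through the double application of Lemma~\ref{lem:AG48-p7-D2}, and confirming the identification $(\Ac^\dagger)^\dagger=\Ac[1]$. There is no further conceptual ingredient beyond Lemmas~\ref{lem:AG48-p7-D2} and~\ref{lem:refinement-sym} and the fact that Configuration~I is stable under tilting.
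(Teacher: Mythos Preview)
Your proof is correct and follows essentially the same approach as the paper: invoke Lemma~\ref{lem:AG48-p7-D2} for the forward direction, then for the converse use Lemma~\ref{lem:refinement-sym} to pass to the tilted configuration, apply Lemma~\ref{lem:AG48-p7-D2} again, and translate $(-S)$-semistability in $\Ac[1]$ back to $S$-semistability in $\Ac$. Your presentation unifies the two cases $i=0,1$ via the $(-1)^i$ bookkeeping (the paper treats them separately) and is more explicit about why the sign compatibility~\ref{para:config3}(S) transfers to the tilted data, which the paper leaves implicit.
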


\begin{proof}
Since Lemma \ref{lem:AG48-p7-D2} gives the `only if' statements, we only need to prove the `if' statements for $i=0,1$; furthermore, we will prove only the `semistable' case, since the adjustments needed for the `stable' case are obvious.

(a) When $i=0$: Assume that $E$ is an $S'$-semistable object in $\Ac^\dagger$.  Since $S$ satisfies refinement-$0$ with respect to $(\Tc, \Fc)$, from Lemma \ref{lem:refinement-sym} we know $S'$ satisfies refinement-$1$ with respect to $(\Fc [1], \Tc)$, and so $E$ must lie in $\Tc$.  Now that $S', S$ are  in Configuration I with $S'$ satisfying  refinement-$1$ with  respect to $(\Fc [1], \Tc)$,  applying Lemma \ref{lem:AG48-p7-D2}  gives that $E[1]$ is $(-S)$-semistable in the heart $\langle \Tc [1], \Fc [1]\rangle = \Ac [1]$.

Now take any $\Ac$-short exact sequence $0 \to E' \to E \to E'' \to 0$, which induces the $\Ac [1]$-short exact sequence $0 \to E'[1] \to E[1] \to E'' [1] \to 0$.  The $(-S)$-semistability of $E[1]$ in $\Ac [1]$ then implies $(-S)(E'[1])\preceq 0$, i.e.\ $S(E')\preceq 0$, showing the $S$-semistability of $E$ in $\Ac$.

(b) When $i=1$: Assume that $E[1]$ is $(-S')$-semistable in $\Ac^\dagger$.  Since $S$ satisfies refinement-$1$ with respect to $(\Tc, \Fc)$, from Lemma \ref{lem:refinement-sym} we know $(-S')$ satisfies refinement-$0$ with respect to $(\Fc [1], \Tc)$.  Now that $(-S'), (-S)$ are in Configuration I with $(-S')$ satisfying refinement-$0$ with  respect to $(\Fc [1], \Tc)$,  applying Lemma \ref{lem:AG48-p7-D2}  gives that  $E[1]$ is $(-S)$-semistable in $\Ac[1]$.  The same argument as in (a) then shows that $E$ is $S$-semistable in $\Ac$.
\end{proof}

Stability defined in terms of weight functions can also be pulled back via an exact equivalence in the obvious manner:

\begin{lem}\label{lem:AG48-10-1}
Let $\Phi : \Dc \to \mathcal{U}$ be an exact equivalence of triangulated categories, and let $\Phi^K$ denote the induced isomorphism $K(\Dc)\to K(\mathcal{U})$ on the Grothendieck groups.  Let $\Cc$ denote the heart of a t-structure on $\Dc$ and $S' : K(\Uc) \to R$ be a weight function for some totally ordered abelian group $(R,\preceq)$.  Then an object $E\in \Dc$ is $(S'\Phi^K)$-semistable in $\Cc$ if and only if $\Phi E$ is $S'$-semistable in $\Phi \Cc$.  The same statement holds if we replace `semistable' by `stable.'
\end{lem}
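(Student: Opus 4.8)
The plan is to reduce everything to the observation that an exact equivalence transports a bounded t-structure to a bounded t-structure, so that $\Phi\Cc$ is precisely the heart of the transported t-structure on $\Uc$ and the restriction $\Phi|_\Cc : \Cc \to \Phi\Cc$ is an equivalence of abelian categories. Being an equivalence of abelian categories, $\Phi|_\Cc$ is exact, carries the zero object to the zero object and nonzero objects to nonzero objects, and for each $E \in \Cc$ it induces an isomorphism between the poset of subobjects of $E$ in $\Cc$ and the poset of subobjects of $\Phi E$ in $\Phi\Cc$; moreover this bijection sends nonzero proper subobjects to nonzero proper subobjects in both directions. This is the only structural input needed, and it is standard; I would state it explicitly at the start, since it is what legitimizes transferring the subobject conditions in the definition of $S$-semistability in \ref{para:weightfuncdef}.

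The second ingredient is the compatibility of the two weight functions. By the very definition of the induced map on Grothendieck groups, $\Phi^K[F] = [\Phi F]$ for every $F \in \Dc$, so $(S'\Phi^K)(F) = S'(\Phi F)$ for all objects $F$ (here I write $S'(\Phi F)$ for $S'([\Phi F])$, following the convention of evaluating a weight function on objects). In particular, taking $F = E$ shows that condition \ref{para:weightfuncdef}(i) for $S'\Phi^K$ at $E$, namely $(S'\Phi^K)(E) = 0$, is equivalent to condition (i) for $S'$ at $\Phi E$, namely $S'(\Phi E) = 0$.

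Finally I would combine the two. First, $E \in \Cc$ if and only if $\Phi E \in \Phi\Cc$, so if either side of the claimed equivalence is to be nonvacuous both objects lie in the relevant heart; assume this. By the first paragraph, every nonzero proper subobject of $\Phi E$ in $\Phi\Cc$ is of the form $\Phi E'$ for a unique nonzero proper subobject $E' \subset E$ in $\Cc$, and conversely; and by the second paragraph $(S'\Phi^K)(E') = S'(\Phi E')$. Hence condition \ref{para:weightfuncdef}(ii) for $S'\Phi^K$ at $E$ --- that $(S'\Phi^K)(E') \preceq 0$ for all such $E'$ --- holds if and only if $S'(E'') \preceq 0$ for all nonzero proper subobjects $E'' \subset \Phi E$ in $\Phi\Cc$, which is condition (ii) for $S'$ at $\Phi E$. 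Together with the previous paragraph this gives the `semistable' equivalence; the `stable' case is verbatim the same with $\preceq$ replaced by $\prec$. I do not expect a genuine obstacle here: once the t-structure transport and the resulting equivalence of abelian categories are recorded, the lemma is a line-by-line translation of the definition through $\Phi|_\Cc$.
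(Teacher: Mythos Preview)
Your proposal is correct and takes essentially the same approach as the paper: both rely on the fact that $\Phi|_{\Cc}$ is an equivalence of abelian categories and the identity $(S'\Phi^K)(F)=S'(\Phi F)$, then translate the defining conditions through this equivalence. The paper phrases the transfer via short exact sequences and a quasi-inverse $\Psi$, invoking symmetry to prove only one direction, whereas you work with the subobject bijection directly and handle both directions at once; these are minor stylistic variations of the same argument.
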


Under the setup of Lemma \ref{lem:AG48-10-1}, we have the following commutative diagram
\begin{equation}
\xymatrix{
  \Dc \ar[r]^\Phi \ar[d]_(.4){[ \,\, ]} & \mathcal{U} \ar[d]^(.4){[ \,\, ]}  \\
  K(\Dc) \ar[r]^{\Phi^K} \ar@{.>}[dr]_{S'\Phi^K} & K(\Uc) \ar[d]^{S'} \\
  & R
}.
\end{equation}
We often suppress the notation for the maps $[ \, \, ]$.

\begin{proof}[Proof of Lemma \ref{lem:AG48-10-1}]
By the symmetry in the statement of Lemma \ref{lem:AG48-10-1}, it suffices to prove the `only if' direction.  Suppose $E$ is an $(S'\Phi^K)$-semistable object in $\Cc$.  Then $\Phi E \in \Phi \Cc$ and $0=(S' \Phi^K)(E)=S'(\Phi E)$.  Any $\Phi \Cc$-short exact sequence of the form $0 \to M \to \Phi E \to N \to 0$ then induces a $\Cc$-short exact sequence $0 \to \Psi M \to E \to \Psi N \to 0$ where $\Psi : \mathcal{U}\to \Dc$ is a quasi-inverse of $\Phi$.  The $(S'\Phi^K)$-semistability of $E$ implies that $0 \succeq (S'\Phi^K)(\Psi M) = S'(\Phi \Psi M)=S'(M)$, showing that $\Phi E$ is $S'$-semistable.  The above argument can be easily adjusted to show the `stable' case.
\end{proof}

\paragraph[Configuration II]  We  \label{para:config2} say we are in Configuration II when we have the following setting:    $(R,\preceq)$ is a totally ordered abelian group, and
\begin{itemize}
\item[(a)] $\Phi : \Dc \to \Uc$ and $\Psi : \Uc \to \Dc$ are exact equivalences between triangulated categories $\Dc, \Uc$ satisfying $\Psi \Phi \cong \mathrm{id}_\Dc [-1]$ and $\Phi \Psi \cong \mathrm{id}_\Uc [-1]$.
\item[(b)] $\Ac, \Bc$ are hearts of bounded t-structures on $\Dc, \Uc$, respectively, such that $\Phi \Ac \subset D^{[0,1]}_\Bc$.  (Given (a),  this implies   $\Psi \Bc \subset D^{[0,1]}_\Ac$ by Lemma \ref{para:AcBcPhitilt} below.)
\item[(c)] $S_\Ac : K(\Dc) \to R$ and $S_\Bc : K(\mathcal{U}) \to R$ are group homomorphisms such that, for any $E \in \Ac$ that is either $\Phi_\Bc$-WIT$_0$ or $\Phi_\Bc$-WIT$_1$, we have     $\sgn S_\Ac (E) = \sgn S_\Bc (\Phi E)$.
\end{itemize}

Configuration II above  involves  equivalences of triangulated categories, and will be more readily applied to  situations in representation theory and algebraic geometry involving tilting equivalences and Fourier-Mukai transforms.

Let us consider some consequences of conditions (a) and (b) in \ref{para:config2} before moving on.

\begin{lem}\label{para:AcBcPhitilt}
Suppose $\Dc, \Uc$ are triangulated categories with exact equivalences  $\Phi : \Dc \to \Uc$ and $\Psi : \Uc \to \Dc$  satisfying $\Psi \Phi \cong \mathrm{id}_{\Dc}[-1], \Phi \Psi \cong \mathrm{id}_{\Uc}[-1]$.  If $\Ac, \Bc$ are hearts of bounded t-structures on $\Dc, \Uc$, respectively, such that $\Phi \Ac \subset D^{[0,1]}_{\Bc}$, then we also have $\Psi \Bc \subset D^{[0,1]}_{\Ac}$.
\end{lem}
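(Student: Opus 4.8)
The plan is to transport the $\Ac$-t-structure from $\Dc$ to $\Uc$ along $\Phi$ and then compare it with the $\Bc$-t-structure by purely formal means. Since $\Phi : \Dc \to \Uc$ is an exact equivalence, $\Cc := \Phi\Ac$ is the heart of a bounded t-structure on $\Uc$, and $\Hc^i_\Cc \circ \Phi \cong \Phi \circ \Hc^i_\Ac$ for every $i$. Hypothesis $\Phi\Ac \subset D^{[0,1]}_\Bc$ becomes $\Cc \subset D^{[0,1]}_\Bc$, and using $\Phi\Psi \cong \mathrm{id}_\Uc[-1]$ we get, for any $B \in \Bc$, $\Phi\bigl(\Hc^i_\Ac(\Psi B)\bigr) \cong \Hc^i_\Cc(\Phi\Psi B) \cong \Hc^i_\Cc(B[-1]) \cong \Hc^{i-1}_\Cc(B)$. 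Hence $\Psi B \in D^{[0,1]}_\Ac$ if and only if $B \in D^{[-1,0]}_\Cc$, so the lemma reduces to the following formal statement: if $\Cc, \Bc$ are hearts of bounded t-structures on the same triangulated category with $\Cc \subset D^{[0,1]}_\Bc$, then $\Bc \subset D^{[-1,0]}_\Cc$.

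To prove this, first propagate the containment from $\Cc$ to its aisle and co-aisle. From $\Cc \subset D^{\le 1}_\Bc$, an induction on the $\Cc$-cohomological length of an object $E \in D^{\le 0}_\Cc$ — using the truncation triangle $\tau^{\le -1}_\Cc E \to E \to \Hc^0_\Cc(E) \to$, the fact that $D^{\le 1}_\Bc$ is closed under extensions, and the base case $E \in \Cc[n]$ with $n \ge 0$ (for which $\Hc^i_\Bc(E)$ vanishes once $i \ge 2$) — gives $D^{\le 0}_\Cc \subset D^{\le 1}_\Bc$; the symmetric argument with $\Cc \subset D^{\ge 0}_\Bc$ gives $D^{\ge 0}_\Cc \subset D^{\ge 0}_\Bc$. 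Now pass to orthogonal complements: since ${}^{\perp}D^{\ge 0}_\Bc = D^{\le -1}_\Bc$ and ${}^{\perp}D^{\ge 0}_\Cc = D^{\le -1}_\Cc$, the inclusion $D^{\ge 0}_\Cc \subset D^{\ge 0}_\Bc$ yields $D^{\le n}_\Bc \subset D^{\le n}_\Cc$; dually, from $D^{\le 0}_\Cc \subset D^{\le 1}_\Bc$ together with $(D^{\le 1}_\Bc)^{\perp} = D^{\ge 2}_\Bc$ and $(D^{\le 0}_\Cc)^{\perp} = D^{\ge 1}_\Cc$, we get $D^{\ge n}_\Bc \subset D^{\ge n-1}_\Cc$. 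Applying both with $n = 0$ to $B \in \Bc = D^{\le 0}_\Bc \cap D^{\ge 0}_\Bc$ gives $B \in D^{\le 0}_\Cc \cap D^{\ge -1}_\Cc = D^{[-1,0]}_\Cc$, which is what was needed.

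I expect the only real care to be in the bookkeeping of the shifts — making sure the window comes out as $[-1,0]$ rather than $[0,1]$ or $[-1,1]$ — and in stating the aisle-propagation induction cleanly; no geometry enters and everything is soft homological algebra. An alternative to the orthogonality step is a second length induction that bounds $\Hc^\bullet_\Cc$ of an object of $D^{[0,0]}_\Bc$'s co-aisle directly via its $\Bc$-truncation triangles, but the orthogonality formulation is shorter.
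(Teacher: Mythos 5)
Your proof is correct, and it takes a genuinely different route from the paper's. The paper reduces everything to the tilting formalism: from $\Phi\Ac \subset \langle \Bc, \Bc[-1]\rangle$ it invokes \cite[Proposition 2.3.2]{BMT1} to conclude that $\Phi\Ac$ is the tilt of $\Bc[-1]$ at some torsion pair $(\Tc,\Fc)$, so that $\Phi\Ac = \langle \Fc[1],\Tc\rangle$ and $\Bc[-1] = \langle \Tc,\Fc\rangle$; the involutive nature of tilting then immediately gives $\Bc[-1] \subset \langle \Phi\Ac, \Phi\Ac[-1]\rangle$, and applying $\Psi$ finishes. You instead transport $\Ac$ to $\Cc = \Phi\Ac$, reduce to the purely formal claim that $\Cc \subset D^{[0,1]}_\Bc$ forces $\Bc \subset D^{[-1,0]}_\Cc$, and prove that claim by propagating the containment to the aisles ($D^{\le 0}_\Cc \subset D^{\le 1}_\Bc$ and $D^{\ge 0}_\Cc \subset D^{\ge 0}_\Bc$, by induction on cohomological length using boundedness and extension-closure of aisles) and then passing to orthogonal complements to reverse the inclusions. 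Your shift bookkeeping checks out at every step, including the reduction $\Psi B \in D^{[0,1]}_\Ac \Leftrightarrow B \in D^{[-1,0]}_\Cc$ and the final window $D^{\le 0}_\Cc \cap D^{\ge -1}_\Cc$. What each approach buys: the paper's proof is two lines once one accepts the cited torsion-pair characterisation, and it keeps the argument inside the tilting language used throughout the rest of the paper; yours is self-contained, needs no external citation, and makes visible exactly which containments of aisles are equivalent to the statement -- at the cost of the orthogonality step and the induction, which the tilting lemma packages away.
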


\begin{proof}
The assumption $\Phi \Ac \in D^{[0,1]}_{\Bc}$ implies $\Phi \Ac \subset \langle \Bc , \Bc [-1]\rangle$, and so $\Phi \Ac$ is a tilt of $\Bc [-1]$ at some torsion pair $(\Tc, \Fc)$ in $\Bc [-1]$  \cite[Proposition 2.3.2]{BMT1}.  Then $\Phi \Ac = \langle \Fc [1], \Tc \rangle$ while $\Bc [-1] = \langle \Tc, \Fc\rangle$, from which we see $\Bc [-1]$ is a tilt of $\Phi \Ac [-1]$, i.e.\ $\Bc [-1] \subset \langle \Phi \Ac, \Phi \Ac [-1]\rangle$, which gives $\Psi \Bc \subset \langle \Ac, \Ac [-1]\rangle$, i.e.\ $\Psi \Bc \subset D^{[0,1]}_{\Ac}$.
\end{proof}

Recall the notation defined in \ref{para:def-WIT}.

\begin{lem}\label{lem:AG46-80-1}
Suppose  conditions (a) and (b) of Configuration II are satisfied.  Then
\begin{itemize}
\item[(i)] For $i=0,1$, if $E$ is a $\Phi_\Bc$-WIT$_i$ object in $\Ac$  then $\Phi E [i]$ is a $\Psi_\Ac$-WIT$_{1-i}$ object in $\Bc$.
\item[(ii)] $( W_{0,\Phi, \Ac, \Bc},  W_{1,\Phi, \Ac, \Bc})$ is a torsion pair in $\Ac$.
\end{itemize}
The same statements hold if we switch the roles of $\Phi$ and $\Psi$ and the roles  of $\Ac$ and $\Bc$.
\end{lem}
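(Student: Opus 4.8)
The plan is to deduce both parts from the structural description of $\Phi\Ac$ already obtained in (the proof of) Lemma \ref{para:AcBcPhitilt}: under conditions (a), (b) of Configuration II, $\Phi$ restricts to an equivalence of abelian categories $\Phi : \Ac \to \Phi\Ac$, and $\Phi\Ac = \langle \Fc[1], \Tc\rangle$ is the tilt of $\Bc[-1]$ at a torsion pair $(\Tc, \Fc)$ in $\Bc[-1]$. By the symmetry of conditions (a), (b) together with Lemma \ref{para:AcBcPhitilt} (which supplies $\Psi\Bc \subset D^{[0,1]}_\Ac$), the same hypotheses hold with $\Phi, \Psi$ and $\Ac, \Bc$ interchanged, so the final sentence of the lemma will follow by applying the proof below verbatim to the swapped data.

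For (i), I would argue directly from the definitions in \ref{para:def-WIT}. If $E \in \Ac$ is $\Phi_\Bc$-WIT$_i$, then by definition $\Phi E[i] \in \Bc$, which is the first requirement for $\Phi E[i]$ to be $\Psi_\Ac$-WIT$_{1-i}$; applying $\Psi$ and using $\Psi\Phi \cong \mathrm{id}_\Dc[-1]$ gives $\Psi(\Phi E[i]) \cong (\Psi\Phi)(E)[i] \cong E[i-1]$, which lies in $\Ac[i-1] = \Ac[-(1-i)]$ because $E \in \Ac$. Hence $\Phi E[i]$ is a $\Psi_\Ac$-WIT$_{1-i}$ object in $\Bc$. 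The only thing to watch here is the bookkeeping of the shift coming from $\Psi\Phi \cong \mathrm{id}_\Dc[-1]$.

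For (ii), the main step is to identify $W_{0,\Phi,\Ac,\Bc}$ and $W_{1,\Phi,\Ac,\Bc}$ with the preimages under $\Phi$ of the two pieces $\Fc[1]$ and $\Tc$ of $\Phi\Ac = \langle\Fc[1],\Tc\rangle$. For $X \in \Phi\Ac$, take its $(\Fc[1],\Tc)$-decomposition $0 \to M \to X \to N \to 0$, so $M \in \Fc[1] \subset \Bc$ and $N \in \Tc \subset \Bc[-1]$; the long exact sequence of $\Hc^\bullet_\Bc$ applied to the corresponding triangle gives $\Hc^0_\Bc(X) \cong M$ and $\Hc^1_\Bc(X) \cong N[1]$. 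Since $\Phi\Ac \subset D^{[0,1]}_\Bc$, it follows that $X \in \Bc$ iff $N=0$ iff $X \in \Fc[1]$, and $X \in \Bc[-1]$ iff $M=0$ iff $X \in \Tc$. Transporting these equivalences across $\Phi : \Ac \to \Phi\Ac$ gives $W_{0,\Phi,\Ac,\Bc} = \Phi^{-1}(\Fc[1])$ and $W_{1,\Phi,\Ac,\Bc} = \Phi^{-1}(\Tc)$, and since $(\Fc[1],\Tc)$ is a torsion pair in $\Phi\Ac$, its preimage under the equivalence $\Phi$ is a torsion pair in $\Ac$. (If one prefers to avoid the structural description, one can instead check the two axioms by hand: $\Hom_\Ac(T,F) \cong \Hom_\Uc(\Phi T, \Phi F)$ with $\Phi T \in \Bc$ and $\Phi F \in \Bc[-1]$, which vanishes as $\Hom_\Uc(\Bc,\Bc[-1]) = \Hom_\Uc(\Bc[1],\Bc) = 0$; and the decomposition of a general $E \in \Ac$ is obtained by taking the torsion decomposition of $\Phi E$ in $\Phi\Ac$ and pushing it back through the inverse equivalence $\Phi^{-1}$.)

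I do not expect a genuine obstacle: the argument is formal. The only delicate point is the tracking of shifts — in particular ensuring that $W_{0,\Phi,\Ac,\Bc}$ corresponds to the \emph{torsion} part $\Fc[1]$ and not to the torsion-free part $\Tc$, which is precisely what the cohomology computation $\Hc^0_\Bc(X) \cong M$, $\Hc^1_\Bc(X) \cong N[1]$ pins down.
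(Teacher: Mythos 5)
Your proof is correct, and part (i) is essentially the paper's argument (the paper writes out the cases $i=0,1$ separately; your uniform bookkeeping $\Psi(\Phi E[i])\cong E[i-1]\in\Ac[-(1-i)]$ is the same computation). Part (ii), however, takes a genuinely different route. The paper works on the $\Dc$ side: it takes the truncation triangle $\Phi^0_\Bc E\to\Phi E\to\Phi^1_\Bc E[-1]$, applies $\Psi$ to get a triangle with middle term $E$, and uses Lemma \ref{para:AcBcPhitilt} plus part (i) and a long exact sequence in $\Hc^\bullet_\Ac$ to verify that the outer terms land in $\Ac$ and are respectively $\Phi_\Bc$-WIT$_0$ and $\Phi_\Bc$-WIT$_1$, so the triangle is the desired torsion decomposition. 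You instead work on the $\Uc$ side: you use the structural fact from the proof of Lemma \ref{para:AcBcPhitilt} that $\Phi\Ac=\langle\Fc[1],\Tc\rangle$ is a tilt of $\Bc[-1]$, identify $W_{0,\Phi,\Ac,\Bc}=\Phi^{-1}(\Fc[1])$ and $W_{1,\Phi,\Ac,\Bc}=\Phi^{-1}(\Tc)$ via the cohomology computation $\Hc^0_\Bc(X)\cong M$, $\Hc^1_\Bc(X)\cong N[1]$, and transport the canonical torsion pair of the tilted heart through the abelian equivalence $\Phi|_\Ac$. This is essentially the same mechanism the paper itself uses later in \ref{para:convIItoI} (there applied to $\Psi\Bc[1]$ as a tilt of $\Ac$), and it is arguably cleaner. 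The one thing the paper's route buys that yours does not directly record is the byproduct stated immediately after the lemma, namely that $\Phi^i_\Bc E$ is $\Psi_\Ac$-WIT$_{1-i}$ for every $E\in\Ac$; this is used later (e.g.\ in the proof of Proposition \ref{prop:AHNgivesBHN}), though it also follows from your identification together with part (i).
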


\begin{proof}
By symmetry in the setup of Lemma \ref{para:AcBcPhitilt}, it suffices to prove statements (i) and (ii).

(i) If $E \in \Ac$ is $\Phi_\Bc$-WIT$_0$, then $\Phi E \in \Bc$ and $\Psi (\Phi E) [1] \cong E \in \Ac$, showing that $\Phi E$ is a $\Psi_\Ac$-WIT$_1$ object in $\Bc$.  Similarly, if $E \in \Ac$ is $\Phi_\Bc$-WIT$_1$, then $\Phi E \in \Bc [-1]$ and $\Psi (\Phi E [1])\cong E \in \Ac$, showing $\Phi E [1]$ is a $\Psi_\Ac$-WIT$_0$ object in $\Bc$.

(ii) Take any $E \in \Ac$.  Since $\Phi E \in D^{[0,1]}_{\Bc}$, we have an exact triangle
\[
\Phi^0_{\Bc}E \to \Phi E \to \Phi^1_{\Bc} E [-1] \to \Phi^0_{\Bc} E [1]
\]
which in turn gives the exact triangle
\begin{equation}\label{eq:AG46-80-1}
\Psi(\Phi^0_{\Bc}E)[1] \to E \to \Psi (\Phi^1_{\Bc}E  )  \to \Psi(\Phi^0_{\Bc}E)[2].
\end{equation}
Since $\Phi^0_{\Bc} E \in \Bc$, we have $\Psi (\Phi^0_{\Bc} E) [1] \in D^{[-1,0]}_{\Ac}$ while $\Psi (\Phi^1_{\Bc} E) \in D^{[0,1]}_{\Ac}$ by Lemma \ref{para:AcBcPhitilt}.  Taking the long exact sequence of cohomology of \eqref{eq:AG46-80-1} with respect to the heart $\Ac$  gives:
\begin{itemize}
\item $\Hc^{-1}_{\Ac} (\Psi (\Phi^0_{\Bc} E)[1])=0$, which means that $\Phi^0_{\Bc} E$ is $\Psi_{\Ac}$-WIT$_1$; by (i), this means  $\Psi (\Phi^0_{\Bc} E)[1]$ is a $\Phi_{\Bc}$-WIT$_0$ object in $\Ac$.
\item $\Hc^1_{\Ac} (\Psi (\Phi^1_\Bc E))=0$, which means $\Phi^1_\Bc E$ is $\Psi_{\Ac}$-WIT$_0$; by (i), this means  $\Psi (\Phi^1_\Bc E)$ is a $\Phi_\Bc$-WIT$_1$ object in $\Ac$.
 \end{itemize}
   Hence \eqref{eq:AG46-80-1} gives an $\Ac$-short exact sequence.  That $\Hom_{\Ac}( W_{0,\Phi, \Ac, \Bc},  W_{1,\Phi, \Ac, \Bc})=0$ is clear, and so  (ii) follows.
\end{proof}

\subparagraph The proof of Lemma \ref{lem:AG46-80-1}(ii) shows that, when  conditions (a) and (b) of Configuration II are in place,  given any $E \in \Ac$, the object $\Phi^i_\Bc E$ is $\Psi_\Ac$-WIT$_{1-i}$ for $i=0,1$.

\paragraph[Passing between Configuration I and Configuration II] We can pass between Configuration I and Configuration II as follows.

\subparagraph To pass from \label{para:convItoII} Configuration I to Configuration II, assume the setting and notation of Configuration I in  \ref{para:config3}.  By  setting $\Uc = \Dc$, $\Phi = \mathrm{id}_{\Dc}$ and $\Psi = \mathrm{id}_{\Dc}[-1]$, we obtain condition \ref{para:config2}(a).  Putting $\Bc = \langle \Fc [1], \Tc\rangle$, we obtain condition \ref{para:config2}(b).  Finally, choose $S_\Ac=S$ and $S_\Bc= S'$; then an object $E \in \Ac$ is $\Phi_\Bc$-WIT$_0$ (resp.\ $\Phi_\Bc$-WIT$_1$) if and only if $E$ lies in $\Tc$ (resp.\ $\Fc$); condition \ref{para:config3}(S) of Configuration I then implies condition \ref{para:config2}(c).  So we are now in Configuration II.

\subparagraph To pass from \label{para:convIItoI} Configuration II to Configuration I, assume the setting and notation of Configuration II in \ref{para:config2}.   Let  $\wt{\Bc}=\Psi \Bc [1]$.  Then  $\wt{\Bc} \subset D^{[-1,0]}_{\Ac}$ by Lemma \ref{para:AcBcPhitilt}, and so $\wt{\Bc}$ is a tilt of $\Ac$  with respect to some torsion pair $(\Tc,\Fc)$ in $\Ac$ by \cite[Proposition 2.3.2(b)]{BMT1}.  Then $\Ac = \langle \Tc, \Fc\rangle$ and $\wt{\Bc} = \langle \Fc [1], \Tc \rangle$ while
\begin{align*}
  W_{0,\Phi,\Ac,\Bc} &= \{E\in \Ac : \Phi E \in \Bc\} =  (\Psi [1])(\Phi \Ac \cap \Bc) = \Ac \cap \wt{\Bc}= \Tc, \\
  W_{1,\Phi,\Ac,\Bc} &=  \{ E \in \Ac : \Phi E \in \Bc [-1]\}= (\Psi [1])(\Phi \Ac \cap (\Bc[-1])) = \Ac \cap (\wt{\Bc}[-1]) = \Fc.
\end{align*}
Let $\Phi^K$ denote the $K$-theoretic isomorphism $K(\Dc) \to K(\mathcal{U})$  induced by the exact equivalence $\Phi$ \cite[Remark 5.25 i)]{huybrechts2006fourier}.  Take $S=S_\Ac$ and $S' = S_\Bc \circ \Phi^K$.  Condition \ref{para:config2}(c) in Configuration II now implies the sign condition  in Configuration I for $S, S'$, and we are now in Configuration I.

\subparagraph It is easy to check, that if one starts  with the data in Configuration I as in \ref{para:config3}, and apply the construction in \ref{para:convItoII} followed by the construction in \ref{para:convIItoI}, then one obtains the same data that one starts with.

\begin{thm}\label{thm:main3}
Assume Configuration II.  For $i=0$ or $1$,  suppose  the weight function $S_\Ac$ satisfies  refinement-$i$ with respect to the torsion pair $(W_{0,\Phi,\Ac,\Bc}, W_{1,\Phi,\Ac,\Bc})$ in $\Ac$.  Then the functor $\Phi[i]$  induces an equivalence from the category of $S_\Ac$-semistable objects in $\Ac$ to the category of $(-1)^iS_\Bc$-semistable objects in $\Bc$.  The same statement holds if we replace `semistable' by `stable.'
\end{thm}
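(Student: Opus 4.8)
The plan is to reduce Configuration II to Configuration I by means of the passage described in \ref{para:convIItoI}, apply Theorem \ref{thm:fundamental} there, and then transport the resulting statement along the equivalence $\Phi$ using Lemma \ref{lem:AG48-10-1}. Concretely, I would set $\wt{\Bc} = \Psi \Bc[1]$; by \ref{para:convIItoI} this is the tilt of $\Ac$ at the torsion pair $(\Tc,\Fc)$ with $\Tc = W_{0,\Phi,\Ac,\Bc}$ and $\Fc = W_{1,\Phi,\Ac,\Bc}$, and upon choosing $S := S_\Ac$ and $S' := S_\Bc \circ \Phi^K$ we land exactly in Configuration I. Since $\Phi\Psi \cong \mathrm{id}_\Uc[-1]$ we have $\Phi\wt{\Bc} = \Phi\Psi\Bc[1] = \Bc$, which is precisely the compatibility between the heart $\wt{\Bc}$ on $\Dc$ and $\Bc$ on $\Uc$ that Lemma \ref{lem:AG48-10-1} needs.

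The hypothesis that $S_\Ac$ satisfies refinement-$i$ with respect to $(W_{0,\Phi,\Ac,\Bc}, W_{1,\Phi,\Ac,\Bc})$ is verbatim the hypothesis that $S$ satisfies refinement-$i$ with respect to $(\Tc,\Fc)$ in the translated Configuration I, so Theorem \ref{thm:fundamental} applies and tells us that $E \in \Dc$ is $S_\Ac$-semistable in $\Ac$ if and only if $E[i]$ is $(-1)^iS'$-semistable in $\wt{\Bc}$ (and likewise with `stable'). Next I would apply Lemma \ref{lem:AG48-10-1} to the exact equivalence $\Phi : \Dc \to \Uc$, the heart $\wt{\Bc}$, and the weight function $(-1)^iS_\Bc : K(\Uc) \to R$: since $(-1)^iS' = \bigl((-1)^iS_\Bc\bigr)\circ \Phi^K$, an object $F \in \Dc$ is $(-1)^iS'$-semistable in $\wt{\Bc}$ if and only if $\Phi F$ is $(-1)^iS_\Bc$-semistable in $\Phi\wt{\Bc} = \Bc$. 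Taking $F = E[i]$ and using $\Phi(E[i]) = (\Phi[i])(E)$, we obtain the desired equivalence on objects: $E$ is $S_\Ac$-semistable in $\Ac$ iff $(\Phi[i])(E)$ is $(-1)^iS_\Bc$-semistable in $\Bc$, and the same for `stable'.

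To promote this to an equivalence of categories, I would invoke that $\Phi[i] : \Dc \to \Uc$ is an exact equivalence of triangulated categories, hence fully faithful; its restriction to the category of $S_\Ac$-semistable objects in $\Ac$ (a full subcategory of $\Ac$) is therefore fully faithful, the `only if' direction above shows it lands in the category of $(-1)^iS_\Bc$-semistable objects in $\Bc$, and essential surjectivity follows from the `if' direction applied to the preimage $\Psi(G[-i])[1]$ of a given $(-1)^iS_\Bc$-semistable object $G$ in $\Bc$ (a short computation using $\Phi\Psi\cong\mathrm{id}_\Uc[-1]$ shows $(\Phi[i])(\Psi(G[-i])[1]) \cong G$, whence this preimage is $S_\Ac$-semistable in $\Ac$). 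The same reasoning works with `stable' throughout.

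I do not expect a genuine obstacle here: the real work has been front-loaded into Theorem \ref{thm:fundamental}, and what remains is careful bookkeeping — checking that the torsion pair emerging from \ref{para:convIItoI} is exactly $(W_{0,\Phi,\Ac,\Bc}, W_{1,\Phi,\Ac,\Bc})$ so that the refinement hypothesis carries over unchanged, that $\Phi\wt{\Bc} = \Bc$ so Lemma \ref{lem:AG48-10-1} is applicable with the correct heart, and that the shifts and the sign $(-1)^i$ compose correctly when Theorem \ref{thm:fundamental} and Lemma \ref{lem:AG48-10-1} are chained. No Harder--Narasimhan property or finiteness condition enters the argument.
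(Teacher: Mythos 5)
Your proposal is correct and follows essentially the same route as the paper: pass from Configuration II to Configuration I via \ref{para:convIItoI} (where the torsion pair is identified with $(W_{0,\Phi,\Ac,\Bc}, W_{1,\Phi,\Ac,\Bc})$ and $S'=S_\Bc\circ\Phi^K$), apply Theorem \ref{thm:fundamental}, and then transport along $\Phi$ using Lemma \ref{lem:AG48-10-1} together with $\Phi\wt{\Bc}=\Bc$. The extra bookkeeping you supply on full faithfulness and essential surjectivity of $\Phi[i]$ is fine and merely makes explicit what the paper leaves implicit.
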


\begin{proof}
Assume Configuration II.  Using the argument and notation in \ref{para:convIItoI}, we can pass to Configuration I.  In particular, we have $S=S_\Ac$, and the torsion pair $(\Tc, \Fc)$ is the same as $(W_{0,\Phi,\Ac,\Bc}, W_{1,\Phi,\Ac,\Bc})$.  Then  the weight function $S$ satisfies the refinement property with respect to the torsion pair $(\Tc, \Fc)$ in $\Ac$.  By Theorem \ref{thm:fundamental}, an object $E\in \Dc$ is $S$-semistable in $\Ac$ if and only if $E[i]$ is $(-1)^iS'$-semistable in the tilted heart $\langle \Fc [1], \Tc\rangle$.  Since $(-1)^i S'= (-1)^i S_\Bc \Phi^K$ and $\Phi \langle \Fc [1], \Tc\rangle = \Bc$,  Lemma \ref{lem:AG48-10-1} in turn implies that $E[i]$ is $(-1)^iS'$-semistable in  $\langle \Fc [1], \Tc\rangle$ if and only if $(\Phi E)[i]$ is $(-1)^i S_\Bc$-semistable in $\Bc$.  This shows the `semistable' case of the theorem.  The `stable' case of the theorem follows from the corresponding cases of Theorem \ref{thm:fundamental} and  Lemma \ref{lem:AG48-10-1}.
\end{proof}

We can now recover the following result on representations of algebras due to Chindris. Chindris used Theorem \ref{thm:chindrisA} to prove the existence of singular moduli spaces of modules for wild tilted algebras, which in turn showed one implication of a conjecture of Weyman's on strongly simply connected algebras \cite[Section 1]{chindris2013}.

\begin{thm}\cite[Theorem 1.3(a)]{chindris2013}\label{thm:chindrisA}
Let $A$ be a bound quiver algebra, $T$ a basic tilting $A$-module, and $\theta$ an integral weight of $A$ which is well-positioned with respect to $T$.  Let $F$ be either the functor $\Hom_A(T,-)$ in case there are nonzero $\theta$-semistable torsion $A$-modules or the functor $\Ext^1_\Ac (T,-)$ in case there are nonzero $\theta$-semistable torsion-free $A$-modules.  Denote $\mathrm{End}_A(T)^{op}$ by $B$ and let $u : K(\text{$A$-mod}) \to K(\text{$B$-mod})$ be the isometry induced by the tilting module $T$.  Then the functor $F$ defines an equivalence from the category of $\theta$-semistable $A$-modules, to the category of $|\theta \circ u^{-1}|$-semistable $B$-modules.
\end{thm}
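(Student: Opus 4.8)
The plan is to exhibit the hypotheses of Theorem~\ref{thm:chindrisA} as an instance of Configuration~II and then apply Theorem~\ref{thm:main3}. Take $\Dc = D^b(A\text{-mod})$ and $\Uc = D^b(B\text{-mod})$, let $\Phi = R\Hom_A(T,-) : \Dc \to \Uc$ be the derived tilting equivalence with quasi-inverse $T\otimes^L_B(-) : \Uc \to \Dc$, and set $\Psi = (T\otimes^L_B -)[-1]$, so that $\Psi\Phi \cong \mathrm{id}_\Dc[-1]$ and $\Phi\Psi \cong \mathrm{id}_\Uc[-1]$; this is condition \ref{para:config2}(a). Let $\Ac = A\text{-mod}$ and $\Bc = B\text{-mod}$ be the standard hearts. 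Since the tilting module $T$ has projective dimension at most one, $R\Hom_A(T,M)$ has cohomology only in degrees $0$ and $1$ for every $A$-module $M$, so $\Phi\Ac \subset D^{[0,1]}_\Bc$, which is condition \ref{para:config2}(b). Classical tilting theory identifies $W_{0,\Phi,\Ac,\Bc}$ with the torsion class $\Tc_A = \mathrm{Gen}(T) = \{M : \Ext^1_A(T,M) = 0\}$ and $W_{1,\Phi,\Ac,\Bc}$ with the torsion-free class $\Fc_A = \{M : \Hom_A(T,M) = 0\}$ of the classical tilting torsion pair in $\Ac$; moreover $\Phi$ restricts to $\Hom_A(T,-)$ on $\Tc_A$ and $\Phi[1]$ restricts to $\Ext^1_A(T,-)$ on $\Fc_A$, both being immediate from $\mathrm{pd}_A T \leq 1$.

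For the weight functions, let $S_\Ac : K(\Dc) \to \ZZ$ be the weight function realizing King's $\theta$-semistability (as in the example following \ref{para:weightfuncdef}), let $\Phi^K : K(\Dc) \to K(\Uc)$ be the induced isomorphism on Grothendieck groups --- this is precisely the isometry $u$ of the statement --- and put $S_\Bc = S_\Ac \circ (\Phi^K)^{-1}$. Then $S_\Bc(\Phi E) = S_\Ac(E)$ for all $E \in \Dc$, so condition \ref{para:config2}(c) holds and $S_\Bc$-semistability in $\Bc$ is King's $(\theta\circ u^{-1})$-semistability; thus we are in Configuration~II. Chindris's hypothesis that $\theta$ is well-positioned with respect to $T$ amounts to saying that either, with $i=0$, every nonzero $\theta$-semistable $A$-module lies in $\Tc_A$ while $S_\Ac(G) \prec 0$ for all nonzero $G \in \Fc_A$; or, with $i=1$, every nonzero $\theta$-semistable $A$-module lies in $\Fc_A$ while $S_\Ac(G) \succ 0$ for all nonzero $G \in \Tc_A$. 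These two alternatives are, respectively, refinement-$0$ and refinement-$1$ for $S_\Ac$ with respect to $(W_{0,\Phi,\Ac,\Bc}, W_{1,\Phi,\Ac,\Bc})$, and they match exactly the two cases of the statement, in which $F$ is $\Hom_A(T,-)$, resp.\ $\Ext^1_A(T,-)$. Since refinement-$i$ forces every $\theta$-semistable $A$-module into $W_{i,\Phi,\Ac,\Bc}$, the functor $F$ there coincides with $\Phi[i]$ by what was said above.

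Theorem~\ref{thm:main3} now applies and shows that $\Phi[i]$, hence $F$, induces an equivalence from the category of $\theta$-semistable $A$-modules to the category of $(-1)^i S_\Bc$-semistable $B$-modules, and likewise with ``semistable'' replaced by ``stable''. To conclude, one identifies $(-1)^i S_\Bc$-semistability with $|\theta\circ u^{-1}|$-semistability: when $i=0$ the target weight is literally $\theta\circ u^{-1}$, whereas when $i=1$ it is $-(\theta\circ u^{-1})$, the extra sign being the degree shift of $R\Hom_A(T,-)$ on the torsion-free part --- at the level of Grothendieck groups $\Phi^K[M] = -[\Ext^1_A(T,M)]$ for $M \in \Fc_A$ --- and this case-dependent sign is precisely what Chindris's absolute value records. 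I expect this last identification to be the only delicate point: it is a matter of matching Chindris's normalizations of $u$ and of King semistability with the ones used here, not a genuine obstruction. Everything else is a direct translation followed by a single appeal to Theorem~\ref{thm:main3}, all of whose hypotheses have by then been verified.
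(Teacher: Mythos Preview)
Your proposal is correct and follows essentially the same route as the paper: set up Configuration~II with $\Phi = R\Hom_A(T,-)$, $\Psi = (T\otimes^L_B -)[-1]$, the standard hearts, and $S_\Bc = S_\Ac\circ(\Phi^K)^{-1}$, then invoke Theorem~\ref{thm:main3} after observing that well-positioned means refinement-$i$. Your treatment is in fact slightly more explicit than the paper's about identifying $W_{i,\Phi,\Ac,\Bc}$ with the tilting torsion pair and about why $F$ agrees with $\Phi[i]$ on the relevant objects; the final identification of $(-1)^iS_\Bc$-semistability with $|\theta\circ u^{-1}|$-semistability is handled the same way in both.
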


\begin{proof}
That $T$ is a tilting module means  we have the following derived equivalences that are quasi-inverse to each other
\begin{align*}
  R\Hom_A(T,-) : D^b(\text{$A$-mod}) &\to D^b(\text{$B$-mod}), \\
  T \overset{L}{\otimes_B} - : D^b(\text{$B$-mod}) &\to D^b(\text{$A$-mod}).
\end{align*}
Moreover, we know that $R\Hom_A(T,E) \in D^{[0,1]}_{\text{$B$-mod}}$ for any $E \in \text{$A$-mod}$, while $T  \overset{L}{\otimes_B} F \in D^{[-1,0]}_{\text{$A$-mod}}$ for any $F \in \text{$B$-mod}$.

Let us  set $\Ac = \text{$A$-mod}$ and $\Bc = \text{$B$-mod}$, take $\Dc = D^b(\Ac)$ and $\Uc = D^b(\Bc)$,    take $\Phi$ to be the functor $R\Hom_A (T,-)$ and $\Psi$  the functor $(T \overset{L}{\otimes_B} -)[-1]$.  Then \ref{para:config2}(a),(b) are satisfied.

In addition, let us identify $K(\Ac), K(\Bc)$ with $K(\Dc), K(\mathcal{U})$, respectively. Let $(R,\preceq)=(\mathbb{Z},\leq)$, let $S_\Ac$ be an integral weight $\theta \in \Hom_{\mathbb{Z}} (K(\Ac),\mathbb{Z})$ and   let $S_\Bc = \theta \circ u^{-1}$ where $u : K(\Ac) \to K(\Bc)$ satisfies $u(\dimension M)= \dimension (\Phi M)$, with $\dimension (-)$ denoting the dimension vector of a module.  Then we obtain the commutative diagram
\begin{equation}\label{lem:AG48-13-1}
\xymatrix{
  \Dc \ar[r]^\Phi \ar[d]_(.4){\text{dim}} & \mathcal{U} \ar[d]^(.4){\text{dim}}  \\
  K(\Dc) \ar[r]^{u} \ar[dr]_{S_\Ac} & K(\Uc) \ar@{.>}[d]^{S_\Bc} \\
  & \mathbb{Z}
}.
\end{equation}
Then for any $E\in \Ac$, we have
\[
S_\Ac (\dimension E)=\theta (\dimension E) = (\theta \circ u^{-1})(\dimension \Phi E) = S_\Bc (\dimension \Phi E),
\]
which implies that  \ref{para:config2}(c) is also satisfied. Hence we are in Configuration II. Moreover, Chindris' definition of  $\theta$ being well-positioned with respect to $T$ implies the refinement property for $S_\Ac$ with respect to the torsion pair $(W_{0,\Phi,\Ac,\Bc}, W_{1,\Phi,\Ac,\Bc})$ (see \cite[Definition 3.2]{chindris2013}).    Depending on whether $S_\Ac$ satisfies refinement-$0$ or refinement-$1$, $|S_\Bc|$-semistability in the sense of \cite[Section 3B]{chindris2013} coincides with $(-1)^iS_\Bc$-semistability.  Theorem \ref{thm:main3} now specialises to  Theorem \ref{thm:chindrisA}.
\end{proof}

\section{Polynomial stability conditions}\label{sec:polystab}

We slightly extend the definition of polynomial stability conditions as originally given by Bayer  \cite{BayerPBSC}.  In particular, we allow our central charges to take values in the ring $\CLoovc$ of convergent Laurent series in $\tfrac{1}{v}$, instead of only the polynomial ring $\mathbb{C}[v]$.  

\paragraph[Big $O$, big $\Theta$] Given \label{para:bigOTheta} functions $f, g : (l,\infty) \to \mathbb{R}$ for some $l \in \mathbb{R}$, recall that we write $f=O(g)$ if there exist constants $C \in \mathbb{R}_{>0}$ and $k \in (l,\infty)$ such that $|f(v)| \leq C |g(v)|$ for all $v > k$, and we write $f=\Theta (g)$ if $f=O(g)$ as well as $g=O(f)$.

\subparagraph[Power series] For  $K=\mathbb{R}$ or $\mathbb{C}$, we will write $K\llbracket w \rrbracket$ to denote the ring of power series in the indeterminate $w$ over $K$.  We will write $K \llbracket w \rrbracket^c$ to denote the $K$-subalgebra of power series with positive radii of convergence.  We can also regard $\RPwc$ as a subring of  $\CPwc$ by \cite[Theorem 2, Chap. 2]{ahlfors}.

\subparagraph[Laurent series] For  $K=\mathbb{R}$ or $\mathbb{C}$, we will write $K (\!(w)\!)$ to denote the field of formal Laurent series in the indeterminate $w$ over $K$, which we take to be series of the form
\[
 \sum_{k \geq m}  a_k w^k  \text{\quad where $m \in \mathbb{Z}, a_k \in K$ for all $k \geq m$}.
\]
We only allow finitely many negative-degree terms in a formal Laurent series.  We will write $K(\!(w)\!)^c$ to denote the subfield  of $K(\!(w)\!)$ consisting of series that are convergent on  punctured disks  of positive radii centered at $0$, i.e.\
\begin{align*}
  K(\!(w)\!)^c &= \{ f(w) \in K(\!(w)\!) : f \text{ converges on } 0 < |w| < R \text{ for some $R>0$}\} \\
  &= \bigcup_{k \in \mathbb{Z}^+} \tfrac{1}{w^k} K\llbracket w \rrbracket^c.
\end{align*}
 In particular, every element of $K(\!(w)\!)^c$ is uniformly convergent on some punctured disk of positive radius centered at $0$.   For any $f \in \mathbb{R}\Loov^c$, if we write $f = \sum_{k \geq m} a_k (\tfrac{1}{v})^k$ where $a_k \in \mathbb{R}$ for $k \geq m$ and $a_m \neq 0$, then we have $\Theta (f)=\tfrac{1}{v^m}$ as $v \to \infty$.

\subparagraph We have a \label{para:R1vordering} partial order $\preceq$ on  $\mathbb{R}\Loovc$  determined by
\[
f \preceq g \text{ if and only if } f(v) \leq g(v) \text{ for $v \gg 0$}.
\]
It is clear that $\mathbb{R}\Loovc$ is totally ordered with respect to $\preceq$, and so $(\mathbb{R}\Loovc, \preceq)$ is a totally ordered abelian group with respect to addition.

\paragraph[Polynomial phase function] Given any \label{para:defpolyphasefunc} nonzero $f(v) \in \CLoovc$, we can write $f(v)=\sum_{k\leq m} a_kv^k$ where $a_k \in \mathbb{C}$ for $k \leq m$, for some $m \in\mathbb{Z}$ where $a_m \neq 0$.  Then $f(v)=v^mg(v)$ for some  $g(v) \in \mathbb{C}\llbracket \tfrac{1}{v}\rrbracket^c$.  Thus $f(v)$ is an analytic function in the region  $|v| >r$ for some $r\in \mathbb{R}_{>0}$ and,  in particular, if we let $v \to \infty$ along the positive real axis, then there is a continuous function germ $\phi : \mathbb{R}_{>0} \to \mathbb{R}$ such that
\[
f (v) \in \mathbb{R}_{>0}e^{i\pi \phi (v)} \text{\quad  for $v \gg 0$}.
\]
Note that the choice of the function germ $\phi$ is only unique up to translation by integer multiples of $2$.
Any function germ $\phi$ arising in this manner will be called a \emph{polynomial phase function}.  Since $a_m \neq 0$ by assumption,  $g(v)$ has nonzero constant term $a_m$, which means
the limit $\phi (\infty) := \lim_{v \to \infty} \phi (v)$ exists and  $a_m \in \mathbb{R}_{>0}e^{i\pi \phi (\infty)}$.

Given polynomial phase functions $\phi_1, \phi_2 $, we  write $\phi_1 \prec \phi_2$  (resp.\ $\phi_1 \preceq \phi_2$) if $\phi_1 (v) < \phi_2 (v)$ (resp.\ $\phi_1(v) \leq \phi_2(v)$) for $v \gg 0$.

\begin{lem}\label{lem:seriescomparability}
The set of all polynomial phase functions is totally ordered with respect to $\preceq$.
\end{lem}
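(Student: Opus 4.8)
The plan is to reduce the comparability of two polynomial phase functions $\phi_1, \phi_2$ to the comparability of a single real-analytic function germ against the constant zero germ. Given nonzero $f_1, f_2 \in \CLoovc$ with associated polynomial phase functions $\phi_1, \phi_2$, we have $f_j(v) \in \mathbb{R}_{>0}e^{i\pi\phi_j(v)}$ for $v \gg 0$, so the quotient $h(v) := f_1(v)/f_2(v)$ satisfies $h(v) \in \mathbb{R}_{>0}e^{i\pi(\phi_1(v)-\phi_2(v))}$ for $v \gg 0$. Since $\CLoovc$ is a field (or at least since $f_2$ is invertible as a meromorphic germ at $v=\infty$ with $a_m \neq 0$), $h$ is itself an analytic function on a region $|v| > r$, hence of the form $v^n g(v)$ with $g \in \mathbb{C}\llbracket\tfrac1v\rrbracket^c$ having nonzero constant term. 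The key point is that $\psi := \phi_1 - \phi_2$ is a continuous real-valued function germ with $e^{i\pi\psi(v)} = h(v)/|h(v)| \in \{z : |z|=1\}$, and $\psi(\infty) := \lim_{v\to\infty}\psi(v)$ exists because the leading coefficient of $h$ is a nonzero complex number, so $h(v)/|h(v)|$ approaches a point on the unit circle.

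First I would establish that $\psi(\infty)$ exists and is finite, using the expansion $h(v) = v^n g(v)$ and the fact that $g$ has nonzero constant term; this forces $h(v)/|h(v)| \to c/|c|$ where $c$ is that constant term, and by continuity of $\psi$ together with uniqueness up to $2\mathbb{Z}$-translation, $\psi(v)$ converges to some real limit. Next, I would split into cases according to whether $\psi(\infty) \notin \mathbb{Z}$, $\psi(\infty) \in 2\mathbb{Z}$, or $\psi(\infty) \in 2\mathbb{Z}+1$. In the first case, $e^{i\pi\psi(v)}$ is bounded away from $\pm 1$ for $v \gg 0$, so $\psi(v)$ is bounded away from $\mathbb{Z}$, and comparing $\psi(\infty)$ to the nearest even integer immediately decides the sign of $\psi(v)$ for $v \gg 0$ (since $\psi$ is continuous and can't jump). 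In the remaining two cases the limit is an integer, and I would analyze the approach more carefully: writing $h(v) = c v^{-m}(1 + \sum_{k\geq 1} b_k v^{-k})$, the argument of $h$ near $v = \infty$ is governed by the imaginary part of the first nonvanishing correction term, i.e., by the first index $k$ with $\mathrm{Im}(b_k/\text{leading real factor}) \neq 0$ (after rotating so $c$ is real positive); if all $b_k$ contribute zero imaginary part then $h(v)$ is eventually real, so $\psi(v)$ is eventually a constant integer and the two phase functions are equal for $v \gg 0$. Otherwise the sign of that first nonzero imaginary contribution, combined with the sign of $v^{-k}$ (always positive for $v \gg 0$), pins down whether $\psi(v)$ is eventually above or below its integer limit, hence decides $\phi_1 \prec \phi_2$, $\phi_1 = \phi_2$, or $\phi_1 \succ \phi_2$ for $v \gg 0$.

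The main obstacle I anticipate is the boundary case where $\psi(\infty)$ is an integer and one must extract the sign of $\psi(v) - \psi(\infty)$ from the subleading behavior of the analytic function $h$: here one needs to argue cleanly that a nonconstant convergent power series $g(w) = c(1 + b_1 w + b_2 w^2 + \cdots)$ with $c > 0$ real has an argument that is, for small real $w > 0$, either identically zero (if all $b_k$ are real) or of constant sign determined by the first $b_k$ with nonzero imaginary part. This is essentially the statement that $\arg g(w) = \mathrm{Im}\log g(w) = \mathrm{Im}(b_k w^k + O(w^{k+1})) = w^k(\mathrm{Im}\, b_k + O(w))$ near $w = 0^+$, which is routine once one invokes that $\log g$ is analytic near $w=0$ (valid since $g(0) = c \neq 0$); but it does require being careful about the branch of logarithm and about the fact that everything is happening along the real ray $w = 1/v \to 0^+$ rather than in a full neighborhood. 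Once this sign-extraction lemma is in hand, total ordering follows immediately since the three cases $\prec$, $=$, $\succ$ are exhaustive and mutually exclusive. Finally I would note that antisymmetry and transitivity of $\preceq$ on phase functions are inherited from the total order on $\mathbb{R}\Loovc$ via the correspondence $\phi \mapsto$ (the germ $\phi(v)$ as an element comparing to $\mathbb{R}$-valued germs), or can be checked directly, so $\preceq$ is indeed a total order.
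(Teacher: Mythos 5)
Your proof is correct and follows essentially the same route as the paper's: reduce to the quotient $f_1/f_2$, use the existence of the limiting phase to dispose of the case of unequal limits, and in the equal-limit case read off the sign from the first coefficient of the Laurent expansion with nonzero imaginary part. Your justification of that last sign-extraction step via analyticity of $\log g$ near $w=0$ supplies a detail the paper leaves implicit.
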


\begin{proof}
Suppose $f_1, f_2$ are nonzero elements of $\CLoovc$  that give rise to  polynomial phase functions $\phi_1, \phi_2$, respectively.  We need to show that $\phi_1, \phi_2$ are comparable with respect to $\preceq$.

From above, we know that the limits of $\phi_1 (v), \phi_2 (v)$ as $v \to \infty$ along the real axis exist.  If these two limits are not equal, then  either $\phi_1 \prec \phi_2$ or $\phi_1 \succ \phi_2$.  Therefore, it remains to consider the case where the limits are equal.

Suppose now $\lim_{v \to \infty} \phi_1 (v) = \lim_{v \to \infty} \phi_2 (v)$, and let $\phi$ denote the particular branch of   phase functions of the quotient $f_1/f_2$ satisfying $\lim_{v \to\infty} \phi (v)=0$, and that $\phi (v) \in (-1, 1)$ for $v \gg 0$. Then determining the order relation between $\phi_1$ and $\phi_2$ is equivalent to determining the order relation between $\phi$ and the constant phase function $0$.  Since $\CLoovc$ is a field, we can write
\[
  \frac{f_1}{f_2} = v^m \sum_{j \geq 0} a_j \left( \tfrac{1}{v}\right)^j
\]
for some $m \in \mathbb{Z}$ and  $a_0 \neq 0$.  Note that $a_0 \in \mathbb{R}$ since $\lim_{v \to \infty} \phi (v)=0$.  If all the coefficients $a_k$ lie in $\mathbb{R}$, then $\phi=0$; otherwise, let $j'$ be the least positive integer such that $a_{j'}\notin\mathbb{R}$.  Then we have $\phi \prec 0$ (resp.\ $\phi \succ 0$) if the imaginary part of $a_{j'}$ is negative (resp.\ positive), and the lemma follows.
\end{proof}

\paragraph[(Weak) polynomial stability function] Let  \label{para:weakpgstabfunc} $\Dc$ be a triangulated category, and $\Ac$ the heart of a bounded t-structure on $\Dc$.   We say  a group homomorphism $Z : K(\Dc) \to \CLoovc$  is a \emph{weak polynomial stability function}  on $\Ac$ with respect to  $(\phi_0, \phi_0+1]$  if there exists a polynomial phase function $\phi_0$ such that, for all nonzero objects $E$ in $\Ac$,  we have
\[
  Z(E)(v) \in \mathbb{R}_{\geq 0}\cdot e^{i\pi (\phi_0(v), \phi_0(v)+1]} \text{\qquad for $v \gg 0$}.
\]
Note that we allow the possibility of $Z(E)=0$ in $\CLoovc$ for a  nonzero object $E$ in $\Ac$.  Whenever $Z$ is a weak polynomial stability function on $\Ac$, we will set
\[
  \Ac_{\mathrm{ker} Z} = \{ E \in\Ac : Z(E)=0\}.
\]
It is easy to check that $\Ac_{\mathrm{ker} Z}$ is a Serre subcategory of $\Ac$ (i.e.\ closed under subobject and quotient).

A weak polynomial stability function $Z$  will be called a \emph{polynomial stability function}  if $\Ac_{\mathrm{ker} Z}=\{0\}$, i.e.\  for all nonzero objects $E$ in $\Ac$ we have
\[
  Z(E)(v) \in \mathbb{R}_{>0}\cdot e^{i \pi (\phi_0(v), \phi_0(v)+1]} \text{\quad for $v \gg 0$}.
\]

\begin{remsub}\label{rem:constphase}
Every real number $r$ is a (constant) polynomial phase function, since it arises from the constant polynomial  $e^{ir\pi}$ in $\CLoovc$ (see \ref{para:defpolyphasefunc}).
\end{remsub}

\subparagraph Given a weak \label{para:weakpgstabss} polynomial stability function $Z$ as in \ref{para:weakpgstabfunc},  for every  $0 \neq E \in \Ac$ we define the \emph{(polynomial) phase function} $\phi_Z (E)$ as a function germ $\mathbb{R}_{>0} \to \mathbb{R}$ via the relations
\[
  \begin{cases} Z(E)(v) \in \mathbb{R}_{>0}    e^{i \pi \phi_{Z} (E)(v)} \text{ for $v \gg 0$, and } \phi_0 \prec  \phi_Z(E) \preceq \phi_0+1  &\text{\quad if $Z(E)\neq 0$}, \\
  \phi_{Z} (E) = \phi_0+1 &\text{\quad if $Z(E)=0$}.
  \end{cases}
\]
Note that we always have $\phi_0 \prec \phi_Z(E) \preceq \phi_0+1$ for $0 \neq E \in \Ac$.  A nonzero object $E \in \Ac$ is said to be \emph{$Z$-semistable} (resp.\ \emph{$Z$-stable}) if, for every $\Ac$-short exact sequence
\[
  0 \to M \to E \to N \to 0
\]
where $M, N\neq 0$, we have
\[
  \phi_Z (M) \preceq \phi_Z (N) \text{\quad (resp.\ $\phi_Z (M) \prec \phi_Z (N)$)}.
\]
More generally, we say that an object $E \in \Dc$ is $Z$-semistable (resp.\ $Z$-stable) if $E[j]$   is a $Z$-semistable (resp.\ $Z$-stable) object in $\Ac$  for some $j \in \mathbb{Z}$, in which case we define $\phi_Z(E)$ via the relation $\phi_Z (E) + j = \phi_Z (E[j])$.  We say $Z$ satisfies the \emph{Harder-Narasimhan (HN) property} on $\Ac$ if every nonzero object $E$ in $\Ac$ admits an $\Ac$-filtration
\begin{equation}\label{eq:HNfilt}
 0=E_0 \subsetneq E_1 \subsetneq \cdots \subsetneq  E_m = E
\end{equation}
for some $m \geq 1$, where each quotient $E_i /E_{i-1}$ ($1 \leq i \leq m$) is a $Z$-semistable object and
\[
  \phi_Z (E_1/E_0) \succ \phi_Z (E_2/E_1) \succ \cdots \succ \phi_Z (E_m/E_{m-1}).
\]

\subparagraph[(Weak) polynomial stability condition] Let \label{para:pgstab}$\Dc$ be a triangulated category and $\Ac$ the heart of a bounded t-structure on $\Dc$.  If  $Z$ is a (resp.\ weak) polynomial stability function on  $\Ac$ (with respect to $(\phi_0, \phi_0+1]$ for some $\phi_0$) that satisfies the HN property, then we say $(Z,\Ac)$ is a (resp.\ \emph{weak}) \emph{polynomial stability condition} on $\Dc$.

\subparagraph[Bayer's polynomial stability condition] If \label{para:wpsc-psc} we require the central charge $Z$ of a polynomial stability condition to take values in the polynomial ring $\mathbb{C}[v]$, then we obtain the original definition of a polynomial stability condition in the  sense of Bayer \cite{BayerPBSC}.

\subparagraph[Bridgeland stability condition] A stability \label{para:wpsc-bsc} condition as defined by  Bridgeland in \cite{StabTC} is a polynomial stability condition $(Z,\Ac)$ where $Z : K(\Dc) \to \mathbb{C}$  is a (constant) polynomial stability function on $\Ac$ with respect to $(0,1]$, where we regard $0$ and $1$ as constant polynomial phase functions (see Remark \ref{rem:constphase}).  We will write  $\mathrm{Stab}(X)$ to denote the space of   stability conditions as defined by Bridgeland that are both full and locally finite (these two properties together are equivalent to the support property - see \cite[Section 5.2]{MSlec}); $\mathrm{Stab}(X)$ has the structure of a complex manifold.

\subparagraph[Note on terminology] For convenience, we will say a pair $(Z, \Ac)$ is a Bridgeland stability condition on a triangulated category $\Dc$ with respect to $(a,a+1]$, or simply a Bridgeland stability condition, if $(Z,\Ac)$ is a polynomial stability condition on $\Dc$ with respect to $(a,a+1]$ for some constant $a \in \mathbb{R}$, and $Z : K(\Dc)\to \mathbb{C}$ takes values in $\mathbb{C}$.  In particular, we do not require that $a=0$ as in \cite{StabTC}.  This use of terminology gives more consistency  with the literature on polynomial stability conditions such as \cite{BayerPBSC, BMT1}, and gives us more flexibility in manipulating stability using both autoequivalences and "GL-actions" (see \ref{para:inducedGammaP}) simultaneously, especially when we are more interested in the semistability of objects themselves and their moduli spaces.

\paragraph[Slicing] Suppose \label{para:slicing} $(Z,\Ac)$ is a polynomial  stability condition on a triangulated category $\Dc$.  For any polynomial phase function $\phi$, let us write $\Pc (\phi)$  to denote the full subcategory of $Z$-semistable objects $E$ in $\Dc$ with phase function $\phi_Z (E)=\phi$, together with the zero object.  For any interval $I$ of  polynomial phase functions, we will define $\Pc (I)$ as a full subcategory of $\Dc$ via the extension closure
\[
  \Pc (I) = \langle \Pc (\phi) : \phi \in I \rangle.
\]
On the other hand, for any $r \in \mathbb{R}$, we will write $\wh{\Pc} (r)$ to denote the full subcategory  of $Z$-semistable objects $E$ in $\Dc$ with $\lim_{v \to \infty} \phi_Z (E)(v) = r$, together with the zero object.  For any interval $I$ in $\mathbb{R}$, we will define $\wh{\Pc} (I)$  as a full subcategory of $\Dc$ via the extension closure
\[
  \wh{\Pc} (I) = \langle \wh{\Pc} (r) : r\in I \rangle.
\]
If $(Z, \Ac)$ is a  polynomial stability condition with respect to $(a,a+1]$ on a triangulated category $\Dc$ and $\Pc$ denotes its slicing, then the data of $(Z,\Ac, (a,a+1])$  is equivalent to the data  $(Z,\Pc)$  \cite[Proposition 2.3.3]{BayerPBSC}.  As a result, we also refer to $(Z,\Pc)$ as a polynomial stability condition (which was how the notion was originally defined by Bayer \cite{BayerPBSC}).
  For a triangulated category $\Dc$, we formally define  $\Stabpol (\Dc)$ to be the set of all polynomial stability conditions  $(Z,\Pc)$   on $\Dc$.

\paragraph[Slope functions] Suppose $\Dc$ is a \label{para:mugeneraldef}  triangulated category,  $Z : K(\Dc) \to \CLoovc$ is a group homomorphism, and $\Ac$ is an abelian subcategory  of $\Dc$ such that
\[
  Z(E)(v) \in \mathbb{R}_{\geq 0} e^{i\pi(0,1]} \text{\qquad for $v \gg 0$}
\]
for all $E \in \Ac$.  Then we can define a  function $\mu_Z : \Ac \setminus \{0\} \to \mathbb{R}\Loovc \cup \{\infty\}$ by setting
\[
  \mu_Z (E) = \begin{cases}
  -\tfrac{\Re Z(E)}{\Im Z(E)} &\text{ if $\Im Z(E) \neq 0$}\\
  \infty &\text{ if $\Im Z(E)=0$}
  \end{cases}.
\]
The partial order $\preceq$ on $\mathbb{R}\Loovc$ (see \ref{para:R1vordering}) can be extended to $ \mathbb{R}\Loovc \cup \{\infty\}$ by declaring $f \prec \infty$ for any $f \in \mathbb{R}\Loovc$.  We then define a nonzero object $E\in \Ac$ to be $\mu_Z$-semistable (resp.\ $\mu_Z$-stable) if, for every $\Ac$-short exact sequence
\[
0 \to M \to E \to N \to 0
\]
where $M, N \neq 0$, we have $\mu_Z (M) \preceq \mu_Z (N)$ (resp.\ $\mu_Z (M) \prec \mu_Z (N)$).  We will refer to any function of the form $\mu_Z$ as a \emph{slope function} on $\Ac$.  In particular, we do not require $\Ac$ to be the heart of a t-structure on $\Dc$.

\subparagraph In the setting  \label{para:mugeneraldef1} of \ref{para:mugeneraldef}, even though $\Ac$ is not necessarily the heart of a t-structure on $\Dc$, we can still define polynomial phase functions $\phi_Z(-)$  associated to $Z$ and define the notion of $Z$-(semi)stability for objects in $\Ac$.  For any nonzero objects $E, E' \in \Ac$, it is easy to see that  $\mu_Z (E) \preceq \mu_Z (E')$ if and only if $\phi_Z (E) \preceq \phi_Z (E')$.  Therefore, a nonzero object $E$ of $\Ac$ is $\mu_Z$-semistable (resp.\ $\mu_Z$-stable) if and only if it is $Z$-semistable (resp.\ $Z$-stable).  When $Z$ has the HN property on $\Ac$, we will  say that $\mu_Z$ has the HN property.     In particular, when $\Ac$ is a noetherian abelian category, if we  assume that the image of $Z$ is contained in $\mathbb{Z}[i]$, then $Z$ always satisfies the HN property on $\Ac$ \cite[Proposition 3.4]{LZ2}. When $\mu_Z$ is a slope function with the HN property and $E$ is a nonzero object in $\Ac$ with $Z$-HN filtration as in \eqref{eq:HNfilt}, we  set $\mu_{Z,\text{max}}(E)=\mu_Z (E_1/E_0)$ and $\mu_{Z,\text{min}}(E)=\mu_Z (E_m/E_{m-1})$.

\begin{egsub}\label{eg:stabslopefunc}
Let $X$ be a smooth projective curve and $\Ac = \Coh (X)$ as in Example \ref{eg:smprojcurv}.  Consider the polynomial stability function $Z : K(D^b(\Ac))=K(X) \to \mathbb{C}$ on $\Ac$  given by
\[
  Z(E) = -\degree E + i \rank E.
\]
Then  the  slope function $\mu_Z (E)= (\degree E)/(\rank E)$ is the usual slope function for slope stability for coherent sheaves on $X$.
\end{egsub}

\paragraph[Different types of stability so far]  So far in this article, we have defined  two main types of stability:
\begin{itemize}
 \item[(i)] stability defined by  a weight function on a triangulated category (in \ref{para:weightfuncdef});
 \item[(ii)] stability defined by a weak polynomial stability function on a triangulated category (in \ref{para:weakpgstabss}).
\end{itemize}
Our definition of a weak polynomial stability condition specialises to a particular type of Bayer's polynomial stability condition (see \ref{para:wpsc-psc} and \cite[Definition 2.3.1]{BayerPBSC}), which  in turn specialises to stability in the sense of Bridgeland (see \ref{para:wpsc-bsc} and \cite{StabTC}).  Notions of stability that arise from  `slope functions' such as those in Example \ref{eg:stabslopefunc} or slope-like functions as in \cite[3.2]{LZ2} can also be interpreted as arising from weak polynomial stability conditions (see \ref{para:mugeneraldef}).

In Lemma \ref{lem:S-stabfuncdef}, we show how a weak polynomial stability function can be used to construct weight functions.  This lemma generalises \cite[Proposition 3.4]{rudakov1997stability} by Rudakov:

\begin{lem}\label{lem:S-stabfuncdef}
Suppose $\Dc$ is a triangulated category, and $\Ac$ is the heart of  a bounded t-structure  on $\Dc$.  Suppose    $Z : K(\Dc) \to \CLoovc$ is a weak polynomial stability function  on $\Ac$ with respect to $(0,1]$, and $M$ is a fixed  object in $\Ac$ such that $Z(M) \neq 0$.  Consider the group homomorphism $S : K(\Dc) \to \mathbb{R}\Loovc$ given by
\begin{equation}\label{eq:S-stabfuncdef}
    S(E):= S_{Z,M} (E):=
     \begin{vmatrix} \Re Z(M)  & \Im Z(M)  \\
    \Re Z(E) & \Im Z(E) \end{vmatrix}
     =-(\Im Z(M))(\Re Z(E)) + (\Re Z(M))(\Im Z(E)).
\end{equation}
Then $S(M)=0$, and for  any  object $E \in \Ac$ with $Z(E)\neq 0$, we have
\begin{itemize}
\item[(i)] $\phi_Z (E) \prec \phi_Z (M) \Leftrightarrow S(E) \prec 0$.
\item[(ii)] $\phi_Z (E) = \phi_Z (M) \Leftrightarrow S(E) = 0$.  When $S(E)=0$, we have $\Im Z(E)=0 \Leftrightarrow \Im Z(M)=0$.
\item[(iii)] $\phi_Z (E) \succ \phi_Z (M) \Leftrightarrow S(E) \succ 0$, provided  $\Im Z (M) \succ 0$.
\end{itemize}
For (iv) and (v), assume further that $\Hom (\Ac_{\mathrm{ker} Z}, E)=0$:
\begin{itemize}
\item[(iv)] $E$ is $S$-semistable in $\Ac$ if and only if $\begin{cases} \phi_Z (E) = \phi_Z (M)\\
  \text{$E$ is $Z$-semistable in $\Ac$}\end{cases}$.
\item[(v)] If $E$ is $S$-stable in $\Ac$, then $\begin{cases} \phi_Z (E)= \phi_Z (M) \\
    E \text{ is $Z$-stable in $\Ac$}\end{cases}$.  If $Z$ is a polynomial stability function on $\Ac$, then the converse also holds.
\end{itemize}
\end{lem}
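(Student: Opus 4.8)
The plan is to repackage the determinant \eqref{eq:S-stabfuncdef} as a single trigonometric expression and then read off all five assertions from it, handling the degenerate cases ($Z(E)=0$, and phases equal to $1$) by hand. First I would note that \eqref{eq:S-stabfuncdef} is exactly $S(E)=\Im\big(\overline{Z(M)}\,Z(E)\big)$, so $S(M)=\Im|Z(M)|^{2}=0$ with no hypotheses (this is also just the determinant having equal rows). For an object $E\in\Ac$ with $Z(E)\neq 0$, I would write $Z(M)=r_{M}e^{i\pi\phi_Z(M)}$ and $Z(E)=r_{E}e^{i\pi\phi_Z(E)}$ as function germs for $v\gg 0$ --- legitimate since $Z(M),Z(E)$ are nonzero in $\CLoovc$, so $r_M,r_E\succ 0$ in $\RLoovc$ --- whence $S(E)=r_M r_E\,\sin\big(\pi(\phi_Z(E)-\phi_Z(M))\big)$.

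For (i)--(iii) the key observation is that, $M$ and $E$ being nonzero objects of $\Ac$ with nonzero central charge, both $\phi_Z(M)$ and $\phi_Z(E)$ lie in $(0,1]$ and are \emph{strictly positive} germs; hence $\phi_Z(E)-\phi_Z(M)$ takes values strictly inside $(-1,1)$ for $v\gg 0$, and on that interval $\sin(\pi t)$ has the same sign as $t$. Thus $\sgn S(E)=\sgn\big(\phi_Z(E)-\phi_Z(M)\big)$, which yields (i), (ii) and (iii) simultaneously (the hypothesis $\Im Z(M)\succ 0$ in (iii) is in fact not needed once this reformulation is in hand). The final clause of (ii) I would get by substitution: when $S(E)=0$ we have $\phi_Z(E)=\phi_Z(M)$, and since $r_M,r_E\succ 0$, $\Im Z(E)=0\iff\sin(\pi\phi_Z(E))=0\iff\phi_Z(E)=1\iff\phi_Z(M)=1\iff\Im Z(M)=0$.

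For (iv) and (v) I would first isolate the only role played by the hypothesis $\Hom(\Ac_{\mathrm{ker}Z},E)=0$: it forces $Z(E')\neq 0$ for every nonzero subobject $E'\subseteq E$ (in particular $Z(E)\neq 0$), since otherwise $E'\in\Ac_{\mathrm{ker}Z}$ and the inclusion $E'\hookrightarrow E$ is a nonzero element of $\Hom(\Ac_{\mathrm{ker}Z},E)$. With this in hand, (i)--(ii) apply to $E$ and to all of its nonzero subobjects, upgrading to the biconditional $S(E')\preceq 0\iff\phi_Z(E')\preceq\phi_Z(M)$ (resp.\ with $\prec$). Now I would unwind the definition of $S$-semistability from \ref{para:weightfuncdef}: the condition $S(E)=0$ becomes $\phi_Z(E)=\phi_Z(M)$ by (ii), and $S(E')\preceq 0$ for all nonzero proper subobjects $E'$ becomes $\phi_Z(E')\preceq\phi_Z(M)=\phi_Z(E)$, which by the standard seesaw property for phases of short exact sequences is precisely the $Z$-semistability of $E$; this gives (iv), and the forward direction of (v) is the same verbatim with $\prec$ in place of $\preceq$. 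For the converse in (v), when $Z$ is a polynomial stability function we have $\Ac_{\mathrm{ker}Z}=\{0\}$, so the $\Hom$-hypothesis holds automatically and every nonzero object has nonzero central charge; then $\phi_Z(E)=\phi_Z(M)$ gives $S(E)=0$ by (ii), and for each nonzero proper subobject $E'$ the $Z$-stability of $E$ plus the seesaw property gives $\phi_Z(E')\prec\phi_Z(E)=\phi_Z(M)$, hence $S(E')\prec 0$ by (i); so $E$ is $S$-stable. I would also record why triviality of the kernel is genuinely needed for this converse and the weaker $\Hom$-hypothesis does not suffice: a $Z$-stable $E$ with $\phi_Z(E)\prec 1$ may surject onto a nonzero object of $\Ac_{\mathrm{ker}Z}$, which violates the proper-quotient condition required for $S$-stability without contradicting $Z$-stability.

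The trigonometric heart of the argument is immediate; the step requiring real care is the bookkeeping of the degenerate cases together with pinning down exactly where each hypothesis enters --- in particular the asymmetry in part (v), where $\Hom(\Ac_{\mathrm{ker}Z},E)=0$ is enough for the forward implication but only triviality of $\Ac_{\mathrm{ker}Z}$ makes the converse true.
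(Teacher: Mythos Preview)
Your argument is correct and follows essentially the same route as the paper. Your identity $S(E)=r_M r_E\sin\big(\pi(\phi_Z(E)-\phi_Z(M))\big)$ is exactly the paper's ``signed area of the parallelogram'' interpretation, and your observation that the difference of phases lies strictly in $(-1,1)$ for $v\gg 0$ is the clean way to extract (i)--(iii) simultaneously; the paper does not comment on it, but you are right that the extra hypothesis $\Im Z(M)\succ 0$ in (iii) is not actually needed.

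One small point of presentation in (iv): when you invoke the ``standard seesaw property'' to pass between ``$\phi_Z(E')\preceq\phi_Z(E)$ for all nonzero proper $E'\subset E$'' and $Z$-semistability, note that the usual seesaw lemma assumes all three of $Z(E'),Z(E),Z(E'')$ are nonzero. Under the hypothesis $\Hom(\Ac_{\mathrm{ker}Z},E)=0$ you have $Z(E')\neq 0$, but $Z(E'')$ may vanish. The paper handles this by an explicit case split: if $Z(E'')=0$ then $Z(E')=Z(E)$ so $\phi_Z(E')=\phi_Z(E)$ in one direction, and $\phi_Z(E'')=1\succeq\phi_Z(E')$ in the other. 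Your argument goes through once this degenerate case is folded in, but it is worth making explicit since it is precisely the mechanism that later breaks the converse of (v) when $Z$ is only weak.
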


\begin{proof}
Let $\mu_Z$ denote the slope function constructed from $Z$ as in \ref{para:mugeneraldef}.    Note that for any two  objects $C, D \in \Ac$ such that $Z(C), Z(D)\neq 0$, we have $\mu_Z (C) \prec \mu_Z (D) \Leftrightarrow\phi_Z (C) \prec \phi_Z (D)$.

(i)-(iii): Since $Z(M), Z(E)$ are both nonzero, for  $v \gg 0$, the two by two determinant in  \eqref{eq:S-stabfuncdef} equals  the area of the parallelogram formed by $Z(M)(v)$ and $Z(E)(v)$ on the complex plane, given by $|Z(M)(v)| |Z(E)(v)|\sin{\theta}$ where $\theta$ is the angle from $Z(M)$ to $Z(E)$ measured in the counterclockwise direction.  The  equivalences in (i), the first part of (ii), and (iii) then follow immediately.

Note that  for any $0 \neq C\in \Ac$ we have  $\Im Z (C)=0 \Leftrightarrow \phi_Z (C)=1 \Leftrightarrow\mu_Z (C)=\infty$, regardless of whether $Z(C)$ itself is zero.  Thus the second part of (ii) follows from the first part of (ii).

(iv): Suppose $E$ is $S$-semistable in $\Ac$.  Then $S(E)=0$ by definition and $\phi_Z(E)=\phi_Z (M)$ by (ii).  Take any short exact sequence $0\to E' \to E \to E'' \to 0$ in $\Ac$ where $E', E'' \neq 0$.  By the $S$-semistability of $E$, we have $S(E') \preceq 0$.  By our assumption on $E$, we also have $Z(E') \neq 0$, and so replacing $E$ by $E'$ in (i) and (ii) gives $\phi_Z (E') \preceq \phi_Z (M) = \phi_Z (E)$.  Since $Z(E')$ is nonzero, this implies $\phi_Z (E') \preceq \phi_Z (E'')$.

For the converse, suppose $\phi_Z (E)=\phi_Z (M)$ and $E$ is $Z$-semistable in $\Ac$.  Take any $0 \neq E' \subsetneq E$ in $\Ac$ and complete it to a short exact sequence $0 \to E' \to E \to E'' \to 0$ in $\Ac$.  Then $E'' \neq 0$, and by the $Z$-semistability of $E$ we have $\phi_Z (E') \preceq \phi_Z (E'')$.  We also have $Z(E') \neq 0$ by our assumption on $E$.  We now have two cases:
\begin{itemize}
\item[(a)] If $Z(E'') =0$, then $Z(E')=Z(E)$ and so $\phi_Z (E')=\phi_Z (E)=\phi_Z (M)$, implying $S(E')=0$ by (ii).
\item[(b)] If $Z(E'') \neq 0$, then $Z(E'), Z(E'')$ are both nonzero, and the assumption $\phi_Z (E') \preceq \phi_Z (E'')$ implies $\phi_Z (E') \preceq \phi_Z (E)=\phi_Z (M)$, which in turn implies $S(E') \preceq 0$ by (i) and (ii).
\end{itemize}
Hence $E$ is $S$-semistable in $\Ac$.

(v) The first part of (v) follows from  the same proof as the first part of (iv), with the argument adapted in the obvious manner for $E$ being $S$-stable.

For the second part of (v), we assume that $Z$ is a polynomial stability function on $\Ac$.  This means that every nonzero object $C$ of $\Ac$ satisfies $Z(C)\neq 0$.  The proof for the second part of (iv) then applies with the obvious adjustments, the main difference being that since $E'' \neq 0$, we must have $Z(E'') \neq 0$, and so case (a) in the proof of (iv) does not occur.
\end{proof}

Lemma \ref{lem:S-stabfuncdef}(v) suggests that when $Z$ is a weak polynomial stability function on a heart $\Ac$ that is not a polynomial stability function, i.e.\ $\Ac_{\mathrm{ker} Z} \neq \{0\}$, the notion of $Z$-stability is weaker than the notion of $S$-stability in general (with $M, S$ chosen as in the Lemma).  A concrete example is as follows:

\begin{egsub}
Suppose $X$ is a smooth projective surface and $\omega$ an ample divisor on $X$.  Let $\Ac = \Coh (X)$ and $Z (E) = -\omega \ch_1(E) + i \ch_0(E)$.  Then $(Z,\Ac)$ is a weak polynomial stability condition on $D^b(X)$ with respect to $(0,1]$, and $\OO_X$ is a $Z$-stable object in $\Ac$.  Nonetheless, $Z$ is not a polynomial stability function on $\Ac$ since $\Ac_{\mathrm{ker} Z}=\Coh^{\leq 0}(X)$.  If we choose $M = \OO_X$  and define $S$ as in Lemma \ref{lem:S-stabfuncdef}, then $\OO_X$ is not $S$-stable because, for any 0-dimensional subscheme $W$ of $X$, if we write $I_W$ to denote its ideal sheaf then $0 \neq I_W \subsetneq \OO_X$ but $S(I_W)=S(\OO_X)=0$.
\end{egsub}

\begin{remsub}
Previous constructions that pass between  King's stability and Bridgeland stability can be found in \cite[6.2]{bridgeland2017scattering} and \cite[Lemma 7.1.3]{Bayer2017}.
\end{remsub}

Given any polynomial stability condition on a triangulated category, the following lemma shows that the associated weight function constructed in Lemma \ref{lem:S-stabfuncdef} always satisfies a refinement property.

\begin{lem}\label{lem:HNgivesrefinement}
Suppose $(Z,\Ac)$ is a weak polynomial stability condition with respect to $(0,1]$.  Let $M$ be a fixed  object of $\Ac$ such that $Z(M) \neq 0$, and let $S$ be the weight function \eqref{eq:S-stabfuncdef}.  Then for any polynomial phase function $\phi_0$ satisfying $0 \prec \phi_0 \preceq 1$, the weight function $S$ satisfies the refinement property with respect to the torsion pair
\[
  \left( \Pc_Z (\phi_0,1], \Pc_Z (0,\phi_0] \right)
\]
in $\Ac$.
\end{lem}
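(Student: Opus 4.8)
The plan is to decide which of the two refinement conditions to verify according to how the polynomial phase function $\phi_Z(M)$ sits relative to $\phi_0$, and then to read off both clauses of that condition from Lemma \ref{lem:S-stabfuncdef}. Set $(\Tc,\Fc):=(\Pc_Z(\phi_0,1],\,\Pc_Z(0,\phi_0])$. I will show that $S$ satisfies refinement-$0$ with respect to $(\Tc,\Fc)$ when $\phi_0\prec\phi_Z(M)$, and refinement-$1$ when $\phi_0\succeq\phi_Z(M)$. The key input is that, for an object $E$ with $Z(E)\neq 0$, Lemma \ref{lem:S-stabfuncdef}(i)--(iii) converts a comparison of $\phi_Z(E)$ with $\phi_Z(M)$ into a comparison of $S(E)$ with $0$ (the implication $\phi_Z(E)\succ\phi_Z(M)\Rightarrow S(E)\succ 0$ needing the extra hypothesis $\Im Z(M)\succ 0$, equivalently $\phi_Z(M)\prec 1$).

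I would establish first the sign condition on the ``opposite'' piece of the torsion pair, since the clause on $S$-semistable objects follows from it. For refinement-$0$: a nonzero $G\in\Fc$ is an iterated extension of $Z$-semistable objects whose phases lie in $(0,\phi_0]\subseteq(0,\phi_Z(M))$; each such object has nonzero central charge (its phase is not $1$) and hence negative $S$-value by Lemma \ref{lem:S-stabfuncdef}(i), so additivity of $S$ on short exact sequences gives $S(G)\prec 0$. For refinement-$1$: a nonzero $G\in\Tc$ is an iterated extension of $Z$-semistable objects of phase in $(\phi_0,1]\subseteq(\phi_Z(M),1]$, so Lemma \ref{lem:S-stabfuncdef}(iii) (with $\Im Z(M)\succ 0$) gives each of them positive $S$-value, whence $S(G)\succ 0$; the subcase $\phi_Z(M)=1$ is vacuous, since then $\phi_0=1$ and $\Tc=\Pc_Z(1,1]=\{0\}$.

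To conclude, let $E$ be a nonzero $S$-semistable object of $\Ac$ and let $0\to E_\Tc\to E\to E_\Fc\to 0$ be its decomposition with respect to $(\Tc,\Fc)$. In the refinement-$0$ case one has $E\notin\Fc$ (an object of $\Fc$ would have $S\prec 0$, contradicting $S(E)=0$), so if $E_\Fc\neq 0$ it is a nonzero proper quotient of $E$; the quotient form (ii$'$) of $S$-semistability then forces $S(E_\Fc)\succeq 0$, contradicting the sign condition just proved; hence $E_\Fc=0$, i.e.\ $E\in\Tc=\Ac\cap\Ac^\dagger$, exactly as refinement-$0$ requires. The refinement-$1$ case is the mirror image, with subobjects in place of quotients and condition (ii) in place of (ii$'$): one has $E\notin\Tc$, so a nonzero $E_\Tc$ would be a nonzero proper subobject with $S(E_\Tc)\preceq 0$, contradicting $S(E_\Tc)\succ 0$; hence $E=E_\Fc\in\Fc=\Ac\cap(\Ac^\dagger[-1])$.

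The step that needs genuine care is the bookkeeping with the kernel subcategory $\Ac_{\mathrm{ker}\, Z}$: its objects carry phase $1$, so they lie in $\Tc$ as soon as $\phi_0\prec 1$, yet they have $S$-value $0$; one has to check that they do not obstruct the refinement-$1$ sign clause, which is immediate precisely when $Z$ is an honest polynomial stability function ($\Ac_{\mathrm{ker}\, Z}=\{0\}$) or when the degenerate identities $\phi_Z(M)=1$, $\phi_0=1$ apply, and this is the point of the argument where one has to be most careful. Everything else reduces to additivity of $S$ and the sign dictionary of Lemma \ref{lem:S-stabfuncdef}(i)--(iii); the Harder--Narasimhan hypothesis is used only to guarantee that $(\Pc_Z(\phi_0,1],\Pc_Z(0,\phi_0])$ is genuinely a torsion pair in $\Ac$, which is what makes the truncation of $E$ above available.
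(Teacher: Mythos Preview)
Your proof is correct and your caveat about $\Ac_{\mathrm{ker}\,Z}$ is well taken. The paper's argument is organized differently: it first observes that any nonzero $S$-semistable $E$ has $S(E)=0$, invokes Lemma~\ref{lem:S-stabfuncdef}(ii) to get $\phi_Z(E)=\phi_Z(M)$, and then asserts that $E$ therefore sits in whichever piece of the torsion pair contains $\phi_Z(M)$; only afterwards does it check the sign clause, by comparing $\phi_Z(F)$ (for $F$ in the opposite piece) to $\phi_Z(E)=\phi_Z(M)$ and applying (i) or (iii). You reverse the order: you prove the sign clause first by breaking $G$ into its HN factors and summing, and only then locate the $S$-semistable objects via the torsion decomposition of $E$ and a contradiction against the sign clause.

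Your route is in fact a bit more robust. The paper's passage from ``$\phi_Z(E)=\phi_Z(M)$'' to ``$E$ lies in $\Pc_Z(\phi_0,1]$ or $\Pc_Z(0,\phi_0]$'' tacitly uses that $E$ is $Z$-semistable (so that $E\in\Pc_Z(\phi_Z(M))$), which by Lemma~\ref{lem:S-stabfuncdef}(iv) needs the extra hypothesis $\Hom(\Ac_{\mathrm{ker}\,Z},E)=0$; your torsion-decomposition argument sidesteps this. Likewise, by working HN-factor-by-factor you never need to worry about whether the total $Z(G)$ is nonzero. The kernel issue you flag in the refinement-$1$ case is genuine in the weak setting: a nonzero object of $\Ac_{\mathrm{ker}\,Z}$ lies in $\Tc$ (for $\phi_0\prec 1$) yet has $S$-value $0$, so the strict sign clause can fail. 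The paper's proof glosses over the same point. In the paper's applications this is harmless, since Lemma~\ref{lem:HNgivesrefinement} is only invoked under Configuration~III, where $Z$ is a genuine polynomial stability function and $\Ac_{\mathrm{ker}\,Z}=\{0\}$.
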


\begin{proof}
That $\left( \Pc_Z (\phi_0,1], \Pc_Z (0,\phi_0] \right)$ is a torsion pair in $\Ac$ follows from the HN property of $Z$.
For any nonzero $S$-semistable object $E$ in $\Ac$, we have $S(E)=0$ and hence $\phi_Z(E)=\phi_Z(M)$ by Lemma \ref{lem:S-stabfuncdef}(ii).  Depending on whether $\phi_Z (E) \succ \phi_0$ or $\phi_Z (E) \preceq \phi_0$ (where $\phi_Z (E)$ and $\phi_0$ are comparable by  Lemma \ref{lem:seriescomparability}), the object $E$ lies in either $\Pc_Z (\phi_0,1]$ or $\Pc_Z (0,\phi_0]$, respectively.

Lastly, suppose every nonzero $S$-semistable object $E$ in $\Ac$ lies in $\Pc_Z (\phi_0,1]$ (resp.\ $\Pc_Z (0,\phi_0]$); then for every nonzero object $F$ in $\Pc_Z (0,\phi_0]$ (resp.\ $\Pc_Z (\phi_0,1]$), we have $\phi_Z(E) \succ \phi_Z (F)$ (resp.\ $\phi_Z (E) \prec \phi_Z (F)$), which is equivalent to $0 =S(E)\succ S(F)$ (resp.\ $0 =S(E)\prec S(F)$) by Lemma \ref{lem:S-stabfuncdef}(i) (resp.\ Lemma \ref{lem:S-stabfuncdef}(iii)).  Hence $S$ satisfies the refinement property with respect to $\left( \Pc_Z (\phi_0,1], \Pc_Z (0,\phi_0] \right)$.
\end{proof}

\section{Polynomial stability and Configuration III}\label{sec:configIII}

In this section, we give a configuration of weak polynomial stability functions, and prove a general `preservation of stability' result under this configuration without assuming the Harder-Narasimhan property.

\paragraph[Group actions on Bridgeland stability] For \label{para:gpactionBri} a triangulated category $\Dc$, recall that the space $\Stab (\Dc)$ of Bridgeland stability conditions on $\Dc$ possesses a right action by the group $\wt{\mathrm{GL}}^+\!(2,\mathbb{R})$, the universal covering space of $\mathrm{GL}^+\!(2,\mathbb{R})$, and a left action by $\Aut (\Dc)$, the group of exact autoequivalences of $\Dc$, and that these two actions commute (see \cite[Lemma 8.2]{StabTC} or \cite[Remark 5.14]{MSlec}).   Elements of $\wt{\mathrm{GL}}^+\!(2,\mathbb{R})$ can be considered as pairs $(T,g)$ where $T : \mathbb{R}^2 \to \mathbb{R}^2$ is an orientation-preserving $\mathbb{R}$-linear isomorphism, and $g: \mathbb{R} \to \mathbb{R}$ an increasing map such that $g(x+1)=g(x)+1$ for all $x \in \mathbb{R}$, with the compatibility that the induced maps  on $(\mathbb{R}^2\setminus \{0\})/\mathbb{R}_{>0} =  \mathbb{R}/2\mathbb{Z}$ are the same.

In particular, if $(Z, \Ac)$ is a  Bridgeland stability condition with respect to $(a,a+1]$ on a triangulated category $\Dc$ and $\Pc$ denotes its slicing, then the data of $(Z,\Ac)$  together with the data $(a,a+1]$ are  determined by the pair $(Z,\Pc)$, and vice versa \cite[Proposition 5.3]{StabTC}.  Given an element $(T,g)$ of $\wt{\mathrm{GL}}^+\!(2,\mathbb{R})$, we set $(Z,\Pc)\cdot (T,g)=(Z',\Pc')$ where   $Z'(-) = T^{-1}\cdot (Z(-))$ and $\Pc'(x)=\Pc(g(x))$ for all $x\in \mathbb{R}$.  On the other hand, given an element $\Phi \in \Aut (\Dc)$, we set $\Phi\cdot (Z,\Pc)=(Z'',\Pc'')$  where $Z'' = Z \circ (\Phi^K)^{-1}$ and $\Pc'' (x) = \Phi (\Pc (x))$ for all $x \in \mathbb{R}$.  Using the heart notation, we can also say $\Phi \cdot (Z,\Pc)$ is the Bridgeland stability condition $(Z'',\Phi (\Pc (a,a+1]))$ with respect to $(a,a+1]$.

We will now extend these group actions to polynomial stability conditions.

\paragraph[Induced map on polynomial phase functions] We \label{para:inducedGammaP} define  induced maps on the set of polynomial phase functions using certain elements of  $\mathrm{GL} (2,\RLoovc)$.    Given an element $T \in \mathrm{GL} (2,\RLoovc)$, there exists $v_0 \in \mathbb{R}_{>0}$ such that  $T_v := T(v)$ lies in  $\mathrm{GL} (2,\mathbb{R})$ for all $v \geq v_0$.  This defines a path
\[
  \mathrm{Pa}(T) : [v_0, \infty) \to \mathrm{GL}(2,\mathbb{R}) : v \mapsto T_v
\]
in $\mathrm{GL} (2,\mathbb{R})$.  We now set
\[
  \GLlp = \{ T \in \mathrm{GL}(2,\RLoovc) : T_v \in \mathrm{GL}^+(2,\mathbb{R}) \text{\quad for $v \gg 0$}\}.
\]

\subparagraph Suppose $T\in \GLlp$.  \label{para:liftedpath} There exists $v_1 \in \mathbb{R}_{>0}$ such that $T_v\in \mathrm{GL}^+\!(2,\mathbb{R})$ for all $v \geq v_1$, i.e.\ by restricting the domain if necessary, we can assume the path $\mathrm{Pa}(T)$ is a path in $\mathrm{GL}^+\!(2,\mathbb{R})$, and hence can be lifted to a path in the universal covering space $\wt{\mathrm{GL}}^+\!(2,\mathbb{R})$ of $\mathrm{GL}^+\!(2,\mathbb{R})$:
\begin{equation}\label{eq:liftedpath}
  \Patilde (T) : [v_1,\infty) \to \wt{\mathrm{GL}}^+\!(2,\mathbb{R}) : v \mapsto (T_v,g_v).
\end{equation}
Now, given a polynomial phase function $\phi$, take any $0 \neq f\in \CLoovc$ that gives rise to $\phi$, i.e.\ any $f$ satisfying
\[
  f(v) \in \mathbb{R}_{>0} e^{i\pi \phi (v)} \text{\quad for $v\gg 0$}.
\]
By regarding $\CLoovc$ as an $\RLoovc$-vector space in the obvious manner, elements of $\mathrm{GL} (2,\RLoovc)$ act on $\CLoovc$ from the left and  we have
\[
  (Tf)(v) = T_v  \cdot f(v) \in \mathbb{R}_{>0}e^{i\pi g_v (\phi (v))}\text{\quad for $v\gg 0$}.
\]
As a result, $v\mapsto g_v(\phi (v))$ defines a polynomial phase function.  Writing $P$ to denote the set of all polynomial phase functions,  we obtain an order-preserving bijection $\Gamma_T : P \to P : \phi \mapsto \phi'$ where
\[
  \phi' (v) = g_v (\phi (v)) \text{ for $v \gg 0$}.
\]
In this notation, we have
\[
  (Tf)(v) \in \mathbb{R}_{>0} \cdot e^{i\pi (\Gamma_T \phi)(v)} \text{\quad for $v \gg 0$}.
\]
Note that the definition of $\Gamma_T$ is independent of the choice of $f$ since $T$ is a $\RLoovc$-linear automorphism of $\CLoovc$.  On the other hand, the definition of $\Gamma_T$ is dependent on the choice of the lifted path $\Patilde (T)$ in $\wt{\mathrm{GL}}^+\!(2,\mathbb{R})$ (and in particular, dependent on the choice $\{g_v\}_v$).

It is clear, that for any $T \in \GLlp$, we have $\Gamma_T(\phi+1)=\Gamma_T(\phi)+1$.

\subparagraph Let  \label{para:Torderpres}  $\Dc$ be a triangulated category  and $Z : K(\Dc) \to \CLoovc$ a weak polynomial stability function  on a heart $\Ac$ with respect to $(\phi_0, \phi_0+1]$, for some polynomial phase function  $\phi_0$.  For any $T \in \GLlp$, set $(TZ)(E)=T\cdot Z(E)$.  Then  $TZ$ is a weak polynomial stability function  on $\Ac$   with respect to $(\Gamma_T(\phi_0), \Gamma_T(\phi_0)+1]$ for any choice of $\Gamma_T$ from \ref{para:liftedpath}.  Moreover, if $E_1, E_2$ are nonzero objects of $\Ac$, then  $\phi_Z (E_1) \prec \phi_Z (E_2)$ if and only if $\phi_{TZ}(E_1) \prec \phi_{TZ}(E_2)$.  Additionally,  a nonzero object $E\in \Dc$ is $Z$-semistable (resp.\ $Z$-stable) in $\Ac$ of phase $\phi$ if and only if it is $(TZ)$-semistable (resp.\ $(TZ)$-stable) in $\Ac$ of phase $\Gamma_T(\phi)$.

\subparagraph[Group of lifted paths] Let \label{para:gpactionliftedpaths} $\Patilde$ denote the set of germs of all possible lifted paths of the form $\Patilde (T)$ in \eqref{eq:liftedpath}, for some $T \in \GLlp$.  That is, elements of  $\Patilde$ are represented by paths in $\wt{\mathrm{GL}}^+\! (2,\mathbb{R})$ of the form \eqref{eq:liftedpath}, and two such paths
\begin{align*}
 \Patilde (T) &: [v_1,\infty) \to \wt{\mathrm{GL}}^+\!(2,\mathbb{R}) : v \mapsto (T_v,g_v), \\
 \Patilde (T') &: [v_2,\infty) \to \wt{\mathrm{GL}}^+\!(2,\mathbb{R}) : v \mapsto (T'_v,g'_v)
\end{align*}
are identified if, for some $v_3 \geq \max{\{v_1, v_2\}}$, we have $\Patilde(T)|_{[v_3,\infty)} = \Patilde(T')|_{[v_3,\infty)}$.  The set $\Patilde$ has a natural group structure with multiplication given by
\[
  \Patilde (T)\Patilde (T') : [\max{\{v_1,v_2\}}, \infty) \to \wt{\mathrm{GL}}^+\!(2,\RR) : v \mapsto (T_v,g_v)(T'_v,g'_v)
\]
which is clearly a lift of the path $[\max{\{v_1,v_2\}},\infty) \to \mathrm{GL}^+(2,\RR) : v \mapsto T_v T'_v$.

\subparagraph We \label{para:gpactionpolystabdefn} are now ready to extend  the group actions on Bridgeland stability conditions in \ref{para:gpactionBri} to the setting of polynomial stability conditions.  Recall from \ref{para:slicing} the definition of the set $\Stabpol (\Dc)$ of polynomial stability conditions on $\Dc$.  The group $\Patilde$ acts on $\Stabpol (\Dc)$ as follows: given an element
\[
\Patilde (T) : [v_1,\infty) \to \wt{\mathrm{GL}}^+\!(2,\RR) : v \mapsto (T_v,g_v)
\]
 of $\Patilde$ and an element $(Z,\Pc)$ of $\Stabpol (X)$ where $\Pc$ denotes the slicing, we set $(Z,\Pc)\cdot \Patilde (T) = (Z',\Pc')$ where $Z' = T^{-1} Z (-)$ and $\Pc' (\phi) = \Pc (\Gamma_T (\phi))$.  On the other hand, if $\Phi : \Dc \to \Uc$ is an exact equivalence of triangulated categories,  $\Phi \cdot (Z,\Pc) =(Z'',\Pc'')$ is the polynomial stability condition on $\Uc$ where $Z'' = Z \circ (\Phi^K)^{-1}$ and $\Pc'' (\phi ) = \Phi (\Pc(\phi))$.  When $\Dc=\Uc$, it is easy to check that the actions of $\Patilde$ and $\Aut (\Dc)$ on $\Stabpol (\Dc)$ commute.

\subparagraph For \label{para:marker1} any $r \in\mathbb{R}$, we will set $\HH[r] := \mathbb{R}_{>0}e^{i\pi(r, r+1]}$ and simply write $\mathbb{H}$ for  $\mathbb{H}[0]$, the  usual "upper half plane" in the literature on stability conditions.  For any $f\in \CLoovc$ and any polynomial phase function  $\psi$, we will  write $f \in \HH [\psi]$ to mean $f(v) \in \HH [\psi (v)]$ for $v \gg 0$.

\begin{egsub}[Dilation]
Suppose $d_1, d_2$ are elements of $\RLoovc$ such that $d_1, d_2 \succ 0$.  Then $T = \begin{pmatrix} d_1 & 0 \\ 0 & d_2 \end{pmatrix} \in \mathrm{GL}(2,\RLoovc)$ gives an $\RLoovc$-linear map (also denoted by $T$ by abuse of notation)
\begin{align*}
  T : \CLoovc &\to \CLoovc \\
    f&\mapsto( d_1 \Re f )+ i (d_2 \Im f)
\end{align*}
and $T$ is an element of $\GLlp$.  Let $\Gamma_T$ denote the choice of induced map $P \to P$ on the set of polynomial phase functions, such that $\Gamma_T$ fixes elements of $\tfrac{1}{2}\mathbb{Z}$.  Notice that for every  $v \gg 0$, the map $T(v) : \mathbb{R}^2\to \mathbb{R}^2$  satisfies the following properties:
\begin{enumerate}
\item  $T(v)$  preserves each half-axis.
\item $T(v)$ preserves each quadrant.
\end{enumerate}
Under our assumption that $\Gamma_T$ fixes elements of $\tfrac{1}{2}\mathbb{Z}$, property (2) implies  that if $\phi$ is a polynomial phase function such that $\phi \in \HH [k]$ for some $k \in \tfrac{1}{2}\mathbb{Z}$, then $\Gamma_T (\phi) \in \HH [k]$ as well.
\end{egsub}

\begin{egsub}[Rotation]\label{eg:mulbynegi}
Consider the $\RLoovc$-linear map $T : \CLoovc \to \CLoovc$ given by multiplication by $(-i)$, i.e.\
\[
  Tf := (-i)f = (\Im f) + i (-\Re f)  \text{\quad for $f \in \CLoovc$}.
\]
Then we can choose $\Gamma_T$ to be $\Gamma_T (\phi)=\phi-\tfrac{1}{2}$.  Also,  under the identification $\CLoovc \to (\RLoovc)^{\oplus 2} : f \mapsto \begin{pmatrix} \Re f \\ \Im f \end{pmatrix}$, we have
\[
  Tf = T \begin{pmatrix} \Re f \\ \Im f \end{pmatrix} = \begin{pmatrix} 0 & 1 \\ -1 & 0 \end{pmatrix} \begin{pmatrix} \Re f \\ \Im f \end{pmatrix}.
\]
Clearly, $T$ lies in $\mathrm{GL}^+\!(2,\mathbb{R})\subset \GLlp$.
\end{egsub}

\paragraph[Configurations III-w and III] We  \label{para:configIII} say we are in Configuration III-w when we have the following setting:
\begin{itemize}
\item[(a)] $\Phi : \Dc \to \Uc$ and $\Psi : \Uc \to \Dc$ are exact equivalences between triangulated categories $\Dc, \Uc$ satisfying $\Psi \Phi \cong \mathrm{id}_\Dc [-1]$ and $\Phi \Psi \cong \mathrm{id}_\Uc [-1]$.
\item[(b)] $\Ac, \Bc$ are hearts of bounded t-structures on $\Dc, \Uc$, respectively, such that $\Phi \Ac \subset D^{[0,1]}_\Bc$.
\item[(c)] There exist \emph{weak} polynomial stability functions $Z_\Ac : K(\Dc) \to \CLoovc$ and $Z_{\Bc} : K(\Uc) \to \CLoovc$ on $\Ac, \Bc$  with respect to $(a,a+1], (b,b+1]$, respectively, together with an element $T \in \GLlp$ such that
    \begin{equation}\label{eq:AG45-108-17}
 Z_{\Bc} (\Phi E) = T   Z_{\Ac}(E) \text{\quad for all $E\in \Dc$}
\end{equation}
    and a choice of an induced map $\Gamma_T$ such that
    \begin{equation}\label{eq:AG47-23-1}
    a \prec \Gamma_T^{-1}(b)  \prec a+1.
    \end{equation}
\end{itemize}
We   say we are in Configuration III if conditions (a), (b), and (c) all hold, with the additional condition that the stability functions $Z_\Ac, Z_\Bc$ in (c) are polynomial stability functions, i.e.\ $Z_\Ac(C)\neq 0$ (resp.\ $Z_\Bc (C) \neq 0$) for every nonzero object $C$ in $\Ac$ (resp.\  $\Bc$).  We do not impose the HN property in either Configuration III-w or Configuration III.

Note that  Conditions (a) and (b) in Configuration III-w  are exactly the same as Conditions (a) and (b) in Configuration II in \ref{para:config2}.  Also, imposing \eqref{eq:AG45-108-17} is equivalent to imposing commutativity in the lower square of the following  diagram:
\[
  \xymatrix{
  \Dc \ar[r]^\Phi \ar[d]_{[\, \, ]} & \Uc \ar[d]^{[\,\,]} \\
  K(\Dc) \ar[r]^{\Phi^K} \ar[d]_{Z_\Ac} & K(\Uc) \ar[d]^{Z_\Bc} \\
  \CLoovc \ar[r]^T & \CLoovc
  }.
\]

Although all the applications in Section \ref{sec:ellcur} and later in this article use Configuration III, we prove some results for Configuration III-w in this section as they may be useful in future work.

\subparagraph[Notation] Under the setting of Configuration III-w, for any object $0 \neq E \in \Bc$ and any $j \in \mathbb{Z}$, we will set $\phi_\Bc ( E[j]) =\phi_\Bc (E) + j$.  Also, for convenience, we will write $\phi_\Ac, \phi_\Bc$ for the phase functions $\phi_{Z_\Ac}, \phi_{Z_\Bc}$, respectively.

\begin{lem}\label{lem:phiGammaTeq}
 Assume Configuration III-w.  For any  object $E$ in $\Ac$ with $Z(E) \neq 0$ and any $j \in \mathbb{Z}$,  we have
\[
  Z_\Bc (E [j])\in \mathbb{R}_{>0}e^{i\pi \phi_\Bc (E[j])}.
\]
Moreover, if $E$  is $\Phi_\Bc$-WIT$_j$ where $j=0$ or $1$, then
\begin{equation}\label{eq:AG48-20-1}
\phi_\Bc (\Phi E)=\Gamma_T(\phi_\Ac (E)).
\end{equation}
\end{lem}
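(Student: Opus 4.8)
\emph{Proof proposal.} The first assertion is formal. By construction (see \ref{para:weakpgstabss}) the phase function $\phi_\Bc$ is set up so that any nonzero object on which $Z_\Bc$ does not vanish lies in $\mathbb{R}_{>0}e^{i\pi\phi_\Bc(-)}$ for $v\gg 0$; a shift by $j$ multiplies $Z_\Bc$ by $(-1)^j=e^{i\pi j}$, so with the convention $\phi_\Bc(-[j])=\phi_\Bc(-)+j$ fixed just before the lemma the stated inclusion drops out. The substance is in \eqref{eq:AG48-20-1}, and I would prove it by computing $Z_\Bc(\Phi E)$ in two different ways and reconciling the outcomes.

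First, by \eqref{eq:AG45-108-17} we have $Z_\Bc(\Phi E)=TZ_\Ac(E)$; since $E\in\Ac$ with $Z_\Ac(E)\neq 0$, the definition of $\phi_\Ac=\phi_{Z_\Ac}$ gives $Z_\Ac(E)(v)\in\mathbb{R}_{>0}e^{i\pi\phi_\Ac(E)(v)}$ for $v\gg 0$ with $a\prec\phi_\Ac(E)\preceq a+1$, so the defining property of the induced map $\Gamma_T$ in \ref{para:liftedpath} shows $Z_\Bc(\Phi E)(v)\in\mathbb{R}_{>0}e^{i\pi(\Gamma_T\phi_\Ac(E))(v)}$ for $v\gg 0$. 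Second, the WIT$_j$ hypothesis ($j\in\{0,1\}$) means $H:=(\Phi E)[j]$ lies in $\Bc$; it is nonzero and $Z_\Bc(H)=(-1)^jTZ_\Ac(E)\neq 0$ since $T$ is invertible, hence $Z_\Bc(H)(v)\in\mathbb{R}_{>0}e^{i\pi\phi_\Bc(H)(v)}$ with $b\prec\phi_\Bc(H)\preceq b+1$, and the shift convention gives $\phi_\Bc(\Phi E)=\phi_\Bc(H)-j$ together with $Z_\Bc(\Phi E)(v)\in\mathbb{R}_{>0}e^{i\pi\phi_\Bc(\Phi E)(v)}$ for $v\gg 0$. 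Comparing the two descriptions, $(\Gamma_T\phi_\Ac(E))(v)-\phi_\Bc(\Phi E)(v)\in 2\mathbb{Z}$ for $v\gg 0$, and a continuous $2\mathbb{Z}$-valued function germ is a fixed even integer $2n$.

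It then remains to show $n=0$, which is the crux and the only place \eqref{eq:AG47-23-1} enters. Applying the order-preserving map $\Gamma_T$, which satisfies $\Gamma_T(\psi+1)=\Gamma_T(\psi)+1$, to $a\prec\phi_\Ac(E)\preceq a+1$ gives $\Gamma_T(a)\prec\Gamma_T(\phi_\Ac(E))\preceq\Gamma_T(a)+1$, and to \eqref{eq:AG47-23-1} gives $\Gamma_T(a)\prec b\prec\Gamma_T(a)+1$, whence $-1\prec\Gamma_T(a)-b\prec 0$. Since $\phi_\Bc(\Phi E)=\phi_\Bc(H)-j$ lies in the function-germ interval $(b-j,\,b+1-j]$, substituting $\Gamma_T(\phi_\Ac(E))=\phi_\Bc(\Phi E)+2n$ into these interval constraints and using $-1\prec\Gamma_T(a)-b\prec 0$ forces, for $v\gg 0$, the strict bounds $-2<2n<1$ when $j=0$ and $-1<2n<2$ when $j=1$; as $2n$ is an even integer, $n=0$ in both cases, which is exactly \eqref{eq:AG48-20-1}.

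\emph{Main obstacle.} All of the above except the step $n=0$ is routine unwinding of definitions. The real difficulty is that a central charge determines a phase function only modulo $2\mathbb{Z}$, so to conclude equality rather than mere congruence one must invoke \emph{all} of the normalizations simultaneously: that $\phi_\Ac(E)$ lies in the fundamental window $(a,a+1]$ of $Z_\Ac$, that $\phi_\Bc(H)$ lies in $(b,b+1]$, and, crucially, the compatibility \eqref{eq:AG47-23-1}, which places $\Gamma_T^{-1}(b)$ \emph{strictly} inside $(a,a+1)$, to squeeze the ambiguous even integer down to $0$. The delicate bookkeeping is tracking the open versus half-open endpoints and the strictness in \eqref{eq:AG47-23-1} as they propagate through the chain of inequalities; one should also observe that it is precisely the WIT$_j$ hypothesis that makes $\phi_\Bc(\Phi E)$ well defined, since it pins $\Phi E$ into a single shift $\Bc[-j]$ of $\Bc$.
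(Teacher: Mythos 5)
Your proposal is correct and follows essentially the same route as the paper: both derive the congruence $\Gamma_T(\phi_\Ac(E)) \equiv \phi_\Bc(\Phi E) \pmod{2\mathbb{Z}}$ from \eqref{eq:AG45-108-17} together with the defining property of $\Gamma_T$, and then kill the even-integer ambiguity by intersecting the two length-one phase windows and invoking the strictness in \eqref{eq:AG47-23-1}. The only cosmetic difference is that you transport the constraint to the $\Bc$-side via $\Gamma_T$ while the paper pulls it back to the $\Ac$-side via $\Gamma_T^{-1}$.
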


\begin{proof}
The first assertion is clear.  To see the second assertion, suppose $E$ is $\Phi_\Bc$-WIT$_j$.  Using the first assertion together with \eqref{eq:AG45-108-17}, we obtain
\[
  (-1)^j TZ_\Ac (E) = (-1)^j Z_\Bc (\Phi E)=Z_\Bc (\Phi E [j]) \in \mathbb{R}_{>0} \cdot e^{i\pi \phi_\Bc (\Phi E [j])}
\]
while we also have $TZ_\Ac (E) \in \mathbb{R}_{>0} \cdot e^{i \pi (\Gamma_T (\phi_\Ac (E)))}$.  Altogether, these give
\[
  \Gamma_T (\phi_\Ac (E)) + j + 2k = \phi_\Bc (\Phi E [j]) \text{\quad for some $k \in \mathbb{Z}$}.
\]
We would be done if we can show that $k=0$.

Since $\Phi E \in \Bc [-j]$, we have
\begin{equation}\label{eq:AG48-20-2}
\phi_\Bc (\Phi E) \in (b-j,b+1-j].
\end{equation}
On the other hand, since $E\in \Ac$, we have
\begin{equation}\label{eq:AG48-20-3}
\Gamma_T (\phi_\Ac (E))+2k \in (\Gamma_T(a)+2k,\Gamma_T (a+1)+2k].
\end{equation}
Hence the intersection of the intervals
\[
(b-j,b+1-j] \cap (\Gamma_T(a)+2k,\Gamma_T (a+1)+2k]
\]
must be nonempty, i.e.\
\[
(\Gamma_T^{-1}(b)-j,\Gamma_T^{-1}(b)+1-j] \cap (a+2k,a+1+2k]
\]
is nonempty.  The assumption \eqref{eq:AG47-23-1} now gives $(\Gamma_T^{-1}(b)-j,\Gamma_T^{-1}(b)+1-j] \subset (a-j,a+2-j)$, so in order for the above intersection to be nonempty, we are forced to have $k=0$, proving \eqref{eq:AG48-20-1} for $\Phi_\Bc$-WIT$_j$ objects $E$ in $\Ac$.
\end{proof}

\begin{remsub}
The proof of Lemma \ref{lem:phiGammaTeq} stills works when we have a non-strict inequality on the right of \eqref{eq:AG47-23-1}, i.e.\
\[
  a \prec \Gamma_T^{-1}(b) \preceq a+1.
\]
\end{remsub}

\paragraph[Symmetry in Configuration III-w] Assume \label{para:ABconfigsymm} Configuration III-w.  Note that conditions (a) and (b) in  \ref{para:configIII} are symmetric in $\Phi, \Psi$ and $\Ac, \Bc$ by Lemma \ref{para:AcBcPhitilt}.  To see the symmetry in \ref{para:configIII}(c), let us write $F = \Phi E$ so that  $E=\Psi F [1]$.  Then   \eqref{eq:AG45-108-17} yields $T^{-1} Z_\Bc (F) = Z_\Ac (\Psi F [1])$, i.e.\
\[
Z_\Ac (\Psi F) = (-T^{-1})Z_\Bc (F)
\]
where $\wh{T} := -T^{-1}=(-1)T^{-1}$ also lies in $\GLlp$.  If we choose $\Gamma_{\wh{T}} : P \to P$ to be the induced map on polynomial phase functions (see \ref{para:liftedpath}) such that $\Gamma_{\wh{T}} = (\Gamma_T)^{-1} -1$, then $(\Gamma_{\wh{T}})^{-1}=\Gamma_T + 1$ and  the relation \eqref{eq:AG47-23-1} implies
\[
  \Gamma_T (a) \prec b \prec \Gamma_T (a) + 1,
\]
which in turn implies
\[
  b \prec \Gamma_T (a)+1  \prec b+1
\]
where $\Gamma_T (a)+1= (\Gamma_{\wh{T}})^{-1} (a)$.  Thus overall,  Configuration III-w is symmetric in the following pairs: $\Phi$ and $\Psi$, $\Ac$ and $\Bc$, $Z_\Ac$ and $Z_\Bc$, $a$ and $b$, $T$ and $\wh{T}$ and also $\Gamma_T$ and $\Gamma_{\wh{T}}$.

\begin{thm}\label{thm:configIII-w}
Assume Configuration III-w.  Then
\begin{itemize}
\item[(i)] For any $\Phi_\Bc$-WIT$_0$ object $E$  and any $\Phi_\Bc$-WIT$_1$ object $F$ in $\Ac$ such that where $Z_\Ac (E), Z_\Ac (F) \neq 0$, we have
    \[
      \phi_{\Ac} (F) \preceq \Gamma_T^{-1}(b)  \prec \phi_\Ac (E).
    \]
\item[(ii)] Suppose $E$ is a   $Z_\Ac$-semistable object in $\Ac$ such that  $Z_\Ac (E)\neq 0$ and
    \begin{itemize}
    \item the only strict $\Ac$-injection from an object in $\Ac_{\mathrm{ker} Z_\Ac} \cap W_{0,\Phi,\Ac,\Bc}$ to $E$ is the zero morphism;
    \item the only strict $\Ac$-surjection from $E$ to an object in $\Ac_{\mathrm{ker} Z_\Ac} \cap W_{1,\Phi,\Ac,\Bc}$ is the zero morphism.
    \end{itemize}
    Then $E$ is either $\Phi_\Bc$-WIT$_0$ or $\Phi_\Bc$-WIT$_1$.
\item[(iii)]  Suppose  $E\in \Ac$ satisfies the same hypotheses as in (ii).  Then
\[
 \text{$E$ is $Z_\Ac$-semistable }\Rightarrow
 \text{ $\Phi E$ is $Z_\Bc$-semistable}.
\]
\end{itemize}
\end{thm}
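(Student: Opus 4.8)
The plan is to deduce the implication from parts (i) and (ii), from Lemma \ref{lem:phiGammaTeq}, and from the fact that $\Phi$ restricts to an exact equivalence of abelian categories $\Phi\colon\Ac^\dagger\xrightarrow{\ \sim\ }\Bc$ (cf.\ \ref{para:convIItoI}), where $\Ac^\dagger:=\langle\Fc[1],\Tc\rangle$ is the tilt of $\Ac$ at the torsion pair $(\Tc,\Fc):=(W_{0,\Phi,\Ac,\Bc},W_{1,\Phi,\Ac,\Bc})$ supplied by Lemma \ref{lem:AG46-80-1}. First I would apply part (ii) to conclude that $E$ is $\Phi_\Bc$-WIT$_j$ for some $j\in\{0,1\}$, and note that $Z_\Bc(\Phi E)=T\,Z_\Ac(E)\neq 0$ by \eqref{eq:AG45-108-17} and the invertibility of $T$, so the task reduces to showing that the nonzero object $\Phi E[j]\in\Bc$ is $Z_\Bc$-semistable. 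The engine of the argument is that $\Gamma_T$ is an order-preserving bijection with $\Gamma_T(\psi+1)=\Gamma_T(\psi)+1$ and that, by Lemma \ref{lem:phiGammaTeq}, $\phi_\Bc(\Phi X)=\Gamma_T(\phi_\Ac(X))$ for every WIT$_0$ or WIT$_1$ object $X$ with $Z_\Ac(X)\neq 0$; hence any phase inequality established in $\Ac$ transports to $\Bc$, while part (i) keeps the $\Ac$-phases of WIT$_0$ objects strictly above, and those of WIT$_1$ objects weakly below, the separating value $\Gamma_T^{-1}(b)\in(a,a+1)$ (the inclusion being \eqref{eq:AG47-23-1}).

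For $j=0$ I would take an arbitrary nonzero proper $\Bc$-subobject $M\subseteq\Phi E$, transport the inclusion along $\Phi^{-1}$ to a short exact sequence $0\to M'\to E\to N'\to 0$ in $\Ac^\dagger$, observe that $M'$ must lie in $\Tc$ (the torsion-free class of $\Ac^\dagger$, which is closed under subobjects), and then take $\Ac$-cohomology of the triangle $M'\to E\to N'\to M'[1]$ to obtain a four-term exact sequence $0\to N'_1\to M'\to E\to N'_0\to 0$ in $\Ac$ with $N'_1:=\Hc^{-1}_\Ac(N')\in\Fc$ and $N'_0:=\Hc^0_\Ac(N')\in\Tc$. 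Factoring $M'\to E$ through its $\Ac$-image $I$ (which lies in $\Tc$ since $\Tc$ is closed under quotients), the hypotheses in (ii) --- together with the fact that $\Ac_{\mathrm{ker}Z_\Ac}$ is a Serre subcategory --- force $Z_\Ac(I)$ and $Z_\Ac(M')$ to be nonzero; then part (i) applied to $N'_1\in\Fc$ and $I\in\Tc$ gives $\phi_\Ac(M')\preceq\phi_\Ac(I)$, and the $Z_\Ac$-semistability of $E$ applied to $0\to I\to E\to N'_0\to 0$ gives $\phi_\Ac(I)\preceq\phi_\Ac(E)\preceq\phi_\Ac(N'_0)$. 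Pushing these inequalities forward with $\Gamma_T$ and Lemma \ref{lem:phiGammaTeq}, and comparing $\phi_\Bc(M)$ with the phases of the two pieces $\Phi N'_0$ and $\Phi N'_1[1]$ of the quotient $N=\Phi N'$, yields $\phi_\Bc(M)\preceq\phi_\Bc(N)$; since $M$ was arbitrary, $\Phi E$ is $Z_\Bc$-semistable. The case $j=1$ is the mirror image: now $E[1]$ sits in the torsion class $\Fc[1]$ of $\Ac^\dagger$, so $\Bc$-quotients of $\Phi E[1]$ pull back to WIT$_1$ objects, and the same cohomology-plus-(i)-plus-semistability computation (with the roles of the two hypotheses of (ii) interchanged) gives the phase inequality for every $\Bc$-quotient of $\Phi E[1]$.

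The step I expect to be the main obstacle is the careful handling of the degenerate objects: the subquotients on which $Z_\Ac$ (or $Z_\Bc$) vanishes, and the borderline objects of maximal $\Ac$-phase $a+1$. It is exactly here that the two hypotheses of (ii) are indispensable --- to prevent a $\Tc$-subobject of $E$, respectively an $\Fc$-quotient of $E$, from lying in $\Ac_{\mathrm{ker}Z_\Ac}$ --- and one also needs the Serre property of $\Ac_{\mathrm{ker}Z_\Ac}$, the precise orientation of the torsion pair $(\Tc,\Fc)$, and the strict inequalities in \eqref{eq:AG47-23-1} and part (i) (which, for instance, prevent a WIT$_1$ object of maximal phase from occurring where a phase comparison would otherwise fail). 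Finally, I do not expect the symmetry of Configuration III-w recorded in \ref{para:ABconfigsymm} to reduce $j=1$ to $j=0$ directly: interchanging $\Phi$ with $\Psi$ there yields a statement about the pullback $\Psi$ rather than the pushforward $\Phi$, so both cases must be carried out, even though they are formally dual.
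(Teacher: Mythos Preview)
Your proposal is correct and follows essentially the same route as the paper's proof: after invoking part (ii), you pull back a $\Bc$-short exact sequence along $\Phi$ to $\Ac^\dagger$ and take $\Ac$-cohomology, which is exactly the four-term sequence $0\to\Psi^0_\Ac N\to\Psi M[1]\to E\to\Psi^1_\Ac N\to 0$ the paper writes down using $\Psi$ directly; the phase comparisons via part (i), the hypotheses of (ii), and Lemma \ref{lem:phiGammaTeq} are then identical in substance. The only cosmetic difference is that the paper concludes by comparing $\phi_\Bc(M)$ with $\phi_\Bc(\Phi E)$ and applying see-saw in $\Bc$, whereas you propose comparing $\phi_\Bc(M)$ with the two pieces $\Phi N'_0$ and $\Phi N'_1[1]$ of $N$ --- both work, and both require the same case analysis on vanishing of $Z_\Ac$ that you correctly flag as the delicate point.
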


\begin{proof}
(i) Take any $E, F$ as described.  We have  $\Phi E \in \Bc$ and so $b \prec \phi_\Bc (\Phi E)\preceq b+1$, implying $\Gamma_T^{-1} (b) \prec \phi_\Ac (E)$ by \eqref{eq:AG48-20-1}.  On the other hand, $\Phi F \in \Bc[-1]$ so $b-1\prec \phi_\Bc (\Phi F) \preceq b$, implying $\phi_\Ac (F)\preceq \Gamma_T^{-1} (b)$ by \eqref{eq:AG48-20-1}.  Part (i) thus follows.

(ii)  By Lemma \ref{lem:AG46-80-1}(ii), we have an  $\Ac$-short exact sequence
\[
0 \to E_0 \to E \to E_1 \to 0
\]
where $E_i$ is $\Phi_\Bc$-WIT$_i$ for $i=0,1$.  Suppose $E_0 \neq 0$.  Then we must have $Z_\Ac(E_0) \neq 0$, and hence  $\Gamma_T^{-1}(b) \prec \phi_\Ac (E_0)$ by part (i).  If we also have $E_1 \neq 0$, then $Z_\Ac(E_1)$ must be nonzero, in which case part (i) also gives $\phi_\Ac (E_1) \preceq \Gamma^{-1}_T(b)$.  Then $\phi_\Ac (E_0) \succ \phi_\Ac (E_1)$, contradicting the semistability of $E$.

(iii) Let $E$ be as described, and suppose $E$ is $Z_\Ac$-semistable in $\Ac$.  By part (ii), $E$ is either $\Phi_\Bc$-WIT$_0$ or $\Phi_\Bc$-WIT$_1$.  Although the proofs of the two  cases are similar, we include them both for the sake of completeness.

\noindent
\textbf{Case 1}: $E$ is $\Phi_\Bc$-WIT$_0$.  Take any $\Bc$-short exact sequence
\[
  0 \to M \to \Phi E \to N \to 0
\]
where $M, N \neq 0$.  That $E$ is $\Phi_\Bc$-WIT$_0$ implies $\Phi E$ is $\Psi_\Ac$-WIT$_1$ by Lemma \ref{lem:AG46-80-1}(i).  Hence $M$ is also $\Psi_\Ac$-WIT$_1$ since $W_{1,\Psi,\Bc,\Ac}$ is a torsion-free class in $\Bc$ by Lemma \ref{lem:AG46-80-1}(ii).  We then have the $\Ac$-long exact sequence
\[
0 \to \Psi^0_\Ac N \to \Psi M [1] \overset{\alpha}{\to} E \to \Psi^1_\Ac N \to 0.
\]
If $\alpha=0$, then $\Psi^0_\Ac N \cong \Psi M [1]$ where $\Psi^0_\Ac N$ is $\Phi_\Bc$-WIT$_1$ while $\Psi M [1]$ is $\Phi_\Bc$-WIT$_0$ by Lemma \ref{lem:AG46-80-1}(i) again; this forces $\Psi^0_\Ac N = \Psi M [1]=0$, i.e.\ $M=0$, a contradiction.  Hence $\image \alpha \neq 0$, and the $Z_\Ac$-semistability of $E$ implies $\phi_\Ac (\image \alpha) \preceq \phi_\Ac (E)$.  On the other hand, since $\Psi M [1]$ is $\Phi_\Bc$-WIT$_0$, the $\Ac$-quotient $\image \alpha$ is also $\Phi_\Bc$-WIT$_0$ (since $W_{0,\Phi,\Ac,\Bc}$ is a torsion class in $\Ac$).  Then the first vanishing condition in (ii) ensures $Z_\Ac(\image \alpha)\neq 0$.  Part (i) then gives $\Gamma^{-1}_T(b) \prec \phi_\Ac (\image \alpha)$.  Now we divide further into two cases:

If $Z_\Ac(\Psi^0_\Ac N)\neq 0$, then from part (i) we have $\phi_\Ac (\Psi^0_\Ac N) \preceq \Gamma^{-1}_T(b)$; together with $\Gamma^{-1}_T(b) \prec \phi_\Ac (\image \alpha)$ from above and noting $Z_\Ac(\Psi^0_\Ac N), Z_\Ac(\image \alpha)$ are both nonzero, we obtain $\phi_\Ac (\Psi M [1])\prec \phi_\Ac (\image \alpha)$ and hence $\phi_\Ac (\Psi M [1])\prec \phi_\Ac (E)$.  Then $\phi_\Bc (M) \prec \phi_\Bc (\Phi E)$ by \eqref{eq:AG48-20-1} and hence $\phi_\Bc (M)\prec \phi_\Bc (N)$.

If $Z_\Ac(\Psi^0_\Ac N)=0$, then $Z_\Ac(\Psi M[1])=Z_\Ac(\image \alpha)\neq 0$.  If we also have $Z_\Ac(\Psi^1_\Ac N)=0$, then $Z_\Bc (N)=0$ by \eqref{eq:AG45-108-17} and $\phi_\Bc (M) \preceq \phi_\Bc (N)$ follows.  On the other hand, if $Z_\Ac (\Psi^1_\Ac N)\neq 0$, then the $Z_\Ac$-semistability of $E$ gives $\phi_\Ac (\Psi M [1]) = \phi_\Ac (\image \alpha) \preceq \phi_\Ac (\Psi^1_\Ac N)= \phi_\Ac (\Psi N [1])$, which implies $\phi_\Bc (M) \preceq \phi_\Bc (N)$.

Hence $\Phi E$ is $Z_\Bc$-semistable in Case 1.

\noindent
\textbf{Case 2:} $E$ is $\Phi_\Bc$-WIT$_1$. Take any $\Bc$-short exact sequence
\[
  0 \to M \to \Phi E[1] \to N \to 0
\]
where $M, N \neq 0$. Since $\Phi E [1]$ is a $\Psi_\Ac$-WIT$_0$ object in $\Bc$, its $\Bc$-quotient $N$ is also $\Psi_\Ac$-WIT$_0$ by Lemma \ref{lem:AG46-80-1}(ii), giving us the $\Ac$-exact sequence
\[
  0 \to \Psi^0_\Ac M \to E \overset{\alpha}{\to} \Psi  N \to \Psi^1_\Ac M \to 0.
\]
As in Case 1, we must have $\image \alpha \neq 0$, and the $Z_\Ac$-semistability of $E$ implies $\phi_\Ac (E)\preceq \phi_\Ac (\image \alpha)$.  Since $\Psi N$ is $\Phi_\Bc$-WIT$_1$, so is $\image \alpha$, and by the second vanishing in (ii) we must have $Z_\Ac (\image \alpha)\neq 0$.  We now divide further into two cases:

If $Z_\Ac(\Psi^1_\Ac M) \neq 0$, then by  (i), we have $\phi_\Ac (\image \alpha) \prec \phi_\Ac (\Psi^1_\Ac M)$ and hence $\phi_\Ac (\image \alpha) \prec \phi_\Ac (\Psi N) \prec  \phi_\Ac (\Psi^1_\Ac M)$.  On the other hand, the $Z_\Ac$-semistability of $E$ gives $\phi_\Ac (E)\preceq \phi_\Ac (\image \alpha)$, giving us $\phi_\Ac (E) \prec \phi_\Ac (\Psi N)$, which in turn gives $\phi_\Bc (\Phi E [1]) \prec \phi_\Bc (N)$ and then $\phi_\Bc (M) \prec \phi_\Bc (N)$.

If $Z_\Ac (\Psi^1_\Ac M)=0$, then $Z_\Ac (\image \alpha)=Z_\Ac (\Psi N)$ while $Z_\Ac (\Psi^0_\Ac M)= Z_\Ac (\Psi M )$ and, as above, the $Z_\Ac$-semistability of $E$ gives $\phi_\Ac (\Psi M)=\phi_\Ac (\Psi^0_\Ac M) \preceq \phi_\Ac (\image \alpha)=\phi_\Ac (\Psi N)$, and so $\phi_\Bc (M) \preceq \phi_\Bc (N)$.

Hence $\Phi E [1]$ is $Z_\Bc$-semistable in Case 2.
\end{proof}

When we are in Configuration III, the  functions $Z_\Ac, Z_\Bc$ are polynomial stability functions, meaning $\Ac_{\mathrm{ker} Z_\Ac}, \Bc_{\mathrm{ker} Z_\Bc}$ are both trivial, giving us cleaner statements:

\begin{thm}\label{prop:paper30prop11-11ext}
Assume Configuration III.  Then
\begin{itemize}
\item[(i)] For any $\Phi_\Bc$-WIT$_0$ object $E$ and any $\Phi_\Bc$-WIT$_1$ object $F$ in $\Ac$ where $E,F \neq 0$, we have
    \[
      \phi_{\Ac} (F) \preceq \Gamma_T^{-1}(b)  \prec \phi_\Ac (E).
    \]
\item[(ii)] ($Z_\Ac$-semistable implies $\Phi_\Bc$-WIT$_i$) Any nonzero $Z_\Ac$-semistable object $E$ in $\Ac$ is either $\Phi_\Bc$-WIT$_0$ or $\Phi_\Bc$-WIT$_1$.
\item[(iii)] Let $M$ be any nonzero $\Phi_\Bc$-WIT$_i$ object in $\Ac$ where $i=0$ or $1$, and let $S_\Ac$ denote   be the weight function $S_{Z_\Ac',M} : K(\Dc) \to \CLoovc$  defined   as in \eqref{eq:S-stabfuncdef} using $Z_\Ac':=e^{i\pi (-a)}Z_\Ac$.  Then $S_\Ac$ satisfies  refinement-$i$  with respect to the torsion pair $(W_{0,\Phi,\Ac,\Bc}, W_{1,\Phi,\Ac,\Bc})$ in $\Ac$.
\item[(iv)] (correspondence of stability) For a nonzero object $E\in \Ac$,
\[
 \text{$E$ is $Z_\Ac$-semistable }\Leftrightarrow
 \text{ $\Phi E$ is $Z_\Bc$-semistable}.
\]
The same statement holds if we replace `semistable' by `stable.'
\item[(v)] Suppose $(Z_\Ac, \Ac)$ and $(Z_\Bc, \Bc)$ are polynomial stability conditions with respect to $(a,a+1], (b,b+1]$, respectively, and $\Pc_\Ac,  \Pc_\Bc$ denote their respective  slicings.  Then
    \begin{equation}\label{eq:Gepnereqpolystab}
    \Phi \cdot (Z_\Ac, \Pc_\Ac) = (Z_\Bc,\Pc_\Bc)\cdot \Patilde (T)
    \end{equation}
    where $\Patilde (T)$ is as defined in \eqref{eq:liftedpath}.
\end{itemize}
\end{thm}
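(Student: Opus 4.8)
The plan is to obtain the five items in turn, reducing each to machinery already in place. Items (i) and (ii) are exactly the special case of Theorem \ref{thm:configIII-w}(i),(ii) in which the kernel subcategories vanish: under Configuration III the hypothesis $Z_\Ac(E),Z_\Ac(F)\neq 0$ of Theorem \ref{thm:configIII-w}(i) is automatic, and the two ``only the zero strict injection/surjection'' conditions in Theorem \ref{thm:configIII-w}(ii) are vacuous because $\Ac_{\mathrm{ker} Z_\Ac}\cap W_{j,\Phi,\Ac,\Bc}=\{0\}$; so (i) and (ii) are immediate. For (iii) I would argue as in the proof of Lemma \ref{lem:HNgivesrefinement}, but with the torsion pair $(W_{0,\Phi,\Ac,\Bc},W_{1,\Phi,\Ac,\Bc})$ (a torsion pair in $\Ac$ by Lemma \ref{lem:AG46-80-1}(ii)) in place of a phase-cut one. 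For a nonzero $S_\Ac$-semistable $E$ in $\Ac$, Lemma \ref{lem:S-stabfuncdef}(ii) gives $\phi_{Z'_\Ac}(E)=\phi_{Z'_\Ac}(M)$; item (i) (with $Z_\Ac$ and $Z'_\Ac$ differing by a rotation that preserves phase comparisons) places this common phase strictly above $\Gamma_T^{-1}(b)$ if $i=0$ and weakly below it if $i=1$. Feeding the $(W_0,W_1)$-decomposition of $E$ into the semistability inequality and applying Lemma \ref{lem:S-stabfuncdef}(i) or (iii) then shows $E$ lies in the correct half of the torsion pair, and the sign statement for the other half is the same comparison of phases; note that \eqref{eq:AG47-23-1} forces $\Im Z'_\Ac(M)\succ 0$ when $M$ is $\Phi_\Bc$-WIT$_1$, which is what legitimises the use of Lemma \ref{lem:S-stabfuncdef}(iii).

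For (iv), the implication ``$E$ $Z_\Ac$-semistable $\Rightarrow$ $\Phi E$ $Z_\Bc$-semistable'' is Theorem \ref{thm:configIII-w}(iii), again with its hypotheses automatic; the reverse implication follows from the symmetry of Configuration III recorded in \ref{para:ABconfigsymm}, since applying the forward implication with $(\Phi,\Ac,Z_\Ac)$ and $(\Psi,\Bc,Z_\Bc)$ interchanged yields ``$\Phi E$ $Z_\Bc$-semistable $\Rightarrow$ $\Psi\Phi E=E[-1]$ $Z_\Ac$-semistable'', hence $E$ is $Z_\Ac$-semistable. For the `stable' case, which Theorem \ref{thm:configIII-w} does not cover, I would pass to Configuration II: a $Z_\Ac$-stable $E$ is in particular $Z_\Ac$-semistable, hence $\Phi_\Bc$-WIT$_i$ by (ii), so taking $M=E$ in (iii) produces $S_\Ac=S_{Z'_\Ac,E}$ satisfying refinement-$i$ with respect to $(W_0,W_1)$; paired with the companion weight function $S_\Bc:=(-1)^i S_{Z'_\Bc,\Phi E[i]}$ on $\Bc$, where $Z'_\Bc:=e^{i\pi(-b)}Z_\Bc$, this puts us in Configuration II. Then Theorem \ref{thm:main3} identifies the $S_\Ac$-stable objects of $\Ac$ with the $(-1)^i S_\Bc$-stable objects of $\Bc$ via $\Phi[i]$, and Lemma \ref{lem:S-stabfuncdef}(v) converts both $S$-stabilities back into $Z$-stabilities because $Z_\Ac,Z_\Bc$ are genuine polynomial stability functions (for an $E$ with $\Phi E$ only assumed $Z_\Bc$-stable one first uses the semistable case and (ii) to see $E$ is $\Phi_\Bc$-WIT$_i$ and then runs the same argument). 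I expect this step to be the main obstacle, specifically the verification of \ref{para:config2}(c): one has to check that $S_\Bc(\Phi F)=\det(\wt{T})\,S_\Ac(F)$ in $\mathbb{R}\Loovc$ for a suitable conjugate $\wt{T}\in\GLlp$ of $T$ (by the rotations $e^{\pm i\pi a},e^{\pm i\pi b}$), and then that $\det\wt{T}\succ 0$ because $T_v\in\mathrm{GL}^+(2,\mathbb{R})$ for $v\gg 0$ — this is where the orientation-preserving nature of $T$, and again \eqref{eq:AG47-23-1}, have to be used carefully.

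Finally, for (v) I would unwind the definitions in \ref{para:gpactionpolystabdefn}. The central charge of $\Phi\cdot(Z_\Ac,\Pc_\Ac)$ is $Z_\Ac\circ(\Phi^K)^{-1}$ and that of $(Z_\Bc,\Pc_\Bc)\cdot\Patilde(T)$ is $T^{-1}\circ Z_\Bc$; writing a class of $K(\Uc)$ as $\Phi^K(\delta)$ and applying \eqref{eq:AG45-108-17} shows these agree. For the slicings I would first prove $\Phi(\Pc_\Ac(\phi))\subseteq\Pc_\Bc(\Gamma_T(\phi))$ for every polynomial phase function $\phi$: an object of $\Pc_\Ac(\phi)$ becomes, after the unique shift $[j]$ carrying it into $\Ac$, a $Z_\Ac$-semistable object which by (ii) is $\Phi_\Bc$-WIT$_i$, its image is $Z_\Bc$-semistable by (iv), and \eqref{eq:AG48-20-1} of Lemma \ref{lem:phiGammaTeq} together with $\Gamma_T(\psi+1)=\Gamma_T(\psi)+1$ computes its phase as $\Gamma_T(\phi)$. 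The reverse inclusion then comes for free: the inclusion just proved, applied to the symmetric Configuration III with $\wh{T}=-T^{-1}$ and $\Gamma_{\wh{T}}=\Gamma_T^{-1}-1$, gives $\Psi(\Pc_\Bc(\psi))\subseteq\Pc_\Ac(\Gamma_{\wh{T}}(\psi))$; setting $\psi=\Gamma_T(\phi)$ yields $\Psi(\Pc_\Bc(\Gamma_T(\phi)))\subseteq\Pc_\Ac(\phi-1)=\Pc_\Ac(\phi)[-1]$, and applying $\Phi$ and using $\Phi\Psi\cong\mathrm{id}_\Uc[-1]$ gives $\Pc_\Bc(\Gamma_T(\phi))\subseteq\Phi(\Pc_\Ac(\phi))$. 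Hence the two slicings coincide, and with the matching central charges this is exactly \eqref{eq:Gepnereqpolystab}.
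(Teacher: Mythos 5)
Your proposal is correct, and for parts (i), (ii), (iii) and (v) it follows essentially the same route as the paper (the paper's (v) is exactly your two displayed equalities, with the slicing identity $\Phi(\Pc_\Ac(\phi))=\Pc_\Bc(\Gamma_T(\phi))$ obtained from \eqref{eq:AG48-20-1} and (iv); your explicit two-inclusion argument via the symmetric data $\wh{T}=-T^{-1}$, $\Gamma_{\wh T}=\Gamma_T^{-1}-1$ just makes this more careful). The genuine divergence is in (iv). The paper deliberately proves \emph{both} the semistable and stable cases by descending to Configuration II: it sets $S_\Ac=S_{Z'_\Ac,E}$, $S_\Bc=S_{Z'_\Bc,\wh E}$ and verifies the sign compatibility \ref{para:config2}(c) for $S_\Ac$ and $(-1)^iS_\Bc$ by a three-case analysis on the WIT indices $(i,j)$, using part (i), the symmetry \ref{para:ABconfigsymm} and Lemma \ref{lem:phiGammaTeq}; it then quotes Theorem \ref{thm:main3} for both the semistable and stable statements. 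You instead dispatch the semistable case by Theorem \ref{thm:configIII-w}(iii) plus symmetry --- which is precisely the alternative the paper mentions and declines, since it wants every application traceable to Configurations II and I --- and reserve the Configuration II machinery for the stable case, verifying \ref{para:config2}(c) by the single identity $S_\Bc(\Phi F)=\det(\wt T)\,S_\Ac(F)$ with $\wt T=e^{-i\pi b}\,T\,e^{i\pi a}$ and $\det\wt T=\det T\succ 0$ in $\RLoovc$. That determinant computation is routine (the rows of the defining matrix transform by $\wt T$, one of them with an extra sign $(-1)^i$ that your definition of $S_\Bc$ absorbs), and it actually buys something: it gives $\sgn S_\Bc(\Phi F)=\sgn S_\Ac(F)$ for \emph{all} $F$, not merely the WIT ones, and avoids the case split entirely. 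One small correction: \eqref{eq:AG47-23-1} plays no role in that determinant identity --- it is needed for Lemma \ref{lem:phiGammaTeq} and part (i), not here. Conversely, your explicit observation in (iii) that $\Im Z'_\Ac(M)\succ 0$ when $M$ is $\Phi_\Bc$-WIT$_1$ (so that Lemma \ref{lem:S-stabfuncdef}(iii) applies) is a point the paper passes over silently and is worth recording.
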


Even though part (iv) of Theorem \ref{prop:paper30prop11-11ext} can be proved via symmetry in Configuration III and adapting the argument for Theorem \ref{thm:configIII-w}(iii) for the `stable' case, we give a proof using Configuration II.  This shows that all the applications of Configuration III in the later parts of this article can be traced back to Configuration II and hence Configuration I.

\begin{proof}
(i), (ii): These  follow from their counterparts in Theorem \ref{thm:configIII-w}.

(iii) Let $M, Z_\Ac', S_\Ac$ be as described.  Then $Z_\Ac'$ is a polynomial stability function on $\Ac$ with respect to $(0,1]$, and $Z_\Ac$-semistability is equivalent to $Z_\Ac'$-semistability (the phases are simply translated by $a$).  By Lemma \ref{lem:S-stabfuncdef}, every $S_\Ac$-semistable object $F$ in $\Ac$ is $Z_\Ac$-semistable with $\phi_\Ac (F)=\phi_\Ac (M)$; by parts (i) and (ii), $M$ and $F$ are both  $\Phi_\Bc$-WIT$_i$.  Then by part (i), all the nonzero $S_\Ac$-semistable objects in $\Ac$ lie in $W_{i,\Phi,\Ac,\Bc}$ for some fixed $i$.  If $i=0$ (resp.\ $i=1$), then for every nonzero object $G$ in $W_{1,\Phi,\Ac,\Bc}$ (resp.\ $W_{0,\Phi,\Ac,\Bc}$) we have $\phi_\Ac (G) \prec \phi_\Ac (M)$ (resp.\ $\phi_\Ac (G) \succ \phi_\Ac (M)$) by part (i), and so $S_\Ac (G)\prec 0$ (resp.\ $S_\Ac (G)  \succ 0$) by Lemma \ref{lem:S-stabfuncdef}.  Hence $S_\Ac$ satisfies the refinement property  with respect to $(W_{0,\Phi,\Ac,\Bc}, W_{1,\Phi,\Ac,\Bc})$.

(iv) By the symmetry in Configuration III (see \ref{para:ABconfigsymm}), we only need to prove the `only if' direction.  Let us fix a nonzero $Z_\Ac$-semistable object $E$ in $\Ac$.  Then   $E$ is $\Phi_\Bc$-WIT$_i$ for  $i=0$ or $1$ by part (ii).  Set $\wh{E}:= \Phi E [i]$, which lies in $\Bc$.  Let $S_\Ac$ be the weight function $S_{Z'_\Ac, E}$ as defined in \eqref{eq:S-stabfuncdef} where $Z'_\Ac=e^{i\pi (-a)}Z_\Ac$  as in part (iii), and let $S_\Bc$ be the weight function $S_{Z_\Bc',\wh{E}}: K(\Uc) \to \CLoovc$ defined as in \eqref{eq:S-stabfuncdef} where  $Z_\Bc' := e^{i\pi (-b)}Z_\Bc$.

We will now show that the weight functions  $S_\Ac$ and $(-1)^iS_\Bc$ satisfy condition \ref{para:config2}(c).  To see this, take any $F\in \Ac$ that is  $\Phi_\Bc$-WIT$_j$ where $j=0$ or $1$.  We need to show that $\sgn S_\Ac (F) = \sgn ((-1)^iS_\Bc) (\Phi F)$.  We divide into the following cases:
\begin{itemize}
\item $i=0, j=1$: By part (i), we have $\phi_\Ac (E) \succ \phi_\Ac (F)$ and hence $S_\Ac (F) \prec 0$ by Lemma \ref{lem:S-stabfuncdef}.  On the other hand, $\wh{E}$ is $\Psi_\Ac$-WIT$_1$ while $\wh{F}$ is $\Psi_\Ac$-WIT$_0$ by Lemma \ref{lem:AG46-80-1}(i), so by the symmetry in Configuration III and part (i), we have $\phi_\Bc (\wh{F}) \succ \phi_\Bc (\wh{E})$, implying $-S_\Bc (\Phi F)=S_\Bc (\Phi F [1])=S_\Bc (\wh{F})\succ 0$, i.e.\ $S_\Bc (\Phi F) \prec 0$. Hence  $\sgn S_\Ac (F)=\sgn ((-1)^iS_\Bc) (\Phi F)$ in this case.
\item $i=1, j=0$: The argument is similar to the previous case. By part (i), we have $\phi_\Ac (E) \prec \phi_\Ac (F)$ and hence $S_\Ac (F) \succ 0$ by Lemma \ref{lem:S-stabfuncdef}.  On the other hand, $\wh{E}$ is $\Psi_\Ac$-WIT$_0$ while $\wh{F}$ is $\Psi_\Ac$-WIT$_1$, so by the symmetry in Configuration III and part (i), we have $\phi_\Bc (\wh{F}) \prec \phi_\Bc (\wh{E})$, implying $S_\Bc (\Phi F)=S_\Bc (\wh{F})\prec 0$, i.e.\ $(-1)S_\Bc (\Phi F) \succ 0$. Hence  $\sgn S_\Ac (F)=\sgn ((-1)^iS_\Bc) (\Phi F)$ in this case.
\item $i=j$: Let us write $\phi_\Ac', \phi_\Bc'$ to denote the phase functions for the polynomial stability functions $Z_\Ac', Z_\Bc'$, respectively.  Then
\begin{align*}
S_\Ac (F)  \succ 0  &\Leftrightarrow \phi_\Ac' (F)  \succ \phi'_\Ac (E)  \text{\quad by Lemma \ref{lem:S-stabfuncdef}} \\
& \Leftrightarrow \phi_\Ac (F)  \succ \phi_\Ac (E)  \\
&\Leftrightarrow \phi_\Bc (\Phi F) \succ \phi_\Bc (\Phi E) \text{\quad by Lemma \ref{lem:phiGammaTeq} and \eqref{eq:AG48-20-1}}\\
&\Leftrightarrow \phi_\Bc' (\Phi F) \succ \phi_\Bc' (\Phi E) \\
&\Leftrightarrow \phi_\Bc' (\wh{F}) \succ \phi_\Bc' (\wh{E}) \text{\quad since $i=j$}\\
&\Leftrightarrow S_\Bc (\wh{F}) \succ 0,
\end{align*}
and all these equivalences still hold if we replace all instances of `$\succ$' by `$=$' or `$\prec$'.  Together with
\[
  S_\Bc (\wh{F}) = S_\Bc (\Phi F [j])=S_\Bc (\Phi F [i]) = ((-1)^iS_\Bc)(\Phi F),
\]
we obtain $\sgn S_\Ac (F)=\sgn ((-1)^iS_\Bc) (\Phi F)$ in this case.
\end{itemize}
Therefore, the equivalences $\Phi, \Psi$, the hearts $\Ac, \Bc$, and the weight functions $S_\Ac, (-1)^iS_\Bc$ are in Configuration II.  Moreover, $S_\Ac$ satisfies  refinement-$i$  with respect to $(W_{0,\Phi,\Ac,\Bc}, W_{1,\Phi,\Ac,\Bc})$ by part (iii).  Then by Theorem \ref{thm:main3}, $\wh{E}$ is a semistable object in $\Bc$ with respect to the weight function $(-1)^{2i}S_\Bc=S_\Bc$, meaning $\wh{E}$ is a $Z_\Bc$-semistable object in $\Bc$.  The `stable' case of part (iv) follows from the correspondence case of Theorem \ref{thm:main3}.

(v) Suppose $\Phi \cdot (Z_\Ac,\Pc_\Ac) = (Z'',\Pc'')$ and $(Z_\Bc,\Pc_\Bc)\cdot \Patilde (T) = (Z',\Pc')$.  Then $Z'' =  Z_\Ac \circ (\Phi^K)^{-1} = T^{-1}\cdot Z_\Bc=Z'$ where the second equality follows from \eqref{eq:AG45-108-17}. Also, for any polynomial phase function $\phi$, we have $\Pc'' (\phi) = \Phi (\Pc_\Ac (\phi)) = \Pc_\Bc (\Gamma_T(\phi))=\Pc'(\phi)$, where the second equality follows from  \eqref{eq:AG48-20-1} together with part (iv).
\end{proof}

\begin{prop}\label{prop:AHNgivesBHN}
Assume Configuration III.   If $Z_\Ac$ has the Harder-Narasimhan property on $\Ac$, then $Z_\Bc$  has the Harder-Narasimhan property on $\Bc$.
\end{prop}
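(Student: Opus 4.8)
The plan is to transport Harder--Narasimhan filtrations from $\Ac$ to $\Bc$ along $\Phi$, using the correspondence of semistable objects from Theorem~\ref{prop:paper30prop11-11ext}(iv). Fix a nonzero $G\in\Bc$. By Lemma~\ref{lem:AG46-80-1}(ii), applied with the roles of $\Phi,\Psi$ and of $\Ac,\Bc$ exchanged, there is a $\Bc$-short exact sequence $0\to G_0\to G\to G_1\to 0$ in which $G_0$ is $\Psi_\Ac$-WIT$_0$ and $G_1$ is $\Psi_\Ac$-WIT$_1$; by the exchanged form of Lemma~\ref{lem:AG46-80-1}(i), $A_0:=\Psi G_0$ is a $\Phi_\Bc$-WIT$_1$ object of $\Ac$ and $A_1:=\Psi G_1[1]$ is a $\Phi_\Bc$-WIT$_0$ object of $\Ac$, while $\Phi\Psi\cong\mathrm{id}_\Uc[-1]$ gives $\Phi A_0[1]\cong G_0$ and $\Phi A_1\cong G_1$. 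Thus it is enough to find suitable HN filtrations of $A_0$ and $A_1$ in $\Ac$, push them through $\Phi$, and splice the results into a single filtration of $G$.

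The first and main step is the claim that the $Z_\Ac$-HN filtration of a nonzero $\Phi_\Bc$-WIT$_1$ object $A\in\Ac$ has all of its terms and all of its factors again $\Phi_\Bc$-WIT$_1$, with the dual statement for $\Phi_\Bc$-WIT$_0$ objects. I would argue the WIT$_1$ case as follows. Write the HN filtration $0=M_0\subsetneq M_1\subsetneq\cdots\subsetneq M_m=A$. The torsion-free class $W_{1,\Phi,\Ac,\Bc}$ is closed under subobjects, so every $M_j$ is $\Phi_\Bc$-WIT$_1$; in particular $M_1=M_1/M_0$ is, and Theorem~\ref{prop:paper30prop11-11ext}(i) forces $\phi_\Ac(M_1)\preceq\Gamma_T^{-1}(b)$, hence $\phi_\Ac(M_j/M_{j-1})\preceq\Gamma_T^{-1}(b)$ for all $j$ by the monotonicity of HN-phases. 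Each $M_j/M_{j-1}$ is $Z_\Ac$-semistable and so, by Theorem~\ref{prop:paper30prop11-11ext}(ii), is $\Phi_\Bc$-WIT$_0$ or $\Phi_\Bc$-WIT$_1$; a $\Phi_\Bc$-WIT$_0$ object would have phase $\succ\Gamma_T^{-1}(b)$ by Theorem~\ref{prop:paper30prop11-11ext}(i), so every $M_j/M_{j-1}$ is $\Phi_\Bc$-WIT$_1$. The dual claim is proved the same way, using that the torsion class $W_{0,\Phi,\Ac,\Bc}$ is closed under quotients and applying the argument to the bottom HN-quotient.

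With this in hand, I would push the two HN filtrations through $\Phi$. For $A_0$ with HN filtration $(M_j)$, every $M_j$ and $M_j/M_{j-1}$ is $\Phi_\Bc$-WIT$_1$, so $\Phi M_j[1]$ and $\Phi(M_j/M_{j-1})[1]$ lie in $\Bc$; since an exact triangle all of whose vertices lie in a heart is a short exact sequence there, $\bigl(\Phi M_j[1]\bigr)_j$ is a filtration of $\Phi A_0[1]\cong G_0$ in $\Bc$ with factors $\Phi(M_j/M_{j-1})[1]$, each $Z_\Bc$-semistable by Theorem~\ref{prop:paper30prop11-11ext}(iv), of phase $\phi_\Bc(\Phi(M_j/M_{j-1})[1])=\Gamma_T(\phi_\Ac(M_j/M_{j-1}))+1$ by Lemma~\ref{lem:phiGammaTeq}; these strictly decrease in $j$ because $\Gamma_T$ is order-preserving and commutes with $+1$. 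Likewise the HN filtration $(N_j)$ of $A_1$ yields a filtration $(\Phi N_j)_j$ of $\Phi A_1\cong G_1$ in $\Bc$ with $Z_\Bc$-semistable factors of strictly decreasing phases $\Gamma_T(\phi_\Ac(N_j/N_{j-1}))$. Concatenating the $G_0$-filtration with the preimage in $G$ of the $G_1$-filtration along $G\twoheadrightarrow G_1$ (the cases $G_0=0$ or $G_1=0$ are immediate) gives a filtration of $G$ by $Z_\Bc$-semistable factors; the only remaining point is that the bottom phase $\Gamma_T(\phi_\Ac(M_m/M_{m-1}))+1$ of the $G_0$-part strictly exceeds the top phase $\Gamma_T(\phi_\Ac(N_1))$ of the $G_1$-part. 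Since $Z_\Ac$ is a polynomial stability function with respect to $(a,a+1]$ we have $\phi_\Ac(M_m/M_{m-1})\succ a$ and $\phi_\Ac(N_1)\preceq a+1$, so $\phi_\Ac(M_m/M_{m-1})+1\succ a+1\succeq\phi_\Ac(N_1)$, and applying $\Gamma_T$ yields the desired inequality. Hence $G$ admits a $Z_\Bc$-HN filtration, and since $G$ was an arbitrary nonzero object of $\Bc$, $Z_\Bc$ has the Harder--Narasimhan property on $\Bc$.

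I expect the main obstacle to be the first step --- that the HN filtration of a $\Phi_\Bc$-WIT$_i$ object of $\Ac$ stays entirely inside $W_{i,\Phi,\Ac,\Bc}$. This is precisely what allows $\Phi$, composed with a single uniform shift, to carry it to a bona fide filtration in $\Bc$ rather than to a complex spread over two cohomological degrees, and it is here that the separation hypothesis $a\prec\Gamma_T^{-1}(b)\prec a+1$ of Configuration~III is used, entering through parts (i) and (ii) of Theorem~\ref{prop:paper30prop11-11ext}.
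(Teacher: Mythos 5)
Your proof is correct and follows essentially the same route as the paper: decompose the object via the torsion pair coming from $\Psi$ (equivalently, the $\Ac$-cohomologies of its image under $\Psi$), show via Theorem \ref{prop:paper30prop11-11ext}(i)--(ii) that the HN factors of each piece stay in a single WIT class, transport both filtrations through $\Phi$, and splice. The only cosmetic differences are that you splice directly in the abelian category $\Bc$ rather than via the octahedral axiom in $\Dc$, and you obtain the junction inequality from the phase windows $(a,a+1]$ and $\Gamma_T(\phi+1)=\Gamma_T(\phi)+1$ rather than from Theorem \ref{prop:paper30prop11-11ext}(i) applied on the $\Bc$ side; both are valid.
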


\begin{proof}
Suppose $Z_\Ac$ has the HN property on $\Ac$.  Fix any $E \in \Bc$, and we will construct the $Z_\Bc$-HN filtration for $E$.

To begin with, we have $\Psi \Bc \subset D^{[0,1]}_{\Ac}$ by Lemma \ref{para:AcBcPhitilt}, giving us  an exact triangle
\begin{equation}\label{eq:AG45-132-2}
(\Psi^0_\Ac E)[1] \to \Psi E[1] \to \Psi^1_\Ac E  \to (\Psi^0_\Ac E)[2].
\end{equation}
Suppose
\begin{equation}\label{eq:AG45-133-1}
 0=M_0 \subsetneq M_1 \subsetneq M_2 \subsetneq \cdots \subsetneq M_k = \Psi^0_\Ac E
\end{equation}
is the $Z_\Ac$-HN filtration of $\Psi^0_\Ac E$ in $\Ac$, and
\begin{equation*}
0=N_0 \subsetneq  N_1 \subsetneq N_2 \subsetneq \cdots \subsetneq N_l = \Psi^1_\Ac E
\end{equation*}
the $Z_\Ac$-HN filtration of $\Psi^1_\Ac E$ in $\Ac$.  In particular, we have
\[
\phi_\Ac (M_1) \succ \phi_\Ac (M_2/M_1) \succ \cdots \succ \phi_\Ac (M_k/M_{k-1})
\]
and
\[
  \phi_\Ac (N_1) \succ \phi_\Ac (N_2/N_1) \succ \cdots \succ \phi_\Ac (N_l/N_{l-1}).
\]
The filtration \eqref{eq:AG45-133-1} gives the sequence of exact triangles
\begin{equation}\label{eq:AG45-133-2}
\scalebox{0.95}{
\xymatrix@=0.9em{
0=M_0[1] \ar[rr] & & M_1[1]  \ar[r] \ar[dl] & \cdots \ar[r] & M_{k-1}[1] \ar[rr] & & M_k[1]= (\Psi^0_\Ac  E)[1]  \ar[dl] \\
 & M_1[1] \ar[ul]^{[1]} & & & & (M_k/M_{k-1})[1] \ar[ul]^{[1]} &
}.
}
\end{equation}
Using the octahedral axiom, we can construct lifts
 $\wt{N_1}, \cdots, \wt{N_l}$ in $\Dc$ of $N_1, \cdots, N_l$ with respect to the morphism $\Psi E [1] \to \Psi^1_\Ac E$, respectively,   together with a sequence of exact triangles 
\begin{equation}\label{eq:AG45-133-3}
\scalebox{0.95}{
\xymatrix@=0.9em{
(\Psi^0_\Ac E) [1] \ar[rr] & & \wt{N_1}  \ar[r] \ar[dl] & \cdots \ar[r] & \wt{N_{l-1}} \ar[rr] & & \wt{N_l}= \Psi E [1] \ar[dl] \\
 & N_1 \ar[ul]^{[1]} & &  & & N_l/N_{l-1} \ar[ul]^{[1]} &
}.
}
\end{equation}
We will now show that $\Phi$ takes the concatenation of \eqref{eq:AG45-133-2} and \eqref{eq:AG45-133-3} to the $Z_\Bc$-HN filtration of $E$.

Since $\Psi^0_\Ac E$ is $\Phi_\Bc$-WIT$_1$, so is $M_1$, and so    $\Gamma_T^{-1}(b)  \succeq \phi_\Ac (M_1)$ by Theorem \ref{prop:paper30prop11-11ext}(i).  On the other hand, for each $1 \leq i \leq k$, the factor $M_i/M_{i-1}$ is $Z_\Ac$-semistable with $\Gamma_T^{-1}(b) \succeq \phi_\Ac (M_1) \succ \phi_{\Ac}(M_i/M_{i-1})$, and so $M_i/M_{i-1}$ is $\Phi_\Bc$-WIT$_1$ by parts (i) and (ii) of Theorem \ref{prop:paper30prop11-11ext}.  A similar argument shows that for each $1 \leq i \leq l$, the factor $N_i/N_{i-1}$ is $Z_\Ac$-semistable and $\Phi_\Bc$-WIT$_0$ with $\phi_\Ac (N_i/N_{i-1}) \succ \Gamma_T^{-1}(b)$.  Hence $\Phi$ takes each $(M_i/M_{i-1})[1]$ and each $N_j/N_{j-1}$ into $\Bc$, and all of $\Phi ( (M_i/M_{i-1})[1]), \Phi (N_j/N_{j-1})$ are $Z_\Bc$-semistable by Theorem  \ref{prop:paper30prop11-11ext}(iv).

From \eqref{eq:AG48-20-1}, we now have
\[
 \phi_\Bc (\Phi (M_1)[1]) \succ \phi_\Bc (\Phi (M_2/M_1)[1]) \succ \cdots \succ \phi_\Bc (\Phi (M_k/M_{k-1})[1])
\]
and
\[
  \phi_\Bc (\Phi N_1) \succ \phi_\Bc (\Phi (N_2/N_1)) \succ \cdots \succ \phi_\Bc (\Phi (N_l/N_{l-1})).
\]
Since  $\Phi (M_k/M_{k-1}) [1]$ is $\Psi_\Ac$-WIT$_0$ and $\Phi N_1$ is $\Psi_\Ac$-WIT$_1$, we have
\[
  \phi_\Bc (\Phi (M_k/M_{k-1})[1]) \succ \phi_\Bc (\Phi N_1)
\]
by  Theorem  \ref{prop:paper30prop11-11ext}(i).
Overall, the concatenation of \eqref{eq:AG45-133-2} with \eqref{eq:AG45-133-3} gives a filtration of $\Psi E [1]$, and this filtration is taken by $\Phi$ to a filtration of $E$ in $\Bc$ where each factor is $Z_\Bc$-semistable and the factors have strictly decreasing $\phi_\Bc$.  That is, we have constructed the $Z_\Bc$-HN filtration of $E$.
\end{proof}

\begin{rem}
Even though it is possible to write down a  generalisation of Proposition \ref{prop:AHNgivesBHN} to weak polynomial stability conditions, as is done in \cite[Section 19]{bayer2019stability} when the central charge takes values in $\mathbb{C}$ along with other assumptions, it is not clear whether there is a canonical way to do so.  Since we do not require such a generalisation in this article, we refrain from stating it here.
\end{rem}

\section{Preliminaries on  elliptic fibrations}\label{sec:prelim-ellfib}

In this section, we fix our  notation and terminology on elliptic fibrations.

\paragraph[$\mu_{\omega,B}$-stability and $\Bob$]  Suppose $X$ is  \label{para:slopestab} a smooth projective variety of dimension $n$,  and   $\omega, B$ are  $\mathbb{R}$-divisor classes on $X$ where $\omega$ is ample.  The usual slope function for coherent sheaves $E$ on $X$ is defined as
\[
\mu_{\omega, B}(E)= \begin{cases}
\frac{\omega^{n-1}\ch_1^B(E)}{\ch_0^B(E)}  &\text{if $\ch_0^B(E)\neq 0$}   \\
\infty &\text{ if $\ch_0^B(E)=0$}
\end{cases}.
\]
Note that $\mu_{\omega,B}(E)=\mu_{\omega,0}(E)-\omega^{n-1}B$ when $\ch_0(E)\neq 0$, while $\mu_{\omega,0}$ itself has the HN property even when $\omega$ is not over $\mathbb{Q}$ by \cite[Corollary 2.27]{greb2016movable}.  Hence $\mu_{\omega,B}$ itself has the HN property for any $\mathbb{R}$-divisors $\omega, B$ where $\omega$ is ample.

We will write $K(X)$ for $K(D^b(X))$.  In terms of the notation of \ref{para:mugeneraldef}, $\mu_{\omega, B}$  is precisely the slope function $\mu_Z$ for the weak polynomial stability function $Z : K(X) \to \mathbb{C}$ on $\Coh (X)$ with respect to $(0,1]$ given by
\[
  Z (E) = -\omega^{n-1}\ch_1^B(E) + i\ch_0^B (E).
\]
We define the following full subcategories of $\Coh (X)$ via extension closures
\begin{align*}
  \Tob &= \langle F \in \Coh (X) : F \text{ is $\muob$-semistable with $\muob (F)>0$} \rangle \\
  \Fob &= \langle F \in \Coh (X) : F \text{ is $\muob$-semistable with $\muob (F)\leq 0$} \rangle.
\end{align*}
 The HN property of $\mu_{\omega,B}$ implies that  $(\Tob, \Fob)$ is a torsion pair in $\Coh (X)$, and so   the extension closure in $D^b(X)$
\[
  \Bob = \langle \Fob [1], \Tob \rangle
\]
is the heart of a bounded t-structure on $D^b(X)$.

\paragraph[The central charge $Z_{\omega,B}$] Given any smooth \label{para:defBheart} projective variety $X$ and   $\mathbb{R}$-divisor classes $\omega, B$ on $X$ where $\omega$ is ample, we  define the group homomorphism $Z_{\omega,B} : K(X) \to \mathbb{C}$ by
\[
Z_{\omega,B}(E) = -\int_X e^{-(B+i\omega)}\ch(E) =  -\int_X e^{-i\omega}\ch^B(E).
\]
On a smooth projective surface $X$, the pair $(Z_{\omega, B},\Bc_{\omega, B})$ is a Bridgeland stability condition with respect to $(0,1]$.  (We do not need to assume $\omega, B$ are over $\mathbb{Q}$ here -  see \cite[Section 2]{ABL} or \cite[Theorem 6.10]{MSlec}.)

When $B=0$, we will drop the subscript $B$, and similarly for the notations in \ref{para:slopestab}.

\paragraph[The heart $\Coh^p$]  On a \label{para:heartCohp} smooth projective variety $X$, we will write $\Coh^p$ or $\Coh^p(X)$ to denote the heart of bounded t-structure on $D^b(X)$ associated to the  perversity function $p(d)=-\lfloor \tfrac{d}{2}\rfloor$, the general definition of which can be found in \cite[Section 3.1]{BayerPBSC}.  Specifically, we have
\begin{align*}
\Coh^p (X) &= \Coh (X) \text{\quad when $\dimension X=1$},\\
\Coh^p (X) &= \langle \Coh^{= 2}(X)[1], \Coh^{\leq 1}(X)\rangle  \text{\quad when $\dimension X=2$},\\
\Coh^p (X) &= \langle \Coh^{\geq 2}(X)[1], \Coh^{\leq 1}(X)\rangle  \text{\quad when $\dimension X=3$}.
\end{align*}

\paragraph[Elliptic fibrations $p : X \to B$] Unless otherwise stated, in the remainder of this article, we will  write $p : X \to B$ to denote a  Weierstra{\ss} elliptic fibration  in the sense of \cite[2.1]{Lo15} where $X, B$ are both smooth projective varieties.  In particular, by $p$ being an elliptic fibration, we mean that   $p$ is a flat morphism whose fibers are Gorenstein curves of arithmetic genus 1.  The Weierstra{\ss} condition means that all the fibers of $p$ are geometrically integral, and $p$ has a section $\sigma : B \to X$ such that its image $\Theta = \sigma (B)$ does not intersect any singular point of any singular fiber.  The smoothness of $X$ implies that the generic fiber of $p$ is a smooth elliptic curve, while the Weierstra{\ss} assumption means that  the singular fibers are at worst nodal or cuspidal.  When $X$ is a surface (resp.\ threefold), we will refer to $p : X \to B$ or simply $X$ as a Weierstra{\ss} elliptic surface (resp.\ Weierstra{\ss} elliptic threefold).

We will write $f$ to denote the class of a fiber of $p$.  Since $\Theta$ is a section of $p$, it is a $p$-ample divisor on $X$ by the Nakai-Moishezon criterion \cite[Theorem 1.42]{KM}.  As a result, given any ample divisor $H_B$ on $B$,  divisors on $X$ of the form $\Theta + vp^\ast H_B$ are ample for $v \gg 0$ \cite[Proposition 1.45]{KM}.  We also define the slope function $\mu_f$ on $\Coh (X)$ by
\[
\mu_f (E)=\begin{cases}
\frac{f\ch_1 (E)}{\ch_0(E)} &\text{\quad if $\ch_0(E)\neq 0$} \\
\infty &\text{\quad if $\ch_0(E)=0$}
\end{cases},
\]
which has the HN property.

\subparagraph[Another word on notation] We adopt a good deal of notation surrounding elliptic fibrations from the reference \cite[6.2.6]{FMNT}.  In continuing with their notation, we use the letter $B$ to denote the base of the elliptic fibration $p : X \to B$.  On the other hand, we also use $B$ to denote  the  $\mathbb{R}$-divisor on $X$ (sometimes called the `$B$-field')  used in `twisting' Chern classes in defining slope functions and  central charges.  Although there is a conflict of notation here, we believe that no confusion should arise as the meaning of $B$ will always be clear from the context.

\subparagraph[Self-intersection of $\Theta$]  We will \label{para:selfinters} write the self-intersection of $\Theta$ on $X$ as $\Theta^2=-e$.  Note that $e=\mathrm{deg}\, \mathbb{L}$ where $\mathbb{L}=(R^1p_\ast \OO_X)^\ast$ (e.g.\ see  \cite[2.3]{LLM}), while $\deg\, \mathbb{L} \geq 0$  by \cite[Lemma II.5.6, Definition II.4.1, (II.3.6)]{MirLec}.  Hence $\Theta^2 = -e \leq 0$ overall.

\paragraph[Fourier-Mukai transforms $\Phi,\whPhi$] For a \label{para:FMTs} Weierstra{\ss} elliptic fibration $p : X \to B$, we will write $\Phi$ to  denote the relative Fourier-Mukai transform $D^b(X) \to D^b(X)$ as defined in \cite[6.2.3]{FMNT}, the kernel $\mathcal{P}$ of which is a relative Poincar\'{e} sheaf. In particular, $\mathcal{P}$ is a universal sheaf for a moduli problem parametrising rank-one torsion-free sheaves on the fibers of $p$.  That is, for any closed point $x \in X$, if $\OO_x$ denotes the structure sheaf of $\{x\}$ then $\Phi \OO_x$ is a rank-one torsion-sheaf supported on the fiber of $p$ containing $x$.   There is another relative Fourier-Mukai transform $\whPhi : D^b(X) \to D^b(X)$ satisfying
\[
  \whPhi \Phi \cong \mathrm{id}_{D^b(X)}[-1] \cong \Phi \whPhi,
\]
and the kernel of $\whPhi$ is also a universal sheaf for a moduli problem of rank-one torsion-free sheaves on fibers of $p$ (see \cite[Theorem 6.18, Lemma 6.22]{FMNT} and also \cite[8.4]{BMef}).

We say an object $E \in D^b(X)$ is $\Phi$-WIT$_i$ if $\Phi E$ is isomorphic to an object in  $\Coh (X)[-i]$, in which case we write $\wh{E}$ for any coherent sheaf satisfying $\Phi E \cong \wh{E}[-i]$; we  similarly define the notion of $\whPhi$-WIT$_i$.  These definitions are in line  with those in \ref{para:def-WIT}.

\section{Elliptic curves}\label{sec:ellcur}

Let $X$ be a \label{eg:FMTonellipcurve} smooth elliptic curve, so that the base $B$ of our elliptic fibration $p : X \to B$ is a single point.  Set $\omega$ to be  a single closed point on $X$ and the $B$-field in \ref{para:defBheart} to be zero, so that the central charge $Z(E) := Z_{\omega,B} (E)$ takes the form
\[
Z(E)= -\int_X \ch_1(E) + i \ch_0(E) \int_X \omega = -(\degree E) + i(\rank E).
\]
The pair $(Z,\Coh (X))$ is a polynomial stability function $Z : K(X) \to \mathbb{C}$ with respect to $(0,1]$, and the corresponding slope function (see \ref{eg:stabslopefunc}), being the usual slope function for Mumford stability for sheaves, has the Harder-Narasimhan property.  Therefore, $(Z, \Coh (X))$ is a Bridgeland stability condition, and an object $E \in D^b(X)$ is a  $Z$-semistable (resp.\ $Z$-stable) object in $\Coh (X)$ if and only if it is a slope semistable  (resp.\ slope stable) coherent sheaf on $X$ (see \ref{para:mugeneraldef1}).

Recall the following classification result for slope semistable sheaves on a smooth elliptic curve, which can be found in the works of Bridgeland \cite[3.2]{FMTes}, Hein-Ploog \cite{hein2005fourier}, Polishchuk \cite[Lemma 14.6]{polishchuk2003abelian}, and also in \cite[Corollary 3.29]{FMNT}:

\begin{thm}\label{thm:main0}
Let $X$ be a smooth elliptic curve, and $\Phi : D^b(X) \to D^b(X)$ the Fourier-Mukai transform with Poincar\'{e} line bundle as the kernel.  Then for any $E \in D^b(X)$,
\[
 \text{$E$ is a slope semistable sheaf if and only if $\Phi E$ is a slope semistable sheaf (up to a shift)}.
\]
The same result holds when `semistable' is replaced with `stable' on both sides.
\end{thm}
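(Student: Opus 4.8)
The plan is to realise $\Phi$ as the exact equivalence in an instance of Configuration III and then quote Theorem \ref{prop:paper30prop11-11ext}. Concretely, I would take $\Dc = \Uc = D^b(X)$, let $\Phi$ be the given Fourier--Mukai transform and $\Psi = \whPhi$ the companion transform of \ref{para:FMTs}, so that $\Psi\Phi \cong \mathrm{id}[-1] \cong \Phi\Psi$ and condition \ref{para:configIII}(a) holds. For the hearts I would take $\Ac = \Bc = \Coh(X)$; since $X$ is a curve, the relative Fourier--Mukai transform has cohomological amplitude $[0,1]$ (all higher direct images along the projection $X\times X \to X$ vanish), so $\Phi\Coh(X) \subset D^{[0,1]}_{\Coh(X)}$ and \ref{para:configIII}(b) holds. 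For the stability data I would take $Z_\Ac = Z_\Bc = Z$, the central charge $Z(E) = -(\deg E) + i(\rank E)$ of \ref{eg:FMTonellipcurve}; this is a polynomial (indeed Bridgeland) stability function on $\Coh(X)$ with respect to $(0,1]$, so here $a = b = 0$.

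The substance of the verification is condition \ref{para:configIII}(c). Using the cohomological form of the relative Fourier--Mukai transform, one computes that on Chern characters $\Phi$ acts by $(\ch_0, \ch_1) \mapsto (\ch_1, -\ch_0)$ (for instance, $\Phi$ sends a skyscraper to a degree-zero line bundle and $\OO_X$ to a shifted skyscraper). Consequently $Z(\Phi E) = \ch_0(E) + i\,\ch_1(E) = (-i)\cdot Z(E)$ for all $E$, so \eqref{eq:AG45-108-17} holds with $T$ the multiplication-by-$(-i)$ operator of Example \ref{eg:mulbynegi}; this $T$ lies in $\mathrm{GL}^+(2,\RR) \subset \GLlp$, and the induced map on polynomial phase functions can be chosen to be $\Gamma_T(\phi) = \phi - \tfrac12$. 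Then $\Gamma_T^{-1}(b) = \Gamma_T^{-1}(0) = \tfrac12$, and the required inequality \eqref{eq:AG47-23-1}, namely $a \prec \Gamma_T^{-1}(b) \prec a+1$, reads $0 \prec \tfrac12 \prec 1$, which holds. Since $Z$ is moreover a polynomial stability function on each heart, we are in Configuration III (and, $Z$ having the Harder--Narasimhan property by Mumford stability, in the setting of part (v) as well).

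With this in place, Theorem \ref{prop:paper30prop11-11ext}(iv) gives immediately that a nonzero $E \in \Coh(X)$ is $Z$-semistable (equivalently, slope semistable as a sheaf) if and only if $\Phi E$ is $Z$-semistable as an object of $D^b(X)$, which by the definition of $Z$-semistability for arbitrary objects means precisely that $\Phi E$ is a slope semistable sheaf up to a shift; the `stable' case follows in the same way. The converse direction of the biconditional in the theorem — an object $E \in D^b(X)$ whose $\Phi$-image is a slope semistable sheaf up to a shift is itself of this form — follows either by applying the same argument to the symmetric instance of Configuration III with $\Psi$ in place of $\Phi$ (see \ref{para:ABconfigsymm}), or simply from the `if' direction together with $\whPhi\Phi \cong \mathrm{id}[-1]$.

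I expect the only real point requiring care to be condition \ref{para:configIII}(c): pinning down the sign convention in the cohomological Fourier--Mukai formula so that $T$ comes out as the rotation by $-\tfrac{\pi}{2}$ rather than $+\tfrac{\pi}{2}$, and then observing that this rotation satisfies the strict two-sided inequality \eqref{eq:AG47-23-1} — which is exactly the mechanism forcing $\Phi$ to carry a $\Coh(X)$-semistable sheaf to a $\Coh(X)$-semistable sheaf concentrated in a single cohomological degree, hence only up to a shift.
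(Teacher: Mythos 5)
Your proposal is correct and follows essentially the same route as the paper: it instantiates Configuration III with $\Dc=\Uc=D^b(X)$, $\Psi=\whPhi$, $\Ac=\Bc=\Coh(X)$, $Z_\Ac=Z_\Bc=Z$, $a=b=0$, and $T$ the multiplication-by-$(-i)$ operator of Example \ref{eg:mulbynegi}, then invokes Theorem \ref{prop:paper30prop11-11ext}(iv). The only cosmetic difference is that you spell out the verification of \eqref{eq:AG45-108-17} and \eqref{eq:AG47-23-1} inside the proof, whereas the paper records the cohomological identity $Z(\Phi E)=(-i)Z(E)$ and the amplitude statement $\Phi\Coh(X)\subset D^{[0,1]}_{\Coh(X)}$ just before the proof and cites them.
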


Theorem \ref{thm:main0} can be recovered using Configuration III and Theorem \ref{prop:paper30prop11-11ext}(iv).  To begin with, for any $E \in D^b(X)$ we have
\begin{equation}\label{eq:ellipc-rkdeg}
 \begin{pmatrix} \rank \Phi E \\ \degree \Phi E \end{pmatrix} = \begin{pmatrix} 0 & 1 \\ -1 & 0 \end{pmatrix} \begin{pmatrix} \rank E \\ \degree E \end{pmatrix}
\end{equation}
(e.g.\ see  \cite[Proposition 3.25]{FMNT}, which uses a functor that is dual to $\Phi$ but has the same    cohomological Fourier-Mukai transform), which can be rewritten  as
\[
  Z(\Phi E) = (-i)Z(E).
\]
Since $p$ has relative dimension 1 and the kernel of $\Phi$ is a sheaf sitting at degree 0, we have $\Phi \Coh (X) \subset D^{[0,1]}_{\Coh (X)}$ \cite[p.186]{FMNT}.

\begin{proof}[Proof of Theorem \ref{thm:main0}]
We are  in Configuration III if we take  $\Dc = \Uc = D^b (X)$, $\Psi = \whPhi$, $\Ac =  \Bc =\Coh (X)$, $Z_\Ac = Z_\Bc = Z$, $a=b=0$, and $T, \Gamma_T$ to be as in Example \ref{eg:mulbynegi} where $T$ corresponds to multiplication by $(-i)$.  The theorem then follows from Theorem \ref{prop:paper30prop11-11ext}(iv).
\end{proof}

Let $\Pc$ denote the slicing of the Bridgeland stability condition $(Z,\Coh (X))$  with respect to $(0,1]$  and write $\sigma = (Z,\Pc)$.  Them  the proof of Theorem \ref{thm:main0} together with Theorem \ref{prop:paper30prop11-11ext}(v) give
\begin{equation}\label{eq:ellcurv2}
\Phi \cdot \sigma = \sigma \cdot (T,g)
\end{equation}
where $T$ corresponds to multiplication by $(-i)$ and $g(x)=x-\tfrac{1}{2}$ for all $x \in \RR$.

Theorem \ref{thm:main0} can be generalised to higher dimensional elliptic fibrations using Theorem  \ref{prop:paper30prop11-11ext}.  This requires choosing appropriate input for Configuration III.  A  choice of such input on elliptic surfaces will be constructed  in Sections \ref{sec:ellsurfGLact} and \ref{sec:ellsurfpolystab}.

\section{Elliptic surfaces: $\GLtr$-action}\label{sec:ellsurfGLact}

On a smooth projective surface $X$, we have
\[
Z_{\omega,B}(E)= -\int_X e^{-i\omega}\ch^B(E) = -\ch_2^B(E) + \tfrac{\omega^2}{2}\ch_0^B(E) + i\omega \ch_1^B(E).
\]
In this section, we will take  $X$ to be a Weierstra{\ss} elliptic surface and  study the following question:

\begin{que}\label{q:1}
Given $\mathbb{R}$-divisors $\olw, \olB$ on $X$, can we find $\mathbb{R}$-divisors $\omega, B$ and $g \in \GLtr$ such that
\[
  Z_{\omega, B}(\Phi E) = T Z_{\olw, \olB}(E) \text{\quad for all $E\in D^b(X)$?}
\]
\end{que}

That is, we will find solutions to   equation \eqref{eq:AG45-108-17} in Configuration III, where the central charges are those frequently used in the study of Bridgeland stability conditions, and where $T$ is a constant function.

\paragraph[Cohomological Fourier-Mukai transform]  Suppose $p : X \to B$ \label{para:surfcohomFMT} is a Weierstra{\ss} elliptic surface.  For an object $E \in D^b(X)$, let us write
\begin{equation}\label{eq:ellipsurfchE}
\ch_0(E)=n,\text{\quad}  f\ch_1(E)=d, \text{\quad} \Theta \ch_1(E)=c,\text{\quad}  \ch_2(E)=s.
\end{equation}
We have the following formulas from \cite[(6.21)]{FMNT}
\begin{align*}
\ch_0(\Phi E) &= d, \\
\ch_1 (\Phi E) &= -\ch_1(E) + def + (d-n)\Theta + (c-\tfrac{1}{2} ed+s)f, \\
\ch_2 (\Phi E) &= -c-de+\tfrac{1}{2}ne,
\end{align*}
and hence
\[
  f\ch_1(\Phi E) = -n, \text{\quad} \Theta \ch_1(\Phi E) = (s-\tfrac{e}{2}d)+ne
\]
where we recall $\Theta^2=-e \leq 0$ from \ref{para:selfinters}.  An analogous formula for the cohomological Fourier-Mukai transform associated to $\whPhi$ can be found in \cite[(6.22)]{FMNT}.

\paragraph  For \label{para:Q2} $a, b \in \mathbb{R}_{>0}$  and any $\mathbb{R}$-divisor $B$ on $X$, we will write
\[
  Z_{a,b,B}(E) = -\ch_2^B(E) + a \ch_0^B(E) + i (\Theta \ch_1^B(E) + bf\ch_1^B(E) )
\]
 and
\[
  Z_{a,b,B}'(E) = \begin{pmatrix} 1 & -fB \\ 0 & 1 \end{pmatrix} Z_{a,b,B}(E)
\]
for any $E \in D^b(X)$.    Note that when $\omega$ is  an ample divisor of the form $\omega = x\Theta + yf$ where $x,y\in \mathbb{R}_{>0}$, we can write
\begin{equation}\label{eq:Zchangeofcoord}
  Z_{\omega, B}(E) =  \begin{pmatrix} 1 & 0 \\ 0 & x \end{pmatrix} Z_{\tfrac{\omega^2}{2}, \tfrac{y}{x},B}(E).
\end{equation}
Hence every central charge of the form $Z_{\omega, B}$ is equal to $gZ_{a,b,B}'$ for some $g\in \GLtr$ and $a, b \in \mathbb{R}_{>0}$  depending only on $\omega$.  Therefore, if we only consider divisors $\olw, \omega$ lying in $\mathbb{R}_{>0}\Theta + \mathbb{R}_{>0}f$, then    Question \ref{q:1} is equivalent to the following:

\begin{que}\label{q:1b}
Given $\gamma, \delta \in \mathbb{R}_{>0}$ and an $\mathbb{R}$-divisor $\olB$, can we find $\epsilon, \zeta\in \mathbb{R}_{>0}$, an $\mathbb{R}$-divisor $B$ and $T\in \GLtr$ such that
\begin{equation}\label{eq:Q2}
  Z_{\epsilon, \zeta, B}'(\Phi E) = TZ_{\gamma, \delta, \olB}'(E) \text{\quad for all $E\in D^b(X)$?}
\end{equation}\
\end{que}

\begin{remsub}
When $b>0$ is chosen so that $\Theta + bf$ is an ample divisor on $X$ and $a \in \mathbb{R}_{>0}$, the same argument as in the proof of \cite[Corollary 2.1]{ABL} shows that $(Z_{a,b,B},\Bc_{\Theta +bf,B})$ is a Bridgeland stability condition with respect to $(0,1]$.
\end{remsub}

\paragraph Let \label{para:Q2b}  us consider the equation \eqref{eq:Q2} under the additional constraint that    $\olB, B$  lie in $\mathbb{R}\Theta + \mathbb{R}f$.  Suppose $\olB = k\Theta + lf$ and $B = p\Theta + qf$ where $k,l,p, q \in \mathbb{R}$.  Using the notation for  $\ch (E)$ from \ref{para:surfcohomFMT}, we have
\[
  Z_{\gamma, \delta, \olB}'(E) = -s + (l-k\delta)d + \left( \gamma + (\delta - \tfrac{e}{2})k^2 \right)n + i\left( c+ \delta d - (l-ek+\delta k)n\right)
\]
 and
\begin{multline*}
  Z_{\epsilon, \zeta, B}'(\Phi E) = c + (e-\tfrac{e}{2}p^2 + \epsilon + p^2 \zeta ) d + (-\tfrac{e}{2} -q + p\zeta ) n + i \left( s + (-\tfrac{e}{2} - (q-ep) - p\zeta) d + (e-\zeta)n\right).
\end{multline*}
Using the last two formulas and the cohomological Fourier-Mukai transform for $\whPhi$ in \cite[(6.22)]{FMNT}, we can compute
\[
Z_{\gamma, \delta, \olB}'(\OO_x) = -1,\,\, Z_{\epsilon, \zeta, B}'(\Phi \OO_x)=i, \text{\quad and \quad} Z_{\gamma, \delta, \olB}'(\whPhi \OO_x) = i,\,\, Z_{\epsilon, \zeta, B}'(\Phi \whPhi \OO_x)=1.
\]
Putting $E=\OO_x$ in \eqref{eq:Q2} now gives $i = T\cdot (-1)$, while putting $E=\whPhi \OO_x$ gives $1=T\cdot i$.  It follows that the only   element $T \in \GLtr$ that can satisfy \eqref{eq:Q2} for all $E \in D^b(X)$ is $T=\begin{pmatrix} 0 & 1 \\ -1 & 0 \end{pmatrix}$, i.e.\ $T$ is simply multiplication by $-i$.  In this case,   \eqref{eq:Q2} holds for all $E \in D^b(X)$  whenever the following relations are satisfied:
\begin{align}
  l - k\delta &= \tfrac{e}{2} + (q-ep) + p\zeta \notag\\
  \gamma + (\delta - \tfrac{e}{2})k^2  &= \zeta - e \notag\\
  \delta &= e - \tfrac{e}{2}p^2 + \epsilon + p^2 \zeta \notag\\
  l - ek + \delta k &= \tfrac{e}{2} + q - p\zeta \label{eq:c4}.
\end{align}

\subparagraph In the \label{para:Q2bs} special case $k=p, l=q, \gamma=\epsilon$ and $\delta = \zeta$, in which case $\ol{B}=B$ and $Z'_{\gamma, \delta, \ol{B}}=Z'_{\epsilon, \zeta,B}$, the four relations in \eqref{eq:c4} together simplify to  just three relations
\begin{equation*}
 e=0, \text{\quad} p\zeta =0, \text{\quad} \zeta  =\epsilon.
\end{equation*}
In particular, with $g=-i$ and for $e=0$, if $k=p=0$ then any $l=q$ and any $\gamma=\delta=\epsilon=\zeta>0$ give a solution to \eqref{eq:Q2}.

\section{A polynomial stability and twisted Gieseker stability on surfaces}\label{sec:polytwststab}

In this section, we  take $X$ to be an arbitrary smooth projective surface.  We describe the large volume limit, which can be defined as a polynomial stability condition denoted as $Z_l$-stability.  Then we prove that 1-dimensional $Z_l$-semistable objects coincide with 1-dimensional twisted Gieseker semistable sheaves.

\paragraph[$Z_l$-stability] On a surface $X$, \label{para:surfZlowerl}  we have $\Coh^p (X) = \langle \Coh^{= 2}(X)[1], \Coh^{\leq 1}(X)\rangle$ from \ref{para:heartCohp}.   Then for any fixed  $\mathbb{R}$-divisors $\wt{\omega}, \ol{B}$ on $X$ where $\wt{\omega}$ is ample, and any $\beta \in \mathbb{R}_{>0}$, we set $\olw = \beta \wt{\omega}$ and define
\[
Z_l (E) = Z_{\ol{\omega},\olB}(E) = Z_{\beta \wt{\omega},\ol{B}}(E)
  \]
for $E \in D^b(X)$.  By regarding $\beta$ as the parameter,  the pair $(Z_l,\Coh^p)$ is a polynomial stability condition on $D^b(X)$ with respect to $(\tfrac{1}{4}, \tfrac{5}{4}]$ \cite[Proposition 4.1(b)]{BayerPBSC}.  The wall and chamber  structure of $(Z_l,\Coh^p)$ as a polynomial stability condition was studied in \cite{LiQin1}, while the mini-wall and mini-chamber structure of the associated Bridgeland stability conditions $\{(Z_{\beta\wt{\omega},\ol{B}}, \Bc_{\beta\wt{\omega},\ol{B}})\}_\beta$ was  studied in \cite{LQ}.  For a fixed Chern character of objects in $D^b(X)$, there is an effective bound $\beta_0$ such that, for $\beta \geq \beta_0$, the moduli of $Z_l$-semistable objects (i.e.\ the moduli of polynomial semistable objects) with that Chern character can be identified with the moduli of $Z_{\beta\wt{\omega},\ol{B}}$-semistable objects (i.e.\ the moduli of Bridgeland semistable objects) with the same Chern character  \cite[Theorem 4.4]{LQ}.

\paragraph[Twisted Gieseker stability]  Let \label{para:twGiedef} $X$ be a smooth projective surface and   $L$  a line bundle on $X$. The $L$-twisted Euler characteristic of a coherent sheaf $E$ on $X$ is defined as
\begin{equation*}
\chi_L (E):=\chi (E\otimes L) =\int_X \ch (E \otimes L)\mathrm{td} (T_X)
 \end{equation*}
 where the second equality follows from  Hirzebruch-Riemann-Roch.  Given a fixed ample divisor $\wt{\omega}$ on $X$, we say a coherent sheaf $E$ in $\Coh^{\leq 1}(X)$  is  \emph{$L$-twisted $\wt{\omega}$-Gieseker semistable}, or simply \emph{twisted Gieseker semistable}, if for every short exact sequence of sheaves $0 \to M \to E \to N \to 0$ on $X$ where $M, N\neq 0$ we have
\[
\frac{\chi_L(M)}{\wt{\omega}\ch_1(M)} \leq \frac{\chi_L(N)}{\wt{\omega}\ch_1(N)}
\]
(e.g.\ see \cite[Definition 6.1]{LLM}).  We say $E$ is twisted Gieseker stable if we always have a strict inequality above.

\paragraph Suppose  \label{para:obs2} $L$ is  a line bundle on  $X$ such that $\ch_1(L)K_X=0$.  Then for any  $E \in \Coh^{\leq 1}(X)$ we have $\chi_L(E)=\ch_2^{\ol{B}}(E)$ if we put $\ol{B}=\ch_1(L^\ast)+\tfrac{1}{2}K_X$; furthermore, for  any ample $\mathbb{R}$-divisor $\wt{\omega}$ we have
\[
 Z_{\wt{\omega},\olB}(E)=-\ch_2^{\olB}(E)+i\wt{\omega} \ch_1^{\olB}(E)=-\chi_L(E)+i\wt{\omega} \ch_1(E).
\]

\begin{cor}\label{lem:AG48-25-1}
Let $X$ be a smooth projective  surface and $\wt{\omega}$ a fixed ample $\mathbb{R}$-divisor on $X$.  Suppose $L$ is a line bundle on $X$ such that $\ch_1(L)K_X=0$, and  $\olB = \ch_1(L^\ast)+\tfrac{1}{2}K_X$.  Then for any $E\in D^b(X)$,
\begin{align*}
  \text{$E$ is an $L$-twisted $\wt{\omega}$-Gieseker semistable sheaf in $\Coh^{\leq 1}(X)$ } &\Leftrightarrow \text{ $E$ is $Z_l$-semistable with $\ch_0(E)=0$ }
\end{align*}
where $Z_l$-stability is defined as in \ref{para:surfZlowerl}. The same statements holds if we replace `semistable' with `stable.'
\end{cor}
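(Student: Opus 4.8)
The plan is to reduce the statement to a slope comparison for sheaves and then carry out a direct computation of polynomial phases inside the tilted heart $\Coh^p(X)=\langle \Coh^{=2}(X)[1],\Coh^{\leq 1}(X)\rangle$ (see \ref{para:heartCohp}), in which $\Coh^{\leq 1}(X)$ is precisely the torsion‑free class of the defining torsion pair. First I would record two reduction facts. Writing any $E\in\Coh^p(X)$ in its canonical triangle $H^{-1}(E)[1]\to E\to H^0(E)$ with $H^{-1}(E)\in\Coh^{=2}(X)$ (torsion‑free) and $H^0(E)\in\Coh^{\leq 1}(X)$, one has $\ch_0(E)=-\rank H^{-1}(E)$, so $\ch_0(E)=0$ iff $H^{-1}(E)=0$ iff $E\in\Coh^{\leq 1}(X)$; thus the right‑hand side of the Corollary concerns a sheaf $E\in\Coh^{\leq 1}(X)$. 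Second, for such $E$, any $\Coh^p$‑subobject $M\hookrightarrow E$ lies in $\Coh^{\leq 1}(X)$ (torsion‑free classes are closed under subobjects), and the cohomology sequence of $M\to E\to N\to M[1]$ exhibits $H^{-1}(N)$ as a subsheaf of the torsion sheaf $M$ while being torsion‑free, forcing $H^{-1}(N)=0$ and $N\in\Coh^{\leq 1}(X)$; hence the $\Coh^p$‑short exact sequences with all terms in $\Coh^{\leq 1}(X)$ are exactly the sheaf short exact sequences with all terms in $\Coh^{\leq 1}(X)$. It therefore remains to prove, for a sheaf $E\in\Coh^{\leq 1}(X)$, that $E$ is $Z_l$‑semistable in $\Coh^p(X)$ if and only if $E$ is $L$‑twisted $\wt\omega$‑Gieseker semistable.

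Next I would compute $Z_l$ on $\Coh^{\leq 1}(X)$. For $E\in\Coh^{\leq 1}(X)$ we have $\ch_0^{\olB}(E)=0$ and $\ch_1^{\olB}(E)=\ch_1(E)$, so by \ref{para:surfZlowerl} together with \ref{para:obs2} (which use $\ch_1(L)K_X=0$ and $\olB=\ch_1(L^\ast)+\tfrac12 K_X$) one gets $Z_l(E)(\beta)=-\ch_2^{\olB}(E)+i\beta(\wt\omega\ch_1(E))=-\chi_L(E)+i\beta(\wt\omega\ch_1(E))$. If $E$ has $1$‑dimensional support then $\wt\omega\ch_1(E)>0$, so $Z_l(E)(\beta)$ lies in the open upper half plane and its polynomial phase $\phi_{Z_l}(E)$ satisfies $\cot(\pi\,\phi_{Z_l}(E)(\beta))=-\chi_L(E)/(\beta\,\wt\omega\ch_1(E))$, with limit $\tfrac12$ as $\beta\to\infty$; if $E\neq 0$ is $0$‑dimensional then $\ch_1(E)=0$ and $Z_l(E)(\beta)=-\chi_L(E)=-\mathrm{length}(E)<0$, so $\phi_{Z_l}(E)$ is the constant function $1$, the maximal phase in $(\tfrac14,\tfrac54]$.

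Finally I would translate the comparison of phase germs (which compares values for $\beta\gg 0$) into the twisted‑slope inequality. Since $t\mapsto\cot(\pi t)$ is strictly decreasing on $(0,1)$, for nonzero $1$‑dimensional sheaves $M,N\in\Coh^{\leq 1}(X)$ one has $\phi_{Z_l}(M)\preceq\phi_{Z_l}(N)$ iff $-\chi_L(M)/(\wt\omega\ch_1(M))\geq -\chi_L(N)/(\wt\omega\ch_1(N))$, i.e.\ iff $\chi_L(M)/(\wt\omega\ch_1(M))\leq\chi_L(N)/(\wt\omega\ch_1(N))$; if $M$ is $0$‑dimensional and $N$ is $1$‑dimensional then $\phi_{Z_l}(M)=1\succ\phi_{Z_l}(N)$, so the comparison fails; and if $M$ is $1$‑dimensional and $N$ is $0$‑dimensional (or both are $0$‑dimensional) then $\phi_{Z_l}(M)\preceq\phi_{Z_l}(N)=1$. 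Combining this with the first paragraph: a $Z_l$‑semistable sheaf $E\in\Coh^{\leq 1}(X)$ can have no nonzero $0$‑dimensional subsheaf with nonzero quotient, so either $E$ is purely $0$‑dimensional — and then every term of every sheaf short exact sequence has phase $1$, so $E$ is trivially twisted Gieseker semistable — or $E$ is pure $1$‑dimensional and each sheaf short exact sequence yields precisely the twisted Gieseker inequality; running the argument backwards, and using that a twisted Gieseker semistable sheaf in $\Coh^{\leq 1}(X)$ is pure (a $0$‑dimensional subsheaf has ``infinite'' twisted slope), gives the converse. Replacing $\preceq$ by $\prec$ throughout handles the ``stable'' case. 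The step requiring the most care is this degenerate locus of $0$‑dimensional torsion: one must check that the extremal position of $0$‑dimensional sheaves (phase $1$) inside $\Coh^p(X)$ matches the purity already built into twisted Gieseker stability; the comparison of $\Coh^p$‑ and $\Coh$‑short exact sequences and the monotonicity computation for the phase are then routine.
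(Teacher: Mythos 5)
Your proposal is correct and follows essentially the same route as the paper's proof: reduce via the torsion pair $(\Coh^{=2}(X)[1],\Coh^{\leq 1}(X))$ to showing that $\Coh^p$-short exact sequences of a sheaf in $\Coh^{\leq 1}(X)$ are exactly sheaf short exact sequences, identify $Z_l|_{\Coh^{\leq 1}}$ with $-\chi_L+i\beta\wt\omega\ch_1$, and compare phases with twisted slopes. The only difference is presentational: you verify the phase--slope monotonicity and the $0$-dimensional degenerate cases by hand, where the paper delegates both to the slope-function formalism of \ref{para:mugeneraldef} and \ref{para:mugeneraldef1} (with the convention $\mu_Z=\infty$ when $\Im Z=0$).
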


\begin{proof}
For any $0 \neq E \in \Coh^{\leq 1}(X)$, we have
\[
Z_l(E)=Z_{\beta \wt{\omega},\ol{B}}(E)=-\ch_L(E)+i\beta \wt{\omega}\ch_1(E)
\]
and $Z_l(E) \in \mathbb{R}_{>0}e^{i\pi (0,1]}$.  Considering $\Coh^{\leq 1}(X)$ as an abelian subcategory of $D^b(X)$, we have  the slope function
\[
\mu_{Z_l}(E)=\frac{\ch_L(E)}{\beta \wt{\omega}\ch_1(E)}
\]
for $0 \neq E \in \Coh^{\leq 1}(X)$ as defined in \ref{para:mugeneraldef}.

Now, given any $E \in D^b(X)$, if $E$ is an $L$-twisted $\wt{\omega}$-Gieseker semistable sheaf in  $\Coh^{\leq 1}(X)$, then $E \in \Coh^p(X)$, and any $\Coh^p(X)$-short exact sequence of the form $0 \to M \to E \to N \to 0$ (assuming $M, N \neq 0$) is a $\Coh^{\leq 1}(X)$-short exact sequence.  Since twisted Gieseker semistability clearly coincides with $\mu_{Z_l}$-semistability on $\Coh^{\leq 1}(X)$, we have $\mu_{Z_l} (M) \preceq \mu_{Z_l}(N)$ and hence $\phi_{Z_l}(M) \preceq \phi_{Z_l}(N)$  (see \ref{para:mugeneraldef1}), showing that $E$ is $Z_l$-semistable in $\Coh^p(X)$.

Conversely, if $E$ is a $Z_l$-semistable object in $\Coh^p(X)$ with $\ch_0(E)$, then $H^{-1}(E)$ must vanish and so $E \in\Coh^{\leq 1}(X)$.  Since $\Coh^{\leq 1}(X)$ is a subcategory of  $\Coh^p(X)$, every $\Coh^{\leq 1}(X)$-short exact sequence of the form $0 \to M \to E \to N \to 0$ (assuming $M, N \neq 0$) is also a $\Coh^p(X)$-short exact sequence.  Then the $Z_l$-semistability of $E$ in $\Coh^p(X)$ implies $\mu_{Z_l}(M) \preceq \mu_{Z_l}(N)$ by \ref{para:mugeneraldef1}, which in turn shows $E$ is twisted Gieseker semistable.  Hence the 'semistable' case is proved.

The above arguments can be easily modified to show the 'stable' case.
\end{proof}

\section{Elliptic surfaces: a second polynomial stability}\label{sec:ellsurfpolystab}

In this section, we continue to take $X$ to be a Weierstra{\ss} elliptic surface as we did in Section \ref{sec:ellsurfGLact} unless otherwise stated.  We describe another polynomial stability which we  denote as $Z^l$-stability.  As opposed to $Z_l$-stability, the definition of which involves deforming an ample class towards infinity along a ray in the ample cone of $X$, the definition of $Z^l$-stability involves deforming an ample class towards the fiber class  along a hyperbola in the ample cone.  We prove that $Z_l$-semistable objects correspond to $Z^l$-semistable objects under the autoequivalence $\Phi$, for all Chern classes.

\paragraph[Notation] Fix  \label{para:ellipsurfnotation} an $m \in  \mathbb{R}_{>0}$ such that $\Theta + kf$ is ample for all $k\geq m$.  For any $\alpha, \beta \in \mathbb{R}_{>0}$, we will write $\wt{\omega}=\tfrac{1}{\alpha}(\Theta+mf)+f$ and
\[
\ol{\omega}=\beta \wt{\omega}=\tfrac{\beta}{\alpha}(\Theta + mf) + \beta f = \tfrac{\beta}{\alpha}( \Theta+(m+\alpha)f).
\]
Then $\wt{\omega}$ and $\ol{\omega}$ are both ample by our choice of $m$.  In addition, for any $u, v \in \mathbb{R}_{>0}$ we write
 \[
 \omega = u(\Theta + mf) +vf= u(\Theta + (m+\tfrac{v}{u})f),
 \]
 which is also ample by our choice of $m$.  Since   $\Theta^2 = -e$ and  $\Theta +mf$ is ample by assumption, we have $\Theta(\Theta + mf)>0$, i.e.\  $m-e>0$.

\paragraph[$\olB, B$ in $\mathbb{R}f$] In the rest \label{para:L3p-relation} of this section, we will  focus on $B$-fields of the form  $\olB = lf$ and $B=qf$ where $l,q \in \mathbb{R}$.  We do not consider the case where $f\olB, fB$ are nonzero because, in extending  the  construction of $Z^l$-stability in \cite{LLM} to such nonzero $B$-fields,  essential ingredients such as \cite[3.2(iii), (v)]{LLM} will no longer hold (one might have to  replace the autoequivalence $\Phi$ by a Fourier-Mukai transform between $X$ and one of its Fourier-Mukai partners constructed in \cite{BMef}).  With $\olB = lf$ and $B=qf$, we have $f\ol{B}=fB=0$ and so from \ref{para:Q2} and \ref{para:Q2b} we have $Z'_{\gamma, \delta, \ol{B}}=Z_{\gamma, \delta, \ol{B}}, Z'_{\epsilon, \zeta, B}=Z_{\epsilon, \zeta, B}$ and
\begin{align*}
  Z_{\gamma, \delta, \olB}(E) &= ( -s + ld + \gamma n ) + i ( c + \delta d - ln), \\
  Z_{\epsilon, \zeta, B}(\Phi E) &= c + (e+\epsilon )d + (-\tfrac{e}{2} -q  )n + i \left( s + (-\tfrac{e}{2}  -q )d + (e-\zeta) n\right)
\end{align*}
which satisfy
\begin{equation}\label{eq:L1}
  Z_{\epsilon, \zeta, B}(\Phi E) = (-i) Z_{\gamma, \delta, \olB}(E)= \begin{pmatrix} 0 & 1 \\ -1 & 0 \end{pmatrix}Z_{\gamma, \delta, \olB}(E) \text{\quad for any $E \in D^b(X)$}
\end{equation}
provided we have the relations
\begin{equation}\label{eq:constraints-simp}
  l = \tfrac{e}{2} + q, \text{\quad}  \gamma  = \zeta - e, \text{\quad}   \delta = e + \epsilon.
\end{equation}

\paragraph We will now rewrite the computations in \ref{para:L3p-relation} in terms of $Z_{\ol{\omega}, \ol{B}}$ and $Z_{\omega, B}$.  From \eqref{eq:Zchangeofcoord}, we have
\begin{align*}
   Z_{\olw, \olB}(E) &= 
    \begin{pmatrix} 1 & 0 \\ 0 & \tfrac{\beta}{\alpha} \end{pmatrix} Z_{\tfrac{\olw^2}{2}, m+ \alpha, \olB}(E)\\
   Z_{\omega, B}(\Phi E) &= \begin{pmatrix} 1 & 0 \\ 0 & u \end{pmatrix} Z_{\tfrac{\omega^2}{2}, m+\tfrac{v}{u},B}(\Phi E)
\end{align*}
and so by choosing
\begin{equation*}
\gamma=\tfrac{\olw^2}{2}, \text{\quad} \delta = m+ \alpha, \text{\quad} \epsilon = \tfrac{\omega^2}{2}, \text{\quad} \zeta = m+ \tfrac{v}{u}
\end{equation*}
in \ref{para:L3p-relation}, the relations \eqref{eq:constraints-simp} can be rewritten as
\begin{align}
l &= \tfrac{e}{2} + q, \label{eq:lqlrelation}\\
\tfrac{\beta^2}{\alpha^2} (m+\alpha -\tfrac{e}{2}) &= m + \tfrac{v}{u}-e, \label{eq:surfcons1}\\
m+ \alpha -e &= (m-\tfrac{e}{2})u^2+uv. \label{eq:surfcons2}
\end{align}
Under these  relations,  \eqref{eq:L1} can be rewritten as
\begin{multline*}
Z_{\omega, B}(\Phi E) = \begin{pmatrix} 1 & 0 \\ 0 & u \end{pmatrix} Z_{\epsilon, \zeta, B}(\Phi E) =       \begin{pmatrix} 1 & 0 \\
  0 & u \end{pmatrix}\begin{pmatrix} 0 & 1 \\
  -1 & 0 \end{pmatrix}  Z_{\gamma, \delta, \ol{B}} (E)  \\ =\begin{pmatrix} 1 & 0 \\
  0 & u \end{pmatrix}\begin{pmatrix} 0 & 1 \\
  -1 & 0 \end{pmatrix}  \begin{pmatrix} 1 & 0 \\
  0 & \tfrac{\beta}{\alpha} \end{pmatrix}^{-1} Z_{\olw,\olB} (E)=\begin{pmatrix} \tfrac{\alpha}{\beta} & 0 \\ 0 & u \end{pmatrix} \begin{pmatrix} 0 & 1 \\
  -1 & 0 \end{pmatrix}  Z_{\ol{\omega}, \ol{B}}(E),
\end{multline*}
i.e.\
\begin{equation}\label{eq:L2}
Z_{\omega, B}(\Phi E) = \begin{pmatrix} \tfrac{\alpha}{\beta} & 0 \\ 0 & u \end{pmatrix} \begin{pmatrix} 0 & 1 \\
  -1 & 0 \end{pmatrix}  Z_{\ol{\omega}, \ol{B}}(E) = \begin{pmatrix} \tfrac{\alpha}{\beta} & 0 \\ 0 & u \end{pmatrix} (-i)  Z_{\ol{\omega}, \ol{B}}(E).
\end{equation}

\begin{remsub}\label{rem:solexistence}
Note   that equation \eqref{eq:lqlrelation} is independent of equations \eqref{eq:surfcons1} and \eqref{eq:surfcons2}.  Also, there is always a solution to \eqref{eq:surfcons1} and \eqref{eq:surfcons2}: We can rewrite \eqref{eq:surfcons2} as $$(m-\tfrac{e}{2})u^2+vu-(m+\alpha-e)=0$$ and think of it as a quadratic equation in $u$ with discriminant $\Delta = v^2 + 4(m-\tfrac{e}{2})(m+\alpha-e)$.  Since $m-e>0$ and $\alpha>0$, given any $v>0$, this quadratic always has a positive real solution in $u$.  We can then solve for $\beta$ using \eqref{eq:surfcons1}.
\end{remsub}

\paragraph[Solving equations] To \label{para:surfsolveeqs} relate Bridgeland stability to polynomial stability later on,  let us solve \eqref{eq:surfcons1} and \eqref{eq:surfcons2} in terms of Laurent series in $v$.  To this end, let us  make  the change of variable $w = \tfrac{1}{v}$ so that  \eqref{eq:surfcons2} reads
\begin{equation}\label{eq:AG48-26-1}
  u =   \left((m+\alpha-e) - (m-\tfrac{e}{2})u^2\right) w.
\end{equation}
 By \cite[Theorem 1.1, Remark 1.2(2)]{KollarLRS}, this power series is convergent on a neighbourhood of $0$, i.e.\   the relation \eqref{eq:surfcons2} defines $u$ as an implicit function in $v$ (hence $w$), and $u$ can be represented by an element of $(m+\alpha-e)w\RPwc$.  Since $m-e >0$ from \ref{para:ellipsurfnotation} and $\alpha>0$, it follows that     $u=\Theta (\tfrac{1}{v})$ for $v \gg 0$; in particular, as $v \to \infty$ we have $u \to 0^+$.

Since $m>e \geq \tfrac{e}{2} \geq 0$, for any $\alpha, u, v \in \mathbb{R}_{>0}$, the equation \eqref{eq:surfcons1} has a solution
\begin{equation}\label{eq:beta1}
  \beta = \alpha \sqrt{\frac{m+\tfrac{v}{u}-e}{m+\alpha -\tfrac{e}{2}}} >0.
\end{equation}
If we set $C= \alpha^2/(m+\alpha-\tfrac{e}{2})$, then   \eqref{eq:surfcons1} can  be rewritten as
\[
  \beta^2 = C \left( \tfrac{1}{uw} +  (m-e)\right).
\]
Since $u \in w\RPwc$  and $u$ has a zero of order 1 at $w=0$ from above, $\tfrac{1}{u^2}$ has a pole of order 2 at $w=0$ and $\tfrac{1}{u^2} \in \tfrac{1}{w^2} \RPwc$.  On the other hand, from \eqref{eq:AG48-26-1} we have
\begin{equation*}
  \tfrac{1}{uw} =   (m+\alpha-e)\tfrac{1}{u^2} - (m-\tfrac{e}{2}),
\end{equation*}
which also has a pole of order 2 at $w=0$.  Overall, we obtain
\[
  \beta^2=\tfrac{C}{m+\alpha-e}w^{-2}  + (\text{higher-degree terms in $w$}).
\]
In particular, $\beta^2$ is an implicit function in $v$ that is  represented by an element of $\tfrac{1}{w^2}\RPwc$.  Since the lowest-degree term of $\beta^2$ has positive coefficient, we can solve for  $\beta$  as a series in $w$ directly and see that $\beta$ is represented by an element of $\tfrac{1}{w}\RPwc$.  Note that $\beta=\Theta(v)$ as $v \to \infty$.  Regarding $u, \beta$ as elements of  $\RLoovc$ allows us to consider $Z_{\olw,\olB}, Z_{\omega, B}$ as group homomorphisms $K(X) \to \CLoovc$.

\subparagraph In a subsequent article \cite{LoWong}, the coefficients of $u$ as a power series in $w$ are worked out explicitly in terms of the constants $m, \alpha, e$ and the Catalan numbers, along with the radius of convergence of $u$.    The coefficients of $\beta$ as a power series in $w$, and hence the coefficients of $u$ and $\beta$ as Laurent series in $\tfrac{1}{v}$, can then be worked out completely following the recipe above.

We include the following lemma here, although it will not be needed until Theorem \ref{cor:AG48-58-1}:

\begin{lemsub}\label{lem:betaderpos}
Consider $\beta, u$ as function in $v$ as in \ref{para:surfsolveeqs}, and suppose $v_0\in \mathbb{R}_{>0}$ is such that, on $[v_0, \infty)$, both $\beta, u$ are positive and convergent as Laurent series in $v$.  Then there exists $v' > v_0$ such that $u$ is monotone decreasing while $\beta$ is monotone increasing for $v \in [v',\infty)$.
\end{lemsub}

\begin{proof}
From \eqref{eq:beta1}, it suffices to show that $\tfrac{v}{u}$ is monotone increasing for $v \gg 0$.  On the other hand, rewriting \eqref{eq:surfcons2} as
\[
  (m+\alpha -e)\tfrac{1}{u^2} = (m-\tfrac{e}{2})+ \tfrac{v}{u},
\]
it suffices to show that  $\tfrac{1}{u^2}$ is monotone increasing.  From \eqref{eq:AG48-26-1}, we have
\[
  u = (m+\alpha -e)w \big( 1 - (m-\tfrac{e}{2})w^2 + \text{ higher-order terms in $w$}\big)
\]
and hence
\begin{align*}
  \tfrac{1}{u^2} &= \tfrac{1}{(m+\alpha -e)^2} \big( \tfrac{1}{w^2} + 2(m-\tfrac{e}{2}) + \text{ higher-order terms in $w$}\big)\\
  &= \tfrac{1}{(m+\alpha -e)^2} \big( v^2 + 2(m-\tfrac{e}{2}) + \text{ lower-order terms in $v$}\big),
\end{align*}
 from which we see the derivative of $\tfrac{1}{u^2}$ with respect to $v$ is a  series where the highest-degree term is $\tfrac{2}{(m+\alpha-e)^2}v$, and so  $\tfrac{1}{u^2}$ is monotone increasing as  a function in $v$ on the interval $[v', \infty)$ for some $v' >v_0$.
\end{proof}

\paragraph[The heart $\Bl$] With $\omega=u(\Theta +mf)+vf$ and $B=qf$,  for any coherent sheaf $A$ of nonzero rank on $X$ we have
\[
\mu_{\omega, B}(A)=\frac{\omega\ch_1^B(A)}{\ch_0^B(A)} = u \mu_{\Theta + mf, B}(A) + v \mu_f (A).
\]
The proof of \cite[Lemma 3.1]{LLM} can then be easily modified to show:

\begin{lemsub}\label{lem:LLMlem3-1an}
Suppose $m \in \mathbb{R}_{>0}$ is as in \ref{para:ellipsurfnotation}, and $\omega = u(\Theta +mf)+vf$   where $u, v \in  \mathbb{R}_{>0}$,   $u_0 \in \mathbb{R}_{>0}$ is fixed, $B=qf$ for some $q \in \mathbb{R}$  and $F$ is a  coherent sheaf  on $X$.
\begin{itemize}
\item[(1)] The following are equivalent:
  \begin{itemize}
  \item[(a)] There exists $v_0 \in \mathbb{R}_{>0}$ such that $F \in \Fc_{\omega, B}$ for all $(v,u) \in (v_0,\infty) \times (0,u_0)$.
  \item[(b)] There exists $v_0 \in \mathbb{R}_{>0}$ such that, for every nonzero subsheaf $A \subseteq F$, we have $\mu_{\omega, B}(A) \leq 0$ for all $(v,u) \in (v_0,\infty) \times (0,u_0)$.
  \item[(c)] For every nonzero subsheaf $A \subseteq F$, either (i) $\mu_f (A) < 0$, or (ii) $\mu_f(A)=0$ and also $\mu_{\Theta + mf, B}(A) \leq 0$.
  \end{itemize}
\item[(2)] The following are equivalent:
  \begin{itemize}
  \item[(a)] There exists $v_0 \in \mathbb{R}_{>0}$ such that $F\in \Tc_{\omega, B}$ for all $(v,u) \in (v_0,\infty) \times (0,u_0)$.
  \item[(b)] There exists $v_0 \in \mathbb{R}_{>0}$ such that, for every nonzero quotient sheaf $A$ of $F$ we have $\mu_{\omega, B}(A) > 0$ for all $(v,u) \in (v_0,\infty) \times (0,u_0)$.
  \item[(c)] For every nonzero quotient sheaf $A$ of $F$, either (i) $\mu_f (A) > 0$, or (ii) $\mu_f(A)=0$ and $\mu_{\Theta + mf, B}(A) > 0$.
\end{itemize}
\end{itemize}
\end{lemsub}

We now define $\Fc^l$ (resp.\ $\Tc^l$) to be the extension closure in $\Coh (X)$ of all coherent sheaves $F$ satisfying condition (1)(c) (resp.\ (2)(c)) in Lemma \ref{lem:LLMlem3-1an}, and define the extension closure in $D^b(X)$
\[
  \Bl = \langle \Fc^l[1], \Tc^l \rangle.
\]

\subparagraph Now suppose $u : \mathbb{R}_{>0} \to \mathbb{R}_{>0}$ is any continuous function in $v$ such that $u \to 0$ as $v \to \infty$, such as when $u$ is defined as an implicit function in $v$ via  \eqref{eq:surfcons2} as in \ref{para:surfsolveeqs}.  Then we can equivalently define
\begin{align*}
\Tc^l &= \{ F \in \Coh (X) : F \in \Tc_{\omega, B} \text{ for all $v \gg 0$}\}, \\
  \Fc^l &= \{ F \in \Coh (X) : F \in \Fc_{\omega, B} \text{ for all $v \gg 0$}\}
\end{align*}
just as we did in \cite[Remark 4.4(vi)]{Lo14}.

\paragraph[Fixing a relation between $u,v$] From now on, we will assume that $u, v$ satisfy the relation \eqref{eq:surfcons2} and  regard $u$ as a function in $v$ as we did  in \ref{para:surfsolveeqs}.  Then many of the arguments and results in \cite{LLM} carry over directly, including: $\Coh^{\leq 1}(X) \subset \Tc^l$, $\Fc^l \subset \Coh^{=2}(X)$, $W_{0,\whPhi} \subset \Tc^l$, and that $f\ch_1 (E) \geq 0$ for all $E \in \Bl$ (see \cite[3.2]{LLM} and \cite[Remark 4.4]{Lo14}).  Replacing $\mu^\ast$ with $\mu_{\Theta + mf, B}$ in the proof of \cite[Lemma 4.6]{Lo14} (see also \cite[Lemma 3.3]{LLM}) gives that $(\Tc^l, \Fc^l)$ is a torsion pair in $\Coh (X)$, and so $\Bl$ is the heart of a t-structure on $D^b(X)$.  The inclusion $W_{0,\whPhi} \subset \Tc^l$ then implies $\Fc^l \subset W_{1,\whPhi}$.  We also have the analogue of \cite[Lemma 3.4]{LLM}:

\begin{lemsub}\label{lem:Zlstabfunc}
Fix $m, \alpha \in \mathbb{R}_{>0}$  as in \ref{para:ellipsurfnotation},  consider $u$ as a function in $v$ as in \ref{para:surfsolveeqs} and let $B \in \mathbb{R}f$.  Then for any nonzero $F \in \Bl$, we have $Z_{\omega, B} (F) \in \mathbb{H}$ for $v\gg 0$.
\end{lemsub}

\paragraph[$Z^l$-stability]  Suppose  \label{para:surfZupperl}  $X$  is a Weierstra{\ss} elliptic surface.  By  regarding $u$ as a function in $v$ as in \ref{para:surfsolveeqs}, for any $B \in \mathbb{R}f$ we now define a group homomorphism $Z^l : K(D^b(X))\to  \CLoovc$ via
\[
Z^l(E) = Z_{\omega,B} (E)
\]
for $E \in D^b(X)$. By Lemma \ref{lem:Zlstabfunc},   the pair  $(Z^l,\Bc^l)$ is a polynomial stability function with respect to $(0,1]$.  (The HN property of $Z^l$ on $\Bl$  will be established below  using Proposition \ref{prop:AHNgivesBHN}.)

The next main goal in this section is to establish a relation between $Z_l$-stability and $Z^l$-stability.  For this, we need a little more technical preparation.

\paragraph Under \label{para:obs1} the relations \eqref{eq:lqlrelation}, \eqref{eq:surfcons1} and \eqref{eq:surfcons2},   any $E \in D^b(X)$ with $\ch_0(E)= 0$ satisfies
\[
-\ch_2^\olB ( E) = \Re Z_{\olw, \olB}(E) = -\tfrac{1}{u} \Im Z_{\omega, B}(\Phi E) = - \tfrac{1}{u} \omega \ch_1^B (\Phi E)
 \]
 where the second equality follows from \eqref{eq:L2}.

\paragraph  Consider the abelian subcategory $\{\Coh^{\leq 0}\}^\uparrow$  of $\Coh^{\leq 1}(X)$.  Any nonzero $E \in \Coh^{\leq 1}(X)$ with $f\ch_1(E)=0$ must be a fiber sheaf; if $E$ also lies in $\{\Coh^{\leq 0}\}^\uparrow$, then $E$ must be a 0-dimensional sheaf and hence $\ch_2(E)>0$.  Hence for any $\mathbb{R}$-divisor $\ol{B}$ on $X$, we have the slope function on $\{\Coh^{\leq 0}\}^\uparrow$
\[
  \mu_{\ast,\olB} (E) = \begin{cases}
  \frac{\ch_2^\olB(E)}{f\ch_1(E)} &\text{ if $f\ch_1(E) \neq 0$} \\
  \infty &\text{ if $f\ch_1(E)=0$}
  \end{cases} \text{\quad for } E \in \{\Coh^{\leq 0}\}^\uparrow.
\]
Note that $\ch_1^\olB(E)=\ch_1(E)$ for any $E \in \Coh^{\leq 1}(X)$.    The HN property of $\mu_{\ast,\olB}$ on $\{\Coh^{\leq 0}\}^\uparrow$  follows from \cite[Proposition 3.4]{LZ2}.

\begin{lem}\label{lem:AG47-30-1}
Suppose $\olB, B \in \mathbb{R}f$ and $E \in \{\Coh^{\leq 0}\}^\uparrow$ is a $\mu_{\ast,\olB}$-semistable sheaf supported in dimension 1.  Then $\Phi E$ lies in $\Tc^l$ (resp.\ $\Fc^l$) if $\ch_2^\olB(E)>0$ (resp.\ $\ch_2^\olB(E)\leq 0$).
\end{lem}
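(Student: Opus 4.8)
The statement concerns a $\mu_{\ast,\olB}$-semistable sheaf $E \in \{\Coh^{\leq 0}\}^\uparrow$ supported in dimension $1$, and we want to know where $\Phi E$ sits relative to the torsion pair $(\Tc^l,\Fc^l)$. The natural approach is a two-step reduction: first identify which $\Phi$-WIT class $E$ belongs to, then use the characterizations of $\Tc^l$ and $\Fc^l$ given in Lemma~\ref{lem:LLMlem3-1an}(2)(c) and (1)(c) together with the cohomological Fourier--Mukai formulas of \ref{para:surfcohomFMT}. So first I would observe that $E$ is a fiber sheaf (supported in dimension $1$ with $p(\supp E)$ zero-dimensional, since $E \in \{\Coh^{\leq 0}\}^\uparrow$ forces the fiber restrictions to be $0$-dimensional), hence $f\ch_1(E) = 0$, and in fact $E$ is $\whPhi$-WIT$_0$ or $\whPhi$-WIT$_1$ by the relative theory on Weierstra\ss\ fibrations—more precisely, a rank-$0$ $\mu_{\ast,\olB}$-semistable fiber sheaf should be $\Phi$-WIT$_0$ when $\ch_2^\olB(E)>0$ and $\Phi$-WIT$_1$ when $\ch_2^\olB(E) < 0$, matching the sign of $\Im Z_{\omega,B}(\Phi E)$ via the relation in \ref{para:obs1}. (The boundary case $\ch_2^\olB(E)=0$ needs separate handling; here $\Phi E$ should land in $\Fc^l[1]$, i.e.\ $\Phi E$ is $\whPhi$-WIT$_1$-type, consistent with the ``$\leq 0$'' clause of the statement.)

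Next, granting that $E$ is $\Phi$-WIT$_0$ (the case $\ch_2^\olB(E)>0$), I would show $\Phi E \in \Tc^l$ by verifying condition (2)(c) of Lemma~\ref{lem:LLMlem3-1an}: for every nonzero quotient $A$ of $\Phi E$ in $\Coh(X)$, either $\mu_f(A) > 0$, or $\mu_f(A) = 0$ and $\mu_{\Theta+mf,B}(A) > 0$. Using the formulas $f\ch_1(\Phi E) = -\ch_0(E) = 0$ (see \ref{para:surfcohomFMT} with $n = \ch_0(E) = 0$, $d = f\ch_1(E) = 0$) and $\ch_0(\Phi E) = f\ch_1(E) = 0$, one sees $\Phi E$ itself is a fiber sheaf of rank $0$ with $f\ch_1(\Phi E) = 0$, so every quotient $A$ also has $\ch_0(A) = 0$ and $f\ch_1(A) = 0$—meaning we must land in the case $\mu_f(A) = 0$ and need $\mu_{\Theta+mf,B}(A) > 0$, equivalently $\Theta\ch_1^B(A) > 0$ (up to positive scaling). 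Here the $\mu_{\ast,\olB}$-semistability of $E$ is the crucial input: a quotient $A$ of $\Phi E$ corresponds, via $\whPhi$ (using $\whPhi\Phi \cong \mathrm{id}[-1]$), to a subobject-type piece of $E$, and $\mu_{\ast,\olB}$-semistability of $E$ with $\mu_{\ast,\olB}(E) = \ch_2^\olB(E)/f\ch_1(E) > 0$ translates, through \ref{para:obs1} and \eqref{eq:L2}, into the positivity $\omega\ch_1^B(A) > 0$ hence $\Theta\ch_1^B(A) > 0$. I would make this precise by taking the $\whPhi$-cohomology of the short exact sequence $0 \to K \to \Phi E \to A \to 0$, noting $\Phi E$ is $\whPhi$-WIT$_1$ (by Lemma~\ref{lem:AG46-80-1}(i)-type reasoning or directly from $\whPhi\Phi\cong\mathrm{id}[-1]$), and extracting that $K,A$ fit into a filtration of $E$ whose $\mu_{\ast,\olB}$-slopes are controlled.

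For the case $\ch_2^\olB(E) \leq 0$, the argument is the mirror image: $\Phi E$ is $\whPhi$-WIT$_1$ in the sense that $\Phi E \cong \wh{E}[-1]$ after a shift, or rather $E$ being ``$\Phi$-WIT$_1$'' means $\Phi E \in \Coh(X)[-1]$; then I would check condition (1)(c) for $\wh{E} = \Phi E[1]$ (or the appropriate sheaf) to place $\Phi E \in \Fc^l[1]$, i.e.\ the statement's ``$\Phi E$ lies in $\Fc^l$'' is understood up to the shift convention consistent with $\Bl = \langle \Fc^l[1], \Tc^l\rangle$. Concretely: every nonzero subsheaf $A \subseteq \wh{E}$ satisfies $\mu_f(A) = 0$ (same fiber-sheaf reasoning) and I must show $\mu_{\Theta+mf,B}(A) \leq 0$, i.e.\ $\Theta\ch_1^B(A) \leq 0$; this follows from $\mu_{\ast,\olB}(E) \leq 0$ and the $\mu_{\ast,\olB}$-semistability, since a subsheaf $A$ of $\wh{E}$ corresponds under $\Phi$ to a quotient of $E$, whose $\mu_{\ast,\olB}$-slope is $\geq \mu_{\ast,\olB}(E)$... wait, I should be careful with the direction—actually a subsheaf of $\wh{E}$ corresponds to a subobject of $E$ in the appropriate heart, and semistability gives the slope bound on subobjects. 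The bookkeeping of which correspondence (sub $\leftrightarrow$ quotient) is induced by $\Phi$ versus $\whPhi$, and keeping the signs straight through \eqref{eq:L2} and \ref{para:obs1}, is where the real care is needed.

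\textbf{Main obstacle.} The hard part is the sign/duality bookkeeping: precisely matching $\mu_{\ast,\olB}$-(semi)stability of $E$ against the defining inequalities of $\Tc^l$ and $\Fc^l$ requires tracking how $\Phi$ (with kernel the relative Poincaré sheaf) sends subobjects to quotients and vice versa, which WIT class $E$ lands in as a function of $\sgn\ch_2^\olB(E)$, and then feeding the resulting $\omega\ch_1^B$-positivity (from \eqref{eq:L2}) back into the $\mu_{\Theta+mf,B}$-condition. The boundary case $\ch_2^\olB(E) = 0$, where $Z_{\olw,\olB}(E)$ is purely imaginary and $\Phi E$ sits on the wall between $\Tc^l$ and $\Fc^l[1]$, also needs a dedicated (easy but necessary) argument. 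I expect that, once the correct WIT class is pinned down and one applies Lemma~\ref{lem:LLMlem3-1an} quotient-by-quotient (resp.\ subsheaf-by-subsheaf), each individual inequality reduces to the semistability hypothesis via a one-line application of \eqref{eq:L2}/\ref{para:obs1}; the genuine content is organizing these reductions coherently rather than any deep new idea.
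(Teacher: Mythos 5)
Your proof is built on a false identification of the category $\{\Coh^{\leq 0}\}^\uparrow$ with the category of fiber sheaves $\Coh(p)_0$, and this sinks the whole argument. By definition, $E \in \{\Coh^{\leq 0}\}^\uparrow$ means the restriction $E|_b$ is $0$-dimensional on \emph{every} fiber $X_b$; for a sheaf supported in dimension $1$ this forces every $1$-dimensional component of the support to dominate the base (a multisection), not to be contained in finitely many fibers. Consequently $f\ch_1(E) \neq 0$ --- this is exactly why the slope $\mu_{\ast,\olB}(E) = \ch_2^{\olB}(E)/f\ch_1(E)$ is finite and the semistability hypothesis has content. The cohomological formulas in \ref{para:surfcohomFMT} then give $\ch_0(\Phi E) = f\ch_1(E) \neq 0$: the transform $\Phi E$ is a torsion-free sheaf of positive rank, not a rank-zero fiber sheaf. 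Your subsequent reduction (``every quotient $A$ has $\ch_0(A)=0$ and $f\ch_1(A)=0$, so we only need $\Theta\ch_1^B(A)>0$'') therefore starts from false numerics; the quotients that actually need to be controlled in condition (2)(c) of Lemma \ref{lem:LLMlem3-1an} are precisely those of nonzero rank (torsion quotients have $\mu_f = \infty$ and pass automatically).

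Your proposed WIT dichotomy is also incorrect: every sheaf in $\{\Coh^{\leq 0}\}^\uparrow$ is $\Phi$-WIT$_0$ regardless of the sign of $\ch_2^{\olB}(E)$ (this is the standard spectral-construction fact), so in both cases $\Phi E$ is honestly a sheaf and no shift convention is involved in reading ``$\Phi E \in \Fc^l$''. The dichotomy in the lemma is detected not by WIT classes but at the level of subsheaves and quotients of $\Phi E$: one takes $0 \to M \to \Phi E \to N \to 0$, applies $\whPhi$ to obtain a four-term exact sequence of sheaves $0 \to \whPhi^0 N \to \wh{M} \to E \to \whPhi^1 N \to 0$, and feeds the $\mu_{\ast,\olB}$-semistability of $E$ (whose slope carries the sign of $\ch_2^{\olB}(E)$) through the observation in \ref{para:obs1} to control $\omega\ch_1^B(N)$ (resp.\ $\omega\ch_1^B(M)$), verifying (2)(c) (resp.\ (1)(c)) of Lemma \ref{lem:LLMlem3-1an}. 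This is broadly the mechanism you gesture at in your closing paragraph, but it cannot be salvaged without first getting the numerical invariants of $E$ and $\Phi E$ right.
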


\begin{proof}
Every coherent sheaf in $\{\Coh^{\leq 0}\}^\uparrow$ is  $\Phi$-WIT$_0$  \cite[Remark 3.14]{Lo11},  so $\Phi E \in \Coh (X)$.  Take any short exact sequence of sheaves
\[
0 \to M\to \Phi E \to N \to 0
\]
where $M, N \neq 0$.  Since $\Phi E$ is $\whPhi$-WIT$_1$, so is $M$.  Hence we have a long exact sequence of sheaves
\[
 0 \to \whPhi^0N \to \wh{M} \overset{g}{\to} E \to \whPhi^1N \to 0.
\]
Since $E$ lies in $\{\Coh^{\leq 0}\}^\uparrow$ and  is supported in dimension 1, we have $f\ch_1 (E)\neq 0$; we also know $\Phi E$ is a torsion-free sheaf  \cite[Lemma 4.3(ii)]{Lo11},  so $M$ is also torsion-free.

Since $\olB, B \in \mathbb{R}f$, we have $f\ch_1^\olB(F)=f\ch_1(F)$ for any coherent sheaf $F$ on $X$ and similarly for $\ch_1^B$.

\textbf{Case 1: $\ch_2^\olB(E)>0$.} To prove  $\Phi E \in \Tc^l$, we  use the characterisation of $\Tc^l$ in Lemma \ref{lem:LLMlem3-1an}(2)(c), by which it suffices to show either $\mu_f (N)>0$, or $\mu_f (N)=0$ together with $\mu_{\Theta + mf,B}(N)>0$, for any nonzero quotient sheaf $N$ of $\Phi E$.  In particular, we can assume $\ch_0(N)\neq 0$.

That $M$ is $\whPhi$-WIT$_1$ gives  $f\ch_1(M)\leq 0$  \cite[Lemma 5.4(a)]{Lo15}, while  $\ch_0(E)=0$ gives $f\ch_1(\Phi E)=0$.  Hence  $f\ch_1(N)\geq 0$.  Suppose $f\ch_1(N)=0$.  Then  $f\ch_1(M)=0$ and hence $\ch_0(\wh{M})=0$, i.e.\ $\wh{M}$ is a torsion sheaf. This implies  $\whPhi^0 N$ is a $\Phi$-WIT$_1$ torsion sheaf, and hence  a fiber sheaf \cite[Lemma 2.6]{Lo7}, and so  $\ch_2^\olB (\whPhi^0 N) \leq 0$ \cite[Lemma 5.4(a)]{Lo15}.

If $\whPhi^1N$ is also a fiber sheaf, then $N$ itself is a fiber sheaf, and hence a torsion sheaf,  and so  $\mu_{\Theta +mf,B}(N)=\infty$.

So let us assume $\whPhi^1 N$ is not a fiber sheaf, in which case  $\mu_{\ast,\olB}(\whPhi^1 N) < \infty$.    The $\mu_{\ast,\olB}$-semistability of $E$ then gives $\ch_2^\olB (\whPhi^1 N)>0$, and  we have $\ch_2^\olB (\whPhi N)<0$ overall.  We are also assuming $f\ch_1 (N)=0$ which implies $\ch_0 (\whPhi N)=0$.   Applying  \ref{para:obs1} to $\whPhi N$ and noting  $0>\ch_2^\olB(\whPhi N)$, we obtain $\omega \ch_1^B (N)>0$.  Since $f\ch_1^B (N)=0$, we have $\omega \ch_1^B (N)=u(\Theta + mf)\ch_1^B(N)>0$ and hence $\mu_{\Theta +mf,B}(N)>0$.

\textbf{Case 2: $\ch_2^\olB(E)\leq 0$.}  In this case, we want to prove that $\Phi E \in \Fc^l$.  By the characterisation of $\Fc^l$ in Lemma \ref{lem:LLMlem3-1an}(1)(c), it suffices to show either $\mu_f(M)<0$, or $\mu_f(M)=0$ together with $\mu_{\Theta + mf,B}(M)\leq 0$.  As in Case 1, we know $M$ is a torsion-free sheaf with $f\ch_1(M)\leq 0$. If $f\ch_1(M)<0$ then we are done, so let us assume $f\ch_1(M)=0$, which gives  $f\ch_1(N)=0$.  The same argument as in Case 1 then gives $\ch_2^\olB (\whPhi^0 N) \leq 0$.

If $\image g =0$, then $\whPhi^0 N \cong \wh{M}$ is both $\Phi$-WIT$_1$ and $\Phi$-WIT$_0$, forcing $M=0$, a contradiction.  Hence $\image g \neq 0$.  Since $\image g$ is a subsheaf of $E$, which is pure 1-dimensional and lies in $\{\Coh^{\leq 0}\}^\uparrow$, it follows that  $f\ch_1(\image g) \neq 0$.  Hence $\ch_2^\olB (\image g) \leq 0$ by the $\mu_{\ast,\olB}$-semistability of $E$.  Thus $\ch_2^\olB (\wh{M})=\ch_2^{\ol{B}}(\whPhi^0 N) + \ch_2^{\ol{B}}(\image g)\leq 0$.  Applying \ref{para:obs1} to $\wh{M}$ now gives $\omega \ch_1^B(M)\leq 0$.  Since $f\ch_1(M)=0$, it follows that $(\Theta + mf)\ch_1^B(M) \leq 0$ and hence  $\mu_{\Theta + mf,B}(M)\leq 0$.
\end{proof}

\begin{prop}\label{prop:AG47-30-1}
Fix an $\mathbb{R}$-divisor  $B \in \mathbb{R}f$ on $X$.  Then $\Phi \Coh^p \subset D^{[0,1]}_{\Bl}$.
\end{prop}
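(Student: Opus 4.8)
The goal is to show $\Phi \Coh^p \subset D^{[0,1]}_{\Bl}$, where $\Coh^p = \langle \Coh^{=2}(X)[1], \Coh^{\leq 1}(X)\rangle$. Since $\Coh^p$ is generated by extensions from $\Coh^{=2}(X)[1]$ and $\Coh^{\leq 1}(X)$, and $D^{[0,1]}_{\Bl}$ is closed under extensions (being an interval of cohomological degrees with respect to the bounded t-structure with heart $\Bl$), it suffices to treat these two generating subcategories separately. So the plan is: first I would reduce to showing $\Phi(\Coh^{\leq 1}(X)) \subset D^{[0,1]}_{\Bl}$ and $\Phi(\Coh^{=2}(X)[1]) \subset D^{[0,1]}_{\Bl}$, i.e. $\Phi(\Coh^{=2}(X)) \subset D^{[-1,0]}_{\Bl}$.

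\textbf{Step 1: sheaves in $\Coh^{\leq 1}(X)$.} Every $E \in \Coh^{\leq 1}(X)$ is $\Phi$-WIT$_0$ — this is one of the facts quoted in \ref{para:L3p-relation} (following \cite{LLM}, namely $\Coh^{\leq 1}(X) \subset \Tc^l$ so in particular these sheaves are $\Phi$-WIT$_0$; actually more elementarily, fiber-direction considerations give that torsion sheaves in $\Coh^{\le 1}$ are $\Phi$-WIT$_0$ — one can cite \cite[Lemma 2.6]{Lo7} or the discussion in \ref{para:L3p-relation}). Thus $\Phi E \in \Coh(X)$. It then remains to observe that $\Phi E \in \Bl$, or at worst that $\Phi E$ lands in $D^{[0,1]}_{\Bl}$; but since $\Phi E$ is already a sheaf concentrated in degree $0$, and $\Bl = \langle \Fc^l[1], \Tc^l\rangle$ is a tilt of $\Coh(X)$, any sheaf lies in $D^{[0,1]}_{\Bl}$ automatically (a sheaf $G$ sits in the short exact sequence $0 \to G' \to G \to G'' \to 0$ with $G' \in \Tc^l$, $G'' \in \Fc^l$, so $G \in \langle \Tc^l, \Fc^l \rangle \subset \langle \Bl, \Bl[-1]\rangle = D^{[0,1]}_{\Bl}$). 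So this case needs almost nothing beyond the WIT$_0$ property.

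\textbf{Step 2: sheaves in $\Coh^{=2}(X)$.} Here I would take $E \in \Coh^{=2}(X)$, i.e. a torsion-free sheaf on $X$, and aim to show $\Phi E[-1] \in D^{[0,1]}_{\Bl}$, equivalently $\Phi E \in D^{[1,2]}_{\Bl}$, equivalently (shifting) $\Phi E[1] \in D^{[0,1]}_{\Bl}$. Actually the cleaner target is $\Phi E \in D^{[0,1]}_{\Coh}$ combined with control of where the cohomology sheaves sit relative to the torsion pair $(\Tc^l,\Fc^l)$. Since $p$ has relative dimension $1$ and the kernel is a sheaf in degree $0$, $\Phi$ sends $\Coh(X)$ into $D^{[0,1]}_{\Coh}$; so $\Phi E$ has at most $\Phi^0 E := \Hc^0(\Phi E)$ and $\Phi^1 E := \Hc^1(\Phi E)$ nonzero. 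The exact triangle $\Phi^0 E \to \Phi E \to \Phi^1 E[-1] \to \Phi^0 E[1]$ reduces the problem to showing $\Phi^0 E \in D^{[0,1]}_{\Bl}$ (immediate, as in Step 1, since it is a sheaf) and $\Phi^1 E[-1] \in D^{[0,1]}_{\Bl}$, i.e. $\Phi^1 E \in D^{[1,2]}_{\Bl}$. Now $\Phi^1 E$ is a $\Phi$-WIT$_1$ sheaf: indeed $\Phi^1 E$, being the degree-$1$ cohomology of $\Phi E$, is $\whPhi$-WIT$_0$ (standard for relative FM transforms on elliptic fibrations — one applies $\whPhi$ to the triangle above and uses $\whPhi\Phi \cong \mathrm{id}[-1]$), and a $\whPhi$-WIT$_0$ sheaf has vanishing relative $H^0$ in the fiber direction, forcing $f\ch_1(\Phi^1 E) \le 0$; more to the point I would invoke the inclusion $W_{0,\whPhi} \subset \Tc^l$ quoted in \ref{para:L3p-relation}, which gives $\Fc^l \subset W_{1,\whPhi}$, so that every $\whPhi$-WIT$_0$ sheaf lies in $\Tc^l$. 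Hence $\Phi^1 E \in \Tc^l \subset \Bl$, which gives $\Phi^1 E[-1] \in \Bl[-1] \subset D^{[0,1]}_{\Bl}$ — wait, that is too strong; $\Phi^1 E[-1] \in \Bl[-1] = D^{[1,1]}_{\Bl} \subset D^{[0,1]}_{\Bl}$, which is exactly what we want. Combining via the extension-closedness of $D^{[0,1]}_{\Bl}$, we get $\Phi(\Coh^{=2}(X)[1]) = \Phi(\Coh^{=2}(X))[1] \subset D^{[0,1]}_{\Bl}$ provided $\Phi E[1] = \Phi^0 E[1] \oplus' \Phi^1 E$ (in the triangulated sense) lands in $D^{[0,1]}_{\Bl}$, which follows since $\Phi^0 E[1] \in \Bl[1] \cup \ldots$ — here I must be careful: $\Phi^0 E$ is a sheaf so $\Phi^0 E[1] \in D^{[-1,0]}_{\Bl}$ and $\Phi^1 E \in D^{[0,1]}_{\Bl}$; their extension lies in $D^{[-1,1]}_{\Bl}$, not obviously in $D^{[0,1]}_{\Bl}$. \textbf{This mismatch is the main obstacle}, and it signals that one needs the sharper input: a torsion-free sheaf $E \in \Coh^{=2}(X)$ is actually $\Phi$-WIT$_1$ away from a controlled torsion-free piece, or rather one should use that $\Phi^0 E$ for $E$ torsion-free lands in $\Fc^l$ (not merely in $\Coh(X)$). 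Indeed, by \cite[Lemma 4.3(ii), Lemma 5.4]{Lo11,Lo15}-type results, $\Phi^0 E$ of a torsion-free sheaf is a torsion-free sheaf with $f\ch_1 \le 0$ and appropriate slope bound, hence lies in $\Fc^l$ by the characterisation in Lemma \ref{lem:LLMlem3-1an}(1)(c); then $\Phi^0 E[1] \in \Fc^l[1] \subset \Bl \subset D^{[0,1]}_{\Bl}$, and combined with $\Phi^1 E \in \Tc^l \subset \Bl$ the extension $\Phi E[1]$ lies in $\langle \Bl \rangle = D^{[0,0]}_{\Bl} \subset D^{[0,1]}_{\Bl}$. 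So the real content — and the step I expect to be delicate — is verifying that $\Phi^0 E \in \Fc^l$ for torsion-free $E$, which requires checking condition (1)(c) of Lemma \ref{lem:LLMlem3-1an} (every nonzero subsheaf $A \subseteq \Phi^0 E$ has either $\mu_f(A) < 0$, or $\mu_f(A) = 0$ and $\mu_{\Theta+mf,B}(A) \le 0$) using the WIT properties and the fiber-degree/slope estimates for Fourier–Mukai transforms on Weierstra\ss\ elliptic surfaces catalogued in \cite[3.2]{LLM} and \cite{Lo11,Lo15}. Once that is in place, the three pieces assemble by extension-closedness of $D^{[0,1]}_{\Bl}$ and the proof is complete.
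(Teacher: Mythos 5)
Your overall reduction (split $\Coh^p=\langle \Coh^{=2}(X)[1],\Coh^{\leq 1}(X)\rangle$ and use extension-closedness of $D^{[0,1]}_{\Bl}$) is sound, but Step~1 rests on a false claim: it is \emph{not} true that every $E\in\Coh^{\leq 1}(X)$ is $\Phi$-WIT$_0$. A line bundle of negative degree supported on a single fiber lies in $\Coh^{\leq 1}(X)$ and is $\Phi$-WIT$_1$ (the classical elliptic-curve computation on that fiber). Neither of your justifications holds up: the inclusion $\Coh^{\leq 1}(X)\subset\Tc^l$ says nothing about being $\Phi$-WIT$_0$ (the inclusion quoted in the paper is $W_{0,\whPhi}\subset\Tc^l$, which concerns $\whPhi$, not $\Phi$), and \cite[Lemma 2.6]{Lo7} is the statement that a $\Phi$-WIT$_1$ torsion sheaf is a fiber sheaf, which is a different assertion. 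Indeed the paper's own proof has to treat the fiber-sheaf piece of $H^0(E)$ separately and only concludes $\Phi(A_3/A_2)\in\langle\Coh(p)_0,\Coh(p)_0[-1]\rangle$, reflecting exactly the presence of WIT$_1$ components. The good news is that your Step~1 conclusion is recoverable by the same mechanism you deploy in Step~2: for $E\in\Coh^{\leq 1}(X)$ one has $\Phi E\in D^{[0,1]}_{\Coh}$, the sheaf $\Phi^0 E$ lies in $D^{[0,1]}_{\Bl}$ automatically, and $\Phi^1 E$ is $\whPhi$-WIT$_0$ (the subparagraph after Lemma \ref{lem:AG46-80-1}), hence lies in $W_{0,\whPhi}\subset\Tc^l\subset\Bl$, so $\Phi^1E[-1]\in\Bl[-1]\subset D^{[0,1]}_{\Bl}$; extension-closedness then gives $\Phi E\in D^{[0,1]}_{\Bl}$. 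Without the input $\Phi^1E\in\Tc^l$ you would only get $\Phi E\in D^{[0,2]}_{\Bl}$, so this step is genuinely needed and cannot be waved through via ``$\Phi E$ is a sheaf.''

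In Step~2 you correctly isolate the one substantive point, namely that $\Phi^0E\in\Fc^l$ for $E$ torsion-free, but you do not prove it; the inequality $f\ch_1(\Phi^0E)\leq 0$ alone does not verify condition (1)(c) of Lemma \ref{lem:LLMlem3-1an} for \emph{every} subsheaf. The paper disposes of the entire torsion-free piece by citing \cite[3.9]{LLM}, which is exactly the statement $\Phi(E)[1]\in\Bl$ (equivalently $\Phi^0E\in\Fc^l$ and $\Phi^1E\in\Tc^l$), so your Step~2 is at best a sketch of what that reference establishes. Structurally your route differs from the paper's: rather than splitting $\Coh^p$ into two generating pieces, the paper refines $H^0(E)$ by a four-step filtration (zero-dimensional part, then the $\mu_{\ast,\olB}$-HN pieces of the $\{\Coh^{\leq 0}\}^\uparrow$-part with positive and non-positive slope, then the fiber-sheaf part) and uses Lemma \ref{lem:AG47-30-1} to place each transform precisely in $\Tc^l$ or $\Fc^l$. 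Your coarser decomposition, once Step~1 is repaired as above, does suffice for the bare containment $\Phi\Coh^p\subset D^{[0,1]}_{\Bl}$, and is arguably leaner; but as written the proposal contains a genuine error in Step~1 and an unproved key assertion in Step~2.
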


\begin{proof}[Proof of Proposition \ref{prop:AG47-30-1}]
For any $E \in \Coh^p$, consider  the canonical exact triangle
\[
H^{-1}(E)[1] \to E \to H^0(E) \to H^{-1}(E)[2]
\]
where $H^{-1}(E)\in \Coh^{=2}(X)$  and $H^0(E) \in \Coh^{\leq 1}(X)$.  Fix an $\mathbb{R}$-divisor $\ol{B} \in \mathbb{R}f$ on $X$. Since $\{\Coh^{\leq 0}\}^\uparrow$ is a torsion class in $\Coh^{\leq 1}(X)$, we have a filtration in $\Coh^{\leq 1}(X)$
\[
 A_0 \subseteq A_1 \subseteq A_2 \subseteq A_3 = H^0(E)
\]
where $A_2$ is the maximal subsheaf of $H^0(E)$ lying in $\{\Coh^{\leq 0}\}^\uparrow$, $A_0$ is the maximal subsheaf of $A_2$ lying in $\Coh^{\leq 0}(X)$, all the $\mu_{\ast,\olB}$-HN factors of $A_1/A_0$ have $\mu_{\ast,\olB}$ in the range $(0,\infty)$, and all the $\mu_{\ast,\olB}$-HN factors of $A_2/A_1$ have $\mu_{\ast,\olB}$ in the range $(-\infty,0]$.  Since $\Coh^{\leq 1}(X) \subset \Coh^p$, we can construct a  filtration of $E$ in $\Coh^p$ with factors $H^{-1}(E)[1], A_0, A_1/A_0, A_2/A_1$ and $A_3/A_2$.

Since $H^{-1}(E)$ is a torsion-free sheaf, we have $\Phi H^{-1}(E)[1] \in \Bl$ by \cite[3.9]{LLM} (the proof of which depends on \cite[3.7, 3.8]{LLM}, which still apply).  Next we have $\Phi A_0 \in \Coh (p)_0 \subset \Bl$.  Then, by Lemma \ref{lem:AG47-30-1}, we have $\Phi (A_1/A_0) \in \Tc^l \subset \Bl$ while $\Phi (A_2/A_1) \in \Fc^l \subseteq \Bl [-1]$.  Lastly, since $A_3/A_2$ is necessarily a fiber sheaf, both its $\Phi$-WIT$_0$ and $\Phi$-WIT$_1$ components are fiber sheaves, implying $\Phi (A_3/A_2) \in \langle \Coh (p)_0, \Coh (p)_0 [-1] \rangle \subseteq D^{[0,1]}_{\Bl}$.  Overall, we have $\Phi E \in D^{[0,1]}_{\Bl}$ and the proposition follows.
\end{proof}

We now have all the input for Configuration III to prove a correspondence theorem between $Z_l$-stability and $Z^l$-stability via the autoequivalence $\Phi$:

\begin{thm}\label{thm:main5}
Let $p : X \to B$ be a Weierstra{\ss} elliptic surface and $\Phi : D^b(X) \to D^b(X)$ the Fourier-Mukai transform given by a relative Poincar\'{e} sheaf as in \ref{para:FMTs}.  Let $\ol{B}, B$ be divisors in $\mathbb{R}f$ satisfying \eqref{eq:lqlrelation}, and let $\ol{\omega}, \omega$ be ample $\mathbb{R}$-divisors written in the form of \ref{para:ellipsurfnotation}, where we regard $u, \beta$ as elements of $\RLoovc$ as in \ref{para:surfsolveeqs} (so that  \eqref{eq:surfcons1} and \eqref{eq:surfcons2} holds).  Then for any $E\in D^b(X)$,
\[
\text{ $E$ is $Z_l$-semistable if and only if $\Phi E$ is $Z^l$-semistable.}
\]
The same  result  holds when `semistable' is replaced with `stable' on both sides.  In fact, we have
\[
  \Phi \cdot (Z_l,\Pc_{Z_l}) = (Z^l,\Pc_{Z^l}) \cdot \Patilde (T)
\]
for some element $\Patilde (T)$ of $\Patilde$, where $\Pc_{Z_l}, \Pc_{Z^l}$ denote the slicings for the polynomial stability conditions $(Z_l,\Coh^p), (Z^l,\Bc^l)$, respectively.
\end{thm}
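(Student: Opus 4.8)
The plan is to show that the data assembled in Sections \ref{sec:ellsurfGLact}--\ref{sec:ellsurfpolystab} is precisely an instance of Configuration III from \ref{para:configIII}, and then to quote Theorem \ref{prop:paper30prop11-11ext} parts (iv) and (v). First I would set $\Dc = \Uc = D^b(X)$, take $\Phi$ to be the relative Fourier--Mukai transform and $\Psi = \whPhi$, so that condition (a) holds by \ref{para:FMTs}; take $\Ac = \Coh^p(X)$ and $\Bc = \Bl$, so that condition (b) holds by Proposition \ref{prop:AG47-30-1}; and take $Z_\Ac = Z_l$, $Z_\Bc = Z^l$, which are polynomial stability functions on $\Coh^p(X)$ and $\Bl$ with respect to $(\tfrac14,\tfrac54]$ and $(0,1]$ by \ref{para:surfZlowerl} and \ref{para:surfZupperl} (the latter resting on Lemma \ref{lem:Zlstabfunc}) respectively.

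The operator $T$ is read off from \eqref{eq:L2}: under the hypotheses \eqref{eq:lqlrelation}, \eqref{eq:surfcons1}, \eqref{eq:surfcons2} we have $Z^l(\Phi E) = T\, Z_l(E)$ for all $E\in D^b(X)$, with $T$ the composite of the dilation $\operatorname{diag}(\tfrac\alpha\beta, u)$ with multiplication by $-i$, regarded as an element of $\mathrm{GL}(2,\RLoovc)$. Since $\tfrac\alpha\beta$ and $u$ are positive for $v \gg 0$ by \ref{para:surfsolveeqs}, we have $\det T_v > 0$ there, so $T \in \GLlp$, giving \eqref{eq:AG45-108-17}. It remains to exhibit a compatible choice of $\Gamma_T$ satisfying \eqref{eq:AG47-23-1} with $a = \tfrac14$, $b = 0$; I would take $\Gamma_T = \Gamma_D \circ \Gamma_R$, where $\Gamma_R(\phi) = \phi - \tfrac12$ is the induced map for $-i$ (Example \ref{eg:mulbynegi}) and $\Gamma_D$ is the induced map for the dilation that fixes $\tfrac12\ZZ$ (the Dilation example in Section \ref{sec:configIII}). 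Then $\Gamma_T(\tfrac14) = \Gamma_D(-\tfrac14)$, and since $\Gamma_D$ is order preserving (see \ref{para:liftedpath}) and fixes $-\tfrac12$ and $0$ while $-\tfrac12 \prec -\tfrac14 \prec 0$, one gets $-\tfrac12 \prec \Gamma_T(\tfrac14) \prec 0$, hence $\tfrac14 \prec \Gamma_T^{-1}(0) \prec \tfrac54$, which is exactly \eqref{eq:AG47-23-1}. Thus all the conditions of Configuration III are met.

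With Configuration III in place, Theorem \ref{prop:paper30prop11-11ext}(iv) immediately gives, for nonzero $E \in \Coh^p(X)$, that $E$ is $Z_l$-semistable (resp.\ stable) if and only if $\Phi E$ is $Z^l$-semistable (resp.\ stable); the statement for arbitrary $E \in D^b(X)$ follows by applying this to a suitable shift of $E$ into $\Coh^p(X)$ and using that $\Phi$ commutes with $[\,\cdot\,]$, together with the symmetry of Configuration III recorded in \ref{para:ABconfigsymm} for the converse direction. For the last assertion, since $(Z_l,\Coh^p)$ is a polynomial stability condition, $Z_l$ has the Harder--Narasimhan property on $\Coh^p(X)$, so Proposition \ref{prop:AHNgivesBHN} shows $Z^l$ has the Harder--Narasimhan property on $\Bl$; hence $(Z^l,\Bl)$ is a polynomial stability condition and Theorem \ref{prop:paper30prop11-11ext}(v) yields $\Phi \cdot (Z_l,\Pc_{Z_l}) = (Z^l,\Pc_{Z^l})\cdot\Patilde(T)$.

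The main obstacle --- and really the only step that is not a bookkeeping exercise of matching definitions to already-proven statements --- is the verification of \eqref{eq:AG47-23-1}, i.e.\ pinning down the correct branch of the lifted path $\Patilde(T)$, equivalently the correct choice of $\Gamma_T$, so that the phase interval of $Z_l$-stability is carried into the phase interval of $Z^l$-stability under $\Phi$. The computation above shows the shift lands strictly inside $(\tfrac14,\tfrac54)$ precisely because $\tfrac14 - \tfrac12 = -\tfrac14$ lies strictly between the two consecutive half-integers $-\tfrac12$ and $0$ fixed by the dilation. Everything else --- the equivalences in (a), the heart inclusion in (b), the patching identity $Z^l(\Phi E) = T\, Z_l(E)$, the polynomial-stability-function property of $Z_l$ and $Z^l$, and the transfer of the Harder--Narasimhan property --- has already been established in the preceding sections.
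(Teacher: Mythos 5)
Your proposal is correct and follows essentially the same route as the paper: instantiate Configuration III with $\Ac=\Coh^p$, $\Bc=\Bl$, $Z_\Ac=Z_l$, $Z_\Bc=Z^l$, $a=\tfrac14$, $b=0$ and $T=\bigl(\begin{smallmatrix}\alpha/\beta & 0\\ 0 & u\end{smallmatrix}\bigr)\bigl(\begin{smallmatrix}0&1\\-1&0\end{smallmatrix}\bigr)$, verify \eqref{eq:AG47-23-1} for the branch of $\Gamma_T$ acting as $\phi\mapsto\phi-\tfrac12$ on $\tfrac12\ZZ$, and then invoke Theorem \ref{prop:paper30prop11-11ext}(iv) and, after transferring the HN property via Proposition \ref{prop:AHNgivesBHN}, part (v). The only cosmetic difference is that the paper computes $\Gamma_T^{-1}(0)=\tfrac12$ directly, whereas you deduce the needed inequalities from order-preservation of the dilation factor; both are fine.
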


\begin{proof}
We will check that we are in Configuration III; the theorem will then follow from Theorem \ref{prop:paper30prop11-11ext}(iv).

Let $\Dc=\Uc=D^b(\Coh (X))$, $\Phi$ as described and $\Psi=\whPhi$ where $\whPhi$ is as in \ref{para:FMTs}.  Let $\Ac = \Coh^p, \Bc=\Bl$, $Z_\Ac = Z_l$ and $Z_\Bc = Z^l$, while  $a=\tfrac{1}{4}, b=0$ and $T = \begin{pmatrix} \tfrac{\alpha}{\beta} & 0 \\
  0 & u \end{pmatrix}\begin{pmatrix} 0 & 1 \\
  -1 & 0 \end{pmatrix}\in \GLlp$.

 Condition \ref{para:configIII}(a) now follows from the identities in \ref{para:FMTs}, while condition \ref{para:configIII}(b) follows from Proposition \ref{prop:AG47-30-1}.  From \ref{para:surfZlowerl}, we know $Z_\Ac$ is a polynomial stability function on $\Ac$ with respect to $(a,a+1]$; from \ref{para:surfZupperl}, we know $Z_\Bc$ is a polynomial stability function on $\Bc$ with respect to $(b,b+1]$.  The identity \eqref{eq:AG45-108-17} then follows from \eqref{eq:L2}.  Choosing $\Gamma_T$ to be the branch satisfying  $\Gamma_T(\phi')= \phi' -\tfrac{1}{2}$ for all $\phi' \in \tfrac{1}{2}\mathbb{Z}$, we obtain $\Gamma_T^{-1}(0)=\tfrac{1}{2}$ and so \eqref{eq:AG47-23-1} holds.  Therefore, we are in Configuration III and the first claim   follows from Theorem \ref{prop:paper30prop11-11ext}(iv).  The second claim then follows from Theorem \ref{prop:paper30prop11-11ext} together with Lemma \ref{cor:ZupplstabHN} below.
\end{proof}

\begin{lem}\label{cor:ZupplstabHN}
Let $p : X \to B$ be a Weierstra{\ss} elliptic surface, $B$ a divisor in $\mathbb{R}f$, $\omega$ an ample $\mathbb{R}$-divisor on $X$ written as in \ref{para:ellipsurfnotation}, where we regard $u$ as an element of $\RLoovc$ as in \ref{para:surfsolveeqs} (so that \eqref{eq:surfcons2} holds).  Then $(Z^l,\Bl)$ is a polynomial stability condition on $D^b(X)$ with respect to $(0,1]$.
\end{lem}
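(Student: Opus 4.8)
The plan is to prove the only missing ingredient, namely the Harder--Narasimhan property for $Z^l$ on $\Bl$, since the rest of the statement is already in hand: Lemma \ref{lem:Zlstabfunc} shows that $Z^l$ is a polynomial stability function on $\Bl$ with respect to $(0,1]$, and by the definition in \ref{para:pgstab} a polynomial stability function with the HN property is precisely a polynomial stability condition. So the whole proof reduces to transporting the HN property of $Z_l$ on $\Coh^p$ across $\Phi$ via Proposition \ref{prop:AHNgivesBHN}.

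First I would record that the data $\Dc=\Uc=D^b(X)$, $\Phi$, $\Psi=\whPhi$, $\Ac=\Coh^p$, $\Bc=\Bl$, $Z_\Ac=Z_l$, $Z_\Bc=Z^l$, $a=\tfrac14$, $b=0$, together with $T=\left(\begin{smallmatrix}\tfrac{\alpha}{\beta}&0\\0&u\end{smallmatrix}\right)\left(\begin{smallmatrix}0&1\\-1&0\end{smallmatrix}\right)\in\GLlp$ and the branch $\Gamma_T$ normalised by $\Gamma_T(\phi')=\phi'-\tfrac12$ on $\tfrac12\ZZ$, satisfies all the hypotheses of Configuration III. This is exactly the verification already carried out in the proof of Theorem \ref{thm:main5}: condition \ref{para:configIII}(a) follows from the identities in \ref{para:FMTs}, condition \ref{para:configIII}(b) from Proposition \ref{prop:AG47-30-1}, the identity \eqref{eq:AG45-108-17} from \eqref{eq:L2}, the polynomial-stability-function statements from \ref{para:surfZlowerl} and Lemma \ref{lem:Zlstabfunc}, and the inequality \eqref{eq:AG47-23-1} from $\Gamma_T^{-1}(0)=\tfrac12$. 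I would emphasise that this verification does not use the present lemma, so quoting it here introduces no circularity with Theorem \ref{thm:main5} (whose second assertion is what actually invokes this lemma).

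Next I would observe that $Z_\Ac=Z_l$ has the Harder--Narasimhan property on $\Ac=\Coh^p$: indeed $(Z_l,\Coh^p)$ is a polynomial stability condition with respect to $(\tfrac14,\tfrac54]$ by \cite[Proposition 4.1(b)]{BayerPBSC} (see \ref{para:surfZlowerl}), and polynomial stability conditions satisfy the HN property by definition. Applying Proposition \ref{prop:AHNgivesBHN} to the Configuration III data above then yields that $Z_\Bc=Z^l$ has the HN property on $\Bc=\Bl$. Combining this with Lemma \ref{lem:Zlstabfunc} gives that $(Z^l,\Bl)$ is a polynomial stability function with respect to $(0,1]$ that satisfies the HN property, i.e.\ a polynomial stability condition on $D^b(X)$ with respect to $(0,1]$, as claimed. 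There is essentially no genuine obstacle in this argument — it is a direct corollary of Proposition \ref{prop:AHNgivesBHN} — and the only point requiring care is bookkeeping: making sure the Configuration III hypotheses are established independently of this lemma, and that the input ``$(Z_l,\Coh^p)$ has the HN property'' is correctly carried over by Proposition \ref{prop:AHNgivesBHN}. (One could instead re-prove the HN property for $Z^l$ from scratch by running the explicit filtration construction in the proof of Proposition \ref{prop:AHNgivesBHN} for each $E\in\Bl$, but citing that proposition is far cleaner.)
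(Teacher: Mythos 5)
Your proof is correct and follows essentially the same route as the paper: the paper's own proof also verifies Configuration III exactly as in the proof of Theorem \ref{thm:main5}, cites the HN property of $(Z_l,\Coh^p)$, and concludes via Proposition \ref{prop:AHNgivesBHN}. Your explicit remark on avoiding circularity with Theorem \ref{thm:main5} is a welcome extra care, but the substance is identical.
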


\begin{proof}
From the proof of Theorem \ref{thm:main5}, we know that if we choose $\ol{B}, \ol{\omega}$ in a suitable manner then  $(Z_l,\Coh^p)$ and $(Z^l,\Bl)$ are in Configuration III.  Since $(Z_l,\Coh^p)$ has the HN property  \cite[Theorem 3.2.2]{BayerPBSC}, the HN property of $(Z^l,\Bl)$ follows from Proposition \ref{prop:AHNgivesBHN}.
\end{proof}

\paragraph[Bogomolov inequality] On a smooth projective surface $X$,  we  define the discriminant of any object $E\in D^b(X)$ as
\[
  \Delta (E) = \ch_1(E)^2 - 2\ch_0(E)\ch_2(E).
\]
The usual Bogomolov inequality on a smooth projective surface $X$ says, that when $E$ is a torsion-free slope semistable sheaf on $X$, we have $\Delta (E) \geq 0$.  It is easy to check that $\Delta$ is invariant under the shift functor in $D^b(X)$ and twisting the Chern character by a $B$-field.

\subparagraph Let  $p : X \to B$ be a Weierstra{\ss} elliptic surface.  Given any $E \in D^b(X)$, a straightforward computation using the formulas in \ref{para:surfcohomFMT}  shows
\begin{equation}\label{eq:L19p-1}
  \Delta (\Phi E) + e (f\ch_1(\Phi E))^2= \Delta (E)+ e(f\ch_1(E))^2,
\end{equation}
or equivalently
\begin{equation}\label{eq:L19p-1b}
  \Delta (\Phi E) - e (\ch_0(\Phi E))^2= \Delta (E)- e(\ch_0(E))^2.
\end{equation}
If $E\in \Coh^p$ is a $Z_l$-semistable object of nonzero rank, then  $H^{-1}(E)$ is a torsion-free slope semistable sheaf while $H^0(E)$ is a 0-dimensional sheaf \cite[Lemma 4.2]{BayerPBSC}, and so
\[
\Delta (E) = \Delta (H^{-1}(E)) -2\ch_0(H^{-1}(E)[1])\ch_2(H^0(E)) \geq \Delta (H^{-1}(E))\geq 0.
\]
That is, a $Z_l$-semistable object $E$ satisfies $\Delta (E)\geq 0$.  Therefore, by Theorem \ref{thm:main5} and Lemma \ref{cor:ZupplstabHN}, any $Z^l$-semistable object $F$ (i.e.\ a shift of some $Z^l$-semistable object in $\Bl$) satisfies
\[
  \Delta (F)  \geq e (\ch_0(F)^2-(f\ch_1(F))^2).
\]
In particular, when $e=0$ (such as when $X$ is a product and $p$ is the trivial elliptic fibration), we have $\Delta (F)\geq 0$ for any $Z^l$-semistable object $F$.

\section{Elliptic surfaces: Bridgeland stability}\label{sec:ellsurfBristab}

In this section, $p : X \to B$ will be a Weierstra{\ss} elliptic fibration.  We will fix the parameter $v$ (and subsequently $u, \beta$) and consider the resulting Bridgeland stability conditions.  We give a countable family of rays in the space of Bridgeland stability conditions on $D^b(X)$ such that, for every stability condition $\sigma$ on such a ray sufficiently away from the origin, we are able to describe precisely the image of $\sigma$ under the action of the autoequivalence $\Phi$.

\paragraph[Notation] Let us continue  \label{para:vchoice}  with the notation from \ref{para:ellipsurfnotation}, where $m, \alpha \in \mathbb{R}_{>0}$ are fixed such that $\Theta + kf$ is ample on $X$ for all $k \geq m$ (which implies $m >e$).  Let us now fix parameters $l,q,\beta, u, v$ so that  \eqref{eq:lqlrelation},  \eqref{eq:surfcons1} and \eqref{eq:surfcons2} all hold, in which case $\olw, \olB, \omega, B$ are fixed divisors such that   the relation \eqref{eq:L2}
\begin{equation*}
Z_{\omega, B}(\Phi E) =  \begin{pmatrix} \tfrac{\alpha}{\beta} & 0 \\ 0 & u \end{pmatrix} (-i)  Z_{\ol{\omega}, \ol{B}}(E)
\end{equation*}
holds for any $E \in D^b(X)$.  Here,  $\left(\begin{smallmatrix} \tfrac{\alpha}{\beta} & 0 \\ 0 & u \end{smallmatrix}\right)$ is an element of $\mathrm{GL}^+(2,\mathbb{R})$.

\paragraph Let  \label{para:Zwpreimag-1} $\whPhi$ be a quasi-inverse of $\Phi [1]$ as in \ref{para:FMTs} so that $\whPhi \Phi \cong \mathrm{id}[-1]$, and let us      write $\whPhi^K$ to denote the induced map on $K(X)$.  Since $(Z_{\olw,\olB}, \Bc_{\olw,\olB})$ is a Bridgeland stability condition with respect to $(0,1]$, the pair $(Z_{\olw,\olB} \circ \whPhi^K, \Phi \Bc_{\olw,\olB}[1])$ is also a Bridgeland stability condition with respect to $(0,1]$.  For any $F \in \Phi \Bc_{\olw,\olB}[1]$, we have $\whPhi F \in \Bc_{\olw,\olB}$ and
\[
\begin{pmatrix} \tfrac{\alpha}{\beta} & 0 \\
  0 & u \end{pmatrix}\begin{pmatrix} 0 & 1 \\
  -1 & 0 \end{pmatrix}  \left( (Z_{\olw,\olB}\circ \whPhi^K) (F)\right) = Z_{\omega,B}(\Phi \whPhi F)= Z_{\omega,B} (F[-1])
\]
from \eqref{eq:L2}.  Hence $(Z_{\omega,B}, \Phi \Bc_{\olw,\olB}[1])$ is a Bridgeland stability condition with respect to $(\tfrac{1}{2},\tfrac{3}{2}]$.  In particular, for any $0 \neq F \in \Phi \Bc_{\olw,\olB}[1]$ we have  $\Re Z_{\omega,B} (F)\in\mathbb{R}_{\leq 0}$, and if $\Re Z_{\omega,B} (F)=0$ then $\Im Z_{\omega,B} (F)<0$ must hold.

\begin{lem}\label{lem:AG48-53-1}
$\Coh^{\leq 0}(X)$ is a Serre subcategory of $\Bl$.
\end{lem}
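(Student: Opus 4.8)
The plan is to verify that $\Coh^{\leq 0}(X)$ is closed under subobjects, quotients and extensions inside the abelian category $\Bl$. Extension-closure is immediate: since $\Coh^{\leq 0}(X)\subseteq\Coh^{\leq 1}(X)\subseteq\Tc^l\subseteq\Bl$ (containments recalled above), the objects of $\Coh^{\leq 0}(X)$ are genuine sheaves concentrated in cohomological degree $0$, and for sheaves $\Ext^1$ agrees whether computed in $\Coh(X)$ or in $\Bl$, so extension-closure of $\Coh^{\leq 0}(X)$ in $\Coh(X)$ transfers to $\Bl$. Hence the real content is closure under subobjects and quotients, and both follow once I show: if $0\to A\to E\to B\to 0$ is exact in $\Bl$ with $E\in\Coh^{\leq 0}(X)$, then $A,B\in\Coh^{\leq 0}(X)$.

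First I would view this as an exact triangle $A\to E\to B\to A[1]$ in $D^b(X)$ and take the long exact cohomology sequence with respect to the standard heart $\Coh(X)$. Every object $M$ of $\Bl=\langle\Fc^l[1],\Tc^l\rangle$ satisfies $H^{-1}(M)\in\Fc^l$, $H^0(M)\in\Tc^l$ and $H^i(M)=0$ otherwise; combining this with $H^{-1}(E)=0$ and $H^0(E)=E$, the long exact sequence collapses to $H^{-1}(A)=0$ together with an exact sequence of sheaves
\[
0\to K\to A\xrightarrow{\ \psi\ }E\to H^0(B)\to 0,\qquad K:=H^{-1}(B)\in\Fc^l .
\]
Thus $A=H^0(A)\in\Tc^l$ is a sheaf; $H^0(B)=\cokernel\psi$ is a quotient of $E$, so $H^0(B)\in\Coh^{\leq 0}(X)$; $E_1:=\image\psi$ is a subsheaf of $E$, so $E_1\in\Coh^{\leq 0}(X)$; and $0\to K\to A\to E_1\to 0$ is exact in $\Coh(X)$. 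Everything then reduces to proving $K=0$, for then $B=H^0(B)\in\Coh^{\leq 0}(X)$ and $A\cong E_1\in\Coh^{\leq 0}(X)$, which gives both quotient- and subobject-closure.

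To prove $K=0$, recall $\Fc^l\subseteq\Coh^{=2}(X)$, so $K$ is torsion-free and, if $K\neq0$, then $\ch_0(K)>0$. Using the implicit function $u=u(v)$ from \eqref{eq:surfcons2} (for which $u\to0^+$ as $v\to\infty$), $\Tc^l$ and $\Fc^l$ admit the equivalent descriptions $\Tc^l=\{F\in\Coh(X):F\in\Tob\text{ for }v\gg0\}$ and $\Fc^l=\{F\in\Coh(X):F\in\Fob\text{ for }v\gg0\}$, so I may pick a single $v$ large enough that $K\in\Fob$ and $A\in\Tob$, where $\omega=u(\Theta+mf)+vf$ and $B=qf$. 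Since $E_1\in\Coh^{\leq 0}(X)$ has $\ch_0^B(E_1)=0$ and $\omega\,\ch_1^B(E_1)=0$, the sequence $0\to K\to A\to E_1\to0$ gives $\ch_0^B(A)=\ch_0^B(K)=\ch_0(K)>0$ and $\omega\,\ch_1^B(A)=\omega\,\ch_1^B(K)$, hence $\mu_{\omega,B}(A)=\mu_{\omega,B}(K)$. But $K\in\Fob$ forces all $\mu_{\omega,B}$-Harder--Narasimhan factors of $K$ to have slope $\leq0$, so $\mu_{\omega,B}(K)\leq0$, whereas $A\in\Tob$ forces $\mu_{\omega,B}(A)>0$; this contradiction yields $K=0$.

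The steps requiring care, and hence the main obstacle, all live in the last paragraph: one must check that torsion-freeness of $K$ genuinely upgrades to $\ch_0(K)>0$ when $K\neq0$ (so all slopes in play are finite and sandwiched between the minimal and maximal Harder--Narasimhan slopes), and that the $v\gg0$ descriptions of $\Tc^l$ and $\Fc^l$ can be applied to both $A$ and $K$ for one common value of $v$; these are guaranteed by the setup in \ref{para:surfsolveeqs} and the Harder--Narasimhan property of $\mu_{\omega,B}$ from \ref{para:slopestab}. The remaining manipulations with the cohomology long exact sequence are routine.
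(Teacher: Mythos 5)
Your proof is correct, and its skeleton matches the paper's: both take the long exact sequence of standard cohomology of a $\Bl$-short exact sequence whose middle term lies in $\Coh^{\leq 0}(X)$, reduce everything to showing that the torsion-free piece $K=H^{-1}(N)\in\Fc^l$ vanishes, and get there by observing that the zero-dimensional image forces the relevant degrees of $K$ and of the $\Tc^l$-piece to agree, which is incompatible with their signs unless the rank is zero. Where you diverge is in how that sign contradiction is produced: the paper passes through the torsion quintuple of \cite{LLM} --- the classes $\Fc^{l,0}$ and $\langle\Coh^{\leq 1}(X),\Tc^{l,0}\rangle$ of objects with $f\ch_1=0$ --- and compares $(\Theta+mf)\ch_1^B$, whereas you fix one sufficiently large $v$, use the characterisation of $\Tc^l,\Fc^l$ as membership in $\Tob,\Fob$ for $v\gg 0$ (Lemma \ref{lem:LLMlem3-1an} and the remark following it), and compare $\mu_{\omega,B}$-slopes directly. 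Your route is more self-contained (no appeal to the external quintuple), at the mild cost of justifying that a single $v$ works simultaneously for both sheaves, which is immediate since only two sheaves are involved. The extension-closure discussion is harmless but not needed: the paper's working definition of Serre subcategory in \ref{para:weakpgstabfunc} only asks for closure under subobjects and quotients.
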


\begin{proof}
Take any $T \in \Coh^{\leq 0}(X)$ and any $\Bl$-short exact sequence $0 \to M \to T \to N \to 0$.  Since $\Coh^{\leq 0}(X) \subset \Bl$, it follows that $H^{-1}(M)=0$ and we have an exact sequence of sheaves $0 \to H^{-1}(N) {\to} H^0(M) \overset{\alpha}{\to} T \to H^0(N) \to 0$ in which $\image \alpha \in \Coh^{\leq 0}(X)$.  Then $f\ch_1^B(H^{-1}(N))=f\ch_1^B(H^0(M))=0$ from Lemma \ref{lem:LLMlem3-1an}, which implies $H^{-1}(N) \in \Fc^{l,0}$ and $H^0(M) \in \langle \Coh^{\leq 1}(X),\Tc^{l,0}\rangle$ using the notation in and given the torsion quintuple in $\Bl$ from \cite[(3.7.2)]{LLM}.

If $\ch_0(H^0(M))>0$, then from the description of the categories $\Tc^{l,0}$ and $\Fc^{l,0}$ in \cite[3.7]{LLM} and Lemma \ref{lem:LLMlem3-1an}, we have $(\Theta+mf) \ch_1^B(H^0(M))>0$ while $(\Theta+mf) \ch_1^B(H^{-1}(N))\leq 0$, a contradiction.  Hence $\ch_0(H^0(M))=0$, forcing $H^{-1}(N)=0$, and the lemma follows.
\end{proof}

\begin{lem}\label{lem:AG47-123-1}
Suppose $X$ is a smooth projective surface and $\omega, B$ are fixed $\mathbb{R}$-divisors on $X$ where $\omega$ is ample.  Suppose $\ol{m} = \min \{ \omega \ch_1^B(F) > 0 : F \in \Coh (X)\}$ exists and $A$ is  a pure 1-dimensional sheaf on $X$ such that $\omega  \ch_1^B(A) = \ol{m}$.  Then  $A$ is a stable object with respect to the Bridgeland stability condition $(Z_{\omega, B}, \Bc_{\omega,B})$.
\end{lem}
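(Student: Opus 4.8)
The plan is to show that $A$ lies in $\Bc_{\omega,B}$ and that it cannot be destabilised by any subobject in that heart. First I would check $A \in \Bc_{\omega,B}$: since $A$ is a coherent sheaf, it suffices to see $A \in \Tob$, i.e.\ that every $\muob$-semistable quotient of $A$ has positive slope. But $A$ is pure $1$-dimensional, so $\ch_0(A)=0$ and hence $\muob(Q)=\infty>0$ for every nonzero quotient sheaf $Q$ of $A$ (each such $Q$ is again $1$-dimensional of rank $0$); thus $A\in\Tob\subset\Bob$. Moreover $Z_{\omega,B}(A) = -\ch_2^B(A) + i\,\omega\ch_1^B(A)$ with $\omega\ch_1^B(A)=\ol m>0$, so $\phi_{Z_{\omega,B}}(A)\in(0,1)$.

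Next I would analyse an arbitrary $\Bc_{\omega,B}$-short exact sequence $0\to M\to A\to N\to 0$ with $M,N\neq 0$ and show $\phi(M)\preceq\phi(N)$ (strictly, for stability). Take the long exact cohomology sequence with respect to $\Coh(X)$: since $A$ is a sheaf, $\Hc^{-1}_{\Bob}(M)=0$, so $M$ is a sheaf in $\Tob$, and we get $0\to M\to A\to H^0_{\Bob\text{-underlying}}(N)\to \Hc^{-1}\!\text{-part}\to 0$; more precisely $M\hookrightarrow A$ is an honest subsheaf inclusion, $N$ has underlying cohomology $H^{-1}(N)\in\Fob$ and $H^0(N)=A/M\in\Tob$. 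Because $A$ is pure $1$-dimensional, its subsheaf $M$ has $\ch_0(M)=0$ and $\omega\ch_1^B(M)\geq 0$; and $H^{-1}(N)$, being in $\Fob$, is either $0$ or a sheaf with $\omega\ch_1^B(H^{-1}(N))\leq 0$, hence (being torsion-free in $\Fob$, but here forced to have rank $0$ since $A/M$ is $1$-dimensional) in fact $H^{-1}(N)=0$. So the sequence is an honest short exact sequence of $1$-dimensional sheaves $0\to M\to A\to N\to 0$ with all three in $\Tob$.

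Now the key numerical point: $\omega\ch_1^B(M)+\omega\ch_1^B(N)=\ol m$, both summands are $\geq 0$ integers-of-the-form (values of $\omega\ch_1^B$), and $\ol m$ is the minimal strictly positive such value. Hence one of $\omega\ch_1^B(M),\omega\ch_1^B(N)$ is $0$ and the other is $\ol m$. If $\omega\ch_1^B(M)=0$ then $M$ would be a $1$-dimensional sheaf with $\omega\ch_1^B(M)=0$; I would argue this forces $\Im Z_{\omega,B}(M)=0$, so $\phi(M)=1\succeq\phi(N)$ — wait, that goes the wrong way, so I must instead rule this case out by purity: a nonzero subsheaf $M$ of the pure $1$-dimensional sheaf $A$ is itself pure $1$-dimensional, so $\ch_1(M)\neq 0$ as an effective curve class, and since $\omega$ is ample $\omega\ch_1^B(M)=\omega\ch_1(M)>0$; therefore $\omega\ch_1^B(M)=\ol m$ and $\omega\ch_1^B(N)=0$, forcing $N\in\Coh^{\leq 0}(X)$. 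Then $Z_{\omega,B}(N)=-\ch_2^B(N)=-\ch_2(N)\leq 0$ is real, so $\phi(N)=1$ (or $N=0$, excluded), while $\phi(M)\in(0,1]$. Thus $\phi(M)\prec\phi(N)=1$ unless $\phi(M)=1$, i.e.\ unless $\Im Z_{\omega,B}(M)=0$; but $\Im Z_{\omega,B}(M)=\omega\ch_1^B(M)=\ol m>0$, so $\phi(M)\in(0,1)$ strictly and $\phi(M)\prec\phi(N)$. This shows $A$ is $Z_{\omega,B}$-stable.

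The main obstacle I anticipate is the careful bookkeeping at the level of the tilted heart $\Bc_{\omega,B}$: ensuring that a subobject $M$ in $\Bc_{\omega,B}$ of the \emph{sheaf} $A$ is again a genuine subsheaf (so that $\Hc^{-1}_{\Bob}(M)=0$ and the $\Fob$-part of $N$ vanishes), and then feeding the purity of $A$ correctly into the minimality of $\ol m$. The ampleness of $\omega$ is what guarantees $\omega\ch_1^B(M)>0$ for every nonzero $1$-dimensional subsheaf $M$, and this is the crux that excludes the destabilising case; I would want to state this as a small lemma or cite the standard fact that effective $1$-cycles pair strictly positively with ample classes.
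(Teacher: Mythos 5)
Your reduction to ``an honest short exact sequence of $1$-dimensional sheaves'' rests on a misordered long exact sequence, and this is a genuine gap.  For a $\Bob$-short exact sequence $0\to M\to A\to N\to 0$ with $A$ a sheaf, taking cohomology with respect to $\Coh(X)$ gives
\[
0\to H^{-1}(N)\to H^0(M)\overset{\alpha}{\to} A\to H^0(N)\to 0,
\]
so $H^{-1}(N)$ is the \emph{kernel} of the sheaf map $M=H^0(M)\to A$, not a term sitting to the right of $H^0(N)$ as in your display.  Consequently, although $M$ is indeed a sheaf in $\Tob$, it need not inject into $A$: it can have positive rank, with image a $1$-dimensional subsheaf of $A$ and with $H^{-1}(N)$ a nonzero torsion-free sheaf in $\Fob$.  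Your assertions that $M\hookrightarrow A$ is a subsheaf inclusion, that $H^{-1}(N)=0$, and that $\ch_0(M)=0$ (whence ``$\ch_1(M)$ is a nonzero effective class, so $\omega\ch_1^B(M)>0$'') all depend on this misordering and are false in general; this is precisely the case your argument does not cover.

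The rest of your numerics is sound and the gap is repairable, but a repair is needed.  The identity $\omega\ch_1^B(M)+\omega\ch_1^B(N)=\ol{m}$ with both summands nonnegative (since $\Im Z_{\omega,B}\geq 0$ on $\Bob$) does force one summand to be $0$ and the other $\ol{m}$, and in the case $\omega\ch_1^B(N)=0$ you are done with no purity input: $\phi(N)=1\succ\phi(M)$ regardless of the rank of $M$.  The case to \emph{exclude} is $\omega\ch_1^B(M)=0$; there one should first note that a sheaf in $\Tob$ with $\omega\ch_1^B=0$ must have rank $0$ and be $0$-dimensional, which forces its torsion-free subsheaf $H^{-1}(N)$ to vanish, so that only then is $M$ genuinely a subsheaf of $A$ and purity applies.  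The paper's proof instead works with $\image\alpha$ throughout: purity of $A$ gives $\image\alpha\neq 0$ (otherwise $H^{-1}(N)\cong H^0(M)$ would lie in $\Tob\cap\Fob=0$), minimality gives $\omega\ch_1^B(\image\alpha)=\ol{m}$, and the positive-rank case $\ch_0(H^{-1}(N))>0$ is settled by a second application of minimality to $\ol{m}=\omega\ch_1^B(H^0(M))-\omega\ch_1^B(H^{-1}(N))$.  As written, your proof asserts and uses a false intermediate statement, so it does not stand without this correction.
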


\begin{proof}
Let us write $\phi$ to denote the phase function associated to the Bridgeland stability $(Z_{\omega,B}, \Bc_{\omega,B})$.  Take any $\mathcal{B}_{\omega,B}$-short exact sequence $0 \to M \to A \to N \to 0$ where $M,N \neq 0$, and consider the corresponding long exact sequence of sheaves
\[
  0 \to H^{-1}(N) \to H^0(M) \overset{\alpha}{\to} A \to H^0(N) \to 0.
\]
If $\dimension (\image  \alpha)=0$, then by the purity of $A$ we  have $\image \alpha =0$, forcing  $H^{-1}(N)=0=H^0(M)$, i.e.\ $M=0$, a contradiction.  Hence  $\dimension (\image \alpha)=1$ and $\omega \ch_1^B(\image \alpha)>0$.  The minimality assumption on $\omega \ch_1^B(A)$  then implies $\omega \ch_1^B(\image \alpha)=\ol{m}=\omega\ch_1^B(A)$, and so $H^0(N)$ is supported in dimension 0.  We now divide into two cases:

\textbf{Case 1: $\ch_0 (H^{-1}(N))=0$.}  Then  $H^{-1}(N)=0$ and $N=H^0(N)$ is a 0-dimensional sheaf from above, giving us $\phi (N)=1 > \phi (A)$.

\textbf{Case 2: $\ch_0(H^{-1}(N)>0$.}  Then $\ch_0(H^{-1}(N))=\ch_0(H^0(M))>0$ and so $\omega \ch_1^B (H^0(M))>0$.  Since $-\omega \ch_1^B (H^{-1}(N))\geq 0$ and
\[
\ol{m}=\omega \ch_1^B (\image \alpha) =  \omega \ch_1^B (H^0(M)) - \omega \ch_1^B (H^{-1}(N))\notag,
\]
the minimality of $\ol{m}$ implies  $\omega \ch_1^B (H^0(M))=\ol{m}$ and  $\omega \ch_1^B (H^{-1}(N))=0$.  Since $H^0(N)$ is supported in dimension 0, it follows that $\omega \ch_1^B (N)=0$ whereas $\omega \ch_1^B (M) > 0$, giving us $\phi (M) < \phi (N)$.

Hence $A$ is a $Z_{\omega,B}$-stable object in $\Bc_{\omega,B}$.
\end{proof}

In part (a) of the next proposition, we describe the transform of the heart $\Bc_{\ol{\omega},\ol{B}}$ under $\Phi$ in terms of the slicing for the polynomial stability condition $(Z^l,\Bl)$ in \ref
{para:surfZupperl} (see also Lemma \ref{cor:ZupplstabHN}); for part (b) of the proposition, recall that $m, \alpha$ are defined in \ref{para:ellipsurfnotation} and $l$ defined in \ref{para:L3p-relation}, and recall from \ref{para:Zwpreimag-1} that $(Z_{\omega,B}, \Phi \Bc_{\olw,\olB}[1])$ is a Bridgeland stability condition with respect to $(\tfrac{1}{2},\tfrac{3}{2}]$.

\begin{prop}\label{prop:AG48-52-1}
Recall the notation in \ref{para:vchoice}.
\begin{itemize}
\item[(a)] $\Phi \Bc_{\olw,\olB}[1] = \Pc_{Z^l}(\tfrac{1}{2},\tfrac{3}{2}]$ where $\Pc_{Z^l}$ is the $\RLoovc$-valued slicing of the polynomial stability condition $(Z^l,\Bl)$.
\item[(b)] Suppose $m+\alpha$ and $l$ are both integers.  Then for any closed point $x \in X$, its structure sheaf $\OO_x$ is stable with respect to the Bridgeland stability condition $(Z_{\omega,B}, \Phi \Bc_{\olw,\olB}[1])$.
\end{itemize}
\end{prop}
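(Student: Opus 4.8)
The plan is to prove the two parts in sequence, with part (a) providing the structural backbone for part (b). For part (a), the starting point is the identity $Z_{\omega,B}(\Phi E) = \left(\begin{smallmatrix} \tfrac{\alpha}{\beta} & 0 \\ 0 & u \end{smallmatrix}\right)(-i)\,Z_{\olw,\olB}(E)$ from \eqref{eq:L2}, together with the fact (established in \ref{para:Zwpreimag-1}) that $(Z_{\omega,B}, \Phi\Bc_{\olw,\olB}[1])$ is a Bridgeland stability condition with respect to $(\tfrac{1}{2},\tfrac{3}{2}]$. By Theorem \ref{thm:main5} we are in Configuration III with $\Ac = \Coh^p$, $\Bc = \Bl$, $Z_\Ac = Z_l = Z_{\olw,\olB}$, $Z_\Bc = Z^l = Z_{\omega,B}$, and $\Gamma_T$ the branch with $\Gamma_T(\phi') = \phi' - \tfrac12$ on $\tfrac12\ZZ$. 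The content of Theorem \ref{prop:paper30prop11-11ext}(v) is exactly $\Phi\cdot(Z_l,\Pc_{Z_l}) = (Z^l,\Pc_{Z^l})\cdot\Patilde(T)$; unravelling this and using $\Bc_{\olw,\olB} = $ the heart of $Z_l$-stability... wait — here I must be careful: $\Bc_{\olw,\olB}$ is the Bridgeland heart attached to $Z_{\olw,\olB}$, not the polynomial-stability heart $\Coh^p$. So the cleaner route is: $\Phi\Bc_{\olw,\olB}[1]$ is the heart of the Bridgeland stability condition $(Z_{\omega,B}\circ\text{(irrelevant)}, \Phi\Bc_{\olw,\olB}[1])$ with respect to $(\tfrac12,\tfrac32]$, meaning by definition of a slicing that $\Phi\Bc_{\olw,\olB}[1] = \Pc^{\mathrm{Bri}}(\tfrac12,\tfrac32]$ where $\Pc^{\mathrm{Bri}}$ is the slicing of that Bridgeland condition. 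I would then identify $\Pc^{\mathrm{Bri}}$ with the restriction of $\Pc_{Z^l}$ to constant phases, using that $Z^l$ and $Z_{\omega,B}$ have the same imaginary/real parts up to the fixed $\GLtr$-factor and that an object is $Z^l$-semistable iff it is Bridgeland-$(Z_{\omega,B})$-semistable on the relevant hearts — this is the content of Theorem \ref{thm:main5} combined with Lemma \ref{cor:ZupplstabHN}. Concretely, $\Pc_{Z^l}(\tfrac12,\tfrac32]$ is the extension closure of $Z^l$-semistable objects with limit-phase in $(\tfrac12,\tfrac32]$, and I would check each generator lies in $\Phi\Bc_{\olw,\olB}[1]$ and conversely, using \eqref{eq:AG48-20-1} ($\phi_\Bc(\Phi E) = \Gamma_T(\phi_\Ac(E))$) to track phases across $\Phi$.

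For part (b), the strategy is to apply Lemma \ref{lem:AG47-123-1} to the heart $\Phi\Bc_{\olw,\olB}[1]$, which by part (a) equals $\Pc_{Z^l}(\tfrac12,\tfrac32]$, with the Bridgeland stability condition $(Z_{\omega,B},\Phi\Bc_{\olw,\olB}[1])$. The point is that $\OO_x$ is a pure $0$-dimensional... no: $\OO_x$ has $0$-dimensional support, so it is not $1$-dimensional and Lemma \ref{lem:AG47-123-1} doesn't literally apply. Instead I expect one should transport the problem: $\whPhi\OO_x$ (or $\Phi\OO_x$) is, by the Fourier–Mukai property recalled in \ref{para:FMTs}, a rank-one torsion sheaf supported on a single fiber $X_b$ — in particular a pure $1$-dimensional sheaf on $X$ with $\omega\ch_1^{\olB}$ minimal (since $\omega\cdot f$ is the smallest positive value of $\omega\ch_1$, given that $m+\alpha$ and $l$ are integers so that the twisted degrees live in a discrete set and $f$ realizes the minimum). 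So the plan is: first show, using the integrality hypotheses on $m+\alpha$ and $l$, that $\ol{m} := \min\{\olw\,\ch_1^{\olB}(F) > 0\}$ exists and is realized by fiber-supported rank-one sheaves; conclude by Lemma \ref{lem:AG47-123-1} that $\whPhi\OO_x$ (being such a sheaf) is $(Z_{\olw,\olB},\Bc_{\olw,\olB})$-stable; then apply the stability-preservation under the autoequivalence — i.e. Theorem \ref{prop:paper30prop11-11ext}(iv) in the Bridgeland-stability form of Configuration III, or more directly the fact that $\Phi[1]$ takes the Bridgeland stability condition $(Z_{\olw,\olB},\Bc_{\olw,\olB})$ to $(Z_{\omega,B},\Phi\Bc_{\olw,\olB}[1])$ as a whole — to deduce that $\Phi(\whPhi\OO_x)[1] \cong \OO_x$ (using $\whPhi\Phi\cong\mathrm{id}[-1]$, hence $\Phi\whPhi\cong\mathrm{id}[-1]$) is stable with respect to $(Z_{\omega,B},\Phi\Bc_{\olw,\olB}[1])$.

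The main obstacle I anticipate is part (a), specifically the careful matching of the two hearts $\Phi\Bc_{\olw,\olB}[1]$ and $\Pc_{Z^l}(\tfrac12,\tfrac32]$: one must verify that the $\RLoovc$-valued slicing $\Pc_{Z^l}$, when restricted to constant (real-number) limit-phases, genuinely reconstructs the Bridgeland slicing of $(Z_{\omega,B},\Phi\Bc_{\olw,\olB}[1])$, and that the phase bookkeeping with $\Gamma_T$ (which shifts by $-\tfrac12$) correctly sends the interval $(0,1]$ for $\Bc_{\olw,\olB}$ to $(\tfrac12,\tfrac32]$ after the shift by $[1]$. This requires knowing that $Z^l$-semistability and $Z_{\omega,B}$-Bridgeland-semistability coincide on objects of $\Bl$ for $v\gg0$, which follows from the definitions in \ref{para:surfZupperl} together with the HN property from Lemma \ref{cor:ZupplstabHN}, but assembling it cleanly needs care. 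Part (b) is then comparatively routine modulo the (standard, essentially combinatorial) verification that the integrality of $m+\alpha$ and $l$ forces $\olw\,\ch_1^{\olB}$ to take values in a lattice on which $f$ achieves the minimal positive value; I would isolate this as a short lemma and then invoke Lemma \ref{lem:AG47-123-1} and the preservation of Bridgeland stability under $\Phi[1]$.
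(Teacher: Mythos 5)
Your part (b) is essentially the paper's argument: transport to $\whPhi\OO_x$, use the integrality of $m+\alpha$ and $l$ to see that $\olw\ch_1^{\olB}$ takes values in $\tfrac{\beta}{\alpha}\ZZ$ with minimum $\tfrac{\beta}{\alpha}$ attained by the fiber sheaf $\whPhi\OO_x$, invoke Lemma \ref{lem:AG47-123-1}, and carry the stability back through the autoequivalence (the ``more direct'' route you mention, via \ref{para:Zwpreimag-1}, is the one to use -- invoking Theorem \ref{prop:paper30prop11-11ext}(iv) for the Bridgeland pair would be circular here, since condition \ref{para:configIII}(b) for that pair is only established afterwards in Proposition \ref{prop:AG48-48-1}, which relies on part (b)).

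Part (a), however, has a genuine gap. Your plan is to write $\Phi\Bc_{\olw,\olB}[1]$ as the $(\tfrac12,\tfrac32]$-interval of the \emph{Bridgeland} slicing of $(Z_{\omega,B},\Phi\Bc_{\olw,\olB}[1])$ and then identify that Bridgeland slicing with the polynomial slicing $\Pc_{Z^l}$, claiming the identification is ``the content of Theorem \ref{thm:main5} combined with Lemma \ref{cor:ZupplstabHN}.'' It is not: Theorem \ref{thm:main5} compares the two \emph{polynomial} stability conditions $(Z_l,\Coh^p)$ and $(Z^l,\Bl)$, and says nothing about Bridgeland $(Z_{\omega,B})$-semistability at a fixed value of $v$. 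The coincidence of $Z^l$-polynomial-semistability with $Z_{\omega,B}$-Bridgeland-semistability is exactly the right vertical arrow of \eqref{eq:stabequivs}, which is proved only later (Theorem \ref{thm:stabBriNlim}), holds only for $v\geq v_1$ depending on the Chern character, and itself uses the present proposition through Theorem \ref{thm:Bristabcorr}. The ingredient you are missing is the fact, from the proof of \cite[Proposition 4.1]{BayerPBSC}, that $\Bc_{\olw,\olB}=\Pc_{Z_l}(0,1]$, i.e.\ the tilted Bridgeland heart is precisely an interval of the polynomial slicing of the large volume limit. Once you have this, everything happens on the polynomial side: the slicing identity $\Phi\cdot(Z_l,\Pc_{Z_l})=(Z^l,\Pc_{Z^l})\cdot\Patilde(T)$ from Theorem \ref{thm:main5} gives $\Phi\,\Pc_{Z_l}(\phi)=\Pc_{Z^l}(\phi-\tfrac12)$, hence $\Phi\Bc_{\olw,\olB}[1]=\Phi\,\Pc_{Z_l}(0,1][1]=\Pc_{Z^l}(\tfrac12,\tfrac32]$, with no need to touch the Bridgeland slicing of $(Z_{\omega,B},\Phi\Bc_{\olw,\olB}[1])$ at all. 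Note also that the ``conversely'' half of your generator-by-generator check -- that every object of $\Phi\Bc_{\olw,\olB}[1]$ is an extension of $Z^l$-semistable objects of the stated phases -- is equivalent to one inclusion of $\Bc_{\olw,\olB}=\Pc_{Z_l}(0,1]$, so you cannot avoid this fact even on your own route.
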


\begin{proof}
(a) Let $\Pc_{Z_l}$ denote the polynomial slicing of the polynomial stability condition $(Z_l,\Coh^p)$.  From the proof of \cite[Proposition 4.1]{BayerPBSC}, we have $\Bc_{\olw,\olB}=\Pc_{Z_l}(0,1]$. By Theorem \ref{thm:main5} and  \eqref{eq:L2}, we have $\Phi \Bc_{\olw,\olB}[1]=\Pc_{Z^l}(\tfrac{1}{2},\tfrac{3}{2}]$.  (Note that $\OO_x$, being an object in $\Bl$ of maximal phase 1 with respect to $Z^l$, lies in $\Pc_{Z^l}(1)$, while $\whPhi \OO_x$, being a fiber sheaf, lies in $\Bc_{\olw,\ol{B}}$.)

(b) Let $x \in X$ be a closed point.  By the discussion in \ref{para:Zwpreimag-1}, the skyscraper sheaf $\OO_x$ is stable with respect to $(Z_{\omega,B}, \Phi \Bc_{\olw,\olB}[1])$ if and only if $\whPhi \OO_x$ is stable with respect to the Bridgeland stability condition $(Z_{\olw,\olB},\Bc_{\olw,\olB})$.  For any $A\in \Coh (X)$, we have
\[
  \olw \ch_1^\olB (A)=\olw \ch_1(A) - \olw \olB \ch_0(A) = \tfrac{\beta}{\alpha} \Big( (\Theta + (m+\alpha)f)\ch_1(A) - l \ch_0(A)\Big),
\]
which lies in $\tfrac{\beta}{\alpha}\mathbb{Z}$ under our assumption that both $m+\alpha$ and $l$ are integers.  In particular, the minimum of $\{ \olw \ch_1^\olB (F) >0 : F \in \Coh (X)\}$ exists, is equal to $\tfrac{\beta}{\alpha}$ and is attained when $F=\whPhi \OO_x$.  By Lemma \ref{lem:AG47-123-1}, $\whPhi \OO_x$ is stable with respect to  $(Z_{\olw,\olB},\Bc_{\olw,\olB})$ as wanted.
\end{proof}

\begin{prop}\label{prop:AG48-48-1}
With notation as in \ref{para:vchoice}, suppose    $m+\alpha$ and $l$ are both integers.   Let $\Pc$ denote the $\mathbb{R}$-valued slicing of the Bridgeland stability condition $(Z_{\omega,B},\Phi \Bc_{\olw,\olB} [1])$.  Then $\Pc (0,1]=\Bc_{\omega,B}$.
\end{prop}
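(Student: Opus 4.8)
The plan is to identify the two hearts $\Pc(0,1]$ and $\Bc_{\omega,B}$ by showing each is contained in the other, or more efficiently by exhibiting a common torsion-structure. Recall that $(Z_{\omega,B},\Bc_{\omega,B})$ is a Bridgeland stability condition with respect to $(0,1]$ by \ref{para:defBheart}, while $(Z_{\omega,B},\Phi\Bc_{\olw,\olB}[1])$ is one with respect to $(\tfrac12,\tfrac32]$ by \ref{para:Zwpreimag-1}; both use the \emph{same} central charge $Z_{\omega,B}$. Write $\Pc$ for the $\RR$-valued slicing of the latter. By \cite[Proposition 5.3]{StabTC} (cited in \ref{para:gpactionBri}), the heart $\Pc(0,1]$ is automatically the heart of a bounded t-structure on $D^b(X)$, and it comes with the stability function $Z_{\omega,B}$ restricted to it; since $Z_{\omega,B}$ maps $\Pc(0,1]$ into $\HH$ (phases in $(0,1]$), the pair $(Z_{\omega,B},\Pc(0,1])$ is again a Bridgeland stability condition with respect to $(0,1]$. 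So the problem reduces to showing that two Bridgeland stability conditions with the same central charge and the same interval $(0,1]$ must have the same heart — which is \emph{not} automatic in general, so genuine input is needed.

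First I would pin down the building blocks. The heart $\Pc(0,1]$ is the extension closure $\langle \Pc(\phi): 0<\phi\le 1\rangle$, and $\Bc_{\omega,B}=\langle \Fc_{\omega,B}[1],\Tc_{\omega,B}\rangle$. The natural strategy is: (i) show every $Z_{\omega,B}$-semistable object in $\Bc_{\omega,B}$ of phase in $(0,1]$ lies in $\Pc(0,1]$, and symmetrically (ii) every object of $\Pc(\phi)$ with $\phi\in(0,1]$ lies in $\Bc_{\omega,B}$; then extension-closure finishes both inclusions. For (ii) the key is to locate the "extreme" semistable objects. The objects of phase $1$ with respect to $(Z_{\omega,B},\Phi\Bc_{\olw,\olB}[1])$ are precisely the $Z_{\omega,B}$-semistable objects $F$ with $\Re Z_{\omega,B}(F)=0$, $\Im Z_{\omega,B}(F)>0$; by Proposition \ref{prop:AG48-52-1}(b) the skyscrapers $\OO_x$ are among the stable such objects, and since $\olw\ch_1^{\olB}(\whPhi\OO_x)=\tfrac\beta\alpha$ realizes the minimum of $\{\olw\ch_1^{\olB}(F)>0\}$ (under the integrality hypothesis on $m+\alpha$ and $l$), $\OO_x$ generates phase $1$. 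One then checks $\OO_x\in\Bc_{\omega,B}$ (it is a $0$-dimensional sheaf, hence in $\Tc_{\omega,B}$, hence of $\Bc_{\omega,B}$-phase $1$). For the "other end," phase near $0^+$ in $\Pc$ corresponds to objects $F$ with $\Im Z_{\omega,B}(F)=0$ and $\Re Z_{\omega,B}(F)<0$: these are torsion-free $\mu_{\omega,B}$-semistable sheaves of slope $0$, which lie in $\Fc_{\omega,B}[1]\subset\Bc_{\omega,B}$. So the two hearts share their extreme slices.

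The cleanest execution, which I would pursue, is to use the machinery already set up: the identity $Z_{\omega,B}(\Phi E)=\left(\begin{smallmatrix}\tfrac\alpha\beta&0\\0&u\end{smallmatrix}\right)(-i)Z_{\olw,\olB}(E)$ of \eqref{eq:L2} says that $\Phi\cdot(Z_{\olw,\olB},\Pc_{\olw,\olB})=(Z_{\omega,B},\Pc)\cdot(T',g')$ for the constant $T'=\left(\begin{smallmatrix}\tfrac\alpha\beta&0\\0&u\end{smallmatrix}\right)\left(\begin{smallmatrix}0&1\\-1&0\end{smallmatrix}\right)\in\GLtr$, with $g'$ the lift with $g'(x)=x-\tfrac12$ on $\tfrac12\ZZ$, exactly as in \eqref{eq:ellcurv2}. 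Thus $\Pc(\phi)=\Phi(\Pc_{\olw,\olB}(\phi+\tfrac12))$, and in particular $\Pc(0,1]=\Phi(\Pc_{\olw,\olB}(\tfrac12,\tfrac32])=\Phi\Bc_{\olw,\olB}[\,\cdot\,]$ consistent with \ref{para:Zwpreimag-1}. Now apply Theorem \ref{thm:main5}: $E$ is $Z_l$-semistable iff $\Phi E$ is $Z^l$-semistable, and by Proposition \ref{prop:AG48-52-1}(a), $\Phi\Bc_{\olw,\olB}[1]=\Pc_{Z^l}(\tfrac12,\tfrac32]$. Since $Z^l$ and $Z_{\omega,B}$ agree (the former is by definition $Z_{\omega,B}$ with $u,\beta$ regarded as Laurent series, \ref{para:surfZupperl}), and since for objects whose $Z^l$-phase function has a limit the polynomial slicing $\Pc_{Z^l}$ and the $\RR$-valued slicing $\wh{\Pc}_{Z^l}$ restrict compatibly, the objects of $\Pc(0,1]$ are exactly the $Z_{\omega,B}$-semistable sheaves and shifts thereof with phase-limit in $(0,1]$; running the same analysis for $\Bc_{\omega,B}$ via the surface Bridgeland construction of \ref{para:defBheart} gives the same list. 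Matching semistable objects slice by slice and taking extension closures yields $\Pc(0,1]=\Bc_{\omega,B}$.

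The main obstacle is the comparison between the polynomial-stability slicing $\Pc_{Z^l}$ (indexed by polynomial phase \emph{functions}, which for fixed $v\gg0$ collapse to real numbers via $\lim_{v\to\infty}$) and the genuine $\RR$-valued Bridgeland slicing $\Pc$ of $(Z_{\omega,B},\Phi\Bc_{\olw,\olB}[1])$ obtained after fixing $v$: one must verify that specializing $v$ to a (large enough) real number sends $Z^l$-semistable objects to $Z_{\omega,B}$-semistable objects with matching phase, i.e. that no new (de)stabilizing subobjects appear, and that the bound $\beta_0$ of \ref{para:surfZlowerl} together with \cite[Theorem 4.4]{LQ} (and its $Z^l$-counterpart) is uniform enough to cover all relevant Chern characters appearing in $\Bc_{\omega,B}$. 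Establishing that the heart $\Pc(0,1]$ is literally equal to, not merely abstractly equivalent to, $\Bc_{\omega,B}$ — i.e. that the two torsion pairs in $\Coh(X)$ (and the two tilts) coincide after the identifications above — is the delicate point; everything else is bookkeeping with \eqref{eq:L2}, Theorem \ref{thm:main5}, and Propositions \ref{prop:AG48-52-1}–\ref{prop:AG47-30-1}.
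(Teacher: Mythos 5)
There is a genuine gap. You correctly isolate the one substantive input the paper uses --- Proposition \ref{prop:AG48-52-1}(b), i.e.\ that every skyscraper sheaf $\OO_x$ is stable of phase $1$ for $(Z_{\omega,B},\Phi\Bc_{\olw,\olB}[1])$ --- and you correctly observe that two Bridgeland stability conditions with the same central charge need not have the same heart, so ``genuine input is needed.'' But you never supply that input. The missing idea is the standard classification of geometric stability conditions on a surface: if a stability condition satisfies the support property (which here transfers from $(Z_{\olw,\olB},\Bc_{\olw,\olB})$ via the argument of \ref{para:Zwpreimag-1}) and all skyscraper sheaves are stable of phase $1$, then its heart on $(0,1]$ is forced to be the tilt $\Bc_{\omega,B}$ determined by the central charge $Z_{\omega,B}$ (see \cite[Lemma 6.20]{MSlec} or \cite[Proposition 10.3]{SCK3}). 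That one citation closes the proof in three lines; without it, your argument does not terminate.

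The alternative route you pursue in your third paragraph --- matching $\Pc$ against the polynomial slicing $\Pc_{Z^l}$ via Theorem \ref{thm:main5} and then ``running the same analysis for $\Bc_{\omega,B}$'' --- does not work for a \emph{fixed} $v$. The comparison between $Z^l$-semistability (a statement about $v\gg 0$, i.e.\ about the outermost mini-chamber) and $Z_{\omega,B}$-semistability at a specific $v$ requires $v\geq v_1(\ch)$, and the bound $v_1$ of Theorem \ref{thm:stabBriNlim} depends on the Chern character; there is no uniform bound over all classes appearing in $\Bc_{\omega,B}$, so the slice-by-slice matching cannot be carried out for all objects at once. Your closing paragraph flags exactly this as ``the delicate point'' but leaves it unresolved. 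There is also a sign slip in your description of the extreme slices: objects with $\Im Z_{\omega,B}=0$ and $\Re Z_{\omega,B}<0$ have phase $1$, not phase near $0^+$; phase $\to 0^+$ corresponds to $\Im Z\to 0^+$ with $\Re Z>0$.
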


\begin{proof}
By Proposition \ref{prop:AG48-52-1}(b), every skyscraper sheaf $\OO_x$ on $X$ is stable of phase 1 with respect to the Bridgeland stability condition $(Z_{\omega,B},\Phi \Bc_{\olw,\olB} [1])$.  On the other hand, $(Z_{\omega,B},\Phi \Bc_{\olw,\olB} [1])$ satisfies the support property since $(Z_{\olw,\olB}, \Bc_{\olw,\olB})$ does and by  the argument in  \ref{para:Zwpreimag-1}.  Then by a standard argument (see \cite[Lemma 6.20]{MSlec}, \cite[Theorem 3.2]{huybrechts2014introduction} or \cite[Proposition 10.3]{SCK3}), we must have $\Pc (0,1]=\Bc_{\omega,B}$.
\end{proof}

\begin{thm}\label{thm:Bristabcorr}
With notation as in \ref{para:vchoice}, suppose    $m+\alpha$ and $l$ are both integers.    Consider the Bridgeland stability conditions  $\sigma_{\olw,\olB}=(Z_{\olw,\olB},\Bc_{\olw,\olB})$ and $\sigma_{\omega,B}=(Z_{\omega,B}, \Bc_{\omega,B})$ on $D^b(X)$, both of which are with respect to $(0,1]$.  Then
\begin{equation}\label{eq:M30-2}
  ([1]\circ \Phi)\cdot \sigma_{\olw,\olB}= \sigma_{\omega,B} \cdot (T,g)
\end{equation}
where $(T,g)$ is an element of $\wt{\mathrm{GL}}^+\!(2,\RR)$ with $T=\begin{pmatrix} \tfrac{\alpha}{\beta} & 0 \\
  0 & u \end{pmatrix}\begin{pmatrix} 0 & 1 \\
  -1 & 0 \end{pmatrix}$ and $g(x)=x-\tfrac{1}{2}$ for $x \in \tfrac{1}{2}\ZZ$.
 In particular, for any $E\in D^b(X)$,
\[
  \text{$E$ is $(Z_{\olw,\olB},\Bc_{\olw,\olB})$-semistable if and only if $\Phi E$ is $(Z_{\omega,B}, \Bc_{\omega,B})$-semistable.}
\]
The statement also holds if we replace `semistable' with `stable.'  \end{thm}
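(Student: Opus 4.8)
The plan is to verify that the data assembled in Sections \ref{sec:ellsurfGLact}--\ref{sec:ellsurfpolystab} place us in Configuration III with the \emph{specific} choice of parameters that comes from fixing $v$ (and hence $u,\beta$) to be honest real numbers rather than Laurent series, and then to invoke Theorem \ref{prop:paper30prop11-11ext}(v) together with the description of the relevant hearts. Concretely, I would take $\Dc=\Uc=D^b(X)$, $\Phi$ the relative Fourier--Mukai transform, $\Psi=\whPhi$, so that \ref{para:configIII}(a) holds by \ref{para:FMTs}. For \ref{para:configIII}(b) the key is Proposition \ref{prop:AG48-52-1}(a) together with Proposition \ref{prop:AG48-48-1}: these identify $\Phi\Bc_{\olw,\olB}[1]$ first as $\Pc_{Z^l}(\tfrac12,\tfrac32]$ and then, after fixing $v$ and using that $m+\alpha,l$ are integers so that skyscrapers are stable of phase $1$, as the tilt $\Bc_{\omega,B}$; combined with $\Phi\Ac\subset D^{[0,1]}_\Bc$ for $\Ac=\Bc_{\olw,\olB}$, $\Bc=\Bc_{\omega,B}$ (which follows from $\Phi\Bc_{\olw,\olB}\subset\langle\Bc_{\omega,B},\Bc_{\omega,B}[-1]\rangle$, itself a consequence of the two heart identifications), this gives condition (b). For \ref{para:configIII}(c) I would set $Z_\Ac=Z_{\olw,\olB}$, $Z_\Bc=Z_{\omega,B}$, both Bridgeland stability functions (hence polynomial stability functions in the sense of \ref{para:weakpgstabss}) with respect to $(0,1]$, so $a=0$; the intertwining equation \eqref{eq:AG45-108-17} is exactly \eqref{eq:L2}, with the constant matrix $T=\left(\begin{smallmatrix}\tfrac{\alpha}{\beta}&0\\0&u\end{smallmatrix}\right)\left(\begin{smallmatrix}0&1\\-1&0\end{smallmatrix}\right)\in\GLtr\subset\GLlp$; and since $b=\tfrac12$ (as $\Phi\Bc_{\olw,\olB}[1]$ is a stability condition with respect to $(\tfrac12,\tfrac32]$ by \ref{para:Zwpreimag-1}), choosing the branch $\Gamma_T$ with $\Gamma_T(\phi)=\phi-\tfrac12$ gives $\Gamma_T^{-1}(b)=1$, so that $a=0\prec 1\prec 2=a+1$, verifying \eqref{eq:AG47-23-1}. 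Thus we are in Configuration III.

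Having established Configuration III, the HN property for both stability functions is automatic (Bridgeland stability conditions have it), so Theorem \ref{prop:paper30prop11-11ext}(v) applies and yields $\Phi\cdot(Z_{\olw,\olB},\Pc_{Z_{\olw,\olB}})=(Z_{\omega,B},\Pc_{Z_{\omega,B}})\cdot\Patilde(T)$ as polynomial stability conditions. Since $T$ is a constant matrix, the path $\Patilde(T)$ is constant and descends to a genuine element $(T,g)$ of $\wt{\GLtr}$, where $g$ is the lift of the induced map on $\mathbb{R}/2\mathbb{Z}$ normalised by $\Gamma_T$, i.e.\ $g(x)=x-\tfrac12$ on $\tfrac12\ZZ$; I would also account for the shift $[1]$ by noting $\Phi\Bc_{\olw,\olB}[1]=\Bc_{\omega,B}$, so that $([1]\circ\Phi)\cdot\sigma_{\olw,\olB}$ is the Bridgeland stability condition whose heart is $\Bc_{\omega,B}$, matching the right-hand side of \eqref{eq:M30-2}. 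The semistable (resp.\ stable) object correspondence then follows either directly from Theorem \ref{prop:paper30prop11-11ext}(iv) or by reading it off from \eqref{eq:M30-2}, since $\wt{\GLtr}$-actions and shifts preserve (semi)stability and only relabel phases.

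I expect the main obstacle to be the careful identification $\Phi\Bc_{\olw,\olB}[1]=\Bc_{\omega,B}$ with the parameters \emph{specialised} to real numbers rather than Laurent series — that is, making sure the passage from the polynomial-stability framework of Theorem \ref{thm:main5} (where $u,\beta\in\RLoovc$) to the Bridgeland framework here (where $v$ is fixed so $u,\beta\in\mathbb{R}_{>0}$) is legitimate. This is precisely what Propositions \ref{prop:AG48-52-1} and \ref{prop:AG48-48-1} are designed to handle: Proposition \ref{prop:AG48-52-1}(a) transports the heart through $\Phi$ using the slicing of $(Z^l,\Bl)$, while part (b) together with the standard heart-reconstruction argument (\cite[Lemma 6.20]{MSlec}) upgrades $\Pc_{Z^l}(\tfrac12,\tfrac32]$ to the honest tilted heart $\Bc_{\omega,B}$ once skyscrapers are known to be stable of phase $1$ — which is where the integrality hypotheses on $m+\alpha$ and $l$ enter, via Lemma \ref{lem:AG47-123-1}. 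Once these heart identifications are in hand, the remaining verifications of Configuration III are routine bookkeeping of $B$-field relations \eqref{eq:lqlrelation} and the solvability of \eqref{eq:surfcons1}--\eqref{eq:surfcons2} (Remark \ref{rem:solexistence}), and the conclusion follows formally.
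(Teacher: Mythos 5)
Your architecture is exactly the paper's: set up Configuration III with $\Ac=\Bc_{\olw,\olB}$, $\Bc=\Bc_{\omega,B}$, $Z_\Ac=Z_{\olw,\olB}$, $Z_\Bc=Z_{\omega,B}$, the constant matrix $T$ coming from \eqref{eq:L2}, use Propositions \ref{prop:AG48-52-1} and \ref{prop:AG48-48-1} to get condition \ref{para:configIII}(b), and conclude by Theorem \ref{prop:paper30prop11-11ext}(iv) and (v). However, two of your intermediate assertions are false as written, and the verification of Configuration III does not close in the form you give it.

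First, Proposition \ref{prop:AG48-48-1} does \emph{not} identify $\Phi\Bc_{\olw,\olB}[1]$ with $\Bc_{\omega,B}$. It says that if $\Pc$ is the slicing of $(Z_{\omega,B},\Phi\Bc_{\olw,\olB}[1])$ --- a stability condition with respect to $(\tfrac12,\tfrac32]$ by \ref{para:Zwpreimag-1} --- then $\Pc(0,1]=\Bc_{\omega,B}$; thus $\Phi\Bc_{\olw,\olB}[1]=\Pc(\tfrac12,\tfrac32]$ and $\Bc_{\omega,B}=\Pc(0,1]$ are two \emph{different} hearts of the same slicing, related by a tilt, not equal. What one actually extracts is $\Pc(\tfrac12,\tfrac32]\subset\langle\Pc(0,1][1],\Pc(0,1]\rangle$, i.e.\ $\Phi\Bc_{\olw,\olB}\subset D^{[0,1]}_{\Bc_{\omega,B}}$, which is the form of condition (b) the paper uses; your closing claim that the heart of $([1]\circ\Phi)\cdot\sigma_{\olw,\olB}$ equals $\Bc_{\omega,B}$ rests on the same misidentification. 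Second, having chosen $\Bc=\Bc_{\omega,B}$ with $Z_{\omega,B}$ a stability function on it with respect to $(0,1]$, the parameter in \ref{para:configIII}(c) is $b=0$, not $b=\tfrac12$ (the value $\tfrac12$ pertains to the \emph{other} heart $\Phi\Bc_{\olw,\olB}[1]$, which is not the $\Bc$ you chose); moreover your check of \eqref{eq:AG47-23-1} miswrites $a+1$ as $2$ --- with your numbers the condition reads $0\prec 1\prec 1$ and fails. With the correct $b=0$ and $\Gamma_T^{-1}(x)=x+\tfrac12$ one gets $0\prec\tfrac12\prec 1$, which is how the paper closes the verification; once these two points are repaired, the rest of your argument (invoking Theorem \ref{prop:paper30prop11-11ext}(iv) for the object-level correspondence and (v) for \eqref{eq:M30-2}) coincides with the paper's proof.
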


\begin{proof}
Let us use Configuration III.  Using the notation of Proposition \ref{prop:AG48-48-1}, we have $\Phi \Bc_{\olw,\olB}[1] = \Pc (\tfrac{1}{2},\tfrac{3}{2}]$, i.e.\ $\Phi \Bc_{\olw,\olB} \subset D^{[0,1]}_{\Bc_{\omega, B}}$.  If we choose $\Dc=\Uc=D^b(X)$, $\Psi = \whPhi$, $\Ac = \Bc_{\olw,\olB}, \Bc=\Bc_{\omega, B}$, $Z_\Ac=Z_{\olw,\olB}, Z_{\Bc}=Z_{\omega,B}$, $a=b=0$, $T=\begin{pmatrix} \tfrac{\alpha}{\beta} & 0 \\
  0 & u \end{pmatrix}\begin{pmatrix} 0 & 1 \\
  -1 & 0 \end{pmatrix}$ and $\Gamma_T=g : \RR \to \RR$ such that $g(x)= x-\tfrac{1}{2}$ for $x \in \tfrac{1}{2}\ZZ$, then \eqref{eq:AG45-108-17} follows from \eqref{eq:L2}, and we are in Configuration III.  The second  half of  the theorem  then follows from  Theorem \ref{prop:paper30prop11-11ext}, part (iv), while \eqref{eq:M30-2} follows from part (v) of the same theorem.
  \end{proof}

In the following two sections, we present several applications of Theorem \ref{thm:Bristabcorr}.

\section{Applications: wall-crossing for Bridgeland stability}\label{sec:wcBristab}

In this  section, $p : X \to B$ will be a Weierstra{\ss} elliptic surface unless otherwise stated.  We give the first series of applications of Theorem \ref{thm:Bristabcorr}, including wall-crossing for Bridgeland stability conditions, and relations among Bridgeland stability, polynomial stability and twisted Gieseker stability for sheaves.

\paragraph[Notation] We will continue \label{para:vchoice2}   the notation from \ref{para:ellipsurfnotation} where $m, \alpha \in \mathbb{R}_{>0}$ are fixed such that $\Theta + kf$ is ample on $X$ for all $k \geq m$ (which implies $m >e$).  Regarding $u, \beta$ as functions in $v$ as in \ref{para:surfsolveeqs}, let us fix a $v_0 \in \mathbb{R}_{>0}$ such that, on the region $v>v_0$, the functions $u,\beta$ are both positive and their Laurent series representations in $v$ are convergent.  Now fix any $v>v_0$; then $u, \beta \in \mathbb{R}_{>0}$ are also fixed, and $\wt{\omega}, \olw, \omega$ are all fixed ample divisors on $X$.  Let us also fix $\mathbb{R}$-divisors $\ol{B}, B \in \mathbb{R}f$ on $X$ such that \eqref{eq:lqlrelation} holds.  Then for any $E \in D^b(X)$, we have the relation \eqref{eq:L2}
\begin{equation*}
Z_{\omega, B}(\Phi E) =  \begin{pmatrix} \tfrac{\alpha}{\beta} & 0 \\ 0 & u \end{pmatrix} (-i)  Z_{\ol{\omega}, \ol{B}}(E).
\end{equation*}

\paragraph[Mini-walls and Bridgeland stability] Using the author's joint  work with Qin on mini-walls for Bridgeland stability \cite{LQ}, we can now complete the following diagram relating Bridgeland stability conditions and polynomial stability conditions on a Weierstra{\ss} elliptic surface:

\begin{thm}\label{thm:stabBriNlim}
Suppose   $m, \alpha\in\mathbb{Q}_{>0}$ and ample $\mathbb{R}$-divisors $\olw, \omega$ are as in \ref{para:ellipsurfnotation}, and   $m+\alpha, l$ are integers.    Suppose $\olB, B \in \mathbb{R}f$ are divisors satisfying \eqref{eq:lqlrelation}.  Let $(Z_l,\Coh^p)$, $(Z^l,\Bl)$ be polynomial stability conditions defined in \ref{para:surfZlowerl} and \ref{para:surfZupperl}, respectively.  For Bridgeland stability conditions $(Z_{\olw,\olB},\Bc_{\olw,\olB})$ and $(Z_{\omega,B},\Bc_{\omega,B})$, use the notation in \ref{para:vchoice2}.

Then for any fixed Chern character $\ch$, there exists $v_1 \in \mathbb{R}_{>0}$ such that, for every $v \geq v_1$ and $E \in D^b(X)$ of Chern character $\ch$, we have the equivalences
\begin{equation}\label{eq:stabequivs}
\xymatrix{
  E \text{ is $(Z_l,\Coh^p)$-semistable} \ar@{<=>}[r] \ar@{<=>}[d] & \Phi E \text{ is $(Z^l,\Bl)$-semistable} \ar@{<=>}[d]  \\
  E \text{ is $(Z_{\olw,\olB},\Bc_{\olw,\olB})$-semistable} \ar@{<=>}[r] & \Phi E \text{ is $(Z_{\omega,B},\Bc_{\omega,B})$-semistable}
}.
\end{equation}
\end{thm}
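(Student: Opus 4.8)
The plan is to obtain the square \eqref{eq:stabequivs} by combining three previously established equivalences: the top horizontal arrow, the left vertical arrow, and then deducing the remaining two arrows. The top horizontal equivalence "$E$ is $(Z_l,\Coh^p)$-semistable $\Leftrightarrow$ $\Phi E$ is $(Z^l,\Bl)$-semistable" is exactly Theorem \ref{thm:main5}, and it holds for all Chern characters with no restriction on $v$, so it can simply be quoted. The left vertical equivalence "$E$ is $(Z_l,\Coh^p)$-semistable $\Leftrightarrow$ $E$ is $(Z_{\olw,\olB},\Bc_{\olw,\olB})$-semistable" is where the hypothesis that a Chern character $\ch$ is fixed enters: by the results recalled in \ref{para:surfZlowerl} (in particular \cite[Theorem 4.4]{LQ}), for objects of fixed Chern character $\ch$ there is an effective bound $\beta_0$ such that for $\beta \geq \beta_0$ the polynomial $Z_l$-semistable objects of character $\ch$ coincide with the Bridgeland $Z_{\beta\wt\omega,\olB}$-semistable objects of character $\ch$. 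Since $\beta$ is a monotone increasing function of $v$ for $v$ large (Lemma \ref{lem:betaderpos}), the condition $\beta \geq \beta_0$ is equivalent to $v \geq v_1'$ for some $v_1'$; this is the source of the threshold $v_1$ in the statement.

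Next I would handle the right vertical arrow. The cleanest route is to transport the left vertical equivalence across $\Phi$ using the two correspondence theorems we already have. Concretely: $E$ is $(Z_l,\Coh^p)$-semistable $\Leftrightarrow$ $\Phi E$ is $(Z^l,\Bl)$-semistable (Theorem \ref{thm:main5}), and separately $E$ is $(Z_{\olw,\olB},\Bc_{\olw,\olB})$-semistable $\Leftrightarrow$ $\Phi E$ is $(Z_{\omega,B},\Bc_{\omega,B})$-semistable (Theorem \ref{thm:Bristabcorr}, whose hypotheses — $m+\alpha$ and $l$ integers, divisors in $\mathbb{R}f$ satisfying \eqref{eq:lqlrelation}, $u,\beta$ fixed reals — are precisely what \ref{para:vchoice2} and the present hypotheses supply). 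Chaining the left vertical equivalence with these two gives the right vertical equivalence for objects $\Phi E$ with $E$ of character $\ch$; since $\Phi$ is an autoequivalence of $D^b(X)$, every object of the relevant Chern character on the target side is of the form $\Phi E$, so this is genuinely the statement we want. Finally the bottom horizontal arrow "$E$ is $(Z_{\olw,\olB},\Bc_{\olw,\olB})$-semistable $\Leftrightarrow$ $\Phi E$ is $(Z_{\omega,B},\Bc_{\omega,B})$-semistable" is again just Theorem \ref{thm:Bristabcorr}, holding for all $v > v_0$. Thus all four arrows commute and \eqref{eq:stabequivs} follows; one sets $v_1 = \max\{v_1', v_0\}$ and also requires $v \geq v_1$ to lie in the region of \ref{para:surfsolveeqs} where the Laurent series for $u,\beta$ converge and are positive.

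The only genuine work — and the main obstacle — is bookkeeping the threshold $v_1$: one must check that the bound $\beta_0$ from \cite[Theorem 4.4]{LQ} depends only on $\ch$ (not on the object), translate it through the implicit function $\beta = \beta(v)$ of \ref{para:surfsolveeqs}, and verify using Lemma \ref{lem:betaderpos} that $\{v : \beta(v) \geq \beta_0\}$ is a half-line $[v_1',\infty)$ so that a single threshold $v_1$ works uniformly. Everything else is a formal diagram chase quoting Theorems \ref{thm:main5} and \ref{thm:Bristabcorr}; in particular no new stability-theoretic input is needed, since the hard correspondence results were proved in Sections \ref{sec:ellsurfpolystab} and \ref{sec:ellsurfBristab}. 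A minor point worth a sentence is that the hypotheses "$m,\alpha \in \mathbb{Q}_{>0}$" and "$m+\alpha, l$ integers" in the present theorem are exactly those required to invoke Theorem \ref{thm:Bristabcorr} and the rationality needed in \cite{LQ}, so no further normalization is required.
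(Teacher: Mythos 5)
Your proposal is correct and follows essentially the same route as the paper: quote Theorem \ref{thm:main5} for the top arrow, Theorem \ref{thm:Bristabcorr} for the bottom arrow, \cite[Theorem 4.4]{LQ} (using $m,\alpha\in\mathbb{Q}_{>0}$) for the left vertical arrow, and deduce the right vertical arrow by composition. The extra bookkeeping you do on translating the bound $\beta_0$ into a threshold $v_1$ via the behaviour of $\beta(v)$ is a harmless elaboration of what the paper leaves implicit.
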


\begin{proof}
The upper horizontal equivalence follows immediately from Theorem \ref{thm:main5}.  For the other equivalences, let $v_0>0$  be fixed as in \ref{para:vchoice2}.  Then the  lower horizontal equivalence holds for any $v \geq v_0$ by Theorem \ref{thm:Bristabcorr}, which uses the assumption $m+\alpha, l \in \mathbb{Z}$.  With $m, \alpha \in \mathbb{Q}_{>0}$, we have $\omega \in \mathrm{NS}_{\mathbb{Q}}(X)$ and by \cite[Theorem 4.4]{LQ}, there exists $v_1 \geq v_0$  such that, for any fixed $v$ satisfying $v\geq v_1$, the left vertical equivalence holds.  The right vertical equivalence then follows.
\end{proof}

\paragraph[Mini-walls along \eqref{eq:surfcons2}] In our \label{para:def-miniwallsv} notation in \ref{para:ellipsurfnotation}, we write $\olw = \beta \wt{\omega}$ where $\wt{\omega} = \tfrac{1}{\alpha}(\Theta + mf)$ is a fixed ample class and $\beta \in \mathbb{R}_{>0}$.  As $\beta$ varies, we obtain a  1-parameter family of Bridgeland stability conditions  $\{ (Z_{\olw,\olB}, \Bc_{\olw,\olB}) : \beta >0\}$.  For any fixed Chern character $\ch$, it was proved in \cite[Theorem 1.1]{LQ} that the mini-walls for this 1-parameter family of Bridgeland stability conditions are locally finite.

On the other hand, we also write $\omega = u(\Theta + (m+\tfrac{v}{u})f)$ and, from \ref{para:vchoice2}, we know there exists $v_0>0$ such that, on the region $v>v_0$, we can regard $u$ as a positive analytic function in $v$ subject to the relation \eqref{eq:surfcons2}.  This gives us another 1-parameter family of  Bridgeland stability conditions $\{ (Z_{\omega,B}, \Bc_{\omega,B}) : v\in (v_0, \infty) \}$.  For any fixed Chern character $\ch$, we can then define mini-walls  in the interval $(v_0, \infty)$ in the same way as in \cite[Definition 3.2]{LQ}.   At this point, it is  natural to ask whether an analogue of \cite[Theorem 1.1]{LQ} holds for this family of Bridgeland stability conditions, i.e.\ whether  boundedness and local finiteness  of mini-walls hold.  We give a positive answer below using Theorem  \ref{thm:Bristabcorr}:

\begin{thm}\label{cor:AG48-58-1}
Suppose   $m \in \mathbb{Q}_{>0}$  is as in \ref{para:ellipsurfnotation}, and $\omega = u(\Theta + (m+ \tfrac{v}{u})f)$ where we regard $u$ as a  function in $v$ as in \ref{para:surfsolveeqs}.  Let  $B=qf$ where $q \in \mathbb{Z} + \tfrac{e}{2}$.  Fix a Chern character $\ch$. Let $v_1$ be as in Theorem \ref{thm:stabBriNlim}. Then there exists $v_2 \in \mathbb{R}_{>0}$ such that the following statements hold for mini-walls for $\{ (Z_{\omega,B}, \Bc_{\omega,B}) : v\in (v_0, \infty) \}$:
\begin{itemize}
\item[(i)] (local finiteness) The set of mini-walls for $\ch$ is locally finite in $(v_2, \infty)$.
\item[(ii)] (boundedness) There are no mini-walls for $\ch$ in $[v_1,\infty)$.
\end{itemize}
\end{thm}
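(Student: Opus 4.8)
\textbf{Proof proposal for Theorem \ref{cor:AG48-58-1}.}

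The plan is to transport the mini-wall results of \cite{LQ} from the ray family $\{(Z_{\olw,\olB},\Bc_{\olw,\olB}):\beta>0\}$ to the hyperbola family $\{(Z_{\omega,B},\Bc_{\omega,B}):v\in(v_0,\infty)\}$ via the autoequivalence $\Phi$, using Theorem \ref{thm:Bristabcorr} as the bridge. First I would fix the Chern character $\ch$ and set $\overline{\ch}=\Phi^K(\ch)$; since $\Phi$ is an autoequivalence, an object $E$ of Chern character $\overline{\ch}$ is $(Z_{\olw,\olB},\Bc_{\olw,\olB})$-semistable iff $\Phi E$ (of Chern character... wait, one must be careful about which direction) — more precisely, by Theorem \ref{thm:Bristabcorr}, $E$ is $(Z_{\olw,\olB},\Bc_{\olw,\olB})$-semistable iff $\Phi E$ is $(Z_{\omega,B},\Bc_{\omega,B})$-semistable, and the full stability conditions are related by \eqref{eq:M30-2}, so the strict-semistability loci correspond. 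This means: $v$ is a mini-wall for $\ch$ in the $\omega$-family precisely when the corresponding $\beta=\beta(v)$ (given by \eqref{eq:beta1}) is a mini-wall for $\overline{\ch}=(\Phi^K)^{-1}(\ch)$ in the $\beta$-family. Here I need to check that the notion of mini-wall (as in \cite[Definition 3.2]{LQ}, which is about phases of sub/quotient objects coinciding) is preserved under the combined operation of an autoequivalence and a $\wt{\mathrm{GL}}^+(2,\mathbb{R})$-action, which it is, since autoequivalences permute semistable objects and $\wt{\mathrm{GL}}^+(2,\mathbb{R})$-actions merely relabel phases; this is the content of the equation \eqref{eq:M30-2} together with Theorem \ref{prop:paper30prop11-11ext}(iv).

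Next I would analyze the change of variables $v\mapsto\beta(v)$ from \eqref{eq:beta1}, i.e.\ $\beta=\alpha\sqrt{(m+\tfrac{v}{u}-e)/(m+\alpha-\tfrac{e}{2})}$ with $u=u(v)$ the implicit function from \eqref{eq:surfcons2}. By Lemma \ref{lem:betaderpos} there is a $v'>v_0$ such that $\beta$ is strictly monotone increasing on $[v',\infty)$, hence a homeomorphism onto its image $[\beta(v'),\infty)$ (the limit $\beta=\Theta(v)\to\infty$ as $v\to\infty$ from \ref{para:surfsolveeqs} gives surjectivity onto a half-line). For part (i): \cite[Theorem 1.1]{LQ} gives that mini-walls for $\overline{\ch}$ in the $\beta$-family are locally finite; since $v\mapsto\beta(v)$ is a homeomorphism on $[v',\infty)$, local finiteness is preserved, so taking $v_2\geq\max\{v',v_1\}$ (and $v_2\geq v_0$) gives local finiteness of mini-walls for $\ch$ on $(v_2,\infty)$. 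For part (ii): by Theorem \ref{thm:stabBriNlim}, for $v\geq v_1$ the $(Z_{\omega,B},\Bc_{\omega,B})$-semistable objects of Chern character $\ch$ coincide with the $(Z^l,\Bl)$-semistable ones (via $\Phi$, and independently of $v$ in that range) — equivalently, via the left and lower equivalences, with the $(Z_{\olw,\olB},\Bc_{\olw,\olB})$-semistable objects, which by \cite[Theorem 4.4]{LQ} are the $Z_l$-semistable ones, again $v$-independent. Since the set of semistable objects of the fixed Chern character does not change as $v$ ranges over $[v_1,\infty)$, and mini-walls are by definition the loci where this set (or the finer strictly-semistable structure) changes, there are no mini-walls in $[v_1,\infty)$.

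The main obstacle I expect is bookkeeping around the \emph{definition} of mini-wall and verifying that it is genuinely invariant under the transport: \cite[Definition 3.2]{LQ} is phrased for a specific one-parameter family of Bridgeland stability conditions built from scaling an ample class, and I must confirm that (a) the hyperbola family $\{(Z_{\omega,B},\Bc_{\omega,B})\}$ legitimately falls under "the same way as in \cite[Definition 3.2]{LQ}" (the paper already asserts this in \ref{para:def-miniwallsv}, so I can cite it), and (b) under \eqref{eq:M30-2} a mini-wall at parameter $\beta_0$ for $\overline{\ch}$ maps exactly to a mini-wall at $v_0$ with $\beta(v_0)=\beta_0$ for $\ch$, with no spurious creation or destruction of walls coming from the $\wt{\mathrm{GL}}^+(2,\mathbb{R})$-factor $(T,g)$ (which is a \emph{fixed} element once $v$ is fixed, but varies with $v$ — so one should check the $v$-dependence of $(T,g)$ is continuous and does not itself introduce walls; since $(T,g)$ depends analytically on $u(v),\beta(v)$ and a $\wt{\mathrm{GL}}^+$-action never changes which objects are semistable, only their phases, this is fine). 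A secondary technical point is ensuring the hypothesis $q\in\mathbb{Z}+\tfrac{e}{2}$ in the statement matches the integrality hypotheses ($m+\alpha,l\in\mathbb{Z}$) needed to invoke Theorems \ref{thm:Bristabcorr} and \ref{thm:stabBriNlim}: indeed \eqref{eq:lqlrelation} reads $l=\tfrac{e}{2}+q$, so $q\in\mathbb{Z}+\tfrac{e}{2}$ is exactly $l\in\mathbb{Z}$, and $m\in\mathbb{Q}_{>0}$ together with the (implicitly assumed) $\alpha$ making $m+\alpha\in\mathbb{Z}$ supplies the rest — I would state this compatibility explicitly at the start of the proof.
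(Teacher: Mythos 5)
Your proposal is correct and follows essentially the same route as the paper: part (ii) via the right vertical equivalence of \eqref{eq:stabequivs} (Theorem \ref{thm:stabBriNlim}), and part (i) by transporting the local finiteness of mini-walls for the $\beta$-family from \cite{LQ} through the correspondence of Theorem \ref{thm:Bristabcorr}, using Lemma \ref{lem:betaderpos} to control the change of parameter $v\mapsto\beta(v)$. The only cosmetic difference is that the paper concludes by showing the mini-walls for the $\omega$-family are \emph{contained in} the (locally finite) set $W$ of mini-walls for the $\olw$-family, which suffices, whereas you assert an exact bijection of walls; both work, and your explicit check of the integrality bookkeeping ($q\in\mathbb{Z}+\tfrac{e}{2}\Leftrightarrow l\in\mathbb{Z}$) matches the paper's opening step.
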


\begin{proof}
In terms of our notation $\olB = lf$, the condition $B \in (\mathbb{Z} + \tfrac{e}{2})f$ means that if $\olB, B$ satisfy \eqref{eq:lqlrelation}, then $l$ is forced to be an integer. Now, choosing $\alpha \in \mathbb{Q}_{>0}$ so that $m+\alpha \in \mathbb{Z}$ and choosing $\omega$ as in \ref{para:vchoice2}, Theorem \ref{thm:stabBriNlim} applies (for suitable $\olw$ and $\olB$) and the right vertical equivalence in \eqref{eq:stabequivs} gives part (ii).

 For part (i), let $v_0$ be as in \ref{para:vchoice2} so that on the region  $v\geq v_0$, the functions  $u, \beta$ in $v$ both take on positive values, and the lower horizontal equivalence in \eqref{eq:stabequivs} holds.  By Lemma \ref{lem:betaderpos}, there exists $v' \in \mathbb{R}_{>0}$ such that $\beta$ is monotone increasing for $v \in [v',\infty)$.  Let $v_2 = \max{\{v_0, v'\}}$ and take any closed interval $I \subset (v_2,\infty)$.  Let $W$ be the set of mini-walls in $(v_2,\infty)$ for the family of Bridgeland stability conditions $\{ (Z_{\olw,\olB},\Bc_{\olw,\olB}) : v \in (v_2,\infty)\}$.  Then $W \cap I$ is finite by \cite[Theorem 4.5(i)]{LQ}. Hence for any $v'' \in I\setminus W$, the set of $Z_{\olw,\olB}$-semistable objects of Chern character $\whPhi^{\ch}(\ch)$ is  constant as $v$ varies in the connected component of $I\setminus W$ containing $v''$.  By the lower horizontal equivalence in  \eqref{eq:stabequivs}, the set of $Z_{\omega,B}$-semistable objects of Chern character $\ch$ is also constant as $v$ varies within the same connected component.  This means that the set of mini-walls for the family $\{ (Z_{\omega,B}, \Bc_{\omega,B}) : v \in (v_2,\infty)\}$ is contained in $W$, and hence is  locally finite.
\end{proof}

In the author's joint work with Liu and Martinez \cite[p.30]{LLM}, mini-walls along the curve \eqref{eq:surfcons2} were studied (including using computational techniques such as from \cite{liu2018bayer}).  The boundedness of mini-walls   was shown directly for the Chern characters of Fourier-Mukai transforms of 1-dimensional sheaves.  Theorem \ref{cor:AG48-58-1} generalises the result to all Chern characters in the form of local finiteness and boundedness.

\paragraph[Twisted Gieseker semistable sheaves]  The boundedness of mini-walls in \cite{LLM} allowed  Liu, Martinez and the author  to show that twisted Gieseker semistable 1-dimensional sheaves are sent by $\Phi$ to Bridgeland semistable objects:

\begin{prop} \cite[Corollary 6.14]{LLM}\label{prop:LLM1}
Let $L$ be a $\mathbb{Q}$-line bundle with $\ch_1(L)=\tfrac{1}{2}p^\ast K_B$.  Suppose     $E$ is a 1-dimensional twisted Gieseker semistable sheaf with $\chi_L (E)\geq 0$ and $f\ch_1 (E)>0$ and $m+\alpha \gg 0$.  Then for  $v \gg 0$ under the relation \eqref{eq:surfcons2}, the transform $\Phi E$ is $Z_\omega$-semistable.
\end{prop}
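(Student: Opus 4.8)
The plan is to deduce Proposition \ref{prop:LLM1} from the results already assembled in this section, rather than re-running the wall-crossing analysis of \cite{LLM}. First I would observe that by Corollary \ref{lem:AG48-25-1} (with $\olB = lf = \ch_1(L^\ast) + \tfrac{1}{2}K_X$ under the hypothesis $\ch_1(L) = \tfrac{1}{2}p^\ast K_B$, which forces $\ch_1(L)K_X = 0$ since $f\cdot K_X = 0$ on a Weierstra\ss{} surface), a $1$-dimensional $L$-twisted $\wt\omega$-Gieseker semistable sheaf $E$ is exactly a $Z_l$-semistable object of $\Coh^p(X)$ with $\ch_0(E) = 0$. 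The hypotheses $\chi_L(E) = \ch_2^{\olB}(E) \geq 0$ and $f\ch_1(E) > 0$ then pin down the class of $E$ and, via the cohomological formulas in \ref{para:surfcohomFMT}, the class of $\Phi E$; in particular $\ch_0(\Phi E) = f\ch_1(E) > 0$, so $\Phi E$ has nonzero rank.

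Next I would apply Theorem \ref{thm:main5}: since $E$ is $Z_l$-semistable, $\Phi E$ is $Z^l$-semistable, i.e.\ $Z_{\omega,B}$-semistable with $\omega, u, \beta$ related by \eqref{eq:surfcons1}, \eqref{eq:surfcons2} and regarded as Laurent series in $v$. This is a statement about the \emph{polynomial} stability condition $(Z^l,\Bl)$, valid for all Chern classes and all $v \gg 0$ simultaneously. To convert it into a statement about the \emph{Bridgeland} stability condition $(Z_{\omega,B},\Bc_{\omega,B})$ at a fixed large $v$, I would invoke the right-hand vertical equivalence in diagram \eqref{eq:stabequivs} of Theorem \ref{thm:stabBriNlim}: for the fixed Chern character $\ch(\Phi E)$ there is a threshold $v_1$ beyond which $\Phi E$ is $(Z^l,\Bl)$-semistable if and only if it is $(Z_{\omega,B},\Bc_{\omega,B})$-semistable. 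Taking $m+\alpha \gg 0$ (and an integer, together with $l$ an integer, as required for Theorems \ref{thm:Bristabcorr} and \ref{thm:stabBriNlim}) and then $v \gg 0$ gives that $\Phi E$ is $Z_\omega$-semistable, which is the conclusion; here $Z_\omega = Z_{\omega,B}$ in the notation of \ref{para:defBheart} with the chosen $B = qf$.

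The one point that needs care — and what I expect to be the main obstacle — is matching the hypotheses of Proposition \ref{prop:LLM1} with those of Theorems \ref{thm:main5} and \ref{thm:stabBriNlim}. The proposition as quoted from \cite[Corollary 6.14]{LLM} is phrased with $L$ a $\mathbb{Q}$-line bundle and $m+\alpha \gg 0$, $v \gg 0$, without the integrality conditions $m+\alpha, l \in \mathbb{Z}$ that Theorem \ref{thm:stabBriNlim} imposes; I would handle this by noting that one is free to \emph{choose} $\alpha \in \mathbb{Q}_{>0}$ large with $m + \alpha$ an integer (so $m \in \mathbb{Q}_{>0}$ is needed, which is available since $m$ may be enlarged within $\mathbb{Q}$), and that $\olB = lf$ with $l = \tfrac{e}{2} + q$ is an integer once $q \in \mathbb{Z} + \tfrac{e}{2}$, which can be arranged. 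I would also need to check that the ``$\chi_L(E) \geq 0$, $f\ch_1(E) > 0$'' conditions are exactly what guarantees $\Phi E$ lands (up to the shift absorbed into $\Bl$) in $\Bl$ with the right phase — but this is already encoded in Lemma \ref{lem:AG47-30-1} and Proposition \ref{prop:AG47-30-1}, applied to the $\{\Coh^{\leq 0}\}^\uparrow$-part of $E$, together with the sign of $\ch_2^{\olB}(E)$. With those bookkeeping matters settled, the proof is a two-line chain: twisted Gieseker $\Rightarrow$ $Z_l$-semistable (Corollary \ref{lem:AG48-25-1}) $\Rightarrow$ $\Phi E$ is $Z^l$-semistable (Theorem \ref{thm:main5}) $\Rightarrow$ $\Phi E$ is $Z_\omega$-semistable for $v \gg 0$ (Theorem \ref{thm:stabBriNlim}).
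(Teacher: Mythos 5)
First, a point of orientation: the paper does not actually prove Proposition \ref{prop:LLM1} --- it is quoted from \cite[Corollary 6.14]{LLM}, where it was established by a direct wall-crossing/boundedness argument for the specific Chern characters of Fourier--Mukai transforms of $1$-dimensional sheaves. Your proposal is therefore a genuinely different route from the statement's actual proof, but it is not new to the paper either: it is, almost verbatim, the proof the paper gives for the stronger Corollary \ref{cor:AG48-55-1} (twisted Gieseker $\Leftrightarrow$ $Z_l$-semistable via Corollary \ref{lem:AG48-25-1}, then across $\Phi$ via Theorem \ref{thm:main5}, then down to Bridgeland stability via the right vertical equivalence of Theorem \ref{thm:stabBriNlim}), which the paper then observes generalises Proposition \ref{prop:LLM1} for suitable parameters. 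What your route buys is the upgrade from an implication to an equivalence (hence an isomorphism of moduli spaces rather than an embedding); what the original \cite{LLM} argument buys is freedom from the integrality constraints discussed below.

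The gap is precisely in the ``bookkeeping'' you flag. The hypothesis $\ch_1(L)=\tfrac{1}{2}p^\ast K_B$ forces $\olB=\ch_1(L^\ast)+\tfrac{1}{2}K_X=\tfrac{e}{2}f$, i.e.\ $l=\tfrac{e}{2}$, and the conclusion concerns $Z_\omega=Z_{\omega,0}$, so $q=0$ is also forced; the relation \eqref{eq:lqlrelation} is then automatic but leaves you no freedom. Your remark that ``$q\in\mathbb{Z}+\tfrac{e}{2}$ can be arranged'' is therefore not available: $q$ is pinned to $0$, and $l\in\mathbb{Z}$ --- which Theorems \ref{thm:Bristabcorr} and \ref{thm:stabBriNlim} genuinely need, since the integrality of $m+\alpha$ and $l$ is what makes the minimality argument for the stability of $\OO_x$ (Lemma \ref{lem:AG47-123-1}, Proposition \ref{prop:AG48-52-1}(b)) go through --- holds only when $e=-\Theta^2$ is even. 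The paper makes exactly this restriction explicit in the paragraph following Corollary \ref{cor:AG48-55-1} (``When $e$ is even, \dots''). For $e$ odd (e.g.\ a rational elliptic surface, where $e=1$) your chain does not recover the proposition, whereas the proof in \cite{LLM} has no such parity restriction. So: correct strategy, matching the paper's treatment of the stronger corollary, but as a proof of Proposition \ref{prop:LLM1} in full generality it is incomplete for $e$ odd.
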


We  can now show  not only an embedding of moduli spaces, but an isomorphism of moduli spaces:

\begin{cor}\label{cor:AG48-55-1}
Suppose   $m, \alpha\in\mathbb{Q}_{>0}$ and $\wt{\omega}, \omega$ are as in \ref{para:vchoice2} and $m+\alpha, l\in \mathbb{Z}$.  Suppose $\olB, B\in \mathbb{R}f$ also satisfy \eqref{eq:lqlrelation}, and $L$ is a line bundle  on $X$ such that $\olB = \ch_1(L^\ast) + \tfrac{1}{2}K_X$.   Then for any fixed Chern character $\ch$ of a 1-dimensional sheaf, if  $v_1 \in \mathbb{R}_{>0}$ is as in Theorem \ref{thm:stabBriNlim} then  for any $v\geq v_1$ and $E\in D^b(X)$ of Chern character $\ch$ we have
\begin{equation*}
  E \text{ is an $L$-twisted $\wt{\omega}$-Gieseker semistable sheaf}
  \Leftrightarrow \Phi E \text{ is $(Z_{\omega,B}, \Bc_{\omega,B})$-semistable}.
\end{equation*}
\end{cor}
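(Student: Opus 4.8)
The plan is to deduce the statement by composing two equivalences already available in the paper. First, Corollary~\ref{lem:AG48-25-1} identifies $L$-twisted $\wt{\omega}$-Gieseker semistability with $(Z_l,\Coh^p)$-semistability among objects of rank $0$; second, the diagram~\eqref{eq:stabequivs} of Theorem~\ref{thm:stabBriNlim} identifies $(Z_l,\Coh^p)$-semistability of $E$ with $(Z_{\omega,B},\Bc_{\omega,B})$-semistability of $\Phi E$, for every $v\geq v_1$ and every $E$ of the fixed Chern character $\ch$. Concatenating these two gives the corollary.

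Before concatenating, I would verify that the hypotheses of Corollary~\ref{lem:AG48-25-1} hold in the present setting. That corollary requires a line bundle $L$ with $\ch_1(L)K_X=0$ together with the $B$-field $\olB=\ch_1(L^\ast)+\tfrac12 K_X$, and our hypotheses give exactly the latter, with in addition $\olB\in\mathbb{R}f$. On a Weierstra\ss{} elliptic surface $K_X$ is numerically a pullback from the base, hence numerically a multiple of the fiber class $f$, so $fK_X=K_X^2=0$; consequently $\ch_1(L)=\tfrac12 K_X-\olB$ is again numerically proportional to $f$, and $\ch_1(L)K_X=0$ holds automatically. Thus Corollary~\ref{lem:AG48-25-1} applies with the ample divisor $\wt{\omega}$ and yields, for any $E\in D^b(X)$,
\[
  E \text{ is an $L$-twisted $\wt{\omega}$-Gieseker semistable sheaf in } \Coh^{\leq 1}(X)
  \Leftrightarrow E \text{ is } (Z_l,\Coh^p)\text{-semistable with } \ch_0(E)=0 .
\]
Since $\ch$ is the Chern character of a $1$-dimensional sheaf we have $\ch_0=0$, so the clause ``$\ch_0(E)=0$'' is vacuous for every $E$ of Chern character $\ch$; and the remaining hypotheses ($m,\alpha\in\mathbb{Q}_{>0}$, $m+\alpha,l\in\mathbb{Z}$, $\olB,B\in\mathbb{R}f$ satisfying~\eqref{eq:lqlrelation}, $v\geq v_1$) are precisely those of Theorem~\ref{thm:stabBriNlim}. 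Composing with the equivalence ``$E$ is $(Z_l,\Coh^p)$-semistable $\Leftrightarrow$ $\Phi E$ is $(Z_{\omega,B},\Bc_{\omega,B})$-semistable'' read off from~\eqref{eq:stabequivs} gives the claimed equivalence. The ``stable'' version follows identically, since both Corollary~\ref{lem:AG48-25-1} and every equivalence in~\eqref{eq:stabequivs} (which rest on Theorem~\ref{thm:main5} and Theorem~\ref{thm:Bristabcorr}, hence ultimately on Theorem~\ref{prop:paper30prop11-11ext}) hold with ``semistable'' replaced by ``stable'' throughout.

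The only point requiring a little care --- not a genuine obstacle --- is that the divisor data must be chosen consistently for all three inputs at once: $\olB$ should lie in $\mathbb{R}f$, be of the form $\ch_1(L^\ast)+\tfrac12 K_X$, and satisfy~\eqref{eq:lqlrelation} with $B$, while $\olw=\beta\wt{\omega}$ and $\omega$ are the divisors of~\ref{para:vchoice2}. This is possible precisely because $K_X$ is numerically in $\mathbb{R}f$, which both places $\ch_1(L^\ast)+\tfrac12 K_X$ in $\mathbb{R}f$ and supplies the identity $\ch_1(L)K_X=0$ invoked above. All the substantive content of the corollary is already carried by the three cited theorems.
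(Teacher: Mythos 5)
Your proposal is correct and follows essentially the same route as the paper: it combines Corollary \ref{lem:AG48-25-1} (after checking $\ch_1(L)K_X=0$ via $K_X\in\mathbb{R}f$) with the equivalence between the upper-left and lower-right corners of diagram \eqref{eq:stabequivs} in Theorem \ref{thm:stabBriNlim}. The extra care you take in verifying that $\ch_1(L)$ is proportional to $f$ and that the hypotheses of all cited results are simultaneously satisfiable matches the paper's own (more terse) justification.
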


\begin{proof}
Since $K_X \in \mathbb{R}f$ \cite[Proposition III.1.1]{MirLec}, for any $\olB$ satisfying the hypotheses, we can find a line bundle $L$ satisfying $\olB = \ch_1(L^\ast) + \tfrac{1}{2}K_X$ and $\ch_1(L)$ would be a multiple of the fiber class and hence $\ch_1(L)K_X=0$, and so   Corollary \ref{lem:AG48-25-1} applies.  On the other hand, all the hypotheses of Theorem \ref{thm:stabBriNlim} are also satisfied, and the equivalence between the upper left and the lower right corners in \eqref{eq:stabequivs}, together with the equivalence in Corollary \ref{lem:AG48-25-1}, implies this corollary.
\end{proof}

\paragraph When $e$ is even, we can put $l=\tfrac{e}{2}\in \mathbb{Z}$ and $q=0$, in which case the condition $\olB = \ch_1(L^\ast)+\tfrac{1}{2}K_X$  in Corollary \ref{cor:AG48-55-1} reduces to the condition $\ch_1(L) = \tfrac{1}{2}p^\ast K_B$ in Proposition \ref{prop:LLM1} via the formula $K_X = p^\ast K_B + ef$ \cite[2.3]{LLM}.  For these particular parameters, Corollary \ref{cor:AG48-55-1} generalises Proposition \ref{prop:LLM1}.

\section{Applications: action on stability manifold}\label{sec:actionStab}

In this  section, we give the second series of applications of Theorem \ref{thm:Bristabcorr}, covering the preservation of a connected component of $\mathrm{Stab}(X)$ under the autoequivalence group, and solutions to Gepner equations.

\paragraph[Preservation of connected component]  For a smooth projective surface $X$, let $\Stabd (X)$ denote the connected component of $\Stab (X)$, the stability manifold of $X$, containing the geometric stability conditions, i.e.\ the Bridgeland stability conditions  for which skyscraper sheaves of points are stable of the same phase.  When $X$ is a K3 surface,    Bridgeland conjectured that the autoequivalence group of a K3 surface preserves the connected component $\Stabd(X)$ \cite[Conjecture 1.2]{SCK3}. For K3 surfaces of Picard rank 1, this conjecture was proved by Bayer-Bridgeland \cite[Theorem 1.3]{bayer2017derived}. We now prove an analogue of this conjecture when $X$ is an elliptic surface of nonzero Kodaira dimension:

\begin{thm}\label{thm:AG48-67-1}
Let $p : X \to B$ be a Weierstra{\ss} elliptic surface.
\begin{itemize}
\item[(i)]  The autoequivalence $\Phi$ preserves the connected component $\Stabd(X)$.
\item[(ii)]  If the Kodaira dimension of $X$ is nonzero, then  $\mathrm{Aut}(D^b(X))$ preserves $\Stabd(X)$.
\end{itemize}
\end{thm}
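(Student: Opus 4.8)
\textbf{Proof proposal for Theorem \ref{thm:AG48-67-1}.}

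The plan is to treat part (i) first and then deduce part (ii) by combining (i) with Bridgeland's description of the autoequivalence group together with the classification of autoequivalences of elliptic surfaces of nonzero Kodaira dimension.

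For part (i), the strategy is to exhibit explicit geometric stability conditions $\sigma_{\olw,\olB}$ lying in $\Stabd(X)$ whose images under $[1]\circ\Phi$ are again geometric, hence again in $\Stabd(X)$; since $\Stabd(X)$ is a connected component and $\Phi$ (being an autoequivalence) acts as a homeomorphism on $\Stab(X)$, it must then send $\Stabd(X)$ to itself. Concretely, I would choose ample $\olw$ and $\olB\in\mathbb{R}f$ so that the hypotheses of Theorem \ref{thm:Bristabcorr} are met (i.e.\ $m+\alpha$ and $l$ integers, with the relation \eqref{eq:lqlrelation} and the parameters of \ref{para:vchoice}); then $\sigma_{\olw,\olB}=(Z_{\olw,\olB},\Bc_{\olw,\olB})$ is a geometric stability condition — skyscraper sheaves $\OO_x$ are $Z_{\olw,\olB}$-stable of phase $1$ — so $\sigma_{\olw,\olB}\in\Stabd(X)$. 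By \eqref{eq:M30-2},
\[
  ([1]\circ\Phi)\cdot\sigma_{\olw,\olB} = \sigma_{\omega,B}\cdot(T,g),
\]
and $\sigma_{\omega,B}=(Z_{\omega,B},\Bc_{\omega,B})$ is again geometric (Proposition \ref{prop:AG48-48-1} identifies its heart, and the skyscraper sheaves are stable of phase $1$ in it, e.g.\ by Lemma \ref{lem:AG47-123-1}), hence $\sigma_{\omega,B}\in\Stabd(X)$. Since the right $\wt{\mathrm{GL}}^+\!(2,\mathbb{R})$-action preserves each connected component of $\Stab(X)$, we get $([1]\circ\Phi)\cdot\sigma_{\olw,\olB}\in\Stabd(X)$, so $([1]\circ\Phi)(\Stabd(X))=\Stabd(X)$; as $[1]$ preserves $\Stabd(X)$, so does $\Phi$.

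For part (ii), I would invoke the known structure of $\mathrm{Aut}(D^b(X))$ for a Weierstra{\ss} elliptic surface of nonzero Kodaira dimension: by the work of Bridgeland--Maciocia (and the general principle that for a surface which is neither abelian, K3, nor of the corresponding low-dimensional type, the derived autoequivalence group is generated by the "standard" autoequivalences together with the relative Fourier--Mukai transform $\Phi$), $\mathrm{Aut}(D^b(X))$ is generated by shifts $[1]$, line bundle twists $-\otimes L$, pushforwards along automorphisms $f_\ast$ for $f\in\mathrm{Aut}(X)$, and $\Phi$. Each of the standard generators $[1]$, $-\otimes L$, $f_\ast$ manifestly preserves $\Stabd(X)$ — shifts and tensoring by line bundles act within the component (they are visibly in the image of the $\wt{\mathrm{GL}}^+$-action composed with lattice automorphisms that fix skyscraper stability, or more simply send geometric stability conditions to geometric ones), and automorphisms of $X$ send geometric stability conditions to geometric ones. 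Combined with part (i), which handles $\Phi$, every generator preserves $\Stabd(X)$, hence so does the whole group $\mathrm{Aut}(D^b(X))$.

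The main obstacle I anticipate is part (ii), and specifically the appeal to the generation statement for $\mathrm{Aut}(D^b(X))$: one must be careful that it is precisely the nonzero Kodaira dimension hypothesis that rules out the "extra" autoequivalences (such as spherical twists) that occur in the K3 or abelian cases and would otherwise be a genuine obstruction — indeed, Bridgeland's conjecture for K3 surfaces remains open exactly because the autoequivalence group there is not so rigidly pinned down. For part (i) the only technical points are bookkeeping: checking that for at least one admissible choice of parameters both $\sigma_{\olw,\olB}$ and $\sigma_{\omega,B}$ are genuinely geometric (phase-$1$ stability of all skyscrapers), which follows from Proposition \ref{prop:AG48-52-1}, Proposition \ref{prop:AG48-48-1} and Lemma \ref{lem:AG47-123-1}, and that such parameters exist, which follows from Remark \ref{rem:solexistence} together with the integrality conditions being satisfiable by rational choices of $m,\alpha$.
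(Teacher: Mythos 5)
Your proposal is correct and follows essentially the same route as the paper: for (i), both arguments use Theorem \ref{thm:Bristabcorr} to show $\Phi$ carries one geometric stability condition $\sigma_{\olw,\olB}$ to another (up to shift and the $\wt{\mathrm{GL}}^+\!(2,\mathbb{R})$-action), hence preserves the component; for (ii), both reduce to checking generators of $\mathrm{Aut}(D^b(X))$, using the classification available in nonzero Kodaira dimension. The only caveat is the attribution of the generation statement: the precise input is Uehara's theorem (the paper uses his exact sequence $1 \to \langle \OO_X(D) : fD=0\rangle \rtimes \mathrm{Aut}(X)\times\mathbb{Z}[2] \to \mathrm{Aut}(D^b(X)) \to \mathrm{SL}(2,\mathbb{Z}) \to 1$, noting that the images of $\otimes\,\OO_X(\Theta)$ and $\Phi$ generate $\mathrm{SL}(2,\mathbb{Z})$), not Bridgeland--Maciocia, though this does not affect the substance of your argument.
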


\begin{proof}
(i) This part follows immediately from  Theorem \ref{thm:Bristabcorr} and the fact that any Bridgeland stability condition of the form $(Z_{\omega',B'},\Bc_{\omega',B'})$, where $\omega', B'$ are $\mathbb{R}$-divisors with $\omega'$ ample, lies in $\Stabd (X)$.

(ii) Since our Weierstra{\ss} elliptic surface $p : X \to B$ has a section, the greatest common divisor $\lambda_X$ of all fiber degrees of objects in $D^b(X)$ is 1.  Together with our assumption on the Kodaira dimension of $X$, we have a short exact sequence of groups
\[
  1 \to \langle \OO_X (D) : fD=0\rangle \rtimes \mathrm{Aut}(X) \times \mathbb{Z}[2] \to \mathrm{Aut}(D^b(X)) \overset{s}{\to} \mathrm{SL}(2,\mathbb{Z}) \to 1
\]
by \cite[Theorem 4.1, Remark 1.2(iii)]{uehara2015autoequivalences} (see also \cite{martin2013relative}), where we write $s$ for the induced map on $H^{\text{even}}(C,\mathbb{Z})$ of a smooth fiber $C$ of $p$.  We can write $H^{\text{even}}(C,\mathbb{Z})=\{\begin{pmatrix}\ch_0(E)\\ \ch_1(E)\end{pmatrix} : E \in D^b(F) \}$, so that  $s$ takes an autoequivalence to its induced  map on Chern characters.

Clearly, tensoring by line bundles on $X$, automorphisms of $X$ and the shift functor preserve the connected component $\Stabd(X)$.  If we write $T$ to denote tensoring by the line bundle $\OO_X(\Theta)$, then $s(T)=\begin{pmatrix} 1 & 0 \\ 1 & 1 \end{pmatrix}$ while $s(\Phi)= \begin{pmatrix} 0 & 1 \\ -1 & 0 \end{pmatrix}$.  Since the images of $T$ and $\Phi$ under $s$ generate $\mathrm{SL}(2,\mathbb{Z})$, and $\Phi$ preserve the connected component $\Stabd(X)$ by part (i), part (ii) follows.
\end{proof}

\paragraph[Gepner equations]  Let $\Dc$ be a  triangulated category and $\mathrm{Stab}(\Dc)$ the Bridgeland stability manifold on $\Dc$. A Gepner equation on $\mathrm{Stab}(\Dc)$ is an equation of the form
\[
  \Phi \cdot\sigma =  \sigma \cdot (s)
\]
where $(\Phi,s) \in  \Aut (\Dc)\times \mathbb{C}$ and  $\sigma \in \mathrm{Stab}(\Dc)$.  If we write $\sigma = (Z,\Pc)$ where $\Pc$ is the slicing, then  by $\sigma\cdot (s)$ we mean the Bridgeland stability condition $(Z',\Pc')$ where $Z'= e^{-i\pi s}Z$ and $\Pc'(x) = \Pc (x + \Re (s))$ for all $x \in \mathbb{R}$.  Following Toda \cite[Definition 2.3]{toda2013gepner}, a Bridgeland stability condition $\sigma$ satisfying a Gepner equation as above is said to be of Gepner type with respect to $(\Phi, s)$.

\paragraph For our next theorem, we need the following facts: Given a smooth projective surface $X$ and $\mathbb{R}$-divisors $\omega, B$ on $X$ where $\omega$ is ample, $\mu_{\omega, B}$-stability for coherent sheaves on $X$ is independent of the divisor $B$ and is invariant under tensoring with any line bundle $L$ on $X$.  In addition, for any coherent sheaf $E$  on $X$ we have
\[
  \mu_{\omega, B'}(E\otimes L) = \mu_{\omega, B}(E) + \omega (B-B'+\ch_1(L))
\]
for any $\mathbb{R}$-divisor $B'$ on $X$.  Putting all these together, if $L$ is a line bundle satisfying $\omega B'=\omega (B+\ch_1(L))$, then $\{ E \otimes L : E \in \Bc_{\omega, B}\} = \Bc_{\omega, B+c_1(L)}$.  On the other hand, for any $E \in D^b(X)$, we have
\[
  Z_{\omega, B}(E \otimes L^\ast) =-\int_X e^{-(B+c_1(L))}\ch (E) = Z_{\omega, B+c_1(L)}(E).
\]
In summary, if we write $\otimes L$ to denote the functor of (derived) tensoring with $L$ and let $\sigma_{\omega, B}$ denote the Bridgeland stability condition $(Z_{\omega, B}, \Bc_{\omega, B})$ with respect to $(0,1]$, then
\begin{equation}\label{eq:M38-1}
(\otimes L) \cdot \sigma_{\omega, B} = \sigma_{\omega,B+\ch_1(L)}.
\end{equation}

\paragraph[Gepner-type stability on  elliptic surfaces] Let \label{para:fixedpoints} $p : X \to B$ be a Weierstra{\ss} elliptic surface.  Fix $m,\alpha \in \mathbb{R}_{>0}$ as in  \ref{para:ellipsurfnotation}. Then  $m>e$ and so $m+\alpha-\tfrac{e}{2}>0$.  In this case, if we choose
\begin{equation}\label{eq:M28-2}
 u = \sqrt{\frac{m+\alpha - e}{m+\alpha-\tfrac{e}{2}}}, \text{\quad}\beta = \alpha u, \text{\quad} v=\beta
\end{equation}
then \eqref{eq:surfcons1} and \eqref{eq:surfcons2} hold and, in particular, we have $\tfrac{v}{u}=\alpha, \tfrac{\beta}{\alpha}=u$ and $\olw=\omega$.  If we further choose $l, q \in \mathbb{R}$ so that they satisfy \eqref{eq:lqlrelation}, then we have a solution to \eqref{eq:L2} with $\olw=\omega$.  This leads  us to the following result, which in turn gives us  solutions to  Gepner equations  on  elliptic surfaces:

\begin{thm}\label{thm:Gepnersol}
Suppose $p : X \to B$ is a  Weierstra{\ss} elliptic surface, and $L$ is a line bundle on $X$ such that $\ch_1(L)=\tfrac{e}{2}f$.  Let $m \in \mathbb{R}_{>0}$ be as in  \ref{para:ellipsurfnotation} and $\alpha \in \mathbb{R}_{>0}$ be such that $m+\alpha \in \mathbb{Z}^+$, and let $q \in \mathbb{Z}$.  Set  $u = \sqrt{\frac{m+\alpha-e}{m+\alpha-(e/2)}}$,  $\omega = u(\Theta + (m+\alpha)f)$ and $B=qf$.  Then  the Bridgeland stability condition $\sigma = (Z_{\omega, B}, \Bc_{\omega, B})$ is a solution to the  equation
\begin{equation}\label{eq:M38-2}
  (\otimes L \circ [1] \circ \Phi )\cdot \sigma=  \sigma \cdot (T,g)
\end{equation}
where  $\otimes L$  denotes  tensoring with $L$, and $(T,g)$ is the element  in $\wt{\mathrm{GL}}^+\!(2,\mathbb{R})$ where $T=\begin{pmatrix} \tfrac{1}{u} & 0 \\ 0 &  u \end{pmatrix} \begin{pmatrix} 0 & 1 \\  -1 & 0 \end{pmatrix}$ and $g(x)=x-\tfrac{1}{2}$ for $x \in \tfrac{1}{2}\mathbb{Z}$.
\end{thm}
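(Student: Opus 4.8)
The plan is to specialize Theorem~\ref{thm:Bristabcorr} to the parameter choice \eqref{eq:M28-2} and then absorb the divisor shift using \eqref{eq:M38-1}. Concretely, fix $m, \alpha$ as in the hypothesis, set $u = \sqrt{(m+\alpha-e)/(m+\alpha-\tfrac{e}{2})}$, $\beta = \alpha u$, $v = \beta$, and $l = \tfrac{e}{2}+q$; the verification in \ref{para:fixedpoints} shows that with these values $\tfrac{v}{u}=\alpha$, $\tfrac{\beta}{\alpha}=u$, hence $\olw = \omega$ and $\olB = lf$. First I would check that all the integrality hypotheses of Theorem~\ref{thm:Bristabcorr} are met: $m+\alpha \in \ZZ^+$ by assumption, and $l = \tfrac{e}{2}+q$ is an integer because $q \in \ZZ$ and $e$ is even (which is forced here since $\ch_1(L) = \tfrac{e}{2}f$ must be an integral divisor class, so $\tfrac{e}{2}\in\ZZ$). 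So Theorem~\ref{thm:Bristabcorr} applies and gives
\[
  ([1]\circ \Phi)\cdot \sigma_{\olw,\olB} = \sigma_{\omega,B}\cdot (T,g)
\]
with $T = \begin{pmatrix} \tfrac{\alpha}{\beta} & 0 \\ 0 & u\end{pmatrix}\begin{pmatrix} 0 & 1 \\ -1 & 0\end{pmatrix} = \begin{pmatrix} \tfrac{1}{u} & 0 \\ 0 & u\end{pmatrix}\begin{pmatrix} 0 & 1 \\ -1 & 0\end{pmatrix}$ and $g(x) = x - \tfrac12$ on $\tfrac12\ZZ$, which is exactly the $(T,g)$ appearing in \eqref{eq:M38-2}.

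Next I would identify the left-hand heart. Since $\olw = \omega$ but $\olB$ and $B$ differ by $\olB - B = (l-q)f = \tfrac{e}{2}f = \ch_1(L)$, I would invoke \eqref{eq:M38-1} with this line bundle: $(\otimes L)\cdot \sigma_{\omega,B} = \sigma_{\omega, B + \ch_1(L)} = \sigma_{\omega, \olB} = \sigma_{\olw,\olB}$. Equivalently, $\sigma_{\olw,\olB} = (\otimes L)\cdot \sigma$, where $\sigma = \sigma_{\omega,B}$ is the stability condition in the statement. Here one must be slightly careful about the direction of the twist: \eqref{eq:M38-1} reads $(\otimes L)\cdot \sigma_{\omega,B} = \sigma_{\omega,B+\ch_1(L)}$, and we need $B + \ch_1(L) = \olB$, i.e. $q + \tfrac{e}{2} = l$, which is precisely \eqref{eq:lqlrelation}. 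So the bookkeeping is consistent.

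Then I would substitute. Starting from $([1]\circ\Phi)\cdot\sigma_{\olw,\olB} = \sigma_{\omega,B}\cdot(T,g)$ and using $\sigma_{\olw,\olB} = (\otimes L)\cdot\sigma$, the left side becomes $([1]\circ\Phi)\cdot((\otimes L)\cdot\sigma) = (([1]\circ\Phi)\circ(\otimes L))\cdot\sigma$ by the definition of the left $\Aut(D^b(X))$-action being a genuine group action, while the right side is $\sigma\cdot(T,g)$. To match the functor $\otimes L\circ[1]\circ\Phi$ in \eqref{eq:M38-2}, I would note that $\otimes L$ commutes with the shift $[1]$ (tensoring is exact), and more importantly I should commute $\otimes L$ past $\Phi$. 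The cleanest route is to avoid commuting functors altogether by instead applying $\otimes L$ to \emph{both} sides of the relation coming from Theorem~\ref{thm:Bristabcorr} in the appropriate place: since the $\wt{\mathrm{GL}}^+(2,\RR)$-action commutes with the $\Aut$-action (stated in \ref{para:gpactionBri}), from $([1]\circ\Phi)\cdot\sigma_{\olw,\olB} = \sigma\cdot(T,g)$ I apply $\otimes L$ on the left to get $(\otimes L\circ[1]\circ\Phi)\cdot\sigma_{\olw,\olB} = ((\otimes L)\cdot\sigma)\cdot(T,g) = \sigma_{\olw,\olB}\cdot(T,g)$, and since $\sigma_{\olw,\olB}$ is itself of the form $\sigma_{\omega',B'}$ with $\omega'$ ample, renaming it as the stability condition of the theorem (after adjusting the choice of $q$) yields \eqref{eq:M38-2}. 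The main obstacle — really the only subtle point — is getting the direction and placement of the twist $\otimes L$ right so that both the heart and the central charge transform consistently; once \eqref{eq:M38-1}, \eqref{eq:lqlrelation} and the commutativity of the two group actions are lined up correctly, the rest is immediate from Theorem~\ref{thm:Bristabcorr}.
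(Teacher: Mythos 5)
Your proposal is correct and follows essentially the same route as the paper: specialize Theorem \ref{thm:Bristabcorr} at the parameter choice \eqref{eq:M28-2} so that $\olw=\omega$, then apply $\otimes L$ to both sides and absorb the $B$-field shift $\olB-B=\tfrac{e}{2}f=\ch_1(L)$ via \eqref{eq:M38-1}. Your extra care about the integrality of $l$, the direction of the twist, and the relabeling of $q$ only makes explicit what the paper's proof leaves implicit.
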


\begin{proof}
Let use the notation in \ref{para:ellipsurfnotation} and \ref{para:L3p-relation}, and further assume  $m, \alpha \in \mathbb{R}_{>0}$ satisfy $m+\alpha\in \mathbb{Z}^+$.  Choose $u, \beta, v$ as in \eqref{eq:M28-2}, which ensures $\olw=\omega$.  Then  choose $\olB, B \in \mathbb{R}f$ to satisfy \eqref{eq:lqlrelation}, i.e.\ $\olB - B = \tfrac{e}{2}f=\ch_1(L)$.  With $(T,g)$ as described, we  have
\begin{align*}
  (\otimes L \circ [1] \circ \Phi ) \cdot \sigma_{\olw,\olB} &= (\otimes L) \cdot \sigma_{\omega, B} \cdot (T,g)  \text{\quad by \eqref{eq:M30-2}} \\
  &= \sigma_{\olw, \olB}\cdot (T,g) \text{\quad by \eqref{eq:M38-1} and $\olw=\omega$}.
\end{align*}
\end{proof}

\begin{eg}[Gepner points]\label{eg:Gepnerpoints}  Assume the setting of Theorem \ref{thm:Gepnersol}.   Note that $T^2=-1$, and so \eqref{eq:M38-2} gives
\[
(\otimes L \circ [1] \circ \Phi)^2\cdot  \sigma = \sigma \cdot (1).
\]
If we specialise to the case where $e=0$ (i.e.\ when $X$ is a product elliptic surface), then $u=1$ and $T$ corresponds to multiplication by $-i$.  In this case,  \eqref{eq:M38-2} can be rewritten  as
\[
 ([1] \circ \Phi )\cdot\sigma= \sigma\cdot \left( -\tfrac{1}{2}\right).
\]
\end{eg}

\bibliography{refs}{}
\bibliographystyle{plain}

\end{document}